\numberwithin{equation}{section}
\newtheorem{thm}{Theorem}[section]
\newcommand{\bt}{\begin{thm}}
\newcommand{\et}{\end{thm}}
\newtheorem{cor}[thm]{Corollary}
\newcommand{\bc}{\begin{cor}}
\newcommand{\ec}{\end{cor}}
\newtheorem{lem}[thm]{Lemma}
\newcommand{\bl}{\begin{lem}}
\newcommand{\el}{\end{lem}}
\newtheorem{prop}[thm]{Proposition}
\newcommand{\bp}{\begin{prop}}
\newcommand{\ep}{\end{prop}}
\newtheorem{defn}[thm]{Definition}
\newcommand{\bd}{\begin{defn}}
\newcommand{\ed}{\end{defn}}
\newtheorem{rmrk}[thm]{Remark}
\newcommand{\br}{\begin{rmrk}}
\newcommand{\er}{\end{rmrk}}
\newtheorem{quest}[thm]{Question}
\newcommand{\bq}{\begin{quest}}
\newcommand{\eq}{\end{quest}}
\newtheorem{example}[thm]{Example}
\newdimen\vintkern\vintkern12pt
\def\vint{-\kern-\vintkern\int}
\newcommand{\hm}{{\mathcal H}}
\newcommand{\diam}{\operatorname{diam}}
\newcommand{\trace}{\operatorname{tr}}
\newcommand{\length}{\ell}
\newcommand{\Area}{\operatorname{Area}}
\newcommand{\md}{\operatorname{md}}
\newcommand{\Fill}{{\operatorname{Fill}}}
\newcommand{\jac}{{\mathbf J}}
\newcommand{\ap}{\operatorname{ap}}
\newcommand{\apmd}{\ap\md}
\newcommand{\tref}[1]{Theorem~\ref{#1}}
\newcommand{\cref}[1]{Corollary~\ref{#1}}
\newcommand{\pref}[1]{Proposition~\ref{#1}}
\newcommand{\lref}[1]{Lemma~\ref{#1}}
\newcommand{\R}{\mathbb{R}}
\newcommand{\J}{\mathbf{J}}
\begin{document}
\pagebreak


\title{Intrinsic structure of minimal discs in metric spaces}







\author{Alexander Lytchak}

\address
  {Mathematisches Institut\\ Universit\"at K\"oln\\ Weyertal 86 -- 90\\ 50931 K\"oln, Germany}
\email{alytchak@math.uni-koeln.de}

\author{Stefan Wenger}

\address
  {Department of Mathematics\\ University of Fribourg\\ Chemin du Mus\'ee 23\\ 1700 Fribourg, Switzerland}
\email{stefan.wenger@unifr.ch}

\date{\today}

\thanks{The second author was partially supported by Swiss National Science Foundation Grants 153599 and 165848}

\begin{abstract}
 We  study the intrinsic structure  of   parametric minimal discs in metric spaces admitting a quadratic isoperimetric inequality.  We associate to each minimal disc
a compact, geodesic metric space whose geometric, topological, and analytic properties are controlled by the isoperimetric inequality. Its
geometry can be used to control the shapes of all curves and therefore
the geometry and topology of  the original metric space.
 The class of spaces arising in this way as intrinsic minimal discs
is a natural generalization of the class of Ahlfors regular discs, well-studied  in
analysis on metric spaces.
\end{abstract}

\maketitle

\maketitle
\renewcommand{\theequation}{\arabic{section}.\arabic{equation}}
\pagenumbering{arabic}

\section{Introduction}\label{sec:Intro}

\subsection{Motivation}
A  smooth minimal surface in a  Riemannian manifold has vanishing mean curvature.  The Gauss equation then forces the intrinsic  curvature of the minimal surface to be no larger  than that of the ambient space.  This has strong implications on the intrinsic geometry and thus on the local and global shape of the minimal surface.   Given a smooth Jordan curve $\Gamma $ in a Riemannian manifold $M$ of bounded geometry, the  classical solution of the  Plateau problem
(\cite{Dou31},  \cite{Rad30}, \cite{Mor48}) provides a minimal disc
$u:D\to M$ spanned by $\Gamma$. This solution, which is a priori
constructed in a Sobolev space, turns out to be  a smooth map. Moreover, it is an immersion up to finitely many \emph{branch points}.
In the absence of branch points, the minimal disc with its induced Riemannian metric turns out  to be a smooth $2$-dimensional Riemannian manifold $Z$ with boundary given by  $\Gamma$.  The map $u$ factors as  $u=\bar u\circ P$, where  $P:\bar D\to Z$ is a diffeomorphism and $\bar u:Z\to M$  is a Riemannian immersion, thus $\bar u$   preserves the length of all curves.   Moreover, the Gauss equation and other implications of minimality provide restrictions on the geometry of $Z$.

 The aim of the present article is to investigate the intrinsic geometry
of minimal discs in the much broader setting of metric spaces with quadratic isoperimetric inequality  and to
find structures analogous to those observed in the  setting of
Riemannian manifolds. Given any \emph{minimal disc}
$u:\bar D\to X$ in such a metric space $X$
we would like to find  a nice metric space $Z$  whose properties reflect  geometric properties of $X$, and such that $u$ factorizes into a homeomorphism from $\bar D$ to $ Z$ and a length preserving  immersion $Z\to X$.  One cannot achieve this in full, as  examples will demonstrate, but \emph{almost}  as our results will show. The geometric properties of the space $Z$ control the shape of the {Jordan curve $\Gamma$ and therefore the geometry of $X$.  Results obtained in this paper are used in \cite{LW-isop}
to prove an isoperimetric characterization of upper curvature bounds, in \cite{LW15-asymptotic} to study topological and asymptotic properties of  spaces with quadratic isoperimetric inequality  and in \cite{LW-ahlfors} to find natural parametrizations of Ahlfors $2$-regular discs and their generalizations.

 \subsection{Setting and construction}
In \cite{LW}, we have found a solution to the classical Plateau problem in any proper metric space $X$. Given any Jordan curve $\Gamma \subset X$  one would like to find a disc bounded by $\Gamma$ with minimal (parametrized Hausdorff) area. As in  the classical situation,  it is   natural to look for a solution in   the set $\Lambda (\Gamma, X)$ of all Sobolev maps  $v\in W^{1,2}  (D,X)$, whose trace $tr (v)$ is a weakly monotone reparametrization of the curve $\Gamma$.   If $X$ is a proper metric space  and $\Lambda (\Gamma, X)$ is not empty then there indeed exists a map
$u$  with minimal area  $\Area (u)$ in
$\Lambda (\Gamma ,X)$, as we showed in \cite{LW}.
We   found a special area  minimizer $u \in \Lambda (\Gamma ,X)$  which moreover  has  minimal (Reshetnyak) energy $E_+^2(u)$ among all area minimizers in $\Lambda (\Gamma ,X)$.   We will call such a map $u$  a \emph{minimal disc} or \emph{solution of the Plateau problem for $(\Gamma,X)$}.

  If no restrictions are imposed on $X$, then a solution  $u$ of the Plateau problem
  can be as irregular as any Sobolev map.
   The situation changes under the natural assumption that in $X$  any  Lipschitz curve
$\gamma :S^1\to X$ of length $l \leq l_0$  bounds a (Sobolev) disc
$v : D \to X$
of  area $A\leq Cl^2$, where $C,l_0>0$ are fixed constants. We say that $X$ admits a $(C,l_0)$-isoperimetric inequality (for the Hausdorff area).
 Many geometrically significant spaces like  compact   Lipschitz manifolds,   compact spaces with one-sided curvature bounds,
Banach spaces and many others satisfy this assumption.

Under this isoperimetric assumption the classical "a priori Hoelder estimates" apply, and any solution $u\in W^{1,2} (D,X)$ of the Plateau problem for $(\Gamma ,X)$ turns out to have a  continuously extendible
representative $u:\bar D\to X$.
The continuity of the solution $u$ makes it from an  almost everywhere  defined map  to a geometric and topological  object.
 We borrow the recipe for the construction of the space $Z$  from the smooth situation.
As to any continuous map one can associate to $u$ an \emph{intrinsic pseudo-distance}
$d_u :\bar D\times \bar D\to [0,\infty]$  by
$$d_{u} (z_1,z_2):=\inf \{\text{length of} \; u\circ \gamma \;  |  \; \gamma \; \text{curve in} \; \bar D \; \text{between}  \;  z_1 \; \text{and} \; z_2  \}.$$
It turns out, cf. \pref{prop:first} below, that $d_u$ is finite-valued. Hence, identifying points on $\bar D$ with $d_u$-distance $0$ from each other we obtain a metric space $Z=Z_u$ which we will call the
\emph{intrinsic minimal disc associated with $u$}. Since $Z_u$ arises from $\bar D$ by an identification of some points we have a canonical projection $P:\bar D\to Z$.



\subsection{Properties of the intrinsic minimal disc} \label{subsec:second}    We fix the following setting for the whole subsection.

{\bf General setting:}
\begin{itemize}
 \item $X$ is a complete metric space admitting a
$(C,l_0)$-isoperimetric inequality;
 \item $\Gamma\subset X$ is a Jordan curve of finite length;
 \item $u: \bar{D}\to X$ is a solution of the Plateau problem for
$(\Gamma, X)$;
\item $d_u:\bar D\times \bar D \to [0,\infty)$ is the pseudo-distance induced by $u$;
\item $Z=Z_u$ is the intrinsic minimal disc associated with $u$;
\item $P:\bar D\to Z$ is the induced canonical projection;
\end{itemize}
Morrey's proof of the Hoelder regularity of minimal discs, generalized in \cite{LW} to metric spaces leads to:
\begin{thm} \label{prop:first}
In the general setting and notations introduced above, the pseudo-distance $d_u$ assumes only finite values  and is continuous. The metric space  $Z=Z_u$  obtained from the pseudo-metric $d_u$
is compact and geodesic. The canonical projection $P:\bar D\to Z$ is continuous.
 The map $u:\bar D\to X $ has a canonical factorization  $u=\bar u\circ P$, where
$\bar u:Z\to X$ is a $1$-Lipschitz map. For any curve $\gamma$ in $\bar D$ the lengths
of $P\circ \gamma$ and $u\circ \gamma$ coincide, thus $\bar u$ preserves the length of $P\circ \gamma$.
\end{thm}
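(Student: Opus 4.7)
The theorem splits into one analytic claim—that the pseudo-distance $d_u$ is finite-valued and continuous on $\bar D\times\bar D$—and a package of formal consequences that I handle at the end. The hard point is the continuity of $d_u$: Hölder regularity of $u$ alone controls only the $d_X$-diameter of a small piece of $\bar D$, whereas to control $d_u$ one needs bounds on the lengths of $u$-images of short curves in $\bar D$, not merely their diameters.

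For finiteness of $d_u$ I use the Sobolev structure of $u$. Since $u\in W^{1,2}(D,X)$ has finite Reshetnyak energy, Fubini applied to the approximate metric derivative $\apmd u$ shows that almost every axis-parallel line segment $L\subset D$ has absolutely continuous restriction $u|_L$ of finite length $\int_L\apmd u$. Any two interior points of $D$ can then be joined by a piecewise axis-parallel staircase built from such good segments; boundary points are reached by a detour along $\Gamma$, which has finite length by hypothesis. Hence $d_u$ is everywhere finite.

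For continuity I would invoke the metric generalization of Morrey's regularity theorem proved in \cite{LW}, which provides not only the Hölder oscillation bound $\mathrm{osc}(u;B(z_0,r)\cap\bar D)\le C r^\alpha$ but also the polynomial decay $E_+^2(u|_{B(z_0,r)})\le C r^{2\beta}$ of the local energy. Using the latter in the Cauchy--Schwarz estimate
\[
\int_r^{2r}\frac{\ell(u\circ\gamma_\rho)^2}{\rho}\,d\rho\le 2\pi E_+^2(u|_{B(z_0,2r)})\le C r^{2\beta},
\]
where $\gamma_\rho$ parametrizes $\partial B(z_0,\rho)$, a Fubini selection produces some radius $\rho\in(r,2r)$ for which $u\circ\gamma_\rho$ is absolutely continuous with $\ell(u\circ\gamma_\rho)\le C'r^\beta$. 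For $z_1,z_2\in B(z_0,r/2)$ I then travel from each $z_i$ along a good radial segment (selected by Fubini to have small $u$-length) out to this circle, and join the two arrival points along the circle, producing a curve in $\bar D$ from $z_1$ to $z_2$ whose $u$-image has length bounded by a positive power of $r$. Near $\partial D$ one replaces the arc of the circle by a controlled detour through $\Gamma$. This gives $d_u(z_1,z_2)\to 0$ as $|z_1-z_2|\to 0$, and joint continuity follows from the triangle inequality.

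The remaining assertions are formal. Continuity of $d_u$ at once yields continuity of $P:\bar D\to Z$, so $Z=P(\bar D)$ is compact as a continuous image of a compact set. Since $d_u$ is defined as an infimum of curve lengths, $(Z,d_u)$ is a length space, and a compact length space is geodesic by Hopf--Rinow. For any curve $\gamma$ joining $z_1$ and $z_2$ the triangle inequality gives $d_X(u(z_1),u(z_2))\le\ell(u\circ\gamma)$, hence $d_X(u(z_1),u(z_2))\le d_u(z_1,z_2)$, which in turn yields both the factorization $u=\bar u\circ P$ and the $1$-Lipschitz property of $\bar u$. The same inequality $d_X\le d_u$ gives $\ell(u\circ\gamma)\le\ell(P\circ\gamma)$ for every curve $\gamma$ in $\bar D$, while the reverse inequality follows from applying the definition of $d_u$ to each piece of a partition of $\gamma$, so the two lengths coincide.
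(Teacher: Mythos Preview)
Your treatment of the formal consequences (compactness of $Z$, geodesicity via Hopf--Rinow, the factorization $u=\bar u\circ P$ with $\bar u$ $1$-Lipschitz, and the two-sided length comparison) is correct and matches the paper's argument essentially verbatim.

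The gap is in the step ``travel from each $z_i$ along a good radial segment (selected by Fubini to have small $u$-length) out to this circle.'' No such selection is available. If \emph{radial} means radial from $z_0$, then the angle is determined by the prescribed point $z_i$ and is not a free Fubini parameter. If \emph{radial} means radial from $z_i$, then in polar coordinates around $z_i$ the area element carries the weight $t$, and a Cauchy--Schwarz bound for the length $\int_0^\rho \rho_u\,dt$ of a ray would need $\int_0^\rho t^{-1}\,dt<\infty$, which fails. This is precisely the obstruction that keeps $W^{1,2}$ in two dimensions from embedding into $C^0$: the $u$-length of a segment emanating from a \emph{specified} point is simply not controlled by the local $W^{1,2}$ energy. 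The same defect breaks your separate finiteness argument via axis-parallel staircases, since a given $z_1$ need not lie on any good horizontal or vertical line.

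The paper does not attempt this one-shot selection. It invokes \lref{shortcurve} (proved in \cite{LW}): for $z_1,z_2$ in a fixed compact subset of $D$ there is a piecewise affine curve $\gamma$ with $\ell_X(u\circ\gamma)\le L\,|z_1-z_2|^{\alpha}$. The proof of that lemma uses exactly the Morrey energy decay you cite, but iteratively: one runs Courant--Lebesgue at every dyadic scale $2^{-k}$ around the prescribed point to produce nested good circles whose $u$-lengths form a summable geometric series, and threads them together into a curve reaching the point. Continuity up to the boundary is then handled separately in \lref{mainboundary} and \cref{shortconnection}, combining this interior estimate (after a conformal change) with the finite length of $\Gamma$. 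Your outline is repaired either by replacing the single Fubini step with this dyadic construction, or by citing \lref{shortcurve} directly.
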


We  are going to  discuss   the topological, geometric, and analytic properties of the constructed space $Z$.

\begin{thm} \label{thmA}
The  intrinsic minimal disc  $Z$ is  homeomorphic to $\bar D$.  The Hausdorff area $\mathcal H^2 (Z)$ and the
length $\ell (\partial Z)$ of the boundary circle are finite.
The domain $\Omega \subset Z$ enclosed by  any Jordan curve  of length $l<l_0$ in $Z$ satisfies
    \begin{equation} \label{Z-hausdorff-isop}
    \mathcal H^2 (\Omega) \leq C \cdot l^2.
\end{equation}
\end{thm}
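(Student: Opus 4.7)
The plan is to establish the three assertions in order, leveraging Theorem~\ref{prop:first} and the variational properties of $u$.

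For the homeomorphism claim, the central point is to show that each fiber $P^{-1}(z)$ is a non-separating continuum in $\bar D$. Connectedness of fibers follows from the definition of $d_u$ via infima of image-lengths combined with a limiting argument: given $z_1,z_2\in P^{-1}(z)$ and curves $\gamma_n$ from $z_1$ to $z_2$ in $\bar D$ with $\ell(u\circ\gamma_n)\to 0$, the accumulation set is a continuum contained in $P^{-1}(z)$. The non-separation property is the harder step and leans essentially on the variational character of $u$: a separating fiber would allow one to modify $u$ on one side of the separation without changing the boundary trace while strictly decreasing area or energy, contradicting the minimality of $u$. Once each fiber is a non-separating continuum, the classical monotone decomposition theorem (Moore/Youngs) identifies the quotient $Z$ with $\bar D$ and sends $\partial D$ onto $\partial Z$.

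For the finiteness statements, recall that a minimal disc has finite Reshetnyak energy and hence finite parametrized Hausdorff area. Since $\bar u$ preserves the length of every curve $P\circ\gamma$, the infinitesimal stretching of $P$ and of $u$ coincide, so $P$ inherits Sobolev regularity with $\Area(P)=\Area(u)<\infty$. The area formula with multiplicity for surjective Sobolev maps then yields
\[
 \mathcal H^2(Z)\leq \Area(P)=\Area(u)<\infty.
\]
Similarly, $\partial Z=P(\partial D)$ is parametrized monotonically by the length-preserving restriction $P|_{\partial D}$, which gives $\ell(\partial Z)\leq \ell(u|_{\partial D})=\ell(\Gamma)<\infty$.

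For the isoperimetric inequality in $Z$, let $\eta\subset Z$ be a Jordan curve of length $l<l_0$ bounding $\Omega$. The first part identifies $\Omega':=P^{-1}(\Omega)$ as a topological sub-disc of $\bar D$. Since $\bar u$ is $1$-Lipschitz, $\bar u\circ\eta$ has length at most $l$, so the isoperimetric hypothesis on $X$ provides a Sobolev disc $v$ spanning $\bar u\circ\eta$ with $\Area(v)\leq Cl^2$. Replacing $u$ on $\bar\Omega'$ by a suitable reparametrization of $v$ produces a competitor $\tilde u\in\Lambda(\Gamma,X)$; the area-minimality of $u$ together with cancellation of the common portions gives
\[
 \mathcal H^2(\Omega)\leq \Area(P|_{\bar\Omega'})=\Area(u|_{\bar\Omega'})\leq \Area(v)\leq Cl^2,
\]
where the first inequality is the area formula applied to the surjection $P|_{\bar\Omega'}:\bar\Omega'\to\bar\Omega$.

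The main obstacle I anticipate is the gluing construction in the last step: the re-glued competitor $\tilde u$ must lie in $W^{1,2}(D,X)$ with the correct boundary trace, which requires Sobolev regularity of $u$ across the typically non-smooth interface $P^{-1}(\eta)$ and a careful matching of the parametrizations of $\bar\Omega'$ and the domain of $v$. A closely related difficulty is the non-separation of fibers in the first part and the Lusin $(N)$-type properties of $P$ needed to apply the area formula with the asserted inequalities. Each of these is plausible because $P$ is the most identifying parametrization of $Z$, but each must be justified using the full variational nature of the minimal disc $u$.
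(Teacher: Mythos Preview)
Your treatment of the homeomorphism and of the finiteness statements is essentially the paper's: fibers of $P$ are connected, and the non-separation is forced by minimality (the paper makes this precise by taking a conformal map $F:D\to O$ onto a hypothetical bounded complementary component, observing that $u\circ F$ has constant trace, and concluding $\Area_\mu(u|_O)=0$); Moore's theorem for cell-like maps then gives $Z\cong\bar D$. For finiteness the paper uses $P\in W^{1,p}_{loc}$ with $p>2$ to guarantee Lusin's property (N), which you allude to but should make explicit.

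The isoperimetric step, however, does not go through as written. The set $P^{-1}(\eta)$ is \emph{not} a Jordan curve: it is the $P$-preimage of a Jordan curve, and since fibers of $P$ may be large continua, this preimage can be a very wild compact set with no curve structure whatsoever. Consequently there is no meaningful ``trace of $u$ along $\partial\Omega'$'' to which one could match the boundary of the filling $v$, and the gluing $\tilde u$ simply cannot be defined. You correctly flag the interface regularity as the main obstacle, but it is not a technical nuisance to be overcome---it is a genuine obstruction to this line of attack.

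The paper avoids the problem by reversing the direction of the approximation. Rather than lifting $\eta\subset Z$ to $\bar D$, one approximates it there: pick finitely many points on $\eta$, choose preimages in $\bar D$, and connect consecutive preimages by curves $\gamma_i\subset\bar D$ whose $P$-images are short. The concatenation $\tilde\gamma$ is a closed curve in $\bar D$ with $\ell_Z(P\circ\tilde\gamma)<\ell_Z(\eta)+\varepsilon$ which is homotopically nontrivial in $\bar D\setminus P^{-1}(U)$ for a slightly smaller disc $U\subset\Omega$. A curve-cutting lemma then extracts from $\tilde\gamma$ an honest Jordan curve in $\bar D$ with no longer $P$-image, still enclosing $P^{-1}(U)$. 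At this point one is back to a Jordan curve in $\bar D$, and Proposition~\ref{newprop} (whose proof already contains the delicate gluing, carried out via a conformal change to $D$ and the filling of irregular traces) gives the area bound. Letting $\varepsilon\to 0$ finishes. The moral is that the competitor argument you propose has to be run against a curve that genuinely lives in $\bar D$, not against the preimage of a curve in $Z$.
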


The isoperimetric property of the topological disc $Z$ has strong implications: lower bound on the area growth, linear local contractibility and
the existence of controlled decompositions into subsets of small diameter.  For the sake of simplicity, we formulate these consequences here only in the case of short boundary curves $\Gamma$, referring to Section \ref{sec:inner-structure-mindiscs}  for the general case.
\begin{cor} \label{cor:neu}
Let $Z$ be as above and assume that  the length $l$  of the boundary circle $\partial Z$ is at most $l_0$. Then
\begin{enumerate}
\item For every $z \in Z$ and any $r\leq d(\partial Z ,z) $ the area of the ball $B(z,r)$ of radius $r$ around $z$  is bounded by
   \begin{equation} \label{Z-hausdorff-growth}
   \mathcal H^2 (B(z,r)) \geq \left(\frac {\pi} {4}\right)^2 \cdot  \frac 1 {4C} \cdot r^2.
   \end{equation}
	\item For  any $z\in Z$ and $r>0$
 the ball $B(z,r)$ in $Z$ is contractible inside  the ball $B(z, (8C+1) \cdot r)$.
\item  There exists a constant
$M=M(C)>0$ such that the  following holds true. For  every $n>0$ there exists a finite, connected   graph $\partial Z\subset G_n\subset Z$
such that any component of $Z\setminus G_n$ is a disc of diameter at most $\frac {l } n$  and such that the number of these components is at most
$M\cdot n^2$.
\end{enumerate}
\end{cor}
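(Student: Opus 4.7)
My plan is to derive all three conclusions from the isoperimetric inequality \eqref{Z-hausdorff-isop} of Theorem~\ref{thmA} together with the topological disc structure of $Z$.

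For (i), set $A(s):=\mathcal{H}^2(B(z,s))$ and $\rho(w):=d(z,w)$. For $s\leq d(\partial Z,z)$ the closed ball $\bar B(z,s)$ lies in the interior of $Z$, and using the planar topology I would extract from $\rho^{-1}(s)$ a Jordan curve $\gamma_s$ enclosing $B(z,s)$ with $\ell(\gamma_s)\leq\mathcal{H}^1(\rho^{-1}(s))$. For a.e.\ $s$ with $\ell(\gamma_s)<l_0$ the isoperimetric inequality gives $A(s)\leq C\,\mathcal{H}^1(\rho^{-1}(s))^2$, hence $\mathcal{H}^1(\rho^{-1}(s))\geq\sqrt{A(s)/C}$. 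Combining with the Eilenberg coarea inequality $\int_0^r\mathcal{H}^1(\rho^{-1}(s))\,ds\leq(4/\pi)A(r)$ for the $1$-Lipschitz function $\rho$ yields $\int_0^r\sqrt{A(s)/C}\,ds\leq(4/\pi)A(r)$; differentiation in $r$ produces $(\sqrt{A})'(r)\geq\pi/(8\sqrt{C})$, which integrates to the sharp bound $A(r)\geq(\pi/4)^2 r^2/(4C)$.

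For (ii), my plan is to contract $B(z,r)$ radially to $z$: for each $y\in B(z,r)$ select a geodesic $\gamma_y$ from $z$ to $y$ and set $H(y,t):=\gamma_y((1-t)\,d(z,y))$. Each individual spoke lies in $\bar B(z,r)$; the principal difficulty is continuity, since geodesic selections in $Z$ are generally not canonical. To address this I would apply \eqref{Z-hausdorff-isop} to bigons: two geodesics from $z$ to $y_1,y_2$ with $d(y_1,y_2)=\epsilon$, closed by a short geodesic from $y_1$ to $y_2$, bound a disc region of perimeter $\leq 2r+\epsilon$ and therefore area $\leq C(2r+\epsilon)^2$; combining with the lower bound from (i) forces this region to sit inside an $O(Cr)$-neighborhood of its boundary. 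This controls the spread of adjacent spokes and enables construction of a continuous $H$, typically by triangulating on a fine $\delta$-net in $\bar B(z,r)$, filling each geodesic triangle by a disc supplied by the isoperimetric inequality, and passing to $\delta\to 0$. The containment $H(B(z,r)\times[0,1])\subset B(z,(8C+1)r)$ then follows: the additive $r$ comes from the spokes themselves and the $8Cr$ from the diameter of the bigon fillings obtained via the upper/lower area comparison. Assembling a genuinely continuous homotopy out of non-canonical geodesic choices with the sharp constant is the main obstacle.

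For (iii), I would argue by covering. Choose a maximal $(l/(6n))$-separated net $\{z_1,\ldots,z_m\}\subset Z$; by maximality the balls $B(z_i,l/(12n))$ are pairwise disjoint and the balls $B(z_i,l/(6n))$ cover $Z$. By (i), with a straightforward adaptation for $z_i$ close to $\partial Z$ using the total area bound $\mathcal{H}^2(Z)\leq Cl^2$ from Theorem~\ref{thmA}, each of the disjoint balls has area bounded below by a constant times $(l/n)^2$, whence $m\leq Mn^2$ for some $M=M(C)$. Set $G_n:=\partial Z\cup\bigcup\gamma_{ij}$, where the union runs over pairs with $d(z_i,z_j)\leq l/(3n)$ and $\gamma_{ij}$ is a chosen geodesic between them; after adding the finitely many intersection points as extra vertices one obtains an embedded finite connected graph containing $\partial Z$. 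Since $Z\cong\bar D$ by Theorem~\ref{thmA}, the Jordan/planar-graph structure forces each component of $Z\setminus G_n$ to be an open topological disc; the density of the net and the triangle inequality give each component diameter at most $l/n$; and the number of components is controlled via Euler's formula by $O(m)=O(n^2)$, yielding the required $M=M(C)$.
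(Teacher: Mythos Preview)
Your argument for (i) is correct and is essentially the paper's proof: extract a Jordan curve from the metric sphere (this needs the topological separation lemmas in Section~\ref{sec:topo}, specifically \cref{corh1}), apply \eqref{Z-hausdorff-isop}, combine with the Eilenberg--Federer coarea inequality (\lref{lem:fed}), and integrate the resulting differential inequality for $\sqrt{A}$.

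Your approach to (ii) has a genuine gap, which you yourself flag: turning a discontinuous geodesic spray into a continuous contraction, and doing so with the exact constant $8C+1$, is not a detail. The bigon/triangulation sketch does not make this rigorous; in a space that is merely a geodesic disc with \eqref{Z-hausdorff-isop} there is no selection principle for geodesics, and the fillings you produce by the isoperimetric inequality are Sobolev discs in $X$, not continuous discs in $Z$. The paper bypasses all of this with a short proof by contradiction (\tref{prop:loccontr}): if $B(z,r)$ were not contractible in $B(z,(8C+1)r)$, one finds a non-contractible closed curve there, shortens it to a Jordan curve $\gamma$ of length $<2r$ via concatenation of geodesic triangles and \lref{lem:subcurves-injective-biLip}, so its Jordan domain $\Omega$ has $\mu_Z(\Omega)<4Cr^2$; but $\Omega$ must contain a point $y$ outside $B(z,(8C+1)r)$, hence $B(y,8Cr)\subset\Omega$, and part (i) forces $\mu_Z(B(y,8Cr))\geq q(\mu)^2\cdot\frac{1}{4C}(8Cr)^2\geq 4Cr^2$, a contradiction. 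No homotopy is ever built by hand.

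Your scheme for (iii) also has a gap at the step ``density of the net and the triangle inequality give each component diameter at most $l/n$''. Density only shows every point of a complementary face $V$ lies within $l/(6n)$ of some vertex of $G_n$, i.e.\ $V$ is contained in a thin neighborhood of $G_n$; it does not prevent $V$ from being a long thin region bounded by many short geodesic edges (and you have no upper bound on vertex degree, hence none on the number of edges, without an \emph{upper} area bound for balls, which $Z$ need not satisfy). The paper instead builds a Voronoi-type partition: it perturbs the distance functions $d(\cdot,p_i)$ so that the equidistant sets have finite $\mathcal H^1$, lets $U_i$ be the resulting cells (each automatically of diameter $\leq 4l/n$ since geodesics to $p_i$ stay in $U_i$), extracts via \lref{lem:finiteset} a minimal finite graph $K_0\cup\partial Z$ separating the $p_i$, and then controls the diameter of each complementary component using \cref{cor:diambound}, which is itself a corollary of the contractibility statement (ii). So in the paper (ii) feeds into (iii), whereas your three parts are logically independent.
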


\tref{thmA} and \cref{cor:neu}  may be considered as    weak analogues of the statement
 that the curvature of a minimal surface is not larger than that of the ambient space. Indeed, on a smooth surface the isoperimetric inequality  is closely related to upper bounds on the curvature and lower bounds on the injectivity radius, a circumstance which will be analyzed in depth in \cite{LW-isop}.
Similarly, lower bounds on the volume of balls are well-known to be related  to the curvature bounds of the manifold, for instance, by the Theorem of   Bishop--Gromov. We will deduce in \cite{LW-isop} from  \cref{cor:neu} that
 $Z$ inherits from $X$  upper curvature bounds in the sense of Alexandrov, cf.  \cite{Petrunin-old}, \cite{Petrunin}

\cref{cor:neu} shows that $Z$ is metrically very similar to the Euclidean  disc.
According to (i) and (ii) of \cref{cor:neu} and  \cite{Bonk-Kleiner}, \cite{Wildrick}, the space
 $Z$ is locally \emph{quasi-symmetric} to the unit disc if and only if areas of  balls in  $Z$
have a quadratic \emph{upper} bound in terms of the radius.  However, this need not be the case in general. Indeed, $Z$ might arise from $\bar D \subset \R^2$  by collapsing a closed ball
$B\subset \bar D$ to a point,   see Example \ref{disc-collapse}. The decomposition result  (iii) is a topological-geometric version of a similar discrete
statement  in groups with quadratic Dehn function proved by Papasoglu in \cite{Papasoglu}. It immediately implies that the set of isometry classes
of spaces $Z$ arising in \tref{thmA} is a relatively compact set with respect to the Gromov-Hausdorff topology, once $C$ and $l_0$ are  fixed and
$\Gamma$ has length at most $l_0$.

 We emphasize that \cref{cor:neu} follows only from the assumptions that $Z$ is a geodesic $2$-dimensional  disc with the isoperimetric property \eqref{Z-hausdorff-isop}.  This observation is used in \cite{LW-ahlfors}  as
the starting point of further investigations  of all such discs $Z$, also shedding new light on the well-investigated theory of Ahlfors $2$-regular discs,  cf. \cite{Bonk-Kleiner}, \cite{Rajala} and the literature therein.

The topological and analytic properties of the map $P:\bar D\to Z$ are summarized in the next theorem.
\begin{thm} \label{thmB}
The canonical projection $P:\bar D\to Z$
is a uniform limit of homeomorphisms $P_i:\bar D\to Z$. Moreover,
\begin{enumerate}
\item  $P\in \Lambda (\partial Z, Z)
 \subset W^{1,2} (D,Z)$.
\item $P:D\to Z$ is contained  in $W^{1,p} _{loc} (D,Z)$  for some  some $p>2$
depending on $C$.
\item $P:D\to Z$ is locally $\alpha$-Hoelder with $\alpha =\frac \pi 4 \cdot \frac  1 {4\pi  C}$.
\item     The equality $\mathcal H^2 ( P(V)) =\Area (P|_V  ) =\Area (u|_V)$  holds for all
open subsets $V\subset D$.
\end{enumerate}
\end{thm}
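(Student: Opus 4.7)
The plan is to promote $P$ itself to a minimal disc into $Z$ and then invoke the regularity results of \cite{LW} in the target space $Z$ in place of $X$. By Theorem~\ref{thmA} the space $Z$ is complete and satisfies the same $(C,l_0)$-isoperimetric inequality as $X$, so this regularity machinery is available. The length identity $\ell(P\circ\gamma)=\ell(u\circ\gamma)$ from Theorem~\ref{prop:first}, combined with the ACL characterization of Reshetnyak--Sobolev maps, forces $P\in W^{1,2}(D,Z)$ with $\md(P)=\md(u)$ almost everywhere, hence $E_+^2(P)=E_+^2(u)$ and $\Area(P)=\Area(u)$. This proves (i) and at once produces the equality $\Area(P|_V)=\Area(u|_V)$ in (iv). Because $\bar u:Z\to X$ is $1$-Lipschitz, every competitor $v\in\Lambda(\partial Z,Z)$ yields $\bar u\circ v\in\Lambda(\Gamma,X)$ of area and energy no larger than those of $v$; the area- and energy-minimality of $u$ then imply that $P$ minimizes area in $\Lambda(\partial Z,Z)$ and minimizes energy among area minimizers. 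Thus $P$ solves the Plateau problem for $(\partial Z,Z)$.

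The Morrey-type energy growth estimate for minimizers established in \cite{LW}, applied to $P$ with isoperimetric constant $C$, yields the local $\alpha$-Hölder continuity (iii) with $\alpha=\tfrac{\pi}{4}\cdot\tfrac{1}{4\pi C}$. A Gehring-type self-improvement of the reverse Hölder inequality satisfied by such minimizers then upgrades this to $P\in W^{1,p}_{\mathrm{loc}}(D,Z)$ for some $p>2$ depending on $C$, proving (ii) and automatically supplying Lusin's property $(N)$ for $P$.

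The two remaining statements -- the $\mathcal H^2$-identity in (iv) and the uniform approximation by homeomorphisms -- will be established in tandem. Via the parametrized area formula $\Area(P|_V)=\int_Z N(P,V,z)\,d\mathcal H^2(z)$, the identity $\mathcal H^2(P(V))=\Area(P|_V)$ reduces to showing that the multiplicity $N(P,V,\cdot)$ equals $1$ almost everywhere on $P(V)$. The key input is that the fibers $P^{-1}(z)$ are cell-like continua in $\bar D$; for subsets of the planar disc, cell-likeness reduces to connectedness together with the non-separation property, and the connectedness has to be teased out from the length-based definition of $d_u$ and the planar topology of $\bar D$. Once this monotone structure is in place, classical results of Moore--Youngs type on monotone decompositions of the $2$-disc produce a sequence of homeomorphisms $P_i:\bar D\to Z$ converging uniformly to $P$, each satisfying $N(P_i,V,\cdot)\leq 1$; passing to the uniform limit transfers the bound to $P$, and the area lower bound from Corollary~\ref{cor:neu} forces the equality $N\equiv 1$ on $P(V)$, completing both (iv) and the uniform-approximation claim. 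The principal obstacle is thus the proof that the fibers of $P$ are cell-like; everything else is a direct application of the regularity theory for minimal discs, applied internally in $Z$.
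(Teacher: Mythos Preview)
Your approach is partially aligned with the paper but takes an unnecessary detour that introduces a genuine gap, and your argument for the multiplicity statement is not correct as written.

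\textbf{The detour and its gap.} You correctly observe that the length identity $\ell_Z(P\circ\gamma)=\ell_X(u\circ\gamma)$ forces $\apmd P_z=\apmd u_z$ almost everywhere (this is exactly the paper's Corollary~\ref{twomaps} and Lemma~\ref{diff-equal}). But having established this, items (ii), (iii) and the second equality in (iv) follow \emph{immediately} by transfer from $u$: since $u\in W^{1,p}_{loc}(D,X)$, the equality of approximate differentials gives $P\in W^{1,p}_{loc}(D,Z)$ at once; and since the H\"older estimate in \cite{LW} is proved by exhibiting curves $\gamma\subset\bar D$ with short $u$-image, the \emph{same} curves have short $P$-image, yielding (iii) directly (this is Lemma~\ref{h-optimal}). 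Your plan to instead promote $P$ to a Plateau solution in $Z$ and re-run the Morrey and Gehring arguments there requires $Z$ to satisfy a $(C,l_0)$-isoperimetric inequality \emph{in the Sobolev sense} used in \cite{LW}. Theorem~\ref{thmA} only asserts the isoperimetric inequality for Jordan domains bounded by Jordan curves; passing from this to the Sobolev filling inequality for arbitrary Lipschitz loops is a nontrivial additional step which you have not supplied. The paper avoids this entirely.

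\textbf{The multiplicity argument.} Your proposed route to $N(P,V,\cdot)=1$ a.e.\ is not valid: the bound $N(P_i,V,\cdot)\leq 1$ for homeomorphisms $P_i$ does \emph{not} pass to uniform limits (a uniform limit of injective maps can have fibers of any cardinality), and it is unclear how the area growth from Corollary~\ref{cor:neu} would repair this. The paper's argument is much simpler and uses only the connectedness of fibers, which you have already flagged: since each $P^{-1}(z)$ is connected, the multiplicity $N$ takes only the values $1$ and $\infty$; since $\int_Z N\,d\mathcal H^2=\Area(P)<\infty$, one concludes $N=1$ almost everywhere. This, together with Lusin's property (N) (which you correctly obtain from $W^{1,p}_{loc}$, $p>2$), gives the first equality in (iv) via the area formula.

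For the headline statement that $P$ is a uniform limit of homeomorphisms, your outline matches the paper's: show fibers are cell-like and invoke Moore's theorem. In the paper the non-separation of fibers is proved via a conformal-reparametrization argument using the area minimality of $u$ (Theorem~\ref{prop:structure-Z}); this is indeed the substantive step.
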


The claim  that $P$ is a uniform limit of homeomorphisms
comes  as close as possible to the statement   that $P:\bar D\to Z$ is a diffeomorphism in the classical smooth case.
Even in  the smooth case, if  branch points are present,    the natural map $P:\bar D\to Z$  is not biLipschitz.
 However, in the smooth case,   branch points are isolated and this  forces the map $P$  to be a homeomorphism. In our general setting, the set of  "branch points" does not have to be discrete. Indeed, the map $u$ (and then also $P$) may send an open subset of $\bar D$ to a single point of $X$ (respectively of $Z$),
as in   Example \ref{disc-collapse} already mentioned above.

Inequality  \eqref{Z-hausdorff-growth} and  the constant $\alpha$ in (iii) of \tref{thmB} are optimal at most up to the factor $(\frac \pi 4 )^2$ and
$\frac \pi 4$, respectively, as the example of a cone over a short circle $\Gamma$ shows, see Example \ref{alpha}.
This factor is related to  the  co-area formula in normed planes. It can be replaced  by $1$ if only Euclidean norms appear as metric differentials of $u$,  for instance, in  spaces with upper or lower  curvature bounds, cf. Subsection \ref{subsec:infinite}.  Another possibility to get an optimal factor is to use instead of $\mathcal H^2$ another definition of area, see Subsection \ref{subsec:genarea}  below.

The next theorem describes the map $\bar u:Z\to X$     as an \emph{almost everywhere  infinitesimal isometry}, thus as an "almost Riemannian immersion".  In particular, $\bar u$ preserves
$\mathcal H^2$ up to multiplicities.

\begin{thm}  \label{thmC}
 Let $\bar u:Z\to X$ be the canonical map of the minimal disc $Z$ to $X$ described in \tref{prop:first}.
There exists a decomposition $Z=S\cup _{1\leq i<\infty} K_i$ with compact $K_i$ and
$\mathcal H^2 (S)=0$ such that the restrictions $\bar u:K_i \to \bar u(K_i)$ of the $1$-Lipschitz map $\bar u$ are  biLipschitz. Moreover,  for any $1\leq i <\infty$ and any $x\in K_i$ we have $$\lim _{y\to x, y\in K_i} \frac {d_Z(x,y) } {d_X(\bar u (x) ,\bar u (y))} =1.$$
The map
$\bar u$ sends $\partial Z$ to $\Gamma$ by an arclength preserving homeomorphism.
\end{thm}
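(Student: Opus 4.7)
The plan is to exploit the factorization $u=\bar u \circ P$ together with the theory of approximate metric derivatives for Sobolev maps into metric spaces. By Kirchheim--Karmanova type approximate differentiability (applied in the form used in \cite{LW}), both $u:D\to X$ and $P:D\to Z$ admit approximate metric derivatives $\apmd u_z$ and $\apmd P_z$ at almost every $z\in D$. A Lusin-type approximation then decomposes $D$, up to a Lebesgue null set, into countably many compact sets $A_i\subset D$ on which $u$ and $P$ are simultaneously biLipschitz onto their images, with biLipschitz constants approaching the respective seminorms $\apmd u_{z_0}$ and $\apmd P_{z_0}$ at every density point $z_0\in A_i$.

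The decisive step is to establish the pointwise identity $\apmd u_z = \apmd P_z$ for almost every $z\in D$. The inequality $\le$ is automatic from the fact that $\bar u$ is $1$-Lipschitz and $u=\bar u\circ P$. For the reverse, I would use \tref{prop:first}: along every Lipschitz curve $\gamma$ in $D$ the lengths $\ell(u\circ\gamma)$ and $\ell(P\circ\gamma)$ coincide. Writing both lengths as integrals of the respective approximate metric derivatives along $\gamma$, and letting $\gamma$ range over a Fubini decomposition of $D$ by parallel segments in a countable dense set of directions $v\in\R^2$, one concludes that $\apmd u_z(v)=\apmd P_z(v)$ for a.e.\ $z$ and each such $v$, whence the two seminorms agree at a.e.\ $z$.

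With this identification in hand, set $K_i := P(A_i)$, after refining so that $P|_{A_i}$ itself is biLipschitz; then each $K_i$ is compact and $\bar u|_{K_i}=u\circ (P|_{A_i})^{-1}$ is biLipschitz. At $x=P(z_0)\in K_i$, both $u$ near $z_0$ and $P$ near $z_0$ are infinitesimally governed by the same seminorm, so
\[
\lim_{y\to x,\ y\in K_i}\frac{d_Z(x,y)}{d_X(\bar u(x),\bar u(y))}=1.
\]
The exceptional set $S:=Z\setminus\bigcup K_i$ is $\mathcal H^2$-null, since by \tref{thmB}(iv),
\[
\mathcal H^2\bigl(P(D\setminus\textstyle\bigcup A_i)\bigr)\le \Area\bigl(P|_{D\setminus\bigcup A_i}\bigr)=\Area\bigl(u|_{D\setminus\bigcup A_i}\bigr)=0,
\]
and the null set $\partial Z$ can be absorbed into $S$ as well.

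Finally, the boundary statement follows because $P:\bar D\to Z$ is a uniform limit of homeomorphisms by \tref{thmB}, so $P|_{\partial D}:\partial D\to\partial Z$ is a weakly monotone continuous surjection. Since the trace of $u$ is a weakly monotone reparametrization of $\Gamma$, the factorization $u=\bar u\circ P$ forces $\bar u|_{\partial Z}:\partial Z\to\Gamma$ to be a continuous bijection between compact sets, hence a homeomorphism; arclength preservation is the length-preservation of \tref{prop:first} applied to a parametrization of $\partial Z$. The main obstacle is the pointwise identification $\apmd u_z = \apmd P_z$: the length-equality from \tref{prop:first} is the essential input, but it must be combined carefully with a Fubini argument over families of parallel segments and with the Lebesgue-point selection used to produce the Lusin pieces $A_i$.
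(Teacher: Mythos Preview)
Your proposal is correct and follows essentially the same route as the paper's proof of \tref{thmC+}: the identity $\apmd u_z=\apmd P_z$ a.e.\ (established via the length equality and a Fubini argument over parallel segments, exactly as in \cref{twomaps} and \lref{diff-equal}) is the key input, after which the Kirchheim--Lusin decomposition of $D$ is pushed forward by $P$ to produce the $K_i$, and the exceptional set is handled via Lusin's property~(N) (equivalently your appeal to \tref{thmB}(iv)). The boundary statement is likewise the content of \lref{lem:bcurve}; your claim that the factorization ``forces $\bar u|_{\partial Z}$ to be a bijection'' is correct but uses implicitly that a $u$-fiber in $S^1$ is an arc on which $u\circ\gamma$, and hence $P\circ\gamma$, has zero length, so $P$ is constant there.
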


If  $\Gamma$ is a \emph{chord-arc  curve}, thus biLipschitz  equivalent  to $S^1$, then one can control the regularity of $u$, $P$  and $Z$ uniformly up to the boundary, as is often the case in the investigation of the Plateau problem:

\begin{thm}  \label{arc-chord}
Assume in addition that $\Gamma$ is a chord-arc curve. Then $P\in W^{1,p} (D,Z)$
 for some $p >2$. In particular, $P:\bar D\to Z$ is globally $(1-\frac 2 p)$-Hoelder continuous.
Moreover, there exists  $\delta >0$
such that   for all
$z\in Z$ and all $0\leq r < \delta$ we have
$\mathcal H^2 (B(z,r)) \geq  \delta  \cdot r ^2$.
\end{thm}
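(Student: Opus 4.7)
\emph{Plan.} I treat the three claims in order: the global higher integrability first (which then implies the H\"older bound), and the boundary area estimate last.

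\emph{Global $W^{1,p}$ regularity.} The interior statement $P\in W^{1,p}_{loc}(D,Z)$ of \tref{thmB}(ii) arises from a reverse-H\"older (Caccioppoli-type) inequality for $|\nabla u|^2$ on balls compactly contained in $D$, followed by Gehring's lemma. My plan is to extend this reverse-H\"older estimate to half-balls centered at points of $\partial D$. The chord-arc hypothesis provides a biLipschitz parametrization of $\Gamma$, which allows the construction of a competitor matching the prescribed trace: on a half-ball $B(x_0,r)\cap\bar D$ with $x_0\in\partial D$, one replaces $u$ on the interior circular arc $D\cap\partial B(x_0,r)$ by a short curve obtained from the chord-arc structure, then fills the resulting short Jordan loop by a disc of area $\leq Cl^2$ via the $(C,l_0)$-isoperimetric inequality in $X$. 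Energy-minimality of $u$ among area minimizers then yields a boundary Caccioppoli-type inequality, and combining this with the interior case gives a reverse-H\"older inequality globally on $D$. Gehring's lemma produces some $p>2$ with $|\nabla u|\in L^p(D)$. Since $u=\bar u\circ P$ with $\bar u$ length-preserving on curves of the form $P\circ\gamma$ (\tref{prop:first}) and the areas of $u|_V$ and $P|_V$ agree for all open $V\subset D$ (\tref{thmB}(iv)), the energy density of $P$ is dominated by that of $u$, yielding $P\in W^{1,p}(D,Z)$.

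\emph{Global H\"older continuity and boundary area bound.} Global $(1-\frac{2}{p})$-H\"older continuity of $P$ on $\bar D$ follows from the Morrey-type Sobolev embedding $W^{1,p}(D,Z)\hookrightarrow C^{0,1-\frac 2p}(\bar D,Z)$ for metric-valued Sobolev maps. For the area lower bound, fix small $r>0$ and $z\in Z$. If $d(z,\partial Z)\geq r$, the generalized form of \cref{cor:neu}(i) discussed in Section~\ref{sec:inner-structure-mindiscs} applies directly. Otherwise $B(z,r)$ meets $\partial Z$, and I use a differential inequality: set $f(t):=\mathcal H^2(B(z,t))$; the coarea inequality for $\dist(z,\cdot)$ gives $f'(t)\geq \mathcal H^1(\partial B(z,t)\cap\mathrm{int}(Z))$ for a.e.\ $t\leq r$. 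The relative topological boundary of $B(z,t)$, together with the arc $\beta_t:=B(z,t)\cap\partial Z$, forms a Jordan loop enclosing $B(z,t)$. Since $\bar u|_{\partial Z}$ is arclength preserving onto the chord-arc curve $\Gamma$ by \tref{thmC} and $\bar u$ is $1$-Lipschitz, the intrinsic arclength metric on $\partial Z$ is comparable to $d_Z$, so $\mathcal H^1(\beta_t)\leq K\cdot t$ for a uniform $K$ and all sufficiently small $t$. The isoperimetric inequality of \tref{thmA} then yields $f(t)\leq C\bigl(f'(t)+Kt\bigr)^2$, and a standard ODE comparison argument (using $f(0)=0$) produces $f(r)\geq \delta\cdot r^2$ for some uniform $\delta>0$ and all $r\leq\delta$.

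\emph{Main obstacle.} The hardest step is the boundary reverse-H\"older inequality in Part~1. In the interior one constructs competitors by filling short closed curves via the isoperimetric inequality in $X$; near $\partial D$ the competitor must simultaneously respect a prescribed non-trivial trace, and this is precisely where the chord-arc biLipschitz parametrization of $\Gamma$ is used essentially, to build a low-area extension whose boundary values agree with $u|_{\partial D}$ on the relevant arc.
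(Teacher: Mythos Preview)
Your treatment of the $W^{1,p}$ regularity is essentially correct: you are sketching the boundary reverse-H\"older argument that the paper imports as a black box from \cite{LW-harmonic}, after which the paper simply transfers the conclusion from $u$ to $P$ via the equality $\apmd P_z=\apmd u_z$ a.e.\ (\lref{diff-equal}). One small correction: it is not that the energy density of $P$ is \emph{dominated} by that of $u$; the length identity $\ell_Z(P\circ\gamma)=\ell_X(u\circ\gamma)$ from \pref{prop:first+} together with \cref{twomaps} gives that the approximate metric differentials agree almost everywhere, so the generalized gradients coincide and $P\in W^{1,p}(D,Z)$ follows immediately.

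The area-growth argument, however, has a genuine gap at the ODE step. Your differential inequality is
\[
 f(t)\;\leq\;C\bigl(f'(t)+Kt\bigr)^2,
\]
with the additive $Kt$ coming from your bound $\mathcal H^1(\beta_t)\leq Kt$ on the boundary arc. This inequality does \emph{not} force quadratic growth: the function $f(t)=t^3$ satisfies it for small $t$ (since $C(3t^2+Kt)^2\approx CK^2t^2\gg t^3$), yet $f(t)/t^2\to 0$. Positivity of $f$ from \lref{lem:Haus-positive} does not rescue this. The paper avoids the problem by bounding the closing arc \emph{multiplicatively} in the length of the separating set rather than by $Kt$: after extracting from the distance sphere $S_r$ a minimal separator $\gamma$ via \cref{corh2} (which also handles your unproven claim that the ball boundary is a Jordan loop), if $\gamma$ is an arc with endpoints $a,b\in\partial Z$ then the chord-arc property of $\partial Z$ gives that the short boundary subarc has length $\leq L\cdot d_Z(a,b)\leq L\cdot\ell_Z(\gamma)$. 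Hence the closed Jordan curve has length $\leq(1+L)\,\mathcal H^1(S_r)$, and the isoperimetric inequality yields
\[
 b(r)\;\leq\;C(1+L)^2\cdot\mathcal H^1(S_r)^2,
\]
with no additive term. This produces the clean inequality $F\leq k\,(F')^2$ (with $F$ the co-area primitive), which integrates to $F(r)\geq c\,r^2$ as in \tref{prop:area-growth-Z}. The essential point you are missing is that the chord-arc hypothesis should be applied to the \emph{endpoints of the separating arc} (whose $Z$-distance is at most its length), not to the diameter of $B(z,t)\cap\partial Z$.
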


The exponent $p$ and  the  non-collapsing number $\delta $ are bounded in terms of $C,l_0$ and the biLipschitz constant of $\Gamma$.

\subsection{Area minimizers for different areas} \label{subsec:genarea}
 There are several natural ways of measuring the area of $2$-rectifiable subsets  in general metric spaces, beyond the Hausdorff area used in the results above.
Any  choice of a
definition  of area $\mu$ in the sense of convex geometry (see Section \ref{sec:prelim}, \cite{LW-energy-area}, \cite{AlvT04} and the literature therein) provides a natural way to assign the $\mu$-area
 $\Area _{\mu} (u)$ to any Sobolev disc $u:D\to X$.
Among such definitions of area the most important ones are the (Busemann)-Hausdorff area $\mathcal H^2$, the
Holmes-Thompson area $\mu^{ht}$, the
(Benson)-Gromov $m^{\ast}$-measure and (Ivanov's) inscribed Riemannian area $\mu^i$.  Due to \cite{LW}, for any quasi-convex definition of area $\mu$ (for instance, for the four examples above)
 one can find a minimizer of the   $\mu$-area in any non-empty set $\Lambda (\Gamma, X)$, whenever $\Gamma$ is a Jordan curve in a proper metric
space $X$. As in the case of the Hausdorff measure,  we can find  a map $u$ with minimal Reshetnyak energy  $E_+^2(u)$  among all such minimizers. We call such a map $u\in \Lambda (\Gamma, X)$
 a \emph{$\mu$-minimal map}.
 Quasi-convexity of $\mu$ is essential for the existence of $\mu$-minimal maps, but does not play a role in
 the regularity questions discussed in \cite{LW} and here.

If $X$ admits a $(C,l_0)$-isoperimetric inequality for the definition of area  $\mu$ (cf. Section \ref{sec:quadr} and \cite{LW}),
and if $u\in \Lambda (\Gamma ,X)$ is $\mu$-minimal,   then again $u$ has
a  continuous representative  $u:\bar D\to X$.  All results above apply to this more general setting.  We just need to mention that the constructed intrinsic minimal disc $Z=Z_u$ is a countably $2$-rectifiable set. The only difference from the special case of the Hausdorff area discussed above is that the constant $\frac \pi 4$ appearing
in \eqref{Z-hausdorff-growth} and in (iii) of \tref{thmB},  is replaced by a constant
$q(\mu) \in [\frac 1 2, 1]$  depending on the definition of area $\mu$.
This constant  $q(\mu )$ is maximal  $q(\mu^i )=1$ for the inscribed Riemannian area $\mu ^i$, making the corresponding statements of
\tref{thmA} and \tref{thmB} optimal in this case.

All definitions of area  coincide for all Sobolev maps with values in $X$  if   the space $X$ has the so-called  property (ET) discussed in \cite{LW}.  This is the case for
many geometrically significant spaces such as spaces with one-sided curvature bounds in the sense of Alexandrov,
sub-Riemannian manifolds or infinitesimally Hilbertian spaces with lower Ricci curvature bounds.
Thus if $X$ satisfies the property (ET), one can replace  the factor $\frac \pi 4$ appearing in    \eqref{Z-hausdorff-growth} and in (iii) of \tref{thmB} by
$1=q(\mu^i )$.

Moreover, if $X$ satisfies  property (ET) then any minimal disc $u\in \Lambda (\Gamma ,X)$ is conformal and   the map
$P\in W^{1,2} (D,Z)$ is conformal as well. Without  property (ET) the map $P$ is  $\sqrt 2$-quasiconformal and the constant $\sqrt 2$ is optimal.
We emphasize, that the quasiconformality  and conformality is understood here in the infinitesimal almost everywhere sense (Subsection \ref{subsec:infinite} and \cite{LW}) and does not imply that $P$ is a homeomorphism.

\subsection{Absolute minimal fillings}
Our results apply to the problem of finding  a  disc  realizing the infimum of Gromov's  restricted minimal filling area problem, cf. \cite{Iva08}.  Let $\mu$ be a definition of area. Let $(\Gamma,d_0)$ be a metric space biLipschitz equivalent to the unit circle $S^1$.
The restricted filling $\mu$-area of $\Gamma$ is defined as
$$m_{\mu} (\Gamma ):=\inf \{\mu ( M) \},$$
 where $M$ runs over all smooth Finsler metrics on the disc $\bar D$ such that for the induced distance function on $\partial M$ one has a $1$-Lipschitz homeomorphism
$\partial M\to \Gamma$.
Using the solution of the absolute Plateau problem in \cite{LW} and the results in the present paper we  get:

\begin{thm}  \label{fillingareathm}
Let $(\Gamma, d_0)$ be a biLipschitz circle and  let $\mu$ be a quasi-convex definition of area. Then   the restricted filling area $m_{\mu} (\Gamma )$ is equal to the  Sobolev filling area   defined as
 $$m_{\mu,Sob} (\Gamma)=\inf \{Area _{\mu} (u)\; : \;Y \, \text{complete}, \;
\Gamma \subset  Y \; ,\;  u\in \Lambda (\Gamma,Y)\}.$$
There exists a
compact, geodesic, countably $2$-rectifiable  metric space $Z$ homeomorphic to $\bar D$  such that
$\mu (Z) =m_{\mu} (\Gamma)$ and there exists a map $P:\bar D\to Z$ such that the conclusions of \tref{thmA}, \tref{thmB} and \tref{arc-chord}  hold true with $C=\frac 1 {2\pi}$, $l_0=\infty$,
and with the constant $\frac \pi 4$ replaced by $q(\mu )$.
 Moreover, there exists a $1$-Lipschitz arclength preserving homeomorphism $(\partial Z,d_Z)\to (\Gamma ,d_0)$.
\end{thm}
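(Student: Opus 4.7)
The plan is to take for $u$ an absolute $\mu$-minimizer provided by the absolute Plateau problem in \cite{LW} and to feed $u$ into the structural machinery of the present paper. By \cite{LW} there exist a complete metric space $Y$ containing $(\Gamma, d_0)$ isometrically and a map $u \in \Lambda(\Gamma, Y)$ realizing $m_{\mu, Sob}(\Gamma)$, chosen in addition to have minimal Reshetnyak energy among all such $\mu$-minimizers. The crucial extra input, to be extracted from the absolute minimality, is that $Y$ admits the sharp Euclidean-type isoperimetric inequality for $\mu$-area with constants $C = \frac{1}{2\pi}$ and $l_0 = \infty$: if some Lipschitz loop in $Y$ of length $\ell$ bounded only $\mu$-discs of area exceeding $\frac{\ell^2}{2\pi}$, one could glue a Euclidean-type filling of that loop onto $Y$ and strictly reduce $\Area_\mu(u)$, contradicting absolute minimality.

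Granted this isoperimetric inequality, I would then apply \tref{thmA}, \tref{thmB}, \tref{thmC} and \tref{arc-chord} to $u \colon \bar D \to Y$ in their general $\mu$-formulation from \S\ref{subsec:genarea}. This produces the compact, geodesic, countably $2$-rectifiable disc $Z = Z_u$, homeomorphic to $\bar D$, the factorization $u = \bar u \circ P$ with $P \colon \bar D \to Z$ a uniform limit of homeomorphisms and $\bar u \colon Z \to Y$ a $1$-Lipschitz length-preserving map, and all the asserted regularity and isoperimetric conclusions with $C = \frac{1}{2\pi}$, $l_0 = \infty$, and the constant $\frac{\pi}{4}$ replaced throughout by $q(\mu)$.

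To identify $Z$ as a minimal filling, I would combine \tref{thmB}(iv) in its $\mu$-variant with the infinitesimal isometry of $\bar u$ from \tref{thmC} to conclude
\[
\mu(Z) = \Area_\mu(P) = \Area_\mu(u) = m_{\mu, Sob}(\Gamma).
\]
The inequality $m_{\mu, Sob}(\Gamma) \leq m_\mu(\Gamma)$ is immediate, since every smooth Finsler filling can be regarded as a Sobolev filling of the same $\mu$-area. For the reverse $m_\mu(\Gamma) \leq \mu(Z)$, I would smooth $Z$ into a sequence of Finsler discs on $\bar D$: since $Z$ is $2$-rectifiable, geodesic, has controlled local area and linear local contractibility, and admits the graph decomposition from \cref{cor:neu} (in its $\mu$-version), a cell-by-cell Finsler approximation on those pieces yields smooth Finsler discs $M_n$ whose $\mu$-areas converge to $\mu(Z)$ and whose boundary distances converge, via $\bar u|_{\partial Z}$, to $d_0$ in a $1$-Lipschitz manner. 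The final assertion of the theorem is then supplied directly by \tref{thmC}: $\bar u|_{\partial Z}$ is a $1$-Lipschitz, arclength-preserving homeomorphism onto $\Gamma$.

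I expect the main obstacle to be the first step, namely the passage from absolute $\mu$-minimality of $u$ to the sharp Euclidean isoperimetric inequality for $\mu$-area on $Y$ with the specific constant $\frac{1}{2\pi}$: this requires a careful surgery in which a near-optimal filling of an arbitrary loop is spliced into $Y$ in a way compatible with both the Sobolev trace and the $\mu$-area. The Finsler-smoothing step underlying $m_\mu(\Gamma) \leq \mu(Z)$ is of a similar flavour and equally delicate; together, these two cut-and-paste arguments carry the real weight of the proof, the rest being a direct application of Theorems~\ref{thmA}--\ref{arc-chord} already established.
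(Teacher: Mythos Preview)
Your overall architecture is right---use an absolute $\mu$-minimizer $u\in\Lambda(\Gamma,Y)$, apply the structural machinery of the paper, and compare with the Finsler definition---but both of the steps you flag as ``delicate'' are in fact genuine gaps, and the paper handles them by completely different (and much simpler) means.

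\textbf{The isoperimetric inequality on $Y$.} Your proposed surgery does not work as stated: if a Lipschitz loop $\gamma$ in $Y$ has no small $\mu$-filling, gluing an external disc along $\gamma$ produces a new space $Y'$ in which $u$ still has exactly the same $\mu$-area---you only gain something if $\gamma$ coincides with a restriction $u|_T$ for some Jordan curve $T\subset\bar D$, and even then the trace issues are serious (this is precisely the content of Section~\ref{sec:quadr}). The paper sidesteps all of this by \emph{choosing} $Y$ to be the injective hull of $\Gamma$. Injective metric spaces are known to satisfy the $(\frac{1}{2\pi},\infty)$-isoperimetric inequality for every definition of area (\cite{LW}, Lemma~10.3), and by \cite{LW}, Corollary~10.4, the infimum $m_{\mu,Sob}(\Gamma)$ is already attained by a $\mu$-minimal map $u\in\Lambda(\Gamma,Y)$ in this particular $Y$. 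So the isoperimetric inequality is an input, not a consequence of minimality.

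\textbf{The inequality $m_\mu(\Gamma)\le m_{\mu,Sob}(\Gamma)$.} Your Finsler-smoothing of $Z$ via the graph decomposition is far more than what is needed, and it is not clear how to carry it out with $\mu$-area control. The paper instead invokes Ivanov's result \cite{Iva08} that $m_\mu(\Gamma)$ equals the infimum of $\mu$-areas of \emph{Lipschitz} discs $v:\bar D\to G$ with biLipschitz boundary parametrization, and then proves a short approximation lemma (\lref{lem:injective}): in an injective target, any Sobolev disc with Lipschitz trace can be approximated in $\mu$-area by Lipschitz discs with the same trace. Combined with \eqref{eq:liptr} this yields $m_\mu(\Gamma)\le m_{\mu,Sob}(\Gamma)$ in two lines. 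The remaining claims about $Z$, $P$ and $\bar u|_{\partial Z}$ then follow exactly as you say, from Theorems~\ref{thmA+}, \ref{thmB+}, \ref{thmC+} and \ref{arc-chord+}.
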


\subsection{A useful technical result}
We  mention a technical achievement of the paper.
The \emph{geometrically obvious} fact that the restriction of a minimal  disc to an open subdisc is again a minimal disc is indeed non-trivial due to two problems: the boundary of the subdisc might be wild, and even if it is smooth,
the restriction of $u$ to this boundary might be very far from being Lipschitz continuous. Both problems are solved  in  Section \ref{sec:quadr}.
The main implication for the present paper is the following seemingly obvious but technically non-trivial statement.

\begin{prop} \label{newprop}
Let  $X$ admit a $(C,l_0)$-isoperimetric inequality for the definition of area $\mu$. Let
  $u:\bar D\to X$ be a  solution of the Plateau problem as above. Let $T\subset   \bar D$ be a Jordan curve with Jordan domain $O$.
  If the curve $u|_T$  has finite  length $l\leq l_0$  then
$\Area _{\mu} (u|_O) \leq C l^2$.
\end{prop}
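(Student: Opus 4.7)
The plan is to construct a competitor $\tilde u \in \Lambda(\Gamma, X)$ that agrees with $u$ outside $O$ but replaces $u|_O$ by a filling of the boundary curve $u|_T$ of $\mu$-area at most $Cl^2$, and then conclude from the area-minimality of $u$. The filling is supplied directly by the $(C,l_0)$-isoperimetric inequality applied to $u|_T$, whose length is at most $l_0$ by hypothesis.

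More concretely, fix a homeomorphism $\psi : \bar D \to \bar O$ (produced, e.g., from the Riemann map and Carath\'eodory's extension for the Jordan domain $O$), and consider the continuous closed curve $\gamma := u \circ \psi|_{\partial D}$, a reparametrization of $u|_T$ of the same length $l \leq l_0$. The isoperimetric assumption provides $v \in W^{1,2}(D,X)$ whose trace is a weakly monotone reparametrization of $\gamma$ and which satisfies $\Area_\mu(v) \leq Cl^2$. After possibly precomposing $v$ with a boundary-compatible self-homeomorphism of $D$ so that its trace matches $\gamma$ in the required sense, define
\[
\tilde u(z) := \begin{cases} v \circ \psi^{-1}(z), & z \in \bar O, \\ u(z), & z \in \bar D \setminus O. \end{cases}
\]
Granting that $\tilde u \in \Lambda(\Gamma, X)$ and that $\Area_\mu(\tilde u) = \Area_\mu(u|_{\bar D \setminus O}) + \Area_\mu(v)$, area-minimality of $u$ yields
\[
\Area_\mu(u|_O) = \Area_\mu(u) - \Area_\mu(u|_{\bar D \setminus O}) \leq \Area_\mu(v) \leq Cl^2,
\]
which is the claim.

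The main obstacle is exactly the verification of these ``grants'', and it is precisely the content that Section \ref{sec:quadr} is designed to supply. Since $T$ is only a Jordan curve, possibly wild in $\bar D$, and $u|_T$ need not be Lipschitz continuous, neither the restriction of $u$ to $O$ nor the Sobolev gluing of $u|_{\bar D \setminus O}$ with $v \circ \psi^{-1}$ along $T$ is covered by classical trace and gluing theory. One must establish separately (i) that $u|_O$, viewed through $\psi$, lies in $W^{1,2}(D,X)$ with boundary trace a weakly monotone reparametrization of $u|_T$; (ii) that the filling $v$ can be chosen so that its trace is compatible with $u|_T$ via $\psi$ in the sense needed for gluing; and (iii) that the $\mu$-areas of $u|_{\bar D \setminus O}$ and of $v$ add without any pathological contribution concentrated on $T$, using that $T$ has planar Lebesgue measure zero and that $\mu$-area behaves well under such decompositions. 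Once (i)--(iii) are available, the short minimality computation above finishes the proof.
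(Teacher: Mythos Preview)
Your overall strategy --- replace $u|_O$ by a small-area filling of $u|_T$ and invoke minimality --- is the paper's as well, and you correctly locate the hard work in your points (i)--(iii). But the concrete mechanism you describe for producing the filling $v$ contains a real gap. The $(C,l_0)$-isoperimetric inequality applies only to \emph{Lipschitz} curves, whereas $\gamma=u\circ\psi|_{S^1}$ is merely continuous and rectifiable. Your proposed fix, to ``precompose $v$ with a boundary-compatible self-homeomorphism of $D$ so that its trace matches $\gamma$'', does not work: such a reparametrization would have to undo the possibly highly irregular length function of $\gamma$, and this in general destroys the $W^{1,2}$ regularity of $v$ (recall from Subsection~\ref{subsec:filcurve} that even in $\R^2$ there are absolutely continuous curves that are not the trace of any Sobolev disc). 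This is precisely the difficulty isolated in Theorem~\ref{thmfil} and Lemma~\ref{lem:independent}: one does not reparametrize, one constructs for every $\epsilon>0$ a Sobolev \emph{annulus} of $\mu$-area $<\epsilon$ connecting $\gamma$ to its arclength parametrization $\gamma_0$, and glues this annulus to a small filling of the Lipschitz curve $\gamma_0$.

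The paper also organizes the gluing differently from your direct splice across the wild curve $T\subset\bar D$. It pulls back via a conformal $F:D\to O$ and shows (Corollary~\ref{cor:minmin}, using the generalized equal-traces gluing of Lemma~\ref{lem:bestgluing} on the irregular domain $O$ together with conformal invariance in Lemmas~\ref{lem:l2enough}--\ref{lem:pullback}) that $u\circ F\in N^{1,2}(D,X)$ is already $\mu$-area minimizing among all maps with its own trace. Hence $\Area_\mu(u|_O)=\Fill_{X,\mu}(\gamma)$ directly, and Lemma~\ref{lem:independent} gives $\Fill_{X,\mu}(\gamma)=\Fill_{X,\mu}(\gamma_0)\le Cl^2$. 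The competitor thus lives on the standard disc after uniformization; your items (i) and (iii) are absorbed into Corollary~\ref{cor:minmin}, and item (ii) is exactly Lemma~\ref{lem:independent}, not a boundary reparametrization.
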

If the boundary curve $T$ and the restriction of $u$ to $T$ are sufficiently regular the proof of \pref{newprop} is simple \cite{LW}, Lemma 8.6.

\subsection{Possible variations of the construction} \label{subsec:subtle}
It is possible to define the intrinsic metric structure  for a minimal disc $u:\bar D\to X$  in a slightly different way. Namely, we can  restrict the set of
curves $\gamma$ in the definition of the pseudo-metric $d_u$ to any of the following families of curves: rectifiable, piecewise biLipschitz, or  piecewise smooth curves in $\bar D$.
 Unlike the smooth situation, the arising pseudo-metric
and thus the associated metric space $Z$ may depend on the choice of the family, even if $u$ is Lipschitz continuous,  cf. \cite{Pet-intrinsic}, Section 4 for related discussions.
  However, for any of these choices of the family of curves and the corresponding  associated metric space $Z$,  all the theorems stated above remain valid. The last statement of \tref{prop:first} is then only true for curves $\gamma$ in the corresponding family.  The proofs remain the same, see Section~\ref{subsec:changes-different-curve-families} for some remarks.

It is possible to view the space $Z$ (and  the variants of $Z$ constructed via  different families of curves as above) from another classical perspective. The map $u$ induces an $L^2$-field  $z\mapsto \apmd u_z$ of seminorms on $D$, the analogue of the
pull-back of the Riemannian metric, see Subsection \ref{subsec:apmd}. The length of \emph{almost} any curve $\gamma \subset D$ with respect to the pseudo-distance $d_u$
can be computed by the same
 formula as in  Finsler geometry, using this measurable field of seminorms.  Thus, the space $Z$ is \emph{almost} defined by the  approximate metric differentials of $u$.

\subsection{Structure of the paper}
  Sections \ref{sec:prelim}, \ref{sec:Sob},  \ref{sec:topo} and \ref{sec:quadr}, consist of  preliminaries and preparations. A reader familiar with the subject
may skip these sections.
In Section \ref{sec:prelim} we collect preliminaries from metric geometry, including  definitions of area, as well as area and co-area formulas for rectifiable sets. In Section \ref{sec:Sob} we collect some basics about  Sobolev maps from domains in $\R^2$ to metric spaces. In    Section \ref{sec:topo} we recall several statements from classical two-dimensional topology  related
to the Jordan curve theorem. In Section \ref{sec:quadr} we deal with fillings of badly parametrized curves and gluings of Sobolev maps on non-regular domains,
preparing the proof of \pref{newprop}.

 In Section \ref{sec:Plateau} we recall the existence and regularity results for minimal discs from \cite{LW} and prove  \pref{newprop}.  We slightly reformulate the regularity statements from \cite{LW}, emphasizing that the Hoelder continuity is controlled via an estimate of the lengths of some image curves, thus giving us  control over the intrinsic structure of the minimal disc.
In Section \ref{sec:metric} we fix a minimal disc $u:\bar D\to X$, associate to $u$ a metric space
$Z$ as in Subsection  \ref{subsec:second} above and prove \tref{prop:first}.
  We  observe that the approximate metric differentials of $u$ and $P$ coincide at almost all points of $D$. In particular, this shows that $P$ is as regular as $u$.
This includes all  statements of \tref{thmB} except the first and main one that $P$ is a uniform limit of homeomorphisms. Moreover, it implies that $Z$ is   a countably $2$-rectifiable space.  What remains to be controlled  are the  topological and isoperimetric properties of $Z$.
  In Section \ref{sec:inner-structure-mindiscs} we prove the topological and isoperimetric properties of  the space $Z$ stated in \tref{thmA} and \cref{cor:neu}, using  classical results from $2$-dimensional topology.
 In Section \ref{sec:harvest} we collect everything proven so far and finish the proof of the  main results.
 In   Section \ref{sec:absolute} we discuss the absolute filling problem and prove \tref{fillingareathm}.
In the last section  we collect examples mentioned above and some natural questions about the structure  of minimal discs.

\vspace*{1ex}

{\bf Acknowledgements:} We would like to thank Robert Young for a discussion on \cite{Papasoglu}  which
inspired (iii) of \cref{cor:neu} above.  We thank Heiko von der Mosel, Anton Petrunin and Stephan Stadler for helpful comments and discussions.

\section{Preliminaries}\label{sec:prelim}

\subsection{Basic notation}
The following notation will be used throughout the paper.
The Euclidean norm of a vector $v\in\R^n$ is denoted by $|v|$.  We denote the open unit disc in $\R^2$ by $D$.  A \emph{domain} will always mean an open, bounded, connected subset of $\R^2$.

 Metric spaces appearing in this paper will be assumed complete.
A metric space is called proper if its closed bounded subsets are compact. We will denote distances in a metric space $X$ by $d$ or $d_X$.
Let $X=(X,d)$ be a metric space. The open ball  in $X$ of radius $r$ and center $x_0\in X$ is denoted by
$$B(x_0,r)=B_X(x_0,r) = \{x\in X: d(x_0, x)<r\}. $$
%
A Jordan curve in $X$ is a subset $\Gamma\subset X$ which is homeomorphic to $S^1$. Given a Jordan curve $\Gamma\subset X$, a continuous map $c\colon S^1\to X$ is called a weakly monotone parametrization of $\Gamma$ if $c$ is the uniform limit of homeomorphisms $c_i\colon S^1\to\Gamma$.
For $m\geq 0$, the $m$-dimensional Hausdorff measure on $X$ is denoted by $\hm^m=\hm^m_X$. The normalizing constant is chosen in such a way that on Euclidean $\R^m$ the Hausdorff measure $\hm^m$ equals the Lebesgue measure $\mathcal L^m$.
By $\mathfrak S_2$ we denote the proper metric space of seminorms on $\R^2$ with the distance given by
$d_{\mathfrak S_2} (s,s')= \max _{v\in S^1} \{|s(v)-s'(v)| \}$.

\subsection{Rectifiable  curves}
Let $X=(X,d)$ be a metric space.
The \emph{length} of a (continuous)  curve $c\colon I\to X$, defined on an interval $I\subset\R$, is given by
\begin{equation} \label{eq:rect}
 \length_X(c):= \sup\left\{\sum _{i=1} ^{k+1} d(c(t_i), c(t_{i+1})): t_i\in I, t_1<\dots<t_{k+1}\right\}.
\end{equation}
The definition  extends to continuous curves defined on $S^1$.
 A continuous curve of finite length is called \emph{rectifiable}.

If $c\colon I=[a,b]\to X$ is a rectifiable curve of length $l$ then the length function of
$c$ is the continuous monotone map $s:I=[a,b]\to [0,l]$ given by $s(t)=\length (c|_{[a,t]})$.  The curve $c$ is parametrized by arclength if $s:I\to [0,l]$ is an isometry.
 The curve $c$ has the form $c_0\circ s$, where $c_0:[0,l] \to X$ is the arclength parametrization of $c$.

A \emph{geodesic} is an isometric embedding on an interval.  A space $X$ is called \emph{a geodesic space} if any pair of points in $X$ is connected by a geodesic.  A space $X$ is a \emph{length space}
if for all $x,y\in X$ the distance $d(x,y)$ coincides with $\inf \{\ell_X (c) \}$, where
$c$ runs over the set of all curves connecting $x$ and $y$.  A proper length space is a geodesic space by the theorem of Hopf-Rinow.

 A rectifiable curve $c$
is called \emph{absolutely continuous} if it sends subsets of $\mathcal H^1$-measure $0$ in $\R$
 to subsets of $\mathcal H^1$-measure $0$ in $X$.  Equivalently, the length function
$s$ of $c$ is contained in the Sobolev space $W^{1,1} (I)$.  In this case we have
$\ell(c)= \int _I s'(t) dt$. Moreover,  for almost all
$t\in I$, the value $s'(t)$ is the \emph{metric differential} of $c$ at $t$, thus
\begin{equation} \label{eq:length}
s'(t) =\lim _{\epsilon \to 0} \frac {d(c(t), c(t+\epsilon))}{|\epsilon |}.
\end{equation}
For a   Borel function  $f:c\to [0,\infty]$ we set as usual
$$\int _c f:=  \int _0 ^l f(c_0 (t)) dt.$$

A rectifiable curve $c:I\to X$ with length parametrization $s:I\to \R$ is in the Sobolev space $W^{1,p} (I,X), 1\leq p <\infty$, if and only if $s$ is in the classical space $ W^{1,p} (I,\R)$. A concatenation
of Sobolev curves is a Sobolev curve.  If $T$ is a metric space homeomorphic to an interval or a circle
and $u:T\to X$ is a continuous map, we can unambiguously talk about the length  of the curve $u(T)$,
since it does not depend on the special  parametrization of $T$ by an interval.
If  $T$ is a metric space  biLipschitz equivalent to
an interval or a circle and $u:T\to X$ a continuous map to a metric space $X$ we say that
$u$ is in the Sobolev class $W^{1,2} (T,X)$ if for the arclength parametrization  $c :I\to T$ of $T$ we have $u\circ c \in W^{1,2} (I,X)$.

A continuous curve $c\colon I\to X$ is called a piecewise biLipschitz curve if there exists a partition of $I$ into a finite number of subintervals such that the restriction of $c$ to each subinterval is a biLipschitz map.

A Jordan curve $\Gamma$ is a \emph{chord-arc} curve if $\Gamma$ is biLipschitz homeomorphic to $S^1$.
Due to \cite{Tuk80} a (connected, bounded) domain $\Omega \subset \R^2$ is a \emph{Lipschitz domain} if and only if
$\Omega$ is a bounded component of $R^2 \setminus T$, where $T \subset R^2$ is a finite union of pairwise disjoint chord-arc curves.


\subsection{Definitions of  area}\label{sec:def-vol-normed}
While there is an essentially unique natural way to measure areas of Riemannian surfaces, there are many  different ways to measure areas of Finsler surfaces,
some of them more appropriate for different questions.  We refer the reader to
\cite{Iva08}, \cite{Ber14}, \cite{LW-energy-area}, \cite{AlvT04} and the literature therein for  more information.

A definition of area $\mu$ assigns a multiple $\mu _V$ of $\mathcal H^2$ on any $2$-dimensional normed space $V$, such that natural monotonicity assumptions are fulfilled, cf. \cite{AlvT04}.  In particular, it assigns the number
$\J^{\mu} (s)$, \emph{the $\mu$-Jacobian}, to any seminorm $s$ on $\R^2$ in the following way. By definition,  $\J ^{\mu} (s)=0$ if the seminorm is not a norm. If $s$ is a norm then  $\J ^{\mu} (s)$ equals the $\mu _{(\R^2,s)}$-area  of the unit   Euclidean square  in  $ \R^2$. Indeed, a choice of a definition of area $\mu$ is equivalent to a choice of a \emph{Jacobian} $\J ^{\mu} :\mathfrak S_2 \to [0,\infty)$ which satisfies natural transformation and monotonicity conditions, cf. \cite{LW-energy-area}, Section 2.3.

Any two definitions of area  differ at most by a factor of $ 2$.   The largest definition of area is the inscribed Riemannian definition of area $\mu ^i$, introduced in  \cite{Iva08}. Other  prominent examples are the Busemann definition
$ \mathcal H^2$,  the Holmes-Thompson definition $\mu ^{ht}$,   Gromov's $mass^*$-definition $m^{\ast}$. We refer to \cite{AlvT04}, \cite{Iva08}
 for  a thorough discussion of these examples and of the whole subject and to \cite{LW-energy-area} for a detailed description of the
 corresponding Jacobians.

For a definition of area $\mu$,
the number $q(\mu)\in [\frac 1 2, 1]$ appearing in the main theorems of this paper, cf. Subsection \ref{subsec:genarea},
   is defined to be the maximal number $q$ such that $\mu _V\geq q \cdot \mu^i _V$ holds true on any normed plane $V$.
 Thus  $q(\mu)$ equals to
the infimum of the quotient $\J^{\mu}  (s)  /\J^{\mu^i} (s)$ taken over all norms $s$. In particular, cf. \cite{LW-energy-area},
 $$q(\mu^i)=1,  \quad q(\mathcal H^2)= \frac \pi 4,   \quad q(\mu^{ht} ) =\frac 2 \pi.$$


\subsection{Lipschitz maps and rectifiable sets} \label{subsec:rect}
Given a measurable  subset $K\subset \R^2$
and a Lipschitz map $f:K\to X$ to a metric space $X$, we say that a seminorm $s:\R^2
\to [0,\infty)$
is the metric differential of $f$ at the point $z\in K$ and denote it by $\md _z f$
if
\begin{equation} \label{def-md}
\lim _{y\to z} \frac {d(f(z),f(y)) -s(y-z)} {|y-z|} =0.
\end{equation}
Any Lipschitz map defined  on a measurable subset  $K\subset \R^2$ has a uniquely defined metric differential at almost every point \cite{Kir94}.   Moreover, for any $\epsilon >0$,  the set $K$ can be decomposed as a disjoint union $K=S\cup _{1\leq i<\infty} K_i$
such that the following holds true. The set $S$ is the union of a set of $\mathcal H^2$-area $0$ and the set of all points at which the metric differential is not a norm. The sets $K_i$ are compact.
For any  $i$,
the restriction $f:K_i \to f(K_i) \subset X$ is $(1+\epsilon)$-biLipschitz, if $K_i$ is endowed with the distance induced by the norm $s_i =\md_z f$, for an arbitrary  $z\in K_i$.  Finally, $\mathcal H^2(f(S))$
is zero, see \cite{Kir94}.

Recall that a metric space $X$ is called \emph{countably $2$-rectifiable} if up to a subset $S\subset X$
of $\mathcal H^2$-measure $0$,  $X$ is a countable union of
Lipschitz images $f_i(K_i)$  of compact subsets of $K_i\subset \R^2$.
The above decomposition result shows that up to a set of $\mathcal H^2$-area $0$, any rectifiable set is a disjoint union of pieces which are arbitrary biLipschitz close to compact subsets
of normed planes \cite{Kir94}.
From this it  follows that
any definition of area $\mu$ provides a measure $\mu _X$   on any   countably $2$-rectifiable set $X$ uniquely determined by the following properties. The measure  $\mu _X$ is absolutely continuous with respect to $\mathcal H^2 _X$, on  Borel subsets of normed planes $\mu _X$ is defined  as  above and, finally,
 any  $1$-Lipschitz map between rectifiable sets does not increase the $\mu$-area, cf. \cite{Iva08}.

  The decomposition result above yields a way  how to compute the $\mu$-area of the image of a Lipschitz map and thus of any rectifiable set,
 cf.  \cite[Theorem 7]{Kir94}, \cite{Iva08}:
\begin{lem}  \label{lem:area}
Let $K\subset \R^2$  be  measurable, let $f:K\to X$ be a Lipschitz map with
$Y=f(K)$  and let $\mu$ be a definition of area.   Let $N:Y\to [1,\infty]$ be the multiplicity  function $N(y)= \#\{z\in K: f(z) = y\}$.  Then:
 \begin{equation*}  \label{eq:area+}
  \int_Y N(y) d\mu _Y(y) = \int_{K} \jac ^{\mu}(\md _z f)\, dz.
 \end{equation*}
\end{lem}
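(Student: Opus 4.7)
The plan is to reduce the statement to the biLipschitz case on small pieces via the Kirchheim-type decomposition recalled in Subsection \ref{subsec:rect}, and then on each piece compute both sides using the fact that on a normed plane the $\mu$-measure of a Borel set is $\J^\mu$ times the Lebesgue measure. First I would fix $\epsilon>0$ and invoke that decomposition, refined by Lusin's theorem applied to the measurable map $z\mapsto \md_z f$ from $K$ into $\mathfrak S_2$, to obtain a decomposition $K=S\sqcup\bigsqcup_{i\ge 1}K_i$ with the following properties. The set $S$ is a union of a null set and the set where $\md_z f$ fails to be a norm; each $K_i$ is compact; there exists a norm $s_i\in\mathfrak S_2$ such that $d_{\mathfrak S_2}(\md_z f,s_i)<\epsilon$ for every $z\in K_i$; and the restriction $f\colon(K_i,d_{s_i})\to X$ is $(1+\epsilon)$-biLipschitz onto its image.

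Second, I would observe that $S$ contributes nothing to either side. On the right, $\jac^\mu(\md_z f)=0$ on the degenerate locus by definition, and the $\mathcal H^2$-null part is integrated out. On the left, $\mathcal H^2(f(S))=0$ (as recalled before the lemma), so $\mu_Y(f(S))=0$ since $\mu_Y$ is absolutely continuous with respect to $\mathcal H^2_Y$. Since each $f|_{K_i}$ is injective and the $K_i$ are pairwise disjoint, for $\mu_Y$-almost every $y\in Y$ one has $N(y)=\sum_i \mathbf{1}_{f(K_i)}(y)$, so monotone convergence yields
$$\int_Y N(y)\,d\mu_Y(y)=\sum_i \mu_Y(f(K_i)).$$

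Third, I would estimate each piece. On $K_i$, the uniform bound $d_{\mathfrak S_2}(\md_z f,s_i)<\epsilon$ together with the continuity of $\J^\mu\colon\mathfrak S_2\to[0,\infty)$ gives
$$\int_{K_i}\jac^\mu(\md_z f)\,dz=(1+o_\epsilon(1))\,\J^\mu(s_i)\,\mathcal H^2(K_i).$$
For the other side, the $(1+\epsilon)$-biLipschitz identification of $(K_i,s_i)$ with $f(K_i)\subset Y$, combined with the monotonicity axiom for definitions of area (which forces a biLipschitz deformation to distort $\mu$ by at most the square of the constant), yields $\mu_Y(f(K_i))=(1+o_\epsilon(1))\,\mu_{(\R^2,s_i)}(K_i)$. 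By the very definition of the $\mu$-Jacobian recalled in Subsection \ref{sec:def-vol-normed}, the right-hand side equals $(1+o_\epsilon(1))\,\J^\mu(s_i)\,\mathcal H^2(K_i)$. Summing over $i$ and letting $\epsilon\to 0$ closes the proof.

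The only genuine technicality is ensuring that the seminorm field $z\mapsto\md_z f$ can be made uniformly almost constant on each compact piece $K_i$; this is handled by Lusin's theorem applied to this measurable field, combined with the continuity of $\J^\mu$ on $\mathfrak S_2$. All remaining inputs — injectivity of $f|_{K_i}$, the biLipschitz deformation control on $\mu$, and the identification $\mu_{(\R^2,s_i)}=\J^\mu(s_i)\cdot\mathcal L^2$ — are immediate from the decomposition theorem of Kirchheim and the axioms of a definition of area, so the lemma is essentially a repackaging of \cite[Theorem 7]{Kir94} adapted to general $\mu$.
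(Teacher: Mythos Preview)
Your proposal is correct and follows exactly the route the paper indicates: the paper does not write out a proof of this lemma but merely states that it follows from the Kirchheim decomposition recalled in Subsection~\ref{subsec:rect}, citing \cite[Theorem~7]{Kir94} and \cite{Iva08}. Your argument is a faithful unpacking of that reference --- decompose $K$ into a negligible piece $S$ and compact pieces $K_i$ on which $f$ is $(1+\epsilon)$-biLipschitz with respect to a fixed norm $s_i$, observe that $S$ contributes nothing on either side, and match the two integrals on each $K_i$ up to a multiplicative error tending to $1$ with $\epsilon$. One minor remark: the extra appeal to Lusin's theorem is not really needed, since the decomposition already stated in the paper lets you take $s_i=\md_z f$ for \emph{any} $z\in K_i$ with the $(1+\epsilon)$-biLipschitz conclusion, which forces all $\md_z f$ on $K_i$ to be uniformly close to $s_i$; also note that the uniformity of your $o_\epsilon(1)$ factors across $i$ is guaranteed because the Lipschitz bound on $f$ confines all the metric differentials to a compact subset of $\mathfrak S_2$ on which $\J^\mu$ is uniformly continuous.
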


\subsection{Co-area inequality with respect to $\mu$}
If $X$ is a measurable subset of   $\R^2$ and $f:X\to \R$ a $1$-Lipschitz function then the classical co-area formula
(\cite{Fed69}, Theorem 3.2.22})  implies
\begin{equation} \label{eq:Riemannian}
\mathcal H^2 (X) \geq    \int _{\R}  \mathcal H^1 (f^{-1} (t)) \, dt.
\end{equation}

In the realm of metric spaces one needs to insert an (optimal) factor of $\frac  \pi 4$ on the right side,  cf. \cite{Fed69}, Theorem 2.10.25, 2.10,26:
\begin{lem} \label{lem:fed}
For any proper metric space $Y$, any Borel subset $X\subset Y$ with finite $\mathcal H^2 (X)$ and any $1$-Lipschitz function
  $f:X\to \R$ we have
  $$ \mathcal H^2 (X) \geq   \frac {\pi} 4 \cdot  \int _{\R}  \mathcal H^1 (f^{-1} (t)) \, dt \; .$$
\end{lem}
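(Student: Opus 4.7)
\textbf{Proof plan for Lemma~\ref{lem:fed} (the co-area inequality).} This is essentially Eilenberg's inequality, and the approach I would take is a direct covering argument. The factor $\frac{\pi}{4}$ reflects the normalization of $\hm^2$: recall that the $\delta$-approximation of the Hausdorff $2$-measure is $\hm^2_\delta(X) = \inf\{\sum \frac{\pi}{4}\diam(A_i)^2\}$, where the infimum runs over covers of $X$ by sets of diameter at most $\delta$.

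First I would fix $\delta>0$ and $\varepsilon>0$ and choose a countable cover $X\subset \bigcup_i A_i$ with $\diam(A_i)=:d_i\leq\delta$ and $\sum_i \tfrac{\pi}{4}d_i^2 \leq \hm^2_\delta(X)+\varepsilon$. The two key geometric observations are: (a) since $f$ is $1$-Lipschitz, $f(A_i)$ lies in an interval of length at most $d_i$; and (b) for every $t\in\R$, the intersection $A_i\cap f^{-1}(t)$ has diameter at most $d_i$. Together these imply that for each fixed $t$, the family $\{A_i\cap f^{-1}(t): t\in f(A_i)\}$ is a $\delta$-cover of $f^{-1}(t)$, and hence
\[
\hm^1_\delta(f^{-1}(t)) \leq \sum_i d_i \cdot \mathbf{1}_{f(A_i)}(t).
\]
Integrating in $t$ and using $|f(A_i)|\leq d_i$ gives
\[
\int_{\R} \hm^1_\delta(f^{-1}(t))\,dt \leq \sum_i d_i^2 = \frac{4}{\pi}\sum_i \frac{\pi}{4}d_i^2 \leq \frac{4}{\pi}\bigl(\hm^2_\delta(X)+\varepsilon\bigr).
\]
Finally I would let $\varepsilon\to 0$ and then $\delta\to 0$; the left-hand side grows to $\int_{\R}\hm^1(f^{-1}(t))\,dt$ by monotone convergence, while the right-hand side tends to $\tfrac{4}{\pi}\hm^2(X)$, yielding the stated inequality after rearrangement.

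The main technical nuisance (and really the only obstacle) is the measurability of $t\mapsto \hm^1(f^{-1}(t))$ needed to justify the final application of monotone convergence. I would handle this exactly as Federer does: either replace the integral by an upper integral throughout (which is enough for the inequality one needs), or invoke the standard fact that for a Lipschitz $f$ on a Borel subset of a proper metric space this slice function is Borel measurable (via approximation by simple Borel covers). Once this is noted, the remainder of the argument is bookkeeping, and the constant $\frac{4}{\pi} = \alpha(1)\alpha(1)/\alpha(2)$ is exactly the one predicted by Federer's general co-area inequality \cite{Fed69}, 2.10.25-26, so no further optimization is required.
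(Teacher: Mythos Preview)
Your argument is correct: this is exactly Eilenberg's covering proof of the coarea inequality, and the constant $\frac{4}{\pi}=\alpha(1)^2/\alpha(2)$ comes out right with the paper's normalization of $\hm^m$. The paper itself does not give a proof of this lemma at all; it simply records the statement and cites \cite{Fed69}, Theorems~2.10.25--26, so you have in fact supplied the argument that the authors outsource to Federer (and Federer's proof is precisely the one you wrote down). One small cosmetic point: to avoid any worry about whether the images $f(A_i)$ are Borel when you integrate $\sum_i d_i\,\mathbf{1}_{f(A_i)}(t)$, just replace each $f(A_i)$ by a closed interval of length $d_i$ containing it; this changes nothing in the estimate and sidesteps the issue entirely.
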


Note that the factor $\frac \pi 4$ coincides with the number  $q(\mathcal H^2)$ introduced in  Subsection \ref{sec:def-vol-normed}. Thus under the assumption that $X$ is countably $2$-rectifiable   the lemma above  is a special case of the following:

\begin{lem} \label{co-area-gen}
Let  $X$ be a countably $2$-rectifiable set and let $f:X\to \R$ be a $1$-Lipschitz function.
Then
 $$\mu (X) \geq q(\mu)  \cdot  \int _{\R}   \mathcal H^1 (f^{-1} (t)) \, dt.$$
\end{lem}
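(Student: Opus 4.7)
The plan is to reduce the inequality to the sharp co-area estimate on normed planes by means of the biLipschitz decomposition of rectifiable sets. Fix $\epsilon > 0$ and, using the decomposition recalled in Subsection \ref{subsec:rect}, write $X$ as a disjoint union
$$X \;=\; N \,\sqcup\, \bigsqcup_{i\geq 1} X_i,$$
where $\mathcal{H}^2(N) = 0$ and each $X_i$ is compact and $(1+\epsilon)$-biLipschitz equivalent, via some $g_i \colon K_i \to X_i$, to a compact subset $K_i$ of a normed plane $V_i = (\R^2, s_i)$.

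On each $V_i$ the sharp co-area inequality
$$\mu_{V_i}(K_i) \;\geq\; q(\mu)\int_{\R} \mathcal{H}^1_{s_i}\bigl(K_i \cap h^{-1}(t)\bigr)\,dt$$
holds for every $h\colon K_i \to \R$ that is $1$-Lipschitz with respect to $s_i$. This follows from the Jacobian bound $\mu_{V_i} \geq q(\mu)\,\mu^i_{V_i}$, which is built into the definition of $q(\mu)$, combined with the same inequality for the inscribed Riemannian area $\mu^i$ with sharp constant $1$: indeed, $\mu^i_{V_i}$ equals the area of the inscribed Riemannian metric $g$ on $V_i$, for which $d_g \geq d_{s_i}$, so that $h$ is also $1$-Lipschitz in $g$ and the classical Riemannian co-area formula applies, giving $\mathcal{H}^1_g \geq \mathcal{H}^1_{s_i}$ on level sets.

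Apply the inequality to $h := (1+\epsilon)^{-1} f \circ g_i$, which is $1$-Lipschitz in $s_i$ since $g_i$ is $(1+\epsilon)$-biLipschitz and $f$ is $1$-Lipschitz, and then transfer $\mu$-areas and $\mathcal{H}^1$-lengths back to $X_i$ through $g_i$, whose distortion of these two quantities is controlled by positive powers of $(1+\epsilon)$. This yields
$$\mu(X_i) \;\geq\; q(\mu)(1+\epsilon)^{-c}\int_{\R} \mathcal{H}^1\bigl(X_i \cap f^{-1}(t)\bigr)\,dt$$
for some absolute constant $c > 0$. Summing over $i$, using that $\mu(N) = 0$ by absolute continuity of $\mu$ with respect to $\mathcal{H}^2$ on rectifiable sets, and that $\mathcal{H}^1\bigl(N\cap f^{-1}(t)\bigr) = 0$ for almost every $t$ by Eilenberg's co-area inequality applied to the $\mathcal{H}^2$-null set $N$, we arrive at
$$\mu(X) \;\geq\; q(\mu)(1+\epsilon)^{-c}\int_{\R}\mathcal{H}^1\bigl(f^{-1}(t)\bigr)\,dt.$$
Letting $\epsilon \to 0$ concludes the argument. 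The main technical input is the sharp normed-plane co-area inequality for $\mu^i$, essentially the content of the definition of $q(\mu)$ and a standard fact from convex area theory; the remaining steps amount to bookkeeping of biLipschitz distortion and of negligible sets.
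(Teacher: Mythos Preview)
Your proof is correct and follows essentially the same strategy as the paper: reduce to the inscribed Riemannian area $\mu^i$ via the inequality $\mu \geq q(\mu)\,\mu^i$, use the biLipschitz decomposition of rectifiable sets into pieces of normed planes, and on each normed plane invoke the Loewner (inscribed) ellipse to pass to a Euclidean metric where the classical co-area formula applies. The paper is terser---it additionally reduces $f$ to a linear map on each piece via Rademacher and cites \cite{AKrect00} for the localization, whereas you keep $f$ general and track the $(1+\epsilon)$-biLipschitz distortion explicitly---but the core idea is identical.
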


\begin{proof}
By definition of $q(\mu)$, we have $q(\mu) ^{-1} \cdot \mu \geq \mu ^i$, where $\mu ^i$ is the inscribed Riemannian area.  Hence we only need to show
\begin{equation} \label{mui}
\mu^i (X) \geq \int _{\R}   \mathcal H^1 (f^{-1} (t)) \, dt.
\end{equation}

Rademacher's theorem \cite{Kir94} and the decomposition of $X$ into small pieces  approximated by pieces of normed spaces
as in Subsection  \ref{subsec:rect}  show that it suffices to prove the result in the case  that
$X$ is a subset of a normed plane $(\R^2,s)$ and $f:\R^2\to \R$ is  a linear map, cf. \cite{AKrect00}, Section 9.
Let $s_0$ be the Euclidean norm whose unit ball is the Loewner ellipse of the unit ball of $s$.  Then $s_0 \geq s$, hence
$f:(\R^2,s_0)\to \R$ is still $1$-Lipschitz. Moreover,  by the definition of the inscribed Riemannian area, the $\mu^i$-area
on $(\R^2, s)$ coincides with the Lebesgue area of the Euclidean plane $(\R^2,s_0)$.  Thus \eqref{mui} follows from   \eqref{eq:Riemannian}.
\end{proof}

\begin{rmrk}
The co-area factor in \lref{co-area-gen}  is optimal for some definitions of area $\mu$, but not for all.
It can be shown, that the optimal $\mu$-dependent co-area constant in \lref{co-area-gen} equals $\frac 4 v$,
where $v$ is the $\mu$-area of the unit ball in the  plane $(\R^2,s_{\infty})$ with the sup-norm.
\lref{co-area-gen} is optimal for the inscribed Riemannian, Hausdorff and Holmes-Thompson, but not for the
Benson definition of area.
\end{rmrk}

\section{Sobolev maps} \label{sec:Sob}
\subsection{Exceptional families of curves} \label{subsec:except}
We refer to \cite{HKST15} for more details.
Let $\Omega \subset \R^2$ be a bounded  domain  and   let $1<p<\infty$ be given.
A family $\mathcal E$ of curves in
$\Omega$ is called \emph{$p$-exceptional}  if there exists a
Borel function $f:\Omega \to [0,\infty]$ such that $f\in L^p (\Omega )$ and  $\int _{\gamma} f =\infty$ for all rectifiable curves
$\gamma $ in the family $\mathcal E$.  Note that the set of all non-rectifiable curves is exceptional by definition.  We say that a property holds true for $p$-\emph{almost every curve} if the set of curves for which the property fails  is $p$-exceptional.

A countable union of $p$-exceptional families is $p$-exceptional.
If $S$ is a subset of $\Omega $ with $\mu (S)=0$ then for  $p$-almost all curves $\gamma$ we have   $\mathcal H^1 (\gamma \cap S) =0$.
By Fubini's theorem, a set of non-constant curves in
$\Omega $ parallel to a given line $l$ is
$p$-exceptional for some and then for any $p \in (1,\infty) $ if and only if the projection of the set of curves to the orthogonal line $l^{\perp}$ has $\mathcal H^1$-measure $0$.

\subsection{Generalities on Sobolev maps} \label{subsec:genera}
We assume some experience with Sobolev maps  with values in complete metric spaces $X$  and refer  to \cite{HKST15},
\cite{LW}
and the literature therein.  In this paper we consider only Sobolev maps   defined on   open bounded domains $\Omega \subset \R^2$, intervals  and circles.  In \cite{LW} we  worked with the Soboev spaces $W^{1,p} (\Omega,X)$ as defined in \cite{KS93}. For the present work it is more natural to stick to  the Newton-Sobolev spaces as defined in \cite{HKST15}. Recall that  both notions are equivalent.
More precisely, every map in the Newton-Sobolev space
$N^{1,p} (\Omega ,X)$ is contained in $W^{1,p} (\Omega ,X)$ and any element
$u\in W^{1,p} (\Omega ,X)$ has a representative in $N^{1,p} (\Omega, X)$, uniquely defined up to some  $p$-exceptional subset. Most maps appearing in this paper are continuous, and a continuous map $u\in W^{1,p} (\Omega ,X)$ is automatically in $N^{1,p} (\Omega,X)$.  Thus the difference is not visible in the cases important in this paper.  Therefore, we will freely interchange between
$N^{1,p}$ and $W^{1,p}$.

The space $L^p(\Omega, X)$ consists of those measurable and essentially separably valued maps $u : \Omega\to X$ for which the composition $f\circ u$ with the distance function $f$ to some point in $X$ is in the classical space $L^p(\Omega)$. A map $u:\Omega \to X$ is in the Newton-Sobolev space $N^{1,p} (\Omega,X)$
if $u\in L^p (\Omega, X)$ and  if there exists a Borel  function $\rho \in L^p (\Omega )$,
such that for $p$-almost all curves $\gamma: I \to \Omega$  the composition $u\circ \gamma$ is a continuous curve and
  the following  inequality holds true:
\begin{equation} \label{eq-n1p}
\int _{\gamma} \rho  \geq \ell_X(u\circ \gamma ).
\end{equation}

 Up to sets of measure $0$, there exists a uniquely defined minimal function $\rho _u$ satisfying the condition above. It is called  the \emph{generalized gradient} of $u$.  The integral $\int _{\Omega} \rho ^p _u (z) dz$ coincides with
   the  Reshetnyak energy $E_+ ^p (u)$ of $u$, see \cite{HKST15}, Theorem 7.1.20 and \cite{LW}, Section 4.

\subsection{Approximate metric differentials} \label{subsec:apmd}
Let $u \in W^{1,p} (\Omega ,X)$ be as above.  Then $u$ has an \emph{approximate metric differential}
at almost every point $z\in \Omega$. This approximate metric differential is a
seminorm $s$ on $\R^2$,  denoted by  $\apmd u_z$, which satisfies \eqref{def-md}, where  $\lim$ is replaced by the approximate limit $\ap \lim$.
   We refer the reader to \cite{Kar07},\cite{LW}, Section 4
 and recall here only the following two structural results that a posteriori  could be taken as the definition of the approximate metric differential.
The field  of seminorms $z\mapsto \apmd u_z$ is a measurable map contained in $L^p(\Omega, \mathfrak S_2)$, thus, changing this map  on a subset of measure $0$, we may assume that
 $z\mapsto \apmd u _z$ is an everywhere  defined Borel map.
There is a countable, disjoint decomposition $\Omega =S\cup _{1\leq i<\infty} K_i$ into a set $S$ of zero measure and compact subsets $K_i$ such that the following holds true. The restriction of $u$ to any $K_i$ is Lipschitz continuous,  the  metric differential  of the restriction $u:K_i\to X$ exists at any  $z\in K_i$ and  coincides with $\apmd u_z$.

\subsection{Length of almost all curves}
Let $u\in N^{1,p} (\Omega, X)$ be given.  Then for $p$-almost all rectifiable  curves $\gamma  :I\to  \Omega$ parametrized by arclength, the composition
$u\circ \gamma$ is absolutely continuous (\cite{HKST15}, Prop. 6.3.2). Using  approximate metric differentials
we can compute the length of almost all  curves by the usual formula:

\begin{lem} \label{almostall}
Let $u\in N^{1,p}  (\Omega ,X)$ be given.
   Then for $p$-almost all   rectifiable curves   $\gamma :I\to \Omega$ parametrized by arclength we have
   \begin{equation} \label{eq:almostall}
\ell_X(u\circ \gamma )=\int _I \apmd u_{\gamma (t)}  (\dot {\gamma }(t)) dt.
\end{equation}
\end{lem}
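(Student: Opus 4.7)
The plan is to reduce to the Lipschitz case via the countable decomposition of $\Omega$ provided in Subsection~\ref{subsec:apmd} and then invoke the classical length formula \eqref{eq:length} piece by piece. Fix a representative of $u$ in $N^{1,p}(\Omega,X)$ and write $\Omega = S \cup \bigcup_{i\geq 1} K_i$, where $S$ has Lebesgue measure zero, each $K_i$ is compact, $u|_{K_i}$ is Lipschitz continuous, and the metric differential of $u|_{K_i}$ at every $z\in K_i$ exists and agrees with $\apmd u_z$.

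First I would restrict attention to curves $\gamma:I\to\Omega$ parametrized by arclength that simultaneously satisfy three conditions, each of which holds outside a $p$-exceptional family and hence jointly outside such a family: (a) $u\circ\gamma$ is absolutely continuous and the upper-gradient inequality \eqref{eq-n1p} holds along $\gamma$ with the minimal gradient $\rho_u$; (b) $\mathcal H^1(\gamma\cap S)=0$, which is possible because $S$ has planar measure zero (\S\ref{subsec:except}); (c) for almost every $t\in I$ the curve $\gamma$ is metrically differentiable at $t$ with $|\dot\gamma(t)|=1$, which holds for every arclength curve.

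Next, set $E_i:=\gamma^{-1}(K_i)$ and $E_0:=\gamma^{-1}(S)$, so that $I$ is the essentially disjoint union of $E_0$ and the $E_i$, with $\mathcal H^1(E_0)=0$. Since $u|_{K_i}$ is Lipschitz and $\gamma|_{E_i}$ is $1$-Lipschitz with $\gamma(E_i)\subset K_i$, the composition $u\circ\gamma|_{E_i}$ is Lipschitz; by the chain rule for metric differentials, for a.e.\ Lebesgue density point $t$ of $E_i$,
\begin{equation*}
\lim_{\epsilon\to 0,\, t+\epsilon\in E_i}\frac{d_X(u(\gamma(t)),u(\gamma(t+\epsilon)))}{|\epsilon|}=\apmd u_{\gamma(t)}(\dot\gamma(t)).
\end{equation*}
This is the standard argument that at density points the Lipschitz estimate combined with the exact expansion \eqref{def-md} for $\md(u|_{K_i})=\apmd u$ forces the one-sided limits along $E_i$ to equal $\apmd u_{\gamma(t)}(\dot\gamma(t))$. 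Since $u\circ\gamma$ is absolutely continuous on $I$, its metric derivative $(u\circ\gamma)'(t)$ exists a.e.\ on $I$, and on the set $E_i$ this metric derivative must coincide with the one-sided limit computed above; on $E_0$ the set has measure zero so it contributes nothing.

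Combining these identifications over all $i$ and integrating the metric derivative via \eqref{eq:length} yields
\begin{equation*}
\ell_X(u\circ\gamma)=\int_I (u\circ\gamma)'(t)\,dt=\sum_{i\geq 1}\int_{E_i}\apmd u_{\gamma(t)}(\dot\gamma(t))\,dt=\int_I \apmd u_{\gamma(t)}(\dot\gamma(t))\,dt,
\end{equation*}
which is the claim. The only subtlety I foresee is step three: justifying that density points of $E_i$ fill $E_i$ up to a null set and that, at such points, the approximate metric differential of $u$ at $\gamma(t)$ controls the actual one-sided increments of $u\circ\gamma$. This is where the decomposition in Subsection~\ref{subsec:apmd} does the real work, since on $K_i$ the approximate differential is an honest metric differential and the limit in \eqref{def-md} is an unrestricted limit.
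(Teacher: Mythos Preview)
Your argument is correct and matches the paper's proof essentially line for line: both use the decomposition $\Omega=S\cup\bigcup K_i$ from Subsection~\ref{subsec:apmd}, discard the $p$-exceptional family of curves meeting $S$ in positive length or along which $u\circ\gamma$ fails to be absolutely continuous, and then identify the metric derivative of $u\circ\gamma$ with $\apmd u_{\gamma(t)}(\dot\gamma(t))$ at Lebesgue density points of the sets $\gamma^{-1}(K_i)$. The only cosmetic difference is that the paper packages the ``good'' points into sets $J_i$ (density points of $\gamma^{-1}(K_i)$ at which both $\dot\gamma$ and the metric derivative of $u\circ\gamma$ exist) before invoking the chain rule, whereas you spell out the same restriction inline.
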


\begin{proof}
Fix a Borel representative of the map $z\mapsto \apmd u_z$.  For any $\gamma:I\to \Omega$ parametrized by arclength
the integrand on the right hand side of \eqref{eq:almostall} is measurable, hence the right hand side is well-defined.
Consider the decomposition  $\Omega =S\cup K_i$ described above, such that $S$ has measure  $0$. Moreover, the sets $K_i$ are compact,    the restrictions  $u:K_i\to X$ are Lipschitz continuous and have metric differentials at all points. Finally, these metric differentials
 coincide with $\apmd u_z$ at \emph{all} $z\in K_i$.
 The set of curves
$\gamma:I\to \Omega$ whose intersection with $S$ has non-zero $\mathcal H^1$-measure
is $p$-exceptional.  The lemma follows, once we have shown \eqref{eq:almostall}
  for all $\gamma :I\to \Omega$ outside this $p$-exceptional set and such that
$u\circ \gamma$ is absolutely continuous.

 For any $\gamma$ as above set $I_i=\gamma ^{-1} (K_i) \subset I$ and let
$J_i$ be the set of all Lebesgue points of $I_i$ in $I$, at which $\gamma$ has a differential and the absolutely  continuous curve $u\circ \gamma$ has a metric differential.
By assumption on $\gamma$,  the union $J=\cup J_i$ has full measure in $I$. On the other hand, for any $t\in J$ the metric differential
$\md _t (u\circ \gamma ) (1)$ must
coincide with $\apmd u _{\gamma(t)} (\dot \gamma (t))$.   Thus, integrating this equality and using \eqref{eq:length} we obtain \eqref{eq:almostall}.
\end{proof}

We deduce as consequences:

\begin{cor} \label{twomaps}
Let $X^{\pm}$ be complete metric spaces. Let $u^{\pm} \in L^p(\Omega,  X^{\pm})$ be two  maps.
  Assume that for
$p$-almost all curves $\gamma \subset  \Omega $ the compositions $u^{\pm}\circ\gamma$ are continuous and
\begin{equation} \label{eq:twomaps}
\ell_{X^+}(u^+\circ \gamma )= \ell_{X^-}(u^-\circ \gamma ).
\end{equation}
    Then $u^+\in N^{1,p} (\Omega, X^+)$
if and only if $u^- \in N^{1,p} (\Omega ,X^-)$. In this case, the approximate metric differentials $\apmd u^{\pm}$ of $u^{\pm}$
coincide almost everywhere.
\end{cor}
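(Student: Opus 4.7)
The plan is to separate the statement into two tasks: (i) the equivalence of Newton--Sobolev membership, and (ii) the almost-everywhere equality of approximate metric differentials. The first is essentially a transfer of upper gradients; the second requires a Fubini-type argument along line segments in fixed directions, combined with Lebesgue differentiation and continuity of seminorms.

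For (i), I would fix a $p$-integrable upper gradient $\rho$ for $u^+$ and observe that the union of three $p$-exceptional families---those on which $\rho$ fails to satisfy \eqref{eq-n1p} for $u^+$, on which the length equality \eqref{eq:twomaps} fails, and on which either composition $u^\pm\circ\gamma$ fails to be continuous---is itself $p$-exceptional. Off this union one has $\int_\gamma \rho \geq \ell_{X^+}(u^+\circ\gamma) = \ell_{X^-}(u^-\circ\gamma)$, so $\rho$ is also an upper gradient for $u^-$; combined with the assumed $u^-\in L^p(\Omega, X^-)$, this gives $u^- \in N^{1,p}(\Omega, X^-)$. The converse direction is symmetric.

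For (ii), assuming both maps lie in $N^{1,p}$, the first step is to apply \lref{almostall} to each of $u^\pm$ and combine with the length equality to obtain, for $p$-almost every arclength-parametrized $\gamma$,
$$\int_I \apmd u^+_{\gamma(t)}(\dot\gamma(t))\, dt = \int_I \apmd u^-_{\gamma(t)}(\dot\gamma(t))\, dt.$$
Fixing a direction $v \in S^1$, I would then specialize to line segments $\gamma_{y,a,b}(t) = y + tv$, $t\in[a,b]$, with rational $a<b$ and $y \in v^\perp$ for which $\gamma_{y,a,b}\subset\Omega$. By the Fubini characterization of $p$-exceptional families parallel to a fixed line recalled in Section~\ref{subsec:except}, the subfamily of such segments on which the identity above fails corresponds to an $\mathcal H^1$-null set of parameters $y$; taking a countable union over rational pairs $(a,b)$ keeps this null. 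For $\mathcal H^1$-almost every $y$ the integrals of $t \mapsto \apmd u^\pm_{y+tv}(v)$ therefore coincide on every rational subinterval of $\gamma_y\cap\Omega$, so Lebesgue differentiation forces pointwise a.e.\ equality of the integrands along each such line. A second application of Fubini then yields $\apmd u^+_z(v) = \apmd u^-_z(v)$ for $\mathcal{L}^2$-a.e.\ $z \in \Omega$.

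To upgrade this directional identity to the full seminorm equality, I would intersect the null sets obtained for a countable dense set $\{v_k\}\subset S^1$ and invoke the fact that a seminorm on $\R^2$ is determined by its values on a dense subset of directions. The main technical obstacle is the bookkeeping around $p$-exceptional families and their restrictions to sub-families parallel to a fixed direction; one cannot naively restrict a non-exceptional curve to a subinterval and conclude non-exceptionality of the restriction, so it is crucial to work directly with the parametrized families $\{\gamma_{y,a,b}\}_{y\in v^\perp}$ of segments of fixed combinatorial type and apply the Fubini characterization to each such family separately before taking countable unions.
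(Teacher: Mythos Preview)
Your proposal is correct and follows essentially the same route as the paper: transfer of upper gradients for part (i), then for part (ii) specialize \lref{almostall} and the length equality to segments in a fixed direction $v$, apply Lebesgue differentiation, and conclude via a countable dense set of directions. Your version is in fact more carefully written than the paper's, which compresses the Fubini step into the single sentence ``for almost every $z\in\Omega$ we have \eqref{eq:twomaps} and \eqref{eq:almostall} for all sufficiently short segments $\gamma$ centered at $z$ and in direction $v$''; your explicit bookkeeping with rational endpoints and the Fubini characterization of $p$-exceptional families parallel to a line makes precise what the paper leaves implicit.
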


\begin{proof}
The first claim follows directly from the defintion of Newton-Sobolev spaces $N^{1,p}$.

Fix a unit vector $v\in\R^2$. Then for almost every $z\in\Omega$ we have \eqref{eq:twomaps}
 and  \eqref{eq:almostall} for all sufficiently short segments $\gamma$ centered at
$z$ and in direction $v$. The Lebesgue Differentiation theorem thus
implies that $\apmd u^+_z(v) = \apmd u^-_z(v)$ for almost every
$z\in\Omega$. Applying this to a countable dense set of directions $v$
implies that the measurable fields of seminorms  $\apmd u^\pm$ are almost everywhere equal.
\end{proof}


\subsection{Traces and gluings}
Let $\Omega \subset \R^2$ be a Lipschitz domain with boundary $\partial \Omega$ and let $u\in W^{1,p} (\Omega,X)$ be given.
Then $u$ has a well-defined trace $\trace u \in L^p(\partial \Omega ,X)$ with the following property, cf. \cite{KS93}.
  For the distance function  $f:X\to \R$ to any point $x\in X$, we have
	$\trace (f\circ u) =f\circ \trace u  \in L^p (\partial \Omega)$, where
   on the left hand side the usual trace of Sobolev real-valued functions is considered.
	
	Let a curve $S\subset \Omega $ separate the Lipschitz domain  $\Omega $
 into two Lipschitz subdomains $\Omega ^{\pm}$. If $u^{\pm} \in W^{1,p} (\Omega ^{\pm},X)$ have the same trace on $S$ then $u^{\pm}$  define together
 a map $u\in W^{1,p} (\Omega,X)$, see \cite{KS93}.

\subsection{Area of Sobolev maps} \label{subsec:areasob}
Let $\mu$ be  a definition of area  and consider the corresponding Jacobian
$\J^{\mu} :\mathfrak S_2\to [0,\infty ) $, see Subsection \ref{sec:def-vol-normed}.  For  $u\in W^{1,2} (\Omega, X)$
the $\mu$-area of $u$ is defined  by
\begin{equation} \label{eq:area-sob}
\Area_{\mu} (u)  :=\int _{\Omega}   \J^{\mu} ( \apmd  u_z) \, dz.
\end{equation}

The number $\Area _{\mu} (u)$ is finite  and satisfies $\Area _{\mu} (u) \leq E_+^2 (u)$, \cite{LW}, Lemma 7.2.
In view of the area formula for Lipschitz maps this is a natural extension
of the parametrized $\mu$-area to Sobolev maps.

Recall that for any $p>2$ any map $u\in N^{1,p}  _{loc}(\Omega ,X)$ is continuous and has \emph{Lusin's property (N)}, thus it sends
$\mathcal H^2$-zero sets to $\mathcal H^2$-zero sets. From the decomposition of $\Omega$ into parts on which $u$ is Lipschitz and a set
of measure $0$, we deduce that
the image $u(\Omega)$ is countably $2$-rectifiable  and of finite Hausdorff area. More precisely we deduce  from \lref{lem:area} (cf. \cite{Kar07}):

\begin{lem} \label{area-sob}
Let $u\in N^{1,2} (\Omega, X)$ be  a continuous map with Lusin's property (N). Let $K$ be a measurable subset of $\Omega$ and $Y=u(K)$.  Let
 $N:Y\to [0,\infty]$ be the multiplicity function $N(y)= \#\{z\in \Omega: u(z) = y\}$.  Then the following holds true:
\end{lem}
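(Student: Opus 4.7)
The goal is the area/multiplicity formula for $\mu$-areas of continuous Sobolev maps with Lusin's property (N), the natural extension of \lref{lem:area} from Lipschitz to Sobolev maps. The plan is to reduce the statement to the Lipschitz case by exploiting the decomposition of $\Omega$ described in Subsection~\ref{subsec:apmd}.

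First I would apply the structural result from Subsection~\ref{subsec:apmd}: there exists a countable disjoint decomposition $\Omega = S \cup \bigsqcup_{i\geq 1} K_i$ such that $|S|=0$, each $K_i$ is compact, each restriction $u|_{K_i}$ is Lipschitz continuous, and the classical metric differential of $u|_{K_i}$ at every point $z\in K_i$ coincides with the approximate metric differential $\apmd u_z$. This reduces the problem to combining the Lipschitz area formula on each piece $K_i\cap K$ with a negligibility argument for $S$.

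Next I would invoke Lusin's property (N): since $|S|=0$, the set $u(S)$ has $\hm^2$-measure zero, hence $\mu_Y(u(S))=0$ because $\mu_Y$ is absolutely continuous with respect to $\hm^2_Y$ (see Subsection~\ref{subsec:rect}). Consequently, points of $u(S\cap K)$ give no contribution to $\int_Y N(y)\,d\mu_Y(y)$, and similarly $\int_{S\cap K}\jac^{\mu}(\apmd u_z)\,dz=0$ since $|S\cap K|=0$. After discarding $S$, I would apply \lref{lem:area} to each Lipschitz map $u|_{K_i\cap K}$, obtaining
\[
\int_{u(K_i\cap K)} N_i(y)\,d\mu_Y(y) = \int_{K_i\cap K} \jac^{\mu}(\apmd u_z)\,dz,
\]
where $N_i(y)=\#\{z\in K_i\cap K : u(z)=y\}$. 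Summing over $i$ via monotone convergence and using that the $K_i$ are pairwise disjoint yields the identity with the summed multiplicity $\sum_i N_i(y)=\#\{z\in K\setminus S : u(z)=y\}$, and the $S$-contribution is nullified as above by Lusin's property (N).

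The main obstacle is bookkeeping with the multiplicity function. The identity has to be stated for the full multiplicity $N(y)=\#\{z\in\Omega : u(z)=y\}$, which counts preimages beyond $K$ and also inside $S$; one must verify that outside a $\mu_Y$-null set every point of $Y$ has all its preimages in $\bigsqcup_i K_i$ and that preimages lying in $\Omega\setminus K$ are $\mu_Y$-a.e.\ irrelevant (e.g.\ by applying the same argument to $K$ and $\Omega\setminus K$ separately and using Lusin (N) for the latter whenever $\jac^{\mu}(\apmd u_z)=0$). Once this is done, the summation yields the required area formula, and as an immediate consequence $Y=u(K)$ is countably $2$-rectifiable with $\mu_Y(Y)\leq\Area_{\mu}(u|_K)$, with equality when $N\equiv 1$ $\mu_Y$-a.e.
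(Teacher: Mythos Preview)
Your approach is exactly what the paper does: it states the result as a direct consequence of the Lipschitz decomposition in Subsection~\ref{subsec:apmd} together with \lref{lem:area}, citing \cite{Kar07} for details, without spelling out the summation argument. Your concern about the multiplicity function is legitimate---as written, $N$ counts preimages in $\Omega$ while the right-hand side integrates only over $K$, which only balances if one reads $N(y)=\#\{z\in K:u(z)=y\}$ (matching \lref{lem:area}); the paper's applications of the lemma are consistent with that reading.
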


 \begin{equation*}  \label{eq:area++}
  \int_Y N(y) \, d\mu _Y(y) = \int_{K} \jac ^{\mu}(\apmd u_z)\,dz.
 \end{equation*}


\subsection{Special infinitesimal structure}  \label{subsec:infinite}
A seminorm $s \in \mathfrak S_2$ is  \emph{$Q$-quasiconformal} for some $Q\geq 1$ if for all $v,w\in S^1$ the inequality $s(v) \leq Q\cdot s(w)$ holds true. A quasiconformal seminorm is either a norm or the $0$-seminorm.
 A map $u\in W^{1,2} (\Omega, X)$ is called $Q$-quasiconformal if  the seminorms $\apmd u_z$ are $Q$-quasiconformal for almost every $z\in \Omega$.
For $Q=1$ we call such maps conformal.

For every  definition of area $\mu$ and every    $Q$-quasiconformal map $u\in W^{1,2} (\Omega , X)$  we have:
\begin{equation} \label{eq:Q2}
 \Area _{\mu} (u) \geq Q^{-2} \cdot E_+^2 (u).
\end{equation}

The inequality above is the basic ingredient for most regularity results in \cite{LW}. The $\sqrt 2$-quasiconformality of a map
 $u\in W^{1,2} (D,X)$ can be guaranteed  by the following lemma (\cite{LW-energy-area}, Lemma 4.1, cf. \cite{LW}, Theorem 1.2). This  lemma  also  strengthens  \eqref{eq:Q2}
replacing the constant $Q^{-2}=\frac 1 2 $
by the $\mu$-depending constant $q (\mu ) \in [\frac 1 2, 1]$.

\begin{lem} \label{lem:Qq}
Let $u\in W^{1,2} (D,X)$ be such that $E_+^2 (u) \leq E_+^2 (u\circ \phi )$ for all biLipschitz homeomorphisms $\phi:\bar D\to \bar D$.
Then $u$ is $\sqrt 2$-quasiconformal. Moreover, for any definition of area $\mu$ and any subdomain $\Omega \subset D$ we have
$$\Area _{\mu} (u|_{\Omega}) \geq   q(\mu) \cdot  E_+^2 (u|_{\Omega}) .$$
\end{lem}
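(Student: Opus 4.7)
My plan is to reduce the global energy-minimization hypothesis to a pointwise condition on the approximate metric differential $s(z) := \apmd u_z$, and then to invoke a purely convex-geometric fact about seminorms on $\R^2$.

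First I would establish a \emph{pointwise minimality condition}. Fix a Lebesgue density point $z_0 \in D$ of a set on which $z \mapsto s(z)$ is approximately continuous (such points have full measure, by the description of $\apmd u$ in Subsection \ref{subsec:apmd}). Given any $L \in GL_2(\R)$, let $\phi_{\varepsilon} : \bar D \to \bar D$ be a biLipschitz homeomorphism that is affine with linear part $L$ on a ball $B'_\varepsilon = B(z_0, r\varepsilon)$, the identity outside the concentric ball $B_\varepsilon = B(z_0,\varepsilon)$, and interpolates biLipschitz on the annulus; such a $\phi_\varepsilon$ can be constructed on each scale after a linear rescaling absorbs the volume change across the annulus. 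Using the chain rule $\apmd(u\circ \phi_\varepsilon)_z = s(\phi_\varepsilon(z)) \circ d\phi_\varepsilon(z)$ and the change of variables $w = \phi_\varepsilon(z)$, the hypothesis $E_+^2(u) \le E_+^2(u\circ \phi_\varepsilon)$ splits into a main term on $B'_\varepsilon$ and an annular error of size $o(\varepsilon^2)$ relative to the main term. Dividing by $|B'_\varepsilon|$ and letting $\varepsilon \to 0$ at a density point yields, for almost every $z_0$,
\begin{equation}\label{eq:pointwise-min}
I_+^2(s(z_0)) \;\le\; \frac{I_+^2(s(z_0) \circ L)}{|\det L|} \qquad \text{for every } L \in GL_2(\R),
\end{equation}
where $I_+^2(\sigma) := \max_{v \in S^1} \sigma(v)^2$ is the Reshetnyak energy density of the seminorm $\sigma$.

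Next I invoke the following \textbf{convex-geometric lemma} for seminorms $\sigma$ on $\R^2$: if $\sigma$ satisfies \eqref{eq:pointwise-min} at $\sigma = s(z_0)$ (i.e.\ $L = I$ is a minimizer of $L \mapsto I_+^2(\sigma\circ L)/|\det L|$), then (a) $\sigma$ is $\sqrt 2$-quasiconformal, and (b) for every definition of area $\mu$,
\[
\J^\mu(\sigma) \;\ge\; q(\mu) \cdot I_+^2(\sigma).
\]
The geometric content is that the minimization forces the unit ball $B_\sigma$ to contain two orthogonal radii of maximal Euclidean length $1/I_+(\sigma)$, hence $B_\sigma$ contains a Euclidean disc of radius $1/I_+(\sigma)$ and this disc realizes the inscribed Riemannian area; combined with the defining inequality $\J^\mu \ge q(\mu)\,\J^{\mu^i}$ from Subsection \ref{sec:def-vol-normed}, one obtains (b), and the ratio $I_+(\sigma)/I_-(\sigma) \le \sqrt 2$ follows from the same John-ellipse comparison. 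Applying this to $\sigma = s(z_0) = \apmd u_{z_0}$ for almost every $z_0$, and then integrating the pointwise inequality in (b) over any subdomain $\Omega \subset D$, gives
\[
\Area_\mu(u|_\Omega) = \int_\Omega \J^\mu(\apmd u_z)\,dz \;\ge\; q(\mu)\int_\Omega I_+^2(\apmd u_z)\,dz = q(\mu)\cdot E_+^2(u|_\Omega).
\]

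The main technical obstacle is the variational step: the local perturbation $\phi_\varepsilon$ must remain a \emph{global} biLipschitz self-homeomorphism of $\bar D$ while accommodating an arbitrary prescribed linear part $L$, so one has to absorb the determinant defect $|\det L| \ne 1$ in the transition annulus and control the resulting error terms in the energy. Once the localization is carried out cleanly and the annular error is shown to be a lower-order correction at a density point, the rest follows from the convex-geometric lemma, whose proof is a direct computation once one reduces to $L$ in diagonal form via SVD and uses the characterization of equality cases in the relevant John-ellipse inequality.
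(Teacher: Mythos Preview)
The paper does not prove this lemma in the text; it is quoted from \cite{LW-energy-area}, Lemma~4.1 (cf.\ \cite{LW}, Theorem~1.2). Your two-step strategy---localize the global energy minimality to the pointwise condition that $\sigma=\apmd u_{z_0}$ minimizes $L\mapsto I_+^2(\sigma\circ L)/|\det L|$ over $GL_2$, then argue by convex geometry of the norm $\sigma$---is the standard route and is correct in outline, but both steps as written have gaps.

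For the localization: with a fixed inner/outer radius ratio $r$ the annular contribution to the energy difference is of order $\varepsilon^2$, the \emph{same} order as the main term, so it does not vanish after dividing by $|B'_\varepsilon|$. Worse, since $\phi_\varepsilon(B'_\varepsilon)$ must stay inside $B_\varepsilon$, for $L$ with singular values $\lambda,1/\lambda$ you are forced to take $r<1/\lambda$, so you cannot simply send $r\to 1$ afterwards. One clean fix is to build, for each $L\in SL_2$, uniformly biLipschitz self-maps $\psi_n$ of the unit disc, identity on the boundary, with $|\{d\psi_n\ne L\}|\to 0$ (a piecewise-linear laminate does this); rescaling to $B(z_0,\varepsilon)$ and passing to the Lebesgue-point limit then yields the pointwise inequality with no annular residue.

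For the convex geometry: the sentence about ``two orthogonal radii of maximal Euclidean length $1/I_+(\sigma)$'' does not work---those would be the \emph{shortest} radii of $B_\sigma$, and a regular hexagon (which is $SL_2$-minimal because its John ellipse is round by symmetry) has no orthogonal pair of shortest radii; also, the inclusion of the disc of radius $1/I_+(\sigma)$ in $B_\sigma$ is trivially true for any norm and by itself gives only $\J^{\mu^i}(\sigma)\le I_+^2(\sigma)$. The correct argument is that $\min_{L\in SL_2} I_+^2(\sigma\circ L)=\pi/\mathrm{area}(J)=\J^{\mu^i}(\sigma)$, where $J$ is the John ellipse of $B_\sigma$, and the minimum is attained at $L=I$ precisely when $J$ is a round disc. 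Hence the pointwise minimality gives $\J^{\mu^i}(\sigma)=I_+^2(\sigma)$, so $\J^{\mu}(\sigma)\ge q(\mu)\,I_+^2(\sigma)$ by the defining inequality of $q(\mu)$; and John's theorem in dimension~$2$ gives $B_\sigma\subset\sqrt 2\,J$, which is exactly $\sqrt 2$-quasiconformality.
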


A space $X$ satisfies property (ET) if for any map $u\in W^{1,2} (D,X)$ the  seminorm $\apmd u_z$ is either degenerate or comes from some Euclidean product at almost every point $z\in D$.  We refer to \cite{LW} for a discussion of property (ET) and the classes of spaces satisfying it, see also Subsection
\ref{subsec:genarea}. We just recall here that if $X$ satisfies  property (ET) then for any $u\in W^{1,2} (D,X)$ the $\mu$-area of $u$ does not depend
on the definition of area $\mu$.  Moreover, in this case any map $u$ satisfying the assumption of \lref{lem:Qq} is conformal, and therefore the equality
$\Area _{\mu} (u|_{\Omega}) = E_+^2 (u|_{\Omega})$ holds for any subdomain $\Omega $ of $D$.

\subsection{Lengths of circles} We will need a variation of the classical Lemma of Courant-Lebesgue, cf. \cite{LW}, Lemma 7.3.
Let $X$ be a complete metric space and let $u\in N^{1,2} (D,X)$. Let $x\in \bar D$ be an arbitrary point. For $t\leq 1$ denote  by
$S_t=S_t(x)$ the set  $S_t (x) =\{z\in D : |z-x|=t \}$, which is either a circle or a circular arc.  For almost all $t\leq 1$ the restriction $u:S_t\to X$ is
absolutely continuous.  Denote by $l_t$ the length of $u:S_t \to X$.

\begin{lem} \label{lem:ballcircle}
In the above notations
let $1>r>0$ be given and set $e_r =E_+^2 (u|_{B(x,r)\cap D})$.  Then there exists a set $S$   of positive measure in  the interval $[\frac 2 3 r, r]$, such that for any $t\in S$ we have  $l_t^2 \leq 6\pi \cdot e_r$.
\end{lem}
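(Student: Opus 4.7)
The plan is to exploit the upper gradient characterization of Newton--Sobolev maps together with a Fubini argument in polar coordinates and an averaging trick.

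First, let $\rho_u$ denote the minimal generalized gradient of $u$, so that $\int_{B(x,r)\cap D} \rho_u^2\,dz = e_r$. In polar coordinates centered at $x$, Fubini's theorem gives
\begin{equation*}
\int_{2r/3}^{r}\!\Bigl(\int_{S_t}\rho_u^2\,d\mathcal{H}^1\Bigr)dt \;\leq\; \int_{B(x,r)\cap D}\rho_u^2\,dz \;=\; e_r.
\end{equation*}
Next, the family of concentric circles $\{S_t : t\in[\tfrac{2r}{3},r]\}$ is parametrized by a set of radii of positive measure, hence is not $2$-exceptional (by the Fubini-type criterion recalled in Section~\ref{subsec:except}). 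Consequently, for almost every $t\in[\tfrac{2r}{3},r]$ the restriction $u\circ\gamma_t$ is absolutely continuous for an arclength parametrization $\gamma_t$ of $S_t$, and the upper gradient inequality \eqref{eq-n1p} applies, yielding
\begin{equation*}
l_t \;\leq\; \int_{S_t}\rho_u\,d\mathcal{H}^1.
\end{equation*}

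Cauchy--Schwarz on the arc $S_t$ (of length $\mathcal{H}^1(S_t)\leq 2\pi t\leq 2\pi r$) then gives
\begin{equation*}
l_t^2 \;\leq\; \mathcal{H}^1(S_t)\cdot\int_{S_t}\rho_u^2\,d\mathcal{H}^1 \;\leq\; 2\pi r\cdot\int_{S_t}\rho_u^2\,d\mathcal{H}^1.
\end{equation*}
Integrating this over $t\in[\tfrac{2r}{3},r]$ and using the displayed Fubini bound yields
\begin{equation*}
\int_{2r/3}^{r} l_t^2\,dt \;\leq\; 2\pi r \cdot e_r.
\end{equation*}

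Finally, the interval $[\tfrac{2r}{3},r]$ has length $r/3$. If the set $S:=\{t\in[\tfrac{2r}{3},r] : l_t^2\leq 6\pi e_r\}$ had measure zero, then $l_t^2 > 6\pi e_r$ for a.e.\ $t$ in this interval, so
\begin{equation*}
\int_{2r/3}^{r} l_t^2\,dt \;>\; 6\pi e_r\cdot \frac{r}{3} \;=\; 2\pi r\,e_r,
\end{equation*}
contradicting the previous bound. Hence $S$ has positive measure, proving the lemma.

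The only mild subtlety, which I expect to be the main point to verify, is the justification that the family of concentric circles $S_t$ with $t\in[\tfrac{2r}{3},r]$ is not $2$-exceptional, so that both the upper gradient inequality and the identity $l_t=\ell_X(u\circ\gamma_t)$ hold for a.e.\ such $t$; this is a routine Fubini argument analogous to the one stated in Section~\ref{subsec:except} for families of parallel lines.
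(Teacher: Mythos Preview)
Your argument is correct and is essentially the same as the paper's: integrate $\rho_u^2$ in polar coordinates, apply Cauchy--Schwarz (H\"older) on each arc $S_t$ to bound $l_t^2$, and conclude by the averaging/contradiction step on $[\tfrac{2}{3}r,r]$. The only cosmetic difference is that the paper integrates over $[0,r]$ rather than $[\tfrac{2}{3}r,r]$ before running the same contradiction, and it does not pause to spell out the non-exceptionality of the circle family as you do.
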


\begin{proof}
For the generalized gradient  $\rho _u$ of $u$, we integrate $\rho _u ^2$ in polar coordinates around $x$, use  Hoelder's inequality and \eqref{eq-n1p}:
\begin{equation} \label{eq:cour}
e_r = \int _0 ^r \left(\int _{S_t} \rho _u^2 \right)\, dt \geq \int _0 ^r   \frac 1 {2\pi t} \cdot \left(\int _{S_t} \rho _u \right)^2 \, dt \geq \frac 1 {2\pi r} \int _0 ^r l_t ^2 \,dt.
\end{equation}
If $l_t^2 > 6\pi \cdot e_r$ for almost all $t\in [\frac 2 3 r, r]$ then $e_r >  \frac 1 {2\pi r }\cdot \frac r 3 \cdot 6\pi \cdot  e_r$ which is absurd. This proves the claim.
\end{proof}



\section{Quadratic isoperimetric inequality} \label{sec:quadr}
\subsection{Equal traces in irregular domains}
For a bounded domain $\Omega \subset \R^2$, we denote by $W^{1,2} _0 (\Omega)$ the closure of the set of all smooth functions with compact support in $\Omega$ with respect to the usual norm in  the  Sobolev space $W^{1,2} (\Omega ,\R)$. By continuity, for any $u \in W^{1,2}_0 (\Omega )$ the extension of $u$ by $0$ outside of $\Omega$ defines a function in $W^{1,2} (\R^2,\R)$.    We can now define
\emph{equality of traces} even for irregular domains, when traces are not defined.
\begin{defn}  \label{def:eqtraces}
Let $\Omega\subset \R^2$ be a bounded domain and let $X$ be a complete metric space.
 We say that  $f_1,f_2 \in W^{1,2} (\Omega, \R)$ have equal traces if  $f_1-f_2 \in W^{1,2}_0 (\Omega)$. We say that  $u_1,u_2\in W^{1,2} (\Omega, X)$ have equal traces if for every
 $x\in X$ the compositions of $u_1$ and $u_2$ with the distance function to $x$ have equal traces.
\end{defn}

If $\Omega $ is a Lipschitz domain then  a function $u\in W^{1,2} (\Omega, \R)$ is contained in $W^{1,2} _0 (\Omega)$ if and only if $\trace u :\partial \Omega \to \R$ equals $0$ almost everywhere on $\partial \Omega$. Therefore, for maps $u_{1,2} \in W^{1,2} (\Omega, X)$ on a Lipschitz domain $\Omega$ the maps $u_{1,2}$ have equal traces in the sense of the definition above if and only
if $\trace u_1 =\trace u_2$ almost everywhere on $\partial \Omega$.

We have the following extension of the classical gluing statement:
\begin{lem} \label{lem:bestgluing}
 Let $V\subset \Omega$ be bounded domains in $\R^2$. Let $u\in W^{1,2}  (\Omega, X)$ and  $v\in W^{1,2} (V,X)$ be maps into a complete metric space $X$.
Assume that  $v$ and $u|_V$ have equal traces. Then the map $\hat u$ which equals $u$ on $\Omega \setminus V$ and $v$ on $V$ is contained in $W^{1,2} (\Omega, X)$. Moreover, $\hat u$ and $u$ have equal traces.
Finally, for any definition of area $\mu$, we have  $\Area  _{\mu} (\hat u ) - \Area _{\mu} (u) =\Area _{\mu} (v)- \Area _{\mu} (u|_{V})$.
\end{lem}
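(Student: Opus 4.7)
The strategy is to reduce everything to a scalar gluing statement and then pass to metric-space values through distance functions. The key observation, which makes the relaxed trace condition of Definition 4.1 work for irregular $V$, is that extension by zero isometrically embeds $W^{1,2}_0(V)$ into $W^{1,2}_0(\Omega)$: if $\phi \in C^\infty_c(V)$, then its zero-extension to $\Omega$ lies in $C^\infty_c(\Omega)$ with identical $W^{1,2}$-norm (gradient vanishing off $\spt \phi$), and the embedding passes to closures.

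First I would establish the scalar case. Given $f_1 \in W^{1,2}(\Omega,\R)$ and $f_2 \in W^{1,2}(V,\R)$ with $g := f_2 - f_1|_V \in W^{1,2}_0(V)$, let $\tilde g$ denote its zero-extension to $\Omega$. By the observation above, $\tilde g \in W^{1,2}_0(\Omega)$. The glued function $\hat f := f_1$ on $\Omega \setminus V$, $f_2$ on $V$, can then be written as $\hat f = f_1 + \tilde g$, hence $\hat f \in W^{1,2}(\Omega)$ and $\hat f - f_1 = \tilde g \in W^{1,2}_0(\Omega)$, i.e., $\hat f$ and $f_1$ have equal traces.

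Next I would apply this to $d_x \circ u$ and $d_x \circ v$ for every $x \in X$: the assumption on $u$ and $v$ gives $d_x \circ v - d_x \circ (u|_V) \in W^{1,2}_0(V)$, so $d_x \circ \hat u \in W^{1,2}(\Omega)$ with equal traces to $d_x \circ u$. To upgrade this to $\hat u \in W^{1,2}(\Omega,X) = N^{1,2}(\Omega,X)$, I would exhibit a common $L^2$ upper gradient by setting $\rho := \rho_u$ on $\Omega \setminus V$ and $\rho := \rho_v$ on $V$; this lies in $L^2(\Omega)$ and dominates the generalized gradients of all compositions $d_x \circ \hat u$. This simultaneously yields assertion (2).

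For the area identity in (3), I would use the locality of the approximate metric differential: since $\hat u$ coincides with $u$ on the open set $\Omega \setminus V$ and with $v$ on the open set $V$, we have $\apmd \hat u_z = \apmd u_z$ for a.e.\ $z \in \Omega \setminus V$ and $\apmd \hat u_z = \apmd v_z$ for a.e.\ $z \in V$. Splitting the defining integral \eqref{eq:area-sob} of $\Area_\mu(\hat u)$ over these two pieces gives
\[
\Area_\mu(\hat u) = \Area_\mu(u|_{\Omega \setminus V}) + \Area_\mu(v) = \Area_\mu(u) - \Area_\mu(u|_V) + \Area_\mu(v).
\]
The main obstacle is the scalar gluing when $V$ has no boundary regularity: the classical trace-matching argument on $\partial V$ is unavailable, but Definition 4.1 is precisely engineered to sidestep this by phrasing the compatibility condition intrinsically in terms of $W^{1,2}_0$, a class stable under zero-extension.
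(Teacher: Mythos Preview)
Your argument is correct and follows essentially the same route as the paper: reduce to the scalar case via compositions with distance functions, then observe that the zero-extension of an element of $W^{1,2}_0(V)$ lies in $W^{1,2}_0(\Omega)$, so $\hat f - f_1 \in W^{1,2}_0(\Omega)$; the area identity comes from locality of $\apmd$ exactly as you say. Two minor remarks: $\Omega \setminus V$ is closed (not open) in $\Omega$, but locality of approximate metric differentials on measurable sets still gives what you need; and your explicit exhibition of the common $L^2$ bound $\rho$ is a point the paper leaves implicit when it writes ``it suffices to prove the corresponding statements for compositions with distance functions.''
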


\begin{proof}
If $\hat u\in W^{1,2} (\Omega ,X)$ then the approximate metric differentials of $\hat u$ and $u$ must coincide at almost all points of $\Omega \setminus V$. Thus the last claim about the difference
of areas follows  from the  definition \eqref{eq:area-sob} of the  area of Sobolev maps.
In  order to establish the  first two claims it suffices to prove the corresponding statements for compositions of $u$ and $v$ with distance functions to points $x \in X$. Therefore, we may assume that $X=\R$. In this case the difference of $u$ and $\hat u$ equals $0$ outside of $V$ and equals $u-v$ on $V$. Thus  $u-\hat u \in W^{1,2} (\Omega, \R)$, hence $\hat u \in W^{1,2} (\Omega, \R)$.  Moreover, $u-\hat u$ is a
limit in $W^{1,2} (\Omega, \R)$ of a sequence of smooth functions with  support contained in $V$, hence $\hat u$ and $u$ have equal traces.
\end{proof}

\subsection{Conformal changes}
Since the Reshetnyak energy is conformally invariant, it is possible to control the pull-backs of Sobolev maps by conformal diffeomorphisms even if they are not biLipschitz.
\begin{lem} \label{lem:l2enough}
Let $F:\Omega _1\to \Omega _2$ be  a conformal diffeomorphism between bounded domains in $\R^2$. Let $X$ be a complete metric space and $u\in N^{1,2} (\Omega _2,X)$.
Then $v=u\circ F:\Omega _1 \to X$ is measurable and essentially separable valued. The map $v$ is contained in
 $N^{1,2} (\Omega _1 ,X)$ if and only if $v\in L^2 (\Omega _1,X)$.  In that case we have $E_+^2(u)=E_+^2(v)$
 and $\Area_{\mu} (u)=\Area _{\mu} (v)$ for any definition of area $\mu$.
\end{lem}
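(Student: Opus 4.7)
My plan is to use that at each point $z\in\Omega_1$ the derivative $DF_z$ factors as $\lambda(z)\cdot O_z$ with $O_z$ orthogonal and $\lambda(z):=|DF_z|>0$, and that by the classical change-of-variables formula $dy = \lambda(z)^2\,dz$ under $y=F(z)$. These two facts are the only geometric inputs; they will let me transfer upper gradients, the Reshetnyak energy, and Jacobians between $\Omega_1$ and $\Omega_2$.

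First I would dispatch the easy parts. Since $F$ is a homeomorphism and $u$ is measurable with essentially separable image, the same holds for $v=u\circ F$. The "only if" direction of the Newton--Sobolev statement is immediate from $N^{1,2}\subset L^2$. The real content lies in the "if" direction, so assume $v\in L^2(\Omega_1,X)$. My candidate upper gradient is $\rho_v(z):=\lambda(z)\cdot \rho_u(F(z))$, where $\rho_u$ is the minimal upper gradient of $u$; its square integrability is immediate from one application of change of variables. To verify the upper-gradient inequality I must transfer the $2$-exceptional family $\mathcal E\subset\Omega_2$ outside of which $\rho_u$ dominates $\ell_X(u\circ\gamma)$. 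Given a Borel witness $f\in L^2(\Omega_2)$ for $\mathcal E$, I would take $g(z):=\lambda(z)f(F(z))$, which is in $L^2(\Omega_1)$ and satisfies $\int_{\tilde\gamma}g=\int_{F\circ\tilde\gamma}f$ because $|\dot{(F\circ\tilde\gamma)}|=\lambda(\tilde\gamma)|\dot{\tilde\gamma}|$. Thus $F^{-1}(\mathcal E)$ is $2$-exceptional in $\Omega_1$, and applying the upper-gradient inequality for $u$ along $F\circ\tilde\gamma$ yields $\ell_X(v\circ\tilde\gamma)\le\int_{\tilde\gamma}\rho_v$ for $2$-a.e.\ curve $\tilde\gamma$. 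Hence $v\in N^{1,2}(\Omega_1,X)$ with $E_+^2(v)\le\|\rho_v\|_2^2=E_+^2(u)$; repeating the argument for $F^{-1}$ gives the reverse inequality and the claimed energy identity.

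For the area statement I would invoke the chain rule for approximate metric differentials (valid at a.e.\ $z$, since $F$ is smooth): $\apmd v_z(w)=\apmd u_{F(z)}(DF_z w)=\lambda(z)\cdot\apmd u_{F(z)}(O_z w)$. Because any definition of area $\mu$ is $2$-homogeneous in the seminorm and because the $\mu$-area measure on a normed plane is a constant multiple of Lebesgue measure and is therefore rotation invariant, this pointwise formula forces $\J^\mu(\apmd v_z)=\lambda(z)^2\,\J^\mu(\apmd u_{F(z)})$. A second application of change of variables then gives $\Area_\mu(v)=\Area_\mu(u)$.

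The one genuinely technical step is the rigorous pullback of exceptional curve families under $F$, since Newton--Sobolev maps are defined only up to their behaviour along $2$-a.e.\ curve; this is what forces the use of the explicit witness $g=\lambda\cdot(f\circ F)$ and the conformal identity for arclength. Everything else reduces to linear algebra on $DF_z$, the invariance properties of $\J^\mu$, and two applications of the Euclidean change-of-variables formula with Jacobian $\lambda^2$.
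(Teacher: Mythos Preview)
Your proof is correct and follows essentially the same strategy as the paper: both use the candidate upper gradient $\hat\rho(z)=\lambda(z)\,\rho_u(F(z))$ (equivalently $\hat\rho^2=\rho_u^2(F(\cdot))\,|\det DF|$), the fact that $F$ preserves $2$-exceptional curve families, and change of variables with Jacobian $\lambda^2$. The paper's version is slightly more modular---it first notes $v\in N^{1,2}_{loc}$ since $F$ is locally biLipschitz, cites \cite{LW}, Lemma~6.4, for the minimal gradient on compactly contained subdomains, and then globalizes---whereas you verify the upper-gradient inequality and the area identity (via the chain rule for $\apmd$ and the transformation law for $\J^\mu$) directly; the underlying computation is the same.
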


\begin{proof}
Since $F$ is locally biLipschitz, the
 composition with $F$ defines a bijection $N^{1,2} _{loc} (\Omega _2 ,X)\to N^{1,2} _{loc} (\Omega _1,X)$. In particular, $v:\Omega _1\to X$ is measurable and essentially separable valued. The map $F:\Omega_1\to \Omega _2$ preserves $2$-exceptional families of curves.
Let $\rho \in L^2 (\Omega _2)$ be the minimal generalized gradient of $u$.
Consider $\hat \rho \in L^2 (\Omega _1)$ defined by $\hat \rho ^2(z):=\rho ^2(F(z)) \cdot |det DF(z)|$.
Then for any $O\subset \bar O\subset \Omega _1$, the function  $\hat \rho$ is the minimal generalized gradient of $v|_O \in W^{1,2} (O,X)$,
cf. \cite{LW}, Lemma 6.4.
Therefore, $\hat \rho \in L^2 (\Omega)$ is the minimal generalized gradient of $u\circ F$ in the sense of
\eqref{eq-n1p}, since this is true for all subdomains $O\subset \bar O\subset \Omega _1$.
Thus $v \in N^{1,2} (\Omega _1 ,X)$ if and only if $v\in L^2 (\Omega _1,X)$.  The last equality statement for energies and areas follows from the corresponding statements for the restrictions to
subdomains $O\subset \bar O\subset \Omega _1$, \cite{LW}, Lemma 6.4.
\end{proof}

The condition $u\circ F\in L^2 (\Omega _1,X)$ is automatically fulfilled if the (essential) image of $u$ is contained in a bounded set,
in particular, if $u$ has a continuous extension to $\bar \Omega _2$. In general, $u\circ F$ need not be contained in $L^2 (\Omega_1, X)$.
However, this condition turns out to depend only on the traces:

\begin{lem} \label{lem:pullback}
Let  $F:\Omega _1\to \Omega _2$ be  a conformal diffeomorphism between bounded domains in $\R^2$. Let $u^{\pm}$ be two maps
in $N^{1,2} (\Omega _2,X)$ which have equal traces. If $u^+ \circ F \in N^{1,2} (\Omega_1,X)$ then  $ u^- \circ F \in N^{1,2} (\Omega_1,X)$
and the compositions $u^{\pm} \circ F$ have equal traces.
\end{lem}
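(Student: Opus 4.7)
The plan is to reduce the statement to the real-valued case by composing with distance functions to points in $X$, and then to show that $W^{1,2}_0$ is preserved under pullback by $F$.

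Pick a point $x_0\in X$ and set $g^\pm := d_X(\cdot , x_0)\circ u^\pm \in W^{1,2}(\Omega_2)$. By the definition of equal traces (\defref{def:eqtraces}), the difference $h:= g^+ - g^-$ lies in $W^{1,2}_0(\Omega_2)$. The first main claim is that $h\circ F \in W^{1,2}_0(\Omega_1)$. To see this, choose $\phi_n\in C_c^\infty(\Omega_2)$ with $\phi_n\to h$ in $W^{1,2}(\Omega_2)$. Since $F$ is a diffeomorphism, $\phi_n\circ F \in C_c^\infty(\Omega_1)$. Conformality of $F$ in dimension two gives the pointwise identity $|\nabla(\phi \circ F)(z)|^2 = |\nabla\phi(F(z))|^2 \cdot |\det DF(z)|$ for any smooth $\phi$, whence
\[
\int_{\Omega_1}|\nabla(\phi_n\circ F)-\nabla(\phi_m\circ F)|^2\,dz \;=\; \int_{\Omega_2}|\nabla\phi_n-\nabla\phi_m|^2\,dw\;\longrightarrow\;0.
\]
Because $\Omega_1$ is bounded, the Poincar\'e inequality on $W^{1,2}_0(\Omega_1)$ converts Dirichlet-Cauchy into $W^{1,2}$-Cauchy, so $\phi_n\circ F$ converges in $W^{1,2}(\Omega_1)$ to a limit $\tilde h\in W^{1,2}_0(\Omega_1)$. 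Passing to a subsequence gives $\phi_n\to h$ a.e.\ on $\Omega_2$, and composition with the diffeomorphism $F$ preserves a.e.\ convergence, so $\tilde h = h\circ F$ a.e., establishing the claim.

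Now apply the claim with the given $x_0$. By assumption $u^+\circ F\in N^{1,2}(\Omega_1,X)$, so $g^+\circ F\in W^{1,2}(\Omega_1)$ and in particular $g^+\circ F\in L^2(\Omega_1)$. Since $h\circ F=g^+\circ F - g^-\circ F\in W^{1,2}_0(\Omega_1)\subset L^2(\Omega_1)$, we conclude $g^-\circ F\in L^2(\Omega_1)$, which is exactly the statement that $u^-\circ F\in L^2(\Omega_1,X)$. By \lref{lem:l2enough} applied to $u^-$, this forces $u^-\circ F\in N^{1,2}(\Omega_1,X)$.

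For the equality of traces of $u^+\circ F$ and $u^-\circ F$, repeat the argument of the first paragraph with $x_0$ replaced by an arbitrary point $x\in X$: the function $h_x:=d_X(\cdot,x)\circ u^+-d_X(\cdot,x)\circ u^-$ lies in $W^{1,2}_0(\Omega_2)$ by hypothesis, and the same approximation-plus-Poincar\'e argument yields $h_x\circ F\in W^{1,2}_0(\Omega_1)$. Since $h_x\circ F = d_X(\cdot,x)\circ(u^+\circ F)-d_X(\cdot,x)\circ(u^-\circ F)$, the definition of equal traces is verified.

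The main obstacle is that $F$ is only assumed conformal, not biLipschitz up to the boundary, so one cannot transport boundary values directly; the trick is that the Dirichlet seminorm is conformally invariant in dimension two and that $W^{1,2}_0$ is characterized purely via interior approximation by $C_c^\infty$ functions, whose supports are automatically compact in $\Omega_1$ under a diffeomorphism. Coupling this with the Poincar\'e inequality on the bounded domain $\Omega_1$ is what upgrades Dirichlet-convergence to full $W^{1,2}$-convergence, closing the argument.
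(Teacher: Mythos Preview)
Your proof is correct and follows essentially the same route as the paper: reduce to $X=\R$ via distance functions, then show that pullback by the conformal $F$ maps $W^{1,2}_0(\Omega_2)$ into $W^{1,2}_0(\Omega_1)$ using conformal invariance of the Dirichlet integral together with the Poincar\'e (Sobolev) inequality on the bounded domain $\Omega_1$, and conclude via \lref{lem:l2enough}. The paper's argument is terser but identical in substance; your explicit identification of the limit via a.e.\ convergence is a clean way to fill in the detail the paper leaves implicit.
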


\begin{proof}
Taking the composition with  distance functions to  points $x\in X$, we may assume that $X=\R$. Hence, considering the difference $u^+-u^-$, it suffices to prove that $v\circ F \in W^{1,2}_0 (\Omega _1)$ for any $v\in W^{1,2}  _0 (\Omega _2)$.  For any smooth function $v$ with compact support
in $\Omega _2$ the composition $v\circ F$ is smooth, has compact support in $\Omega _1$ and the same energy as $v$.  Since $v$ has compact support in $\Omega _1$ the $L^2$-norm
of $v\circ F$ is bounded by $K\cdot E_+^2 (v\circ F)$ for some constant $K=K(\Omega _1)$, due to the Sobolev inequality. This shows that the composition with $F$ defines a continuous map
$W_0^{1,2} (\Omega _2)\to W_0^{1,2} (\Omega _1)$ and finishes the proof.
\end{proof}

We infer that restrictions of area minimizers to subdiscs minimize the area as well:
\begin{cor} \label{cor:minmin}
Let the image of $u \in N^{1,2} (D, X)$ be  contained in a  bounded set.  Let $F:D\to O$ be a conformal diffeomorphism onto a subdomain $O\subset D$. Then $v:=u\circ F \in N^{1,2} (D,X)$. If $u$
has minimal $\mu$-area among all maps in $N^{1,2} (D ,X)$ with the same trace as $u$  then
 $v$ has minimal $\mu$-area among all maps in $N^{1,2} (D,X)$ with the same trace as $v$.
\end{cor}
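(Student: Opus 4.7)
The first claim that $v\in N^{1,2}(D,X)$ follows immediately from \lref{lem:l2enough}: since $u$ has bounded image, so does $v=u\circ F$, hence $v\in L^2(D,X)$ and therefore $v\in N^{1,2}(D,X)$. For the area-minimizing property, the plan is to start with an arbitrary competitor $w\in N^{1,2}(D,X)$ of $v$ (that is, $w$ and $v$ have equal traces), transplant $w$ through $F^{-1}$ to a map $\hat w$ on $O$, glue $\hat w$ with $u$ outside $O$ to obtain a competitor $\tilde u$ of $u$, and then invoke the minimality of $u$ to deduce $\Area_\mu(w)\geq \Area_\mu(v)$.

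The first substantive step is to show $\hat w:=w\circ F^{-1}\in N^{1,2}(O,X)$ and that $\hat w$ has the same trace as $u|_O$. I will apply \lref{lem:pullback} to the conformal diffeomorphism $F^{-1}:O\to D$, with $u^+:=v$ and $u^-:=w$. These have equal traces on $D$ by the choice of $w$, and the hypothesis $u^+\circ F^{-1}\in N^{1,2}(O,X)$ holds automatically, since $v\circ F^{-1}=u|_O$. The lemma then delivers $\hat w\in N^{1,2}(O,X)$ together with the equality of traces of $\hat w$ and $u|_O$ in the sense of \dref{def:eqtraces}. This is the main obstacle in the argument: without a bounded-image hypothesis on $w$, one cannot apply \lref{lem:l2enough} directly to $w\circ F^{-1}$, and \lref{lem:pullback} is precisely the tool that bypasses this by using the bounded reference map $v$ as an anchor.

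For the second step, define $\tilde u:=u$ on $D\setminus O$ and $\tilde u:=\hat w$ on $O$. Then \lref{lem:bestgluing} yields $\tilde u\in W^{1,2}(D,X)$, that $\tilde u$ and $u$ have equal traces, and the identity
$$\Area_\mu(\tilde u)-\Area_\mu(u)=\Area_\mu(\hat w)-\Area_\mu(u|_O).$$
By the conformal invariance of $\mu$-area (\lref{lem:l2enough} applied to $F$ and to $F^{-1}$), $\Area_\mu(\hat w)=\Area_\mu(w)$ and $\Area_\mu(u|_O)=\Area_\mu(v)$. The minimality of $u$ forces the left-hand side to be nonnegative, so $\Area_\mu(w)\geq \Area_\mu(v)$. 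Since $w$ was arbitrary, $v$ minimizes $\mu$-area among all maps in $N^{1,2}(D,X)$ sharing its trace.
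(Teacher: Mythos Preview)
Your proof is correct and follows essentially the same route as the paper's: both use \lref{lem:l2enough} for the first claim, transplant a competitor via $F^{-1}$ using \lref{lem:pullback} (with $v\circ F^{-1}=u|_O$ as the anchor), glue via \lref{lem:bestgluing}, and compare areas by conformal invariance. The only cosmetic difference is that the paper argues by contradiction (assuming a competitor with strictly smaller area) while you give the direct inequality $\Area_\mu(w)\geq\Area_\mu(v)$ for an arbitrary competitor.
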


\begin{proof}
From \lref{lem:l2enough} and the subsequent remark we deduce $v\in N^{1,2} (D,X)$.  Assume that $v_+ \in N^{1,2} (D,X)$
has the same trace and smaller $\mu$-area than $v$. Then $w:=v\circ F^{-1}$ is contained in $N^{1,2} (O,X)$ and $u|_O$ and $w$ have equal traces
 by \lref{lem:pullback}. Moreover, $w$ has smaller $\mu$-area than the corresponding restriction of $u$.
Now, define $\hat u$ to be equal $u$ on $D \setminus O$ and equal $w$ on $O$. The corresponding map $\hat u$ is in $W^{1,2} (D, X)$, has
 the same trace as $u$ and smaller $\mu$-area, due to \lref{lem:bestgluing}.   This contradicts the minimality assumption.
\end{proof}

\subsection{Quadratic isoperimetric inequality}
Let $\mu$ be a fixed definition of area.  A space $X$ admits a $(C,l_0)$-isoperimetric inequality with respect to $\mu$, if for any \emph{Lipschitz} curve
$\gamma :S^1\to X$ of length $l \leq l_0$ there exists some $u\in W^{1,2} (D,X)$ with trace $\gamma$ and $\Area_{\mu} (u)\leq Cl^2$.
Then   $X$ admits a $(2C,l_0)$-isoperimetric inequality with respect to any other  definition of area $\mu'$.
However, $\mu$-minimal and $\mu'$-minimal discs may be completely  different (\cite{LW}, Prop. 11.6).

In order to avoid many reparametrizations we  state  the following:
\begin{lem} \label{no-rep}
Let $X$ admit a  $(C,l_0)$-isoperimetric inequality with respect to $\mu$.
Let $T$ be a biLipschitz Jordan curve in $\R^2$ and let $\Omega$ be the Jordan domain bounded by $T$.  If $\gamma \in W^{1,2} (T,X)$ is  a curve of length $l\leq l_0$ then
there exists a map $u\in W^{1,2} (\Omega ,X)$ with $\trace u =\gamma$ and such that
$\Area_{\mu} (u) \leq C\cdot l^2$.
\end{lem}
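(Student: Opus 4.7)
The plan is to reduce to the standard case $(D,S^1)$ via a biLipschitz change of domain, and then extend the isoperimetric hypothesis from Lipschitz to $W^{1,2}$ boundary data by an approximation argument.

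For the first reduction, since $T$ is a chord-arc curve, $\Omega$ is a Lipschitz domain in the sense of the paper, and by Tukia's result (as cited earlier) there is a biLipschitz homeomorphism $\bar\phi:\bar D\to\bar\Omega$. For any $v\in W^{1,2}(D,X)$, the composition $v\circ\bar\phi^{-1}$ lies in $W^{1,2}(\Omega,X)$, with trace equal to $\trace v$ composed with $(\bar\phi|_{S^1})^{-1}$. Using the transformation rule $\J^\mu(s\circ A)=|\det A|\cdot\J^\mu(s)$ for the $\mu$-Jacobian, together with the chain rule $\apmd(v\circ\bar\phi^{-1})_{\bar\phi(w)}=\apmd v_w\circ D\bar\phi(w)^{-1}$ (valid a.e.) and the change-of-variables formula, a direct computation gives $\Area_\mu(v\circ\bar\phi^{-1})=\Area_\mu(v)$. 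Consequently, it suffices to find $v\in W^{1,2}(D,X)$ with trace $\gamma':=\gamma\circ\bar\phi|_{S^1}$ (still of length $l$) and $\Area_\mu(v)\le Cl^2$.

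Next, the approximation. Since $W^{1,2}(S^1,X)$ embeds into $C^{0,1/2}$, the curve $\gamma'$ is continuous with well-defined length $l$. Approximate $\gamma'$ uniformly by Lipschitz curves $\gamma_n:S^1\to X$ with $\ell(\gamma_n)\le l$ and $\ell(\gamma_n)\to l$, for instance by quasi-geodesic interpolation between densely sampled values of $\gamma'$ (so that the piecewise-geodesic length is bounded above by that of $\gamma'$). For each $n$ we have $\ell(\gamma_n)\le l_0$, so the isoperimetric hypothesis yields fillings $v_n\in W^{1,2}(D,X)$ with $\trace v_n=\gamma_n$ and $\Area_\mu(v_n)\le C\ell(\gamma_n)^2\le Cl^2$. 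By \lref{lem:Qq}, after replacing $v_n$ by its energy-minimizing biLipschitz reparametrization (which preserves trace and area), we may assume $v_n$ is $\sqrt 2$-quasiconformal, yielding the uniform energy bound $E_+^2(v_n)\le 2Cl^2$.

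Finally, one passes to the limit. The $\sqrt 2$-quasiconformality together with the uniform energy bound and the Courant--Lebesgue Lemma~\lref{lem:ballcircle} gives, via the standard Morrey-type argument underlying H\"older regularity of minimal discs under isoperimetric hypotheses, equi-H\"older continuity of $\{v_n\}$ on compact subsets of $D$. Combined with the uniform boundary convergence $\gamma_n\to\gamma'$, Arzel\`a--Ascoli (valid for maps into the complete space $X$) produces a locally uniformly convergent subsequence whose limit $v\in W^{1,2}(D,X)$ has $\trace v=\gamma'$; lower semicontinuity of $\Area_\mu$ under such convergence (via Fatou applied to the approximate metric differentials, which converge weakly to those of $v$) then gives $\Area_\mu(v)\le Cl^2$. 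The main obstacle is this compactness step, since $X$ is only assumed complete and not proper: it is the Morrey-type H\"older estimate coming from the combination of quasi-conformality and the isoperimetric inequality that supplies the equi-continuity required to invoke Arzel\`a--Ascoli, and completeness then ensures that the limit lies in $X$.
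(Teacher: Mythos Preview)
Your first reduction via a biLipschitz change of domain is correct and is exactly what the paper does. The difficulty lies entirely in the case $T=S^1$, and here your approximation–compactness strategy has genuine gaps.

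First, the construction of the Lipschitz approximants $\gamma_n$ by ``quasi-geodesic interpolation'' presupposes that $X$ is a length (or at least approximately geodesic) space. The lemma assumes only that $X$ is complete with a quadratic isoperimetric inequality, so this step is not justified. Second, obtaining a $\sqrt 2$-quasiconformal filling by passing to an energy-minimizing biLipschitz reparametrization of $v_n$ requires the existence of such a minimizer; \lref{lem:Qq} gives a conclusion \emph{for} a minimizer but does not produce one, and existence typically needs properness of $X$.

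Most seriously, your limit step does not go through. Arzel\`a--Ascoli for maps into a metric target requires not only equicontinuity but also pointwise precompactness of $\{v_n(z)\}$. Equi-H\"older bounds give the former, but without properness of $X$ there is no reason the images lie in a compact set, and ``completeness ensures that the limit lies in $X$'' does not address this: you first need a convergent subsequence to have a limit at all. The lower-semicontinuity claim is likewise not substantiated in this generality.

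The paper avoids all of this: after the biLipschitz reduction it simply cites \cite{LW}, Lemma~8.5, whose proof is a direct construction rather than a compactness argument. One takes the constant-speed reparametrization $\gamma_0$ of $\gamma$ (which is Lipschitz of the same length $l$), fills $\gamma_0$ with a disc of $\mu$-area at most $Cl^2$ using the isoperimetric hypothesis, and then attaches a Sobolev annulus of \emph{zero} area connecting $\gamma_0$ to $\gamma$; such an annulus exists precisely because $\gamma\in W^{1,2}(S^1,X)$ (cf.\ Subsection~\ref{subsec:filcurve}). Gluing gives the required filling with the correct trace and area bound, with no limiting procedure and no extra hypotheses on $X$.
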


\begin{proof}
The domain $\Omega$ is biLipschitz homeomorpic to $\bar D$ (\cite{Tuk80}). Since compositions with biLipschitz homeomorphisms preserve the class of Sobolev maps, lengths and areas, the statement follows from the corresponding statement in the case $T=S^1$. In the case $T=S^1$ the statement was proved  in \cite{LW}, Lemma 8.5.
\end{proof}

\subsection{Filling area of curves} \label{subsec:filcurve}
Let $\gamma:S^1\to X$ be a continuous curve in  a complete metric space $X$. We define the \emph{filling area of $\gamma$ in $X$ with respect to $\mu$} as
$$\Fill (\gamma )=\Fill _{X,\mu} (\gamma ) :=\inf \{ Area_{\mu} (u)  \; |  \; u \in W^{1,2} (D,X)  \;  ,\;tr (u)=\gamma \}.$$

We let $I$ denote the unit interval and  $A$ denote the annulus $S^1\times I$. Any Sobolev map
$v\in W^{1,2} (A,X)$ has as a trace a map defined on the boundary of $A$ which consists of two copies of $S^1$.
Thus, the  trace of $v$ consists of two maps $\gamma _{0,1} \in L^2(S^1,X)$ and we say that $u$ is a Sobolev annulus connecting
 $\gamma _0$ with $\gamma _1$. Assume that  the Sobolev annulus $v$ connects two continuous curves $\gamma _0$ and $\gamma _1$. Then
gluing $v$ to a disc $u$ arising in the definition of the filling area  and reparametrizing the arising map   we deduce
$|\Fill (\gamma _0)-\Fill (\gamma _1)| \leq \Area_{\mu} (v)$, cf. \cite{LW-harmonic}, Section 3.2.  Any continuous curve $\gamma \in W^{1,2} (S^1,X)$
can be connected by a Sobolev annulus contained in the image of $\gamma$ to a constant speed parametrization $\gamma _0$  of $\gamma$.
Thus, $\Fill (\gamma )=\Fill (\gamma _0)$, \cite{LW-harmonic}, Lemma 2.6.

If $\gamma:S^1\to X$ is any rectifiable curve  with constant speed parametrization $\gamma _0$, then $\Fill (\gamma)$ might be infinite, while $\Fill (\gamma _0)$ is finite. Even if $X=\R^2$ there exist \emph{absolutely continuous} curves $\gamma:S^1\to \R^2$ which do not bound any Sobolev disc at all, since they do not belong to the fractional Sobolev space $W^{\frac 1 2,2} (S^1,\R^2)$, cf. \cite{Leoni}, \cite{Chiron}.

Finally,  $\Fill_{X,\mu} (\gamma) \leq Cl^2$  holds for any Lipschitz curve  $\gamma:S^1\to X$ of length $l\leq l_0$ if and only if
$X$ admits
the $(C',l_0)$-isoperimetric inequality with respect to  $\mu$ for all $C'>C$.

\subsection{Filling irregular  curves}
We are going to prove
\begin{thm} \label{thmfil}
Let $X$ be a space with a $(C,l_0)$-isoperimetric inequality. Assume that
$\gamma :S^1\to X$ is a curve of length $l\leq l_0$ which is the trace of a map  $u\in W^{1,2}(D,X)$.
Then $\Fill (\gamma ) \leq Cl^2$.
\end{thm}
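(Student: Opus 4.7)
The strategy is to reduce to the case of a Lipschitz boundary curve via the constant-speed reparametrization, apply the isoperimetric inequality directly there, and transfer the resulting filling back to $\gamma$ by an annular construction that maps into the (rectifiable, hence $\mathcal H^2$-negligible) image of $\gamma$.

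First, let $\gamma_0\colon S^1\to X$ be the constant-speed reparametrization of $\gamma$, which is $(l/2\pi)$-Lipschitz of length $l$, and write $\gamma=\gamma_0\circ\phi$ for a continuous, weakly monotone surjection $\phi\colon S^1\to S^1$ obtained from the arclength function of $\gamma$. Since $\gamma_0$ is Lipschitz of length $l\le l_0$, \lref{no-rep} supplies a Sobolev disc $w\in W^{1,2}(D,X)$ with trace $\gamma_0$ and $\Area_{\mu}(w)\le Cl^2$; the boundary regularity of minimal discs with Lipschitz trace from \cite{LW} lets us assume $w$ extends continuously, and in particular boundedly, to $\bar D$.

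Next, I would produce a Sobolev annulus $a$ on $A=S^1\times[0,1]$ whose inner trace is $\gamma_0$, whose outer trace is $\gamma$, and whose image lies in the $1$-rectifiable set $\gamma_0(S^1)\subset X$; since this image has zero $\mathcal H^2$-measure, $\Area_{\mu}(a)=0$ for any definition of area. The natural candidate is $a(\theta,s)=\gamma_0(\phi_s(\theta))$ with $\phi_s\colon S^1\to S^1$ interpolating continuously between the identity at $s=0$ and $\phi$ at $s=1$ (e.g.\ $\phi_s(\theta)=(1-s)\theta+s\,\tilde\phi(\theta)$ in a lifted coordinate, where $\tilde\phi\colon\R\to\R$ is a monotone lift of $\phi$). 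Realizing $A$ as the subdomain $\{\tfrac12<|z|<1\}\subset D$ and gluing $a$ to a conformally rescaled copy of $w$ on $\{|z|\le\tfrac12\}$ via \lref{lem:bestgluing} then produces $v\in W^{1,2}(D,X)$ with trace $\gamma$ and $\Area_{\mu}(v)\le\Area_{\mu}(w)\le Cl^2$, proving $\Fill(\gamma)\le Cl^2$.

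The main obstacle is verifying that the annular interpolation $a$ is actually a Sobolev map, i.e.\ that its Reshetnyak energy is finite. A direct computation shows this energy is controlled, up to a factor depending on the Lipschitz constant of $\gamma_0$, by $\int_0^{2\pi}(\phi'(\theta)^2+|\tilde\phi(\theta)-\theta|^2)\,d\theta$; the second integrand is uniformly bounded since $\tilde\phi$ differs from the identity by a bounded function, but the first requires $\phi'\in L^2(S^1)$, which is equivalent to $\gamma\in W^{1,2}(S^1,X)$. This is precisely where the hypothesis that $\gamma$ is the trace of a Sobolev map enters the argument: combined with the finite length of $\gamma$, the fractional boundary regularity $H^{1/2}$ of Sobolev traces, together with the monotonicity of $\phi$, should upgrade the a priori $W^{1,1}$-bound on $\phi$ (forced by finite length alone) to the $W^{1,2}$-bound needed to make $a$ Sobolev and to complete the construction.
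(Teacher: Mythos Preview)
Your approach has a genuine gap at exactly the point you yourself flag as ``the main obstacle'': the claim that being the trace of a $W^{1,2}$-disc together with rectifiability forces $\gamma\in W^{1,2}(S^1,X)$ (equivalently, $\phi'\in L^2$). This implication is false already for $X=\R$. Take $\gamma(\theta)=\sqrt{\theta}$ near $\theta=0$, extended smoothly to a closed curve. Then $\gamma'(\theta)=\tfrac{1}{2}\theta^{-1/2}$ lies in $L^1$ but not in $L^2$, so $\gamma$ is rectifiable yet $\gamma\notin W^{1,2}(S^1,\R)$. On the other hand, a direct computation of the Gagliardo seminorm shows
\[
\iint_{[0,1]^2}\frac{|\sqrt{x}-\sqrt{y}|^2}{|x-y|^2}\,dx\,dy
=\iint_{[0,1]^2}\frac{dx\,dy}{(\sqrt{x}+\sqrt{y})^2}<\infty,
\]
so $\gamma\in H^{1/2}(S^1,\R)$ and hence $\gamma$ \emph{is} the trace of some $u\in W^{1,2}(D,\R)$. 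Thus the ``upgrade'' you are hoping for does not exist, and no interpolation annulus $a(\theta,s)=\gamma_0(\phi_s(\theta))$ with image in $\gamma(S^1)$ can be Sobolev in this situation. The paper in fact notes in Subsection~\ref{subsec:filcurve} that when $\gamma\in W^{1,2}(S^1,X)$ the zero-area annulus to $\gamma_0$ is available and $\Fill(\gamma)=\Fill(\gamma_0)$ is immediate; the entire content of the theorem is precisely the case your argument cannot reach.

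The paper's proof (\lref{lem:independent}) avoids this by not trying to stay inside $\gamma(S^1)$. Instead it uses the \emph{map} $u$, not merely its trace: one restricts $u$ to a thin annular neighbourhood of $S^1$ of small energy, uses the Courant--Lebesgue lemma (\lref{lem:ballcircle}) to find short circular arcs $c_i$ near finitely many boundary points, and then replaces the pieces of $u$ between $S^1$ and these arcs by small Sobolev fillings supplied by the isoperimetric inequality itself (\lref{no-rep}). The resulting annulus between $\gamma$ and a Lipschitz reparametrization of $\gamma_0$ has genuinely positive but arbitrarily small $\mu$-area, which is all that is needed. The two-dimensional Sobolev information in $u$ is used essentially; the trace alone does not suffice.
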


This statement is not  obvious  even if $X$ is Euclidean space $\R ^n$. However, in this case it is a consequence of the following analytic fact, cf. \cite{Dierkes-et-al10}, p.283.
For the harmonic map $v:D\to \R^n$ with trace $\gamma$, the length of images of concentric circles is non-decreasing as a function of the radius.
Thus   one can find a small filling of $\gamma$  by taking a small annulus out of $v$ which connects $\gamma$ to a smooth curve $\eta$ of length not larger than $l$ and
then fill this smooth curve $\eta$. In general spaces $X$, it might very well happen, that all curves outside of $\gamma$ are much longer than $\gamma$.
Instead, we prove the following Lemma which immediately implies \tref{thmfil}:

\begin{lem} \label{lem:independent}
Let $X$ be  a space with a $(C,l_0)$-isoperimetric inequality. Let the rectifiable curve $\gamma:S^1\to X$ be the trace of a map  $u\in W^{1,2}(D,X)$.
Then there exist  Sobolev annuli $\hat u \in W^{1,2} (A,X)$ of arbitrary small  area which connect
$\gamma$ and its arclength parametrization $\gamma _0$. In particular, $\Fill_{X,\mu} (\gamma)=\Fill_{X,\mu} (\gamma _0)$. If $X$ is proper then the annuli $\hat u$
can be chosen  to be continuous and to satisfy Lusin's property (N).
\end{lem}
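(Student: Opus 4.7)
The plan is to build $\hat u$, for any $\epsilon>0$, as a concatenation of two Sobolev annuli joined along a common intermediate curve: an outer collar carrying the entire $\epsilon$-worth of area, and an inner reparametrization layer contributing no area at all. Write $\gamma=\gamma_0\circ\tilde s$, where $\gamma_0$ is the Lipschitz constant-speed parametrization and $\tilde s:S^1\to S^1$ is continuous, weakly monotone of degree one. The naive strategy would be to set $\hat u=\gamma_0\circ\Phi$ for some $\Phi\in W^{1,2}(A,S^1)$ with traces $\tilde s$ and the identity; such a map would automatically have image in the $1$-rectifiable set $\gamma(S^1)$, hence $\mu$-area zero by \lref{lem:area} (the approximate metric differentials would be degenerate seminorms almost everywhere). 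But this fails directly because $\tilde s$ need not lie in $W^{1/2,2}(S^1)$, so admits no $W^{1,2}$ extension.

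For the outer collar I use the hypothesis that $\gamma$ is the trace of $u\in W^{1,2}(D,X)$. Choose $r\in(0,1)$ close to $1$ so that $\Area_\mu(u|_{A_r})<\epsilon$, with $A_r=\{r<|z|<1\}$; such $r$ exists by absolute continuity of the integral in \eqref{eq:area-sob}. By Fubini, for almost every such $r$ the circle trace $\eta_r:=u|_{\partial B_r(0)}$ is absolutely continuous and belongs to $W^{1,2}(S^1,X)$. Conformally reparametrizing $A_r$ onto the outer half of $A$ (an operation preserving the Sobolev class and area by \lref{lem:l2enough}) yields a Sobolev annulus from $\gamma$ to $\eta_r$ of area $<\epsilon$.

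For the reparametrization layer I produce an area-zero Sobolev annulus from $\eta_r$ toward $\gamma_0$. The key point is that the normalized length function $\tilde s_r$ of the now-regular curve $\eta_r$ lies in $W^{1,2}(S^1,\R)$, so the linear interpolation $\sigma_t(\theta)=(1-t)\tilde s_r(\theta)+t\theta$ gives $\sigma\in W^{1,2}(A,S^1)$. The composition $(\eta_r)_0\circ\sigma$ is then a Sobolev annulus from $\eta_r$ to its own constant-speed parametrization $(\eta_r)_0$, and its image sits in the $1$-rectifiable set $\eta_r(S^1)$; by \lref{lem:area} its $\mu$-area is zero.

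The hard part will be the final step, connecting $(\eta_r)_0$ to $\gamma_0$, since these are parametrizations of a priori different Lipschitz curves in $X$. I would address this by choosing $r$ carefully, using the Courant--Lebesgue-type estimate of \lref{lem:ballcircle} applied near boundary points to arrange simultaneously that $\Area_\mu(u|_{A_r})$ is small, that $\ell(\eta_r)$ is close to $\ell(\gamma)$, and that $\eta_r$ and $\gamma$ lie in a common small neighborhood of $\gamma(S^1)$; then a short Lipschitz homotopy between $(\eta_r)_0$ and $\gamma_0$ inside such a neighborhood contributes arbitrarily small extra area. A cleaner alternative is to absorb the parametrization mismatch into the outer collar directly: precompose $u|_{A_r}$ with a conformal self-diffeomorphism, justified by \lref{lem:l2enough} and \lref{lem:pullback}, so that the outer trace is already $\gamma_0$, bypassing the bridging issue entirely. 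The three pieces are glued into $\hat u\in W^{1,2}(A,X)$ via \lref{lem:bestgluing} and the equal-trace framework of \defref{def:eqtraces}; the identity $\Fill_{X,\mu}(\gamma)=\Fill_{X,\mu}(\gamma_0)$ is then immediate from the definitions and the Sobolev annulus construction of Subsection \ref{subsec:filcurve}. For the proper case, $\gamma_0$ and the Lipschitz pieces are continuous with Lusin property (N), $\sigma$ is continuous so the reparametrization layer is continuous, and the outer collar $u|_{A_r}$ can be arranged continuous with (N) by standard higher integrability of Sobolev maps into a space with quadratic isoperimetric inequality; hence $\hat u$ inherits both properties.
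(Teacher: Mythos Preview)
Your approach has a genuine gap at the ``hard part,'' and neither of your proposed fixes works.

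Option (b) is simply wrong: conformal self-diffeomorphisms of $D$ form a three-parameter M\"obius family, and their restrictions to $S^1$ cannot realize an arbitrary monotone reparametrization. Precomposing $u$ by such a map turns the trace $\gamma$ into $\gamma\circ\phi|_{S^1}$, not into $\gamma_0$; the references to \lref{lem:l2enough} and \lref{lem:pullback} do nothing to change this.

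Option (a) is where the real difficulty hides, and you have not resolved it. First, there is no reason for $\ell(\eta_r)$ to be close to $\ell(\gamma)$: as the paper remarks just before this lemma, in a general metric space all interior circle-images may be much longer than $\gamma$. Second, even granting closeness in length and in sup-distance, there is no ``short Lipschitz homotopy between $(\eta_r)_0$ and $\gamma_0$'' available in an arbitrary metric space $X$---you cannot interpolate linearly, and no geodesic structure is assumed. In effect you have reduced the problem of connecting $\gamma$ to $\gamma_0$ to the problem of connecting $(\eta_r)_0$ to $\gamma_0$, which is not easier; you have pushed the difficulty inward without using the isoperimetric inequality in any essential way.

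The paper's proof proceeds quite differently. It places $n$ equidistant points $p_i$ on $S^1$, uses Courant--Lebesgue (\lref{lem:ballcircle}) to find short circular arcs $c_i$ around each $p_i$ with $\ell(u\circ c_i)^2\lesssim E_i$, and subdivides the resulting thin annular region between $\cup\partial B_i$ and $S^1$ into $2n$ small Lipschitz discs $T_j$. On each $\partial T_j$ one forms a Sobolev loop $k_j$ using $u$ on the circular pieces and the \emph{arclength} parametrization on the $S^1$-pieces, and then fills $k_j$ via \lref{no-rep} with area $\leq C\,a_j^2$. The total area is controlled because $\sum a_j^2\lesssim\sum m_j^2+\sum l_i^2$, where $\sum m_j=\ell(\gamma)$ forces $\sum m_j^2\to 0$ as the partition refines, and $\sum l_i^2\lesssim\sum E_i\lesssim\delta$. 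This is precisely where the $(C,l_0)$-isoperimetric hypothesis does the work that your argument never invokes.

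Finally, your treatment of the proper case is also off: higher integrability of $u$ requires a minimality hypothesis, not merely the isoperimetric inequality on $X$. The paper instead replaces $u$ by a harmonic map with the same trace and chooses the small fillings $v_j$ to be continuous with Lusin's property (N).
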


\begin{proof}
We may assume $u\in N^{1,2} (D,X)$.
We choose a very small $\delta$ which will control the area of $\hat u$.
Then we choose a small $\rho= \sin (\frac {2\pi} n)$ such that the restriction    of $u$ to the $\rho$-neighborhood of $S^1$ in $D$  has energy at most $\delta$.
We choose  equidistant points $p_1,....,p_n$ on $S^1$ with pairwise Euclidean distance $\rho$. If $\rho$ is small enough then the length of $\gamma$ on segments between consecutive points $p_i$, $p_{i+1}$ is smaller than $\delta$.

Denote by $E_i$ the energy of the restriction of $u$ to $B(p_i,\rho)\cap D$.
Use \lref{lem:ballcircle} to find  some  $r_i\in (\frac 2 3 \rho , \rho)$, such that the following holds true. The restriction of  $u$ to  the  distance circle $c_i$  of radius $r_i$ around
$p_i$ in $D$  is a continuous curve in $W^{1,2} (c_i,X)$ and the    length $l_i$ of $u\circ c_i$ satisfies $l_i^2<6\pi \cdot  E_i$.

We define $B_i$ to be the ball $B(p_i,r_i)$. By construction,   the boundary  $\eta $ of  $\cup B_i$ in $D$  is  a biLipschitz Jordan curve, and  the restriction of $u$ to $\eta$ is in $W^{1,2} (\eta,X)$.
Let $\Omega$ denote the annulus in $D$ between $\eta $ and $S^1$ and note that $\Omega$ is biLipschitz to $A$.
It suffices to find a map $v\in W^{1,2} (\Omega ,X)$ of area going to $0$ with $\delta$, whose traces coincide with $u|_{\eta}$ and $\gamma _0$ respectively.
Then after a
biLipschitz identification of $\Omega$ with $A$, we can glue both annuli $v$ and $u|_{\Omega}$ along $\eta$ to obtain the desired annulus $\hat u$ between $\gamma$ and $\gamma _0$.

In order to construct $v$ we proceed as follows.
The domain $\Omega$ is subdivided by the circular arcs  $c_i$ in $2n$ Lipschitz discs $T_j$. The boundary of any $T_j$
 consists of two or three
parts  of consecutive circles $c_i$ and a part $I_j$ of $S^1$. By our assumption on $\rho$, any restriction
$\gamma :I_j\to X$ has length  $m_j$ smaller than $2\delta$.  We now consider the  curve $k_j:\partial T_j \to X$
which coincides with $u$ on the parts of the circles $c_i$ and whose restriction to $I_j$ parametrizes the corresponding part of
$\gamma $ proportionally to arclength.   Thus $k_j$ is just a reparametrization of $u:\partial T_j \to X$ and has the same length
$a_j=\ell (k_j)$ as the restriction of $u$ to $\partial T_j$.

By our choice of the circles $c_i$, all the curves $k_j$ are Sobolev curves $k_j\in W^{1,2} (\partial T_j,X)$. Moreover, if $\rho$ has been chosen small enough, the length of any $k_j$ does not exceed $l_0$.
 \lref{no-rep}  provides  a map $v_j\in W^{1,2} (T_j,X)$ whose trace is $k_j$ and whose $\mu$-area is at most
$C\cdot a_j ^2$.  Whenever two domains $T_j$ and $T_l$ have a common part of the boundary, the traces of $v_j$ and $v_l$ on this common part coincide (with the restriction of $u$).   Therefore, gluing the $v_j$ together we obtain a well defined map  $v\in W^{1,2} (\Omega ,X)$ which coincide with $v_j$ on $T_j$ for all $j$. By construction, the trace of $v$ coincides with the restriction of $u$ on $\eta$. On the outer circle $S^1$, the trace of $v$ is a Lipschitz continuous reparametrization $\gamma _1$ of $\gamma_0$. Thus we can attach to $v$ another annulus of zero area connecting  $\gamma _1$ with $\gamma _0$, Subsection \ref{subsec:filcurve}, to obtain an annulus with   the required trace.  Therefore, it remains to prove that the $\mu$-area of the constructed annulus $v$  goes to $0$ as   $\delta$ goes to $0$.

 By construction, the $\mu$-area of $v$ is at most  $C\cdot \sum_{j=1}^{2n} a_j^2 $. The length $a_j$ of $k_j=u\circ \partial T_j$ has a contribution  $m_j$ of a part of the boundary circle $S^1$. The rest of $k_j$ consists  of two or three parts of circles $c_i$. We estimate those parts by the whole lengths $l_i$
 of the corresponding circular arcs $u\circ c_i$. Parts of each circle $c_i$ appear as boundaries of at most $5$ different domains $T_j$.  This implies $$\sum_{j=1}^{2n} a_j ^2 \leq K\cdot (\sum _{j=1}^{2n} m_j^2 + \sum _{i=1} ^n  l_i ^2),  $$
for some universal constant $K$.  Using the bound $l_i^2 < 6\pi \cdot E_i$ we obtain
$$\Area_{\mu} (v)\leq K_1\cdot (\sum _{j=1}^{2n}  m_j^2  +\sum _{i=1} ^n E_i),$$
for some constant $K_1$ depending on $C$.

Since the balls $B_i$ intersect at most pairwise, the sum  $\sum_{i=1}^n E_i$ is bounded by $2\cdot E_+^2 (u|_{\Omega} )\leq 2 \delta$.
 On the other hand $\sum _{j=1}^{2n}  m_j$ is the finite length $l$ of $\gamma$,
hence $\sum _{j=1} ^{2n}  m_j ^2$ becomes arbitrary small once $\delta $ has been chosen small enough.
 This finishes the proof that the constructed annulus $v$ has arbitrary small area, once $\delta$ is chosen small enough.

 If $X$ is proper, we may first replace $u$ by the harmonic map with the same trace, see \cite{LW-harmonic}. Then $u$ is continuous on $\bar D$ and satisfies  Lusin's property (N). Also the fillings $v_j$ can be chosen
to be continuous and satisfy Lusin's property (N), cf. \cite{LW-harmonic}.  Then the constructed annulus $v$ is continuous as well and satisfies Lusin's property (N).  Therefore, also the annulus $\hat u$
obtained from a gluing of $v$ and $u$ will be continuous and  satisfy Lusin's property (N).
\end{proof}

\section{Solutions of the Plateau problem} \label{sec:Plateau}
\subsection{Setting} Let $\mu$ be a definition of area. Let $X$ be a complete
metric space which admits a $(C,l_0)$-isoperimetric inequality for the area $\mu$.
Let $\Gamma$ be a rectifiable  Jordan curve  in $X$. Denote as always in this paper by $\Lambda (\Gamma, X)$ the set of maps
$u\in W^{1,2} (D,X)$, whose trace is a weakly monotone parametrization of $\Gamma$.  Let $u\in \Lambda (\Gamma,X)$  be a solution of the
Plateau problem for the curve $\Gamma$ with respect to  $\mu$. Thus  $\Area_{\mu} (u)$ is minimal in
$\Lambda (\Gamma ,X)$ and $u$ has minimal Reshetnyak energy $E_+^2 (u)$ among all minimizers of the $\mu$-area in $\Lambda (\Gamma, X)$.
 Due to  \cite{LW}, Theorem 9.1, $u$ has a unique representative which continuously extends to
$\bar D$.  From now on we will  fix this representative $u:\bar D\to X$.

The map  $u$ is $\sqrt 2$-quasiconformal  and it is conformal if $X$ satisfies property (ET), \lref{lem:Qq} and the subsequent paragraph. Due to \cite{LW}, Theorem 1.4, there exists a number $p>2$ depending only on
the constant $C$ such that $u$ is contained in $W^{1,p} _{loc} (D,X)$. Moreover,  if $\Gamma$ is
a chord-arc curve then one can improve the integrability globally: there is some  $p>2$ depending on
$C$ and the biLipschitz constant of $\Gamma$ such that $u\in W^{1,p} (D,X)$, \cite{LW-harmonic}, Theorem 1.3, see also  \cite{LW}, Theorem 1.4 (iii).
The last results and the Sobolev embedding theorems (\cite{LW}, Proposition 3.3) imply that $u$ is locally Hoelder continuous on $D$, respectively globally
Hoelder continuous on $\bar D$, with the  exponent  $1-\frac 2 p$.

\subsection{Restrictions to subdomains}
 The next result generalizes \pref{newprop}.
\begin{prop} \label{lip-good}
Let $u:\bar D\to X$ be a solution of the Plateau problem as above. Let $T\subset   \bar D$ be a Jordan curve with Jordan domain $O$.
  Assume  that $u|_T$ is a curve of finite  length $l$ and let $\gamma_0 :S^1\to X$ be a parametrization of
$u|_T$ proportional to arclength.  Then $\Area _{\mu} (u|_O) \leq \Fill _{X,\mu} (\gamma_0)$.  In particular, if $l\leq l_0$ then
$\Area _{\mu} (u|_O) \leq C l^2$.
 \end{prop}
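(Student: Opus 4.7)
The plan is a replacement argument: substitute $u|_O$ by a near-optimal filling of $\gamma_0$ and invoke the minimality of $u$ in $\Lambda(\Gamma,X)$. Since $T$ need not be rectifiable and $O$ need not be a Lipschitz domain, we must work conformally and rely on the generalized notion of equal traces from \dref{def:eqtraces} rather than classical boundary values.

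First I would uniformize: by the Riemann mapping theorem together with Carath\'eodory's extension theorem, pick a conformal diffeomorphism $F:D\to O$ extending to a homeomorphism $\bar F:\bar D\to \bar O$. Since $u$ is continuous on $\bar D$, the map $v:=u\circ F$ is bounded, hence lies in $L^2(D,X)$, and \lref{lem:l2enough} upgrades it to $v\in N^{1,2}(D,X)$ with $\Area_\mu(v)=\Area_\mu(u|_O)$. The continuous extension of $v$ to $\bar D$ has boundary values $u\circ\bar F|_{S^1}$, a weakly monotone reparametrization of the rectifiable curve $u|_T$, and hence still a rectifiable curve of length $l$. Since it is the trace of a Sobolev disc, \lref{lem:independent} provides Sobolev annuli of arbitrarily small $\mu$-area connecting it to $\gamma_0$; in particular $\Fill_{X,\mu}(v|_{S^1})=\Fill_{X,\mu}(\gamma_0)$.

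Fix $\varepsilon>0$ and choose a filling $v_\varepsilon\in W^{1,2}(D,X)$ of $\gamma_0$ with $\Area_\mu(v_\varepsilon)\le \Fill_{X,\mu}(\gamma_0)+\varepsilon$. Conformally reparametrizing the small annulus from \lref{lem:independent} onto a round subannulus of $D$ and gluing it to $v_\varepsilon$ (placed on the inner subdisc) yields a disc $w_\varepsilon\in W^{1,2}(D,X)$ whose trace agrees with that of $v$ and with $\Area_\mu(w_\varepsilon)\le \Fill_{X,\mu}(\gamma_0)+2\varepsilon$. Transporting back by $F^{-1}$, the map $\hat w_\varepsilon := w_\varepsilon\circ F^{-1}\in W^{1,2}(O,X)$ has the same $\mu$-area by \lref{lem:l2enough}, and \lref{lem:pullback} implies that $\hat w_\varepsilon$ and $u|_O$ share the same generalized trace in the sense of \dref{def:eqtraces}. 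Then \lref{lem:bestgluing} produces a map $\tilde u_\varepsilon \in W^{1,2}(D,X)$ equal to $u$ on $D\setminus O$ and to $\hat w_\varepsilon$ on $O$, and assures that $\tilde u_\varepsilon$ has the same (classical) trace as $u$ on $S^1$; in particular $\tilde u_\varepsilon\in \Lambda(\Gamma,X)$.

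By minimality of $u$ in $\Lambda(\Gamma,X)$ and the area-difference identity in \lref{lem:bestgluing},
\[
0\le \Area_\mu(\tilde u_\varepsilon)-\Area_\mu(u)=\Area_\mu(w_\varepsilon)-\Area_\mu(u|_O),
\]
so $\Area_\mu(u|_O)\le \Fill_{X,\mu}(\gamma_0)+2\varepsilon$; letting $\varepsilon\to 0$ gives the main inequality. When $l\le l_0$, the constant-speed curve $\gamma_0$ is Lipschitz of length $l$, so the $(C,l_0)$-isoperimetric inequality bounds $\Fill_{X,\mu}(\gamma_0)$ by $Cl^2$. The main obstacle is exactly the possible wildness of $T$: neither classical traces of $u|_O$ along $T$ nor a biLipschitz identification of $O$ with $D$ is available, so the whole comparison has to be routed through the conformal-invariance package (\lref{lem:l2enough}, \lref{lem:pullback}, \lref{lem:bestgluing}) together with the length-based filling invariance of \lref{lem:independent}.
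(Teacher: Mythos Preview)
Your argument is correct and follows essentially the same route as the paper: uniformize $O$ conformally, observe via \lref{lem:independent} that the filling area of the trace of $v=u\circ F$ equals $\Fill_{X,\mu}(\gamma_0)$, and compare with a near-optimal filling using the conformal-invariance and gluing package (\lref{lem:l2enough}, \lref{lem:pullback}, \lref{lem:bestgluing}). The paper's proof is shorter only because it has already bundled your replacement steps (4)--(8) into \cref{cor:minmin}, which directly asserts that $v$ minimizes $\mu$-area in its trace class; from there $\Area_\mu(u|_O)=\Area_\mu(v)=\Fill_{X,\mu}(v|_{S^1})=\Fill_{X,\mu}(\gamma_0)$ follows immediately.
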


\begin{proof}
Fix a conformal diffeomorphism $F:D\to O$.
Due to \cref{cor:minmin}, the map $v=u\circ F$ is contained in $N^{1,2} (D,X)$ and has minimal area among all maps with the same trace.
Since $F$ extends to a homeomorphism $F:\bar D\to \bar O$ by the theorem of Caratheodory, $v$ extends to a continuous map on $\bar D$.
Thus the trace of $v$ coincides with $u\circ F |_{S^1}$. Therefore, the trace of $v$ is a rectifiable curve of length $l$,  and $\gamma_0$ is
a reparametrization of this trace.
Due to  the minimality of $v$ and \cref{cor:minmin}, $\Area _{\mu} (v)= \Area _{\mu} (u|_O)$ is the filling area of the curve $u\circ F |_{S^1}$.
From \lref{lem:independent} we deduce  $\Area _{\mu} (u|_O) \leq \Fill _{X,\mu} (\gamma_0)$.
\end{proof}

\subsection{Intrinsic regularity} All regularity results in \cite{LW}  are based on the estimate of lengths  of some curves in the image of
$u$. This estimate goes back to  Morrey's classical proof of the a priori Hoelder continuity, \cite{Mor48},  and is proven \cite{LW}, Proposition 8.7, in the present setting of metric spaces with quadratic isoperimetric inequalities. As  shown  in \cite{LW-energy-area}, Theorem 4.5, the
Hoelder exponent  from \cite{LW}, Proposition 8.7, can be slightly improved using \lref{lem:Qq}.

\begin{lem} \label{shortcurve}
 For any  $1>\delta >0$ and $A>0$ there is  some $L=L(C,l_0,\delta ,A) >0$
such that  the following holds true   whenever the $\mu$-area of the $\mu$-minimal disc $u$ is at most $A$.
 For any $z_1,z_2\in B(0,\delta )$
there is a piecewise affine curve $\gamma$ from $z_1$ to $z_2$ such that $$\ell_X(u\circ  \gamma ) \leq  L\cdot |z_1-z_2| ^{\alpha},$$
where $\alpha  = q(\mu )\cdot \frac 1 {4\pi C}$.
\end{lem}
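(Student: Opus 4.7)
The plan is to first derive a Morrey-type a priori bound on the Reshetnyak energy of $u$ on small balls inside $B(0,\delta)$, and then to construct $\gamma$ by a dyadic Fubini argument whose contributions sum geometrically to $\lesssim |z_1-z_2|^\alpha$.

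For the \textbf{Morrey energy estimate}, I fix $z_0 \in \overline{B(0,\delta)}$ and set $\phi(r) := E_+^2(u|_{B(z_0,r)\cap D})$. By the polar decomposition of the Reshetnyak energy one has $\phi'(r) = \int_{\partial B(z_0,r)} \rho_u^2\,d\mathcal{H}^1$ for a.e.\ $r$, and Cauchy--Schwarz combined with \eqref{eq-n1p} gives $l_r^2 \leq 2\pi r\,\phi'(r)$, where $l_r$ denotes the length of $u|_{\partial B(z_0,r)}$. A bootstrap using continuity of $u$ ensures $l_r \leq l_0$ for $r$ small enough, so that \pref{lip-good} yields $\Area_\mu(u|_{B(z_0,r)}) \leq C\,l_r^2$. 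Combining with the quasi-conformal area--energy inequality $q(\mu)\phi(r) \leq \Area_\mu(u|_{B(z_0,r)})$ from \lref{lem:Qq} (applied to the $\mu$-minimal map $u$) produces the Morrey ODE
\[
(\log \phi)'(r) \;\geq\; \frac{q(\mu)}{2\pi C\, r}.
\]
Integrating from $r$ to $R := (1-\delta)/2$ and using $\phi(R) \leq E_+^2(u) \leq A/q(\mu)$ yields $\phi(r) \leq B\, r^{2\alpha}$ uniformly in $z_0 \in \overline{B(0,\delta)}$ for all sufficiently small $r$, with $B = B(A,l_0,\delta,C)$.

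For the \textbf{construction of $\gamma$}, set $r := |z_1 - z_2|$, which we may assume small (otherwise the claim is trivial with $L$ absorbing the bounded global quantities). Around each $z_i$, $i \in \{1,2\}$, decompose dyadically by radii $r_k := 2^{-k-1}r$, and in the annulus $A_k^i := \{r_{k+1} \leq |y-z_i| \leq r_k\}$ apply Fubini in polar coordinates around $z_i$:
\[
\int_0^{2\pi} \ell_X\!\bigl(u|_{\text{radial at }\theta}\bigr)\,d\theta \;=\; \int_{A_k^i} \frac{\rho_u(y)}{|y-z_i|}\,dy \;\leq\; \frac{|A_k^i|^{1/2}\,\phi(r_k)^{1/2}}{r_{k+1}} \;\lesssim\; r_k^\alpha,
\]
so some angle $\theta_k^i$ yields a radial segment of $u$-image length $\lesssim r_k^\alpha$. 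Consecutive radial segments (at angles $\theta_k^i, \theta_{k+1}^i$ on $\partial B(z_i, r_{k+1})$) are joined by a piecewise affine ``detour'' constructed by the same Fubini scheme in a thin annular strip around the circle; the energy on such a strip is $\lesssim r_k^{2\alpha}$ by the Morrey bound, so the detour has $u$-image length $\lesssim r_k^\alpha$. A bridge circle $\partial B(\tfrac{z_1+z_2}{2}, s)$ with $s \in [r,2r]$, selected via \lref{lem:ballcircle} together with the Morrey bound to have $u$-image length $\lesssim r^\alpha$, is incorporated similarly to link the outermost endpoints of the two dyadic chains. Summing the geometric series $\sum_{k\geq 0} r_k^\alpha \lesssim r^\alpha$ gives $\ell_X(u\circ\gamma) \leq L r^\alpha$ with $L = L(C,l_0,\delta,A)$.

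The \textbf{main obstacle} is the piecewise affine constraint, which forbids using circular arcs directly as connectors; a naive replacement of an arc by an inscribed polygon fails, since the pointwise Hölder estimate $d_X(u(p),u(q)) \lesssim |p-q|^\alpha$ sums superlinearly in the number of chords. The resolution adopted above is that every arc connector is itself realized by a piecewise affine detour selected via a second Fubini step in a thin annular strip around the circle, so that only energy estimates (not pointwise Hölder) control its $u$-length. If one insists on strictly finitely many affine pieces, the dyadic chains are terminated at a small scale $r_K$; the corresponding terminal displacement contributes at most $\lesssim r_K^\alpha$, which is absorbed into $L$.
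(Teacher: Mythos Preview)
The paper does not prove this lemma in detail; it is recorded as a consequence of \cite{LW}, Proposition~8.7, with the sharpened exponent coming from \cite{LW-energy-area}, Theorem~4.5, via \lref{lem:Qq}. Your two-stage plan---Morrey energy decay, then a Fubini-based curve construction---is precisely the mechanism behind those cited results, so the overall strategy is correct.

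Two steps need repair. First, the bootstrap ``continuity of $u$ ensures $l_r\le l_0$ for small $r$'' is wrong: continuity of $u$ gives no bound on the length of $u|_{\partial B(z_0,r)}$. A correct argument is that whenever $l_r>l_0$ one still has $\phi'(r)\ge l_r^2/(2\pi r)>l_0^2/(2\pi r)\ge 2\alpha\,\phi(r)/r$ provided $\phi(r)\le Cl_0^2/q(\mu)$; and absolute continuity of $r\mapsto\int_{B(z_0,r)}\rho_u^2$ (uniformly in $z_0\in\overline{B(0,\delta)}$, since $\rho_u^2\in L^1$) yields some $r_0>0$ with $\phi(r_0)\le Cl_0^2/q(\mu)$. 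Thus the differential inequality $(\log\phi)'\ge 2\alpha/r$ holds on all of $(0,r_0]$ regardless of the size of $l_r$.

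Second, and more seriously, both your ``detour'' step and your termination step are circular. The detour asks you to connect two \emph{specified} points on $\partial B(z_i,r_{k+1})$, at distance of order $r_{k+1}$, by a short piecewise affine curve---which is the original problem at a comparable scale; a Fubini sweep in a strip only produces a good \emph{parallel} segment, not one through the prescribed endpoints. Likewise, terminating the chain at scale $r_K$ leaves a specific final segment to $z_i$ whose $u$-length you cannot bound without already knowing the H\"older estimate. The clean fix is already implicit in your annular computation: summing the bounds $\int_{A_k^i}\rho_u/|y-z_i|\,dy\le L_1\,r_k^\alpha$ over all $k\ge 0$ shows that $\int_{S^1}\int_0^{2r}\rho_u(z_i+s\theta)\,ds\,d\theta\le L_2\,r^\alpha$, so a \emph{single} angle $\theta_i$ gives one affine segment $[z_i,\,z_i+2r\theta_i]$ with $u$-length at most $L_3\,r^\alpha$. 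Moreover the map $\theta_1\mapsto (z_1+2r\theta_1-z_2)/|z_1+2r\theta_1-z_2|$ is a diffeomorphism of $S^1$ with Jacobian between $2/3$ and $2$, so one may choose $\theta_1$ for which the segment $[z_2,\,z_1+2r\theta_1]$ is simultaneously good for the radial average centred at $z_2$. This produces a two-piece affine curve $z_1\to z_1+2r\theta_1\to z_2$ of total $u$-length at most $L\,r^\alpha$, with no infinite dyadic chain and no detours.
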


In fact, similarly to \cite{LW}, Proposition 8.7, the above result applies to the slightly more general situation where the $\mu$-minimality of $u$ is replaced by the following slightly weaker assumption: the map $u$ minimizes the $\mu$-area among all maps with the same trace as $u$, and $u$ satisfies the conclusion of \lref{lem:Qq}. This extension  is needed  only in the proof \lref{mainboundary} below.  In fact, the slightly smaller Hoelder exponent used in \cite{LW}, Proposition 8.7, suffices for the conclusion of \lref{mainboundary}.


\subsection{Boundary continuity}
For regular curves, the following result is implicitly contained in  the proof of the boundary regularity   in  \cite{LW}, Section 9.
We sketch  the proof, referring  to \cite{LW} for details.

\begin{lem} \label{mainboundary}
For any $\epsilon >0$ there exists some
$s >0$ depending on $C,l_0$ and $\epsilon$  with the following property.
Let $u\in \Lambda (\Gamma, X)$ be a solution of the Plateau  problem as above.
Let $T$ be a Jordan curve in $\bar D$  with Jordan domain $\Omega$.
    Assume that the restriction of $u$ to $\Omega$ has $\mu$-area at most $s$. Then for any $p\in \Omega$ there exists
	a curve $\eta \subset \Omega$ connecting $p$ with $T$  such that $\length _X (u\circ \eta) < \epsilon$.
\end{lem}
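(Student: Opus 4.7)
The plan is to reduce first to the case $\Omega = D$ by a conformal parametrization and then to build $\eta$ as the concatenation of a short \emph{interior} piece supplied by \lref{shortcurve} with a short \emph{radial} piece controlled by a Fubini / Cauchy--Schwarz argument on the energy.

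First I would choose a conformal diffeomorphism $F \colon D \to \Omega$ with $F(0) = p$; by Caratheodory's theorem it extends to a homeomorphism $\bar D \to \bar\Omega$ mapping $S^1$ onto $T$. Set $v := u \circ F$. Then \cref{cor:minmin} shows that $v$ minimizes $\mu$-area in its own trace class; \lref{lem:l2enough} gives $\Area_{\mu}(v) = \Area_{\mu}(u|_\Omega) \le s$; and the $\sqrt 2$-quasiconformality of $u$ together with the conformality of $F$ ensures that $v$ is itself $\sqrt 2$-quasiconformal and satisfies the area--energy inequality of \lref{lem:Qq} on every subdomain (both properties being invariant under conformal reparametrization by \lref{lem:l2enough}). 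In particular $E_+^2(v) \le s/q(\mu) \le 2s$. It therefore suffices to produce a curve $\tilde\eta \subset \bar D$ from $0$ to some point of $S^1$ with $\ell_X(v \circ \tilde\eta) < \epsilon$: then $\eta := F \circ \tilde\eta$ is the required curve in $\bar\Omega$ from $p$ to $T$.

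The curve $\tilde\eta$ will be the concatenation of two pieces. For the interior piece I would invoke \lref{shortcurve} applied to $v$---valid in the slightly more general form stated in the remark right after that lemma, since $v$ minimizes $\mu$-area with its trace and satisfies the conclusion of \lref{lem:Qq}---with $\delta = \frac{3}{4}$ and $A = s$: for every $\theta$ one obtains a piecewise affine curve $\gamma_\theta$ from $0$ to $\frac{1}{2}e^{i\theta}$ inside $B(0,\tfrac34)$ with $\ell_X(v \circ \gamma_\theta) \le L \cdot 2^{-\alpha}$, where $L = L(C, l_0, \tfrac34, s)$. For the radial piece I would use the straight segments $r_\theta(r) := r e^{i\theta}$, $r \in [\frac{1}{2}, 1]$. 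By \lref{almostall}, for $2$-almost every $\theta$ its $v$-image has length $\int_{1/2}^1 \apmd v_{re^{i\theta}}(e^{i\theta})\,dr$. Applying Cauchy--Schwarz in $r$ against the weight $r\,dr$ (using $\int_{1/2}^1 r^{-1}\,dr = \log 2$), integrating in $\theta$, passing to polar coordinates and using the pointwise bound $\apmd v_z(w)^2 \le \rho_v(z)^2$ for $|w| = 1$ yields
$$
\int_0^{2\pi} \ell_X(v \circ r_\theta)^2\, d\theta \;\le\; (\log 2)\,E_+^2(v) \;\le\; 2s \log 2.
$$
Averaging then produces a positive-measure set of $\theta$'s, from which I would pick $\theta_0$ also satisfying the length identity of \lref{almostall}, giving $\ell_X(v \circ r_{\theta_0}) \le \sqrt{s \log 2 / \pi}$. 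The concatenation $\tilde\eta$ of $\gamma_{\theta_0}$ with $r_{\theta_0}$ has $v$-image length at most $L \cdot 2^{-\alpha} + \sqrt{s \log 2 / \pi}$.

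The radial summand is $< \epsilon/2$ as soon as $s < \pi \epsilon^2 / (4 \log 2)$, a choice depending only on $\epsilon$. The main (and really only) obstacle is the interior summand: the argument requires the quantitative scaling $L(C, l_0, \tfrac34, s) \to 0$ as $s \to 0$. This is implicit in the Morrey iteration underlying \lref{shortcurve} when the total area is small---one expects $L \lesssim \sqrt s$---but it is not stated explicitly in \lref{shortcurve}, so tracking the constants through that proof is where care is required. Once granted, both summands can be made $< \epsilon/2$ for $s$ small in terms of $C, l_0, \epsilon$, and pushing $\tilde\eta$ forward by $F$ completes the construction of $\eta$.
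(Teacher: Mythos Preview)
Your approach mirrors the paper's: conformal reduction to $v=u\circ F$ on $D$ with $F(0)=p$, then build $\tilde\eta$ as an interior piece coming from \lref{shortcurve} concatenated with a radial piece controlled by a Fubini/Cauchy--Schwarz energy estimate. The radial part is fine and essentially identical to what the paper does.

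The one substantive difference is precisely the point you flag as an obstacle, and the paper sidesteps it rather than confronting it. You fix the intermediate radius at $\tfrac12$ and then need the scaling $L(C,l_0,\tfrac34,s)\to 0$ as $s\to 0$; this is true (the Morrey iteration gives $L\lesssim\sqrt{s}$) but is not part of the stated conclusion of \lref{shortcurve}, so you would have to reopen that proof. The paper instead reverses the roles of the two parameters: it first imposes a harmless a priori bound $s<1$, applies \lref{shortcurve} with the \emph{fixed} area bound $A=1$ to obtain a fixed $L=L(C,l_0,\cdot,1)$, and then shrinks the \emph{intermediate radius} $\delta$ so that $L\cdot\delta^{\alpha}<\epsilon/2$. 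The radial segment is now taken from $\delta\theta$ to $\theta$, and the same polar-coordinate computation yields a $\theta$ with $\ell_X(v\circ\eta_\theta)^2\lesssim s/\delta$; finally $s$ is chosen small in terms of $\delta$ and $\epsilon$. This uses only the black-box form of \lref{shortcurve} and makes the dependence $s=s(C,l_0,\epsilon)$ immediate, so it is the cleaner route.
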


\begin{proof}
Let $0<s  <1$ be small enough, to be determined later.
Choose a conformal map $\phi:D\to \Omega$ such that $\phi (0)=p$. Denote by $\phi$ its continuous extension to a homeomorphism from $\bar D\to \bar \Omega$.
 As we have seen in \cref{cor:minmin}, the composition $\hat u:=u\circ \phi$ is contained in $W^{1,2} (D,X)$ and has  minimal $\mu$-area among all maps in
	$W^{1,2} (D,X)$ with the same trace as $\hat u$.  Since $\phi$ is conformal it preserves the $\mu$-area and the Reshetnyak energy on all subdomains.  Thus the composition
$\hat u=u\circ \phi:\bar D \to X$ satisfies the conclusions of \lref{lem:Qq}. As has been explained after \lref{shortcurve}, the conclusion of \lref{shortcurve} applies to the map $\hat u$.
 Thus, there exists some $1>\delta >0$  depending only on $C,l_0$ and $\epsilon$ with the following property. For any point $\theta \in S^1$ there exists some curve $\gamma _{\theta}$  in  $D $ connecting $0$ with $\delta \cdot   \theta $ such that $\ell_X(\hat u \circ \gamma _{\theta} )<\frac \epsilon 2$.

 The restriction of  $\hat u$ to the annulus  $\Omega= \{z \; | \; \delta  <|z|< 1\}$ is $\sqrt 2$-quasiconformal and  satisfies
 $E^2_+ (\hat u |_{\Omega}) \leq 2\Area _{\mu} (\hat u|_{\Omega}) \leq 2 s.$  Denote by  $ \eta  _{\theta} $ the radial curve connecting $\delta \cdot \theta $ and $\theta $ and by $\rho$ the minimal generalized gradient of $\hat u$.  Integrating in polar coordinates, using Hoelder inequality and \eqref{eq-n1p}  we deduce
$$2s \geq E_+^2 (\hat u |_{\Omega}) =\int _{\Omega} \rho ^2 (z) \, dz \geq \delta \int _{S^1} \left(\int _{\eta _{\theta}} \rho ^2 \right)\,  d\theta  \geq \delta \int _{S^1} \ell _X(\hat u \circ \eta _{\theta})^2. $$
Thus, if $s$ is small enough we find some $\theta \in S^1$ with $\ell_X(\hat u \circ \eta _{\theta} )<\frac \epsilon 2$.

  Now the concatenation $\gamma = \gamma _{\theta} * \eta  _{\theta}$ connects  the origin $0$ with $\theta \in \partial D$ and $\epsilon > \ell_X(\hat u \circ \gamma )$.
	Thus we obtain the required curve $\eta$ as $\eta =\phi \circ \gamma$.
\end{proof}

We can now deduce:

\begin{cor} \label{shortconnection}
Let $u \in \Lambda (\Gamma, X)$ be a solution of the Plateau problem as above.
Then
for any $\epsilon >0$ there exists some $ \epsilon > r >0$  with the following property. For any pair
of points $x,y\in \bar D$ with $|x-y|<r$ there is some curve $\gamma $ connecting these points inside $B(x,\epsilon)$ and such
that $\length _X (u\circ \gamma ) <\epsilon$.
\end{cor}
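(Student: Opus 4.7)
The plan is to cover each $x\in\bar D$ by a small Jordan domain $\Omega_x\subset\bar D$ whose bounding curve has short $u$-image and whose $u$-area lies below the tolerance of \lref{mainboundary}; both $x$ and any $y$ with $|x-y|<r$ then reach the bounding Jordan curve by a short-image path (or lie on it directly), and concatenating the two paths with a sub-arc of the boundary yields the desired curve $\gamma$.

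First I would fix the tolerances. Apply \lref{mainboundary} with $\epsilon/5$ in place of $\epsilon$ to obtain a constant $s_0>0$. Using absolute continuity of the integral $\int_{\bar D}\rho_u^2$ (which furnishes a modulus independent of the position of the ball) together with the fact that $u|_{\partial D}$ is a rectifiable continuous curve (the trace is weakly monotone onto $\Gamma$, which is rectifiable, so has length $\ell(\Gamma)$ and a uniformly continuous arclength function), I would pick $t_0\in(0,\epsilon)$ so small that uniformly in $x\in\bar D$
\[
E_+^2(u|_{B(x,t_0)\cap D})<\min\!\left(s_0,\frac{(\epsilon/5)^2}{6\pi}\right),
\]
and such that every sub-arc of $\partial D$ of Euclidean diameter at most $2t_0$ has $u$-image length less than $\epsilon/5$. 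Then set $r:=t_0/3$.

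Next I would construct $T_x$. By \lref{lem:ballcircle} applied to the ball $B(x,t_0)$, there is some $r_x\in(2t_0/3,t_0)$ for which the arc $C_x:=\{z\in D:|z-x|=r_x\}$ satisfies $\ell_X(u|_{C_x})<\epsilon/5$. Put $\Omega_x:=B(x,r_x)\cap D$, and let $T_x$ be the topological boundary of $\Omega_x$ in $\R^2$ — a Jordan curve consisting of $\overline{C_x}$ together with the (possibly empty) arc $A_x:=\partial D\cap\overline{B(x,r_x)}$. The choice of $t_0$ forces $\ell_X(u|_{T_x})<2\epsilon/5$ and $\Area_{\mu}(u|_{\Omega_x})\leq E_+^2(u|_{\Omega_x})<s_0$.

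Finally I would join $x$ and $y$. Given $y\in\bar D$ with $|x-y|<r$, one has $y\in\overline{B(x,2t_0/3)}\subset\overline{\Omega_x}$, and I would treat each endpoint by cases. If the endpoint lies in $D$, it lies in the open domain $\Omega_x$ and \lref{mainboundary} supplies a curve inside $\Omega_x$ to a point on $T_x$ with $u$-image length below $\epsilon/5$; if it lies on $\partial D$ it already lies on $A_x\subset T_x$ and no connecting curve is needed. Concatenating these two (possibly trivial) curves with the shorter sub-arc of $T_x$ between their endpoints on $T_x$ produces a curve $\gamma\subset\overline{\Omega_x}\subset B(x,\epsilon)$ from $x$ to $y$ with total $u$-image length strictly less than $\epsilon/5+2\epsilon/5+\epsilon/5<\epsilon$.

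The main obstacle I anticipate is the boundary case $x\in\partial D$: \lref{mainboundary} requires its basepoint to lie in the open Jordan domain, and $x\in\partial D$ never does, since the only Jordan domain in $\bar D$ whose interior contains such an $x$ is $\bar D$ itself. The resolution is exactly the construction above — taking $\Omega_x=B(x,r_x)\cap D$ places $x$ on the $\partial D$-portion $A_x$ of $T_x$, so no connecting curve is needed for $x$; the role of the image-length control on $A_x$ is to absorb this piece into the $\ell_X(u|_{T_x})$ budget. A secondary point of care is the uniform choice of $t_0$ across $x\in\bar D$, which is what the uniform absolute continuity of $\int\rho_u^2$ and the rectifiability of $u|_{\partial D}$ supply.
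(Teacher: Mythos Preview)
Your proof is correct and follows essentially the same route as the paper's: both construct, around each point $x$, the convex domain $\Omega_x=B(x,r_x)\cap D$ with short $u$-image boundary (via Courant--Lebesgue and rectifiability of $u|_{\partial D}$) and small $\mu$-area, then invoke \lref{mainboundary} to connect interior points to $T_x$ and run along $T_x$. The only noteworthy difference is that you bound $\Area_\mu(u|_{\Omega_x})$ directly by $E_+^2(u|_{\Omega_x})$ using absolute continuity of $\int\rho_u^2$, whereas the paper instead appeals to \pref{lip-good} to get $\Area_\mu(u|_{\Omega_x})\leq C\,\ell_X(u|_{T_x})^2$; both work, and your choice is marginally more elementary since it avoids the isoperimetric step.
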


\begin{proof}
Using \lref{mainboundary} it suffices to prove the following claim.

\emph{Claim:} For any $\delta >0$ there exists some $r>0$ such that  any $x,y \in \bar D$ with $|x-y|<r$
 are contained in the closure $\bar \Omega$ of a convex domain $\Omega \subset D$  such that
 the  quantities $\Area _{\mu} (u|_{\Omega}), \diam (\Omega )$ and $\ell_X(u|_{\partial \Omega})$ are bounded from above by $\delta$.

 The claim is proven by taking $\overline {\Omega}$ to be the closure of the ball $B(x,t) \subset \bar D$ of an appropriate  radius $t\in (r,\sqrt r)$  around $x$.
 Indeed, if $r$ is small enough then the diameter of any  such ball is  certainly smaller than $\delta$. The boundary  $T:=\partial \Omega \subset \bar D$ of such balls consists  of the distance circle
 $S_t=S_t(x)$ and, possibly,
 an interval $T^b =\overline {\Omega} \cap \partial D$ in the boundary circle $\partial D$.  Since $u:\partial D\to \Gamma$ is rectifiable, the image
$u (T^b)$ has arbitrary small length, if the diameter of $T^b$ is small enough. On the other hand,  $\ell_X(u \circ S_t)$ is controlled by  the lemma of Courant-Lebesgue,
 \cite{LW}, Lemma 7.3, see also \lref{lem:ballcircle} above.
Therefore,  $\ell_X(u\circ T)$ becomes arbitrary small if $r$ is small enough and $t$ is chosen by    the lemma of Courant-Lebesgue.  Finally, an upper bound on $\Area_ {\mu} (u|_{\Omega})$ follows
 from the upper bound on $\ell_X(u\circ T)$  and \pref{lip-good}.
 \end{proof}

\section{Metric and analytic properties of the space $Z$} \label{sec:metric}
\subsection{Setting, notations and basic properties}  Throughout  this  section, we fix a definition of area $\mu$ and constants  $C, l_0>0$. We fix  a complete  metric space $X$ admitting a $(C, l_0)$-isoperimetric inequality.  We fix   a Jordan curve $\Gamma$ of finite length in $X$.  Finally, we let  $u:\bar D\to X$ be a solution of the Plateau problem for $(\Gamma,X)$.
Consider the pseudo-distance $d_u:\bar D\times \bar D \to [0,\infty]$ defined as in the introduction 
 $$d_{u} (z_1,z_2):= \inf \{\length_X  (u\circ  \gamma ) \,|\, \gamma \subset \bar D\,  , \;\gamma \; \text{connects} \; z_1 \; \text{and} \; z_2 \}.$$

Now the statement of \cref{shortconnection} immediately implies:
\begin{lem}
For any $\epsilon >0$ there exists some $r>0$ such that for $x,y\in \bar D$ with $|x-y|<r$ we have
$d_u (x,y) \leq \epsilon$.
\end{lem}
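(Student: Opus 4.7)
The plan is essentially to unwind definitions and invoke \cref{shortconnection} directly; the paper itself flags the implication as immediate. Given $\epsilon > 0$, I would apply \cref{shortconnection} to obtain a radius $r > 0$ (which can be taken smaller than $\epsilon$) such that any pair $x, y \in \bar D$ with $|x-y| < r$ can be joined by a curve $\gamma \subset \bar D$ lying in $B(x,\epsilon)$ with $\length_X(u\circ \gamma) < \epsilon$. Since $\gamma$ is an admissible competitor in the infimum defining $d_u(x,y)$, one concludes $d_u(x,y) \leq \length_X(u\circ\gamma) < \epsilon$, and this is exactly the desired uniform estimate.

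The only subtlety worth flagging is making sure the curve produced by \cref{shortconnection} is admissible in the definition of $d_u$: its image must lie in $\bar D$ and it must actually connect $x$ to $y$. Both are built into the statement of \cref{shortconnection}, so no additional work is required. One could also remark, for clarity, that because $r$ can be chosen smaller than $\epsilon$, the inclusion $\gamma \subset B(x,\epsilon) \cap \bar D$ forces the auxiliary curve to stay close to $x$ in the Euclidean sense, but this spatial control is not needed for the conclusion of the present lemma—only the length bound $\length_X(u\circ\gamma) < \epsilon$ is used.

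There is no genuine obstacle here: the real work went into establishing \cref{shortconnection}, which in turn relies on the boundary continuity lemma \lref{mainboundary} and the interior Hölder-type length estimate \lref{shortcurve}. Those were the hard inputs. Consequently, I would write the proof as a single short paragraph that quotes \cref{shortconnection} and passes to the infimum.
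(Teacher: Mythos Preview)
Your proposal is correct and matches the paper's approach exactly: the paper simply states that the lemma is an immediate consequence of \cref{shortconnection}, and your argument spells out precisely that deduction by taking the curve $\gamma$ provided there as a competitor in the infimum defining $d_u$. There is nothing to add.
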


As a direct consequence of the triangle inequality we deduce that
the pseudo-metric  $d_u:\bar D\times \bar D\to [0,\infty]$ is finite-valued and  continuous.
  Consider the equivalence relation of $\bar D$ identifying pairs of points   $z_1,z_2\in \bar D$ with $d_u(z_1,z_2)=0$.
 Let $Z$ be the set of equivalence classes of points in $\bar D$. We obtain a canonical surjective projection $P:\bar D\to Z$.  The pseudo-metric $d_u$ defines a metric $d_Z$ on the set $Z$.
The continuity of $d_u$ implies that $P:\bar D\to Z$ is continuous. Thus $Z$ is a compact metric space. For any $z_1,z_2\in \bar D$  we have $d_u(z_1,z_2) \geq d_X(u(z_1),u(z_2))$.
Therefore, $u$ admits a unique factorization
$u=\bar u \circ P$  with a unique $\bar u:Z\to X$. Moreover,  $\bar u$ is $1$-Lipschitz.  Hence we have  already verified most statements in the following direct generalization of \tref{prop:first}.

\begin{prop} \label{prop:first+}
The pseudo-distance $d_u$ assumes only finite values  and is continuous. The metric space  $Z$ associated with the pseudo-metric $d_u$
is compact and geodesic, and the canonical projection $P:\bar D\to Z$ is continuous.
 The map $u:\bar D\to X $ has a canonical factorization  $u=\bar u\circ P$, where
$\bar u:Z\to X$ is a $1$-Lipschitz map. For any curve $\gamma :I\to \bar D$ we have
$\ell_X(u\circ \gamma )=\ell_Z(P\circ \gamma )$, hence $\bar u$ preserves the length of $P\circ \gamma$.
\end{prop}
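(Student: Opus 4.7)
The paragraph preceding the statement already dispatches most of the claims: finiteness and continuity of $d_u$, compactness of $Z$, continuity of $P$, and the existence and $1$-Lipschitz property of $\bar u$. So my proof would open with a short sentence collecting these facts and then focus on the two remaining items, (a) the length identity $\ell_X(u\circ\gamma)=\ell_Z(P\circ\gamma)$, and (b) the geodesic property of $Z$.

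For the length identity I would do the two inequalities separately. The inequality $\ell_X(u\circ\gamma)\le \ell_Z(P\circ\gamma)$ is immediate from $\bar u$ being $1$-Lipschitz together with $u=\bar u\circ P$, since $1$-Lipschitz maps do not increase lengths of curves. For the reverse inequality $\ell_Z(P\circ\gamma)\le \ell_X(u\circ\gamma)$, I would fix a partition $a=t_0<t_1<\dots<t_n=b$ of the parameter interval of $\gamma$ and observe that by the very definition of $d_u$ applied to the restriction $\gamma|_{[t_i,t_{i+1}]}$,
\[
d_Z(P(\gamma(t_i)),P(\gamma(t_{i+1})))\le d_u(\gamma(t_i),\gamma(t_{i+1}))\le \ell_X(u\circ\gamma|_{[t_i,t_{i+1}]}),
\]
so summing and using additivity of length over the partition yields $\sum_i d_Z(P(\gamma(t_i)),P(\gamma(t_{i+1})))\le \ell_X(u\circ\gamma)$. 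Taking the supremum over partitions gives the claimed inequality.

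For the geodesic property I would first upgrade $Z$ to a length space and then invoke Hopf--Rinow, since $Z$ is compact. To verify the length-space axiom, fix $[z_1],[z_2]\in Z$ and write, using the definition of $d_u$ and the length identity proven above,
\[
d_Z([z_1],[z_2])=\inf_{\gamma}\ell_X(u\circ\gamma)=\inf_{\gamma}\ell_Z(P\circ\gamma),
\]
where the infima run over curves $\gamma\subset\bar D$ joining $z_1$ to $z_2$. Each $P\circ\gamma$ is a curve in $Z$ joining $[z_1]$ to $[z_2]$, so the right-hand side is bounded below by the infimum of $\ell_Z(c)$ over all curves $c$ in $Z$ joining these points; the reverse bound is the trivial estimate $\ell_Z(c)\ge d_Z(\text{endpoints})$. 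Hence $Z$ is a length space. Compactness plus the length-space property gives a geodesic by Hopf--Rinow.

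I do not expect any serious obstacle: the argument is essentially definitional once the preceding lemma (which yields continuity of $d_u$ from \cref{shortconnection}) is in hand. The only mild subtlety is to make sure the reverse length inequality is handled via partitions rather than attempted directly, and to resist the temptation of trying to lift arbitrary curves in $Z$ to curves in $\bar D$ — we only need the one-sided comparison of infima given above.
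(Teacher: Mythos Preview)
Your proposal is correct and matches the paper's own proof essentially line for line: the paper also reduces to the length identity and the geodesic property, handles the two inequalities of the length identity exactly as you do (one via the $1$-Lipschitz property of $\bar u$, the other via the definition of $d_u$ on subintervals and the partition definition of length), and then concludes geodesicity via the length-space property plus Hopf--Rinow.
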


\begin{proof}
It remains to prove the last equality and the fact that $Z$ is a geodesic space.  Thus let $\gamma :I\to \bar D$ be given.
Since $u=\bar u \circ P$ and $\bar u$ is $1$-Lipschitz, we have $\ell_X(u\circ \gamma ) \leq \ell_Z(P\circ \gamma )$. On the other hand, for any $[t,t']\subset I$
we have $d_Z(P\circ \gamma (t), P\circ \gamma (t') )\leq  \ell _X (u\circ \gamma |_{[t,t']})$. Thus the reverse inequality follows directly from the definition of length  \eqref{eq:rect}.

For any $p_1 =P(z_1),p_2 =P(z_2) \in Z$  the definition of $d_Z(p_1,p_2)=d_u (z_1,z_2)$ together with the equality of lengths proved above shows that $Z$ is a length space.
Since $Z$ is compact, the theorem of Hopf-Rinow shows that $Z$ is geodesic.
\end{proof}

Also the following result is general  non-sense as  well.

 \begin{lem} \label{lem:nonsense-P}
Any fiber $P^{-1} (q)$ is a connected subset of $\bar D$.
\end{lem}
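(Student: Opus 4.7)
The plan is to exhibit $P^{-1}(q)$ as a decreasing intersection of nonempty compact connected subsets of $\bar D$, from which connectedness follows immediately by a standard fact in general topology.

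Fix a point $z_0 \in P^{-1}(q)$. For each integer $n \geq 1$, consider the closed set
$$A_n := \{ z \in \bar D : d_u(z,z_0) \leq 1/n \},$$
which is closed by continuity of $d_u$ (\pref{prop:first+}), and let $F_n$ denote the connected component of $A_n$ containing $z_0$. Each $F_n$ is a nonempty compact connected subset of $\bar D$, and the inclusions $A_{n+1} \subseteq A_n$ imply $F_{n+1} \subseteq F_n$.

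I claim that $P^{-1}(q) = \bigcap_{n \geq 1} F_n$. The inclusion $\bigcap_n F_n \subseteq P^{-1}(q)$ is immediate since $\bigcap_n F_n \subseteq \bigcap_n A_n = \{z : d_u(z,z_0)=0\} = P^{-1}(q)$. For the converse, let $z \in P^{-1}(q)$, so $d_u(z,z_0)=0$. By the definition of $d_u$ as an infimum of lengths, for each $n$ there exists a curve $\gamma_n : [0,1] \to \bar D$ from $z_0$ to $z$ with $\ell_X(u \circ \gamma_n) < 1/n$. For any $w = \gamma_n(t)$ on the image, the restricted subcurve $\gamma_n|_{[0,t]}$ connects $z_0$ to $w$ and has length at most $\ell_X(u\circ \gamma_n) < 1/n$, so $d_u(w,z_0) < 1/n$. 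Thus $\gamma_n([0,1]) \subseteq A_n$, and since this image is connected and contains $z_0$, it is contained in $F_n$; in particular $z \in F_n$.

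Finally, a nested decreasing sequence of nonempty compact connected subsets of a Hausdorff space has connected intersection (if the intersection splits into two disjoint closed sets $A,B$, separate them by disjoint open neighborhoods $U,V$; by compactness some $F_n$ lies in $U \cup V$ while meeting both, contradicting connectedness of $F_n$). Therefore $P^{-1}(q) = \bigcap_n F_n$ is connected. The only non-formal step is the observation that the approximating curves $\gamma_n$ furnished by the definition of $d_u$ automatically keep all their subcurves short, which is what allows us to capture $P^{-1}(q)$ inside the connected components $F_n$ rather than merely inside $A_n$.
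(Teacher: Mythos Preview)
Your proof is correct. Both your argument and the paper's hinge on the same key observation: if $\gamma$ is a curve in $\bar D$ with $\ell_X(u\circ\gamma)<\epsilon$, then every point on $\gamma$ lies within $d_u$-distance $\epsilon$ of either endpoint (via the subcurve). The difference is only in the topological packaging. The paper argues by contradiction: assuming $K=P^{-1}(q)$ splits as $K_1\cup K_2$, it introduces the equidistant set $S=\{z:d(z,K_1)=d(z,K_2)\}$, notes that $d_u(\cdot,K)$ attains a positive minimum $\epsilon$ on $S$, and then derives a contradiction because any curve from $K_1$ to $K_2$ of $u$-length below $\epsilon$ must cross $S$. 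Your approach is more direct, realizing $P^{-1}(q)$ as a nested intersection of compact connected sets $F_n$ and invoking the standard fact that such intersections are connected. Your version avoids the auxiliary separator $S$ and is arguably cleaner; the paper's version is slightly more self-contained in that it does not appeal to the nested-intersection lemma (though it implicitly uses that the equidistant set separates $K_1$ from $K_2$, which is the same flavor of point-set topology).
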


\begin{proof}
The set $K=P^{-1} (q)$ is closed, hence compact. 	 If it is not connected we find a decomposition
$K=K_1\cup K_2$ such that $d(K_1,K_2)>0$. Let $S$ be the compact set of all points in $\bar D$  which are at the same distance from $K_1$ and $K_2$.
Choose $k_1\in K_1$ and $k_2\in K_2$. By continuity and compactness, the function
$x\mapsto d_u (k_1,x)$ assumes a minimum $\epsilon$ on $S$. Since $S$ does not intersect $K$, we have $\epsilon >0$.  By definition of $d_u$, we find a curve $\gamma $ connecting $k_1$ and $k_2$  with $\ell_X(u\circ \gamma )<\epsilon$.  This curve must intersect the set $S$
at some point $p$. We deduce $d_u(p,k_1) \leq \ell_X(u\circ \gamma ) <\epsilon$.
This contradiction finishes the proof.
\end{proof}

Since $u:S^1\to \Gamma$ is rectifiable the curve $P:S^1\to Z$ is rectifiable as well. Since the restrictions of $u$ and $P$ to any subarc of $S^1$ have equal lengths and since
$u|_{S^1}$ a weakly monotone parametrization of the   Jordan curve, $\Gamma$ we conclude:
\begin{lem} \label{lem:bcurve}
The restriction $P:S^1\to Z$ is a weakly monotone parametrization of a  rectifiable Jordan curve $\Gamma'$.    The restriction  $\bar u:\Gamma ' \to \Gamma$ is an arclength-preserving
homeomorphism.
\end{lem}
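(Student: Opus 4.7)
The plan is to first pin down the fiber structure of $P|_{S^1}$. The implication $P(z_1)=P(z_2)\Rightarrow u(z_1)=u(z_2)$ is immediate from $u=\bar u\circ P$. For the converse, I would use the weak monotonicity of $u|_{S^1}$: if $u(z_1)=u(z_2)$, then $(u|_{S^1})^{-1}(u(z_1))$ is a connected subarc $A\subset S^1$ on which $u$ is constant, so $\ell_X(u|_A)=0$, and \pref{prop:first+} upgrades this to $\ell_Z(P|_A)=0$; hence $P$ is constant on $A$ and $P(z_1)=P(z_2)$. This shows that the fibers of $P|_{S^1}$ coincide with those of $u|_{S^1}$, so in particular the fibers of $P|_{S^1}$ are connected arcs (possibly points) of $S^1$.

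I would then obtain the homeomorphism $\bar u\colon\Gamma'\to\Gamma$, where $\Gamma':=P(S^1)$, essentially for free. The map $\bar u|_{\Gamma'}$ is continuous (as $\bar u$ is $1$-Lipschitz), surjective (since $u|_{S^1}=\bar u\circ P|_{S^1}$ maps onto $\Gamma$), and injective (by the fiber identification: if $\bar u(q_1)=\bar u(q_2)$ and $P(z_i)=q_i$, then $u(z_1)=u(z_2)$, whence $P(z_1)=P(z_2)$ and $q_1=q_2$). A continuous bijection between compact Hausdorff spaces is a homeomorphism; hence $\Gamma'\cong\Gamma$, and in particular $\Gamma'$ is a Jordan curve.

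For the weak monotonicity of $P|_{S^1}$, I would exploit that $u|_{S^1}$ is by assumption a uniform limit of homeomorphisms $u_i\colon S^1\to\Gamma$. Setting $P_i:=(\bar u|_{\Gamma'})^{-1}\circ u_i$ produces homeomorphisms $S^1\to\Gamma'$, and since $(\bar u|_{\Gamma'})^{-1}$ is continuous, $P_i$ converges uniformly to $(\bar u|_{\Gamma'})^{-1}\circ u|_{S^1}=P|_{S^1}$. Thus $P|_{S^1}$ is a weakly monotone parametrization of $\Gamma'$. Rectifiability is then immediate: $\ell(\Gamma')=\ell_Z(P|_{S^1})=\ell_X(u|_{S^1})=\ell(\Gamma)<\infty$ by \pref{prop:first+}. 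Finally, for arclength preservation, any subarc $A'\subset\Gamma'$ pulls back to an arc $A=(P|_{S^1})^{-1}(A')\subset S^1$ (using the now-established weak monotonicity of $P|_{S^1}$ to make the preimage connected); then $\bar u(A')=u(A)$, and \pref{prop:first+} yields $\ell(A')=\ell_Z(P|_A)=\ell_X(u|_A)=\ell(\bar u(A'))$.

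The only subtle point is the fiber identification: while \lref{lem:nonsense-P} guarantees connectedness of the $\bar D$-fibers of $P$, this alone does not imply connectedness of the intersection of a fiber with $S^1$. The latter is forced specifically by the weak monotonicity of $u|_{S^1}$ combined with the length identity of \pref{prop:first+}; once this step is in place, everything else reduces to standard facts about continuous bijections of compacta and to the canonical trick of passing approximating homeomorphisms of $\Gamma$ over to $\Gamma'$ via $\bar u^{-1}$.
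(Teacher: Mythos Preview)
Your proof is correct and follows essentially the same approach as the paper. The paper's proof is a one-sentence sketch that the lemma follows from the length identity in \pref{prop:first+} together with the weak monotonicity of $u|_{S^1}$; you have simply spelled out the details the paper leaves implicit (fiber identification, the continuous-bijection argument, the explicit transport of approximating homeomorphisms via $(\bar u|_{\Gamma'})^{-1}$).
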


\subsection{Analytic properties}
From \lref{shortcurve} we infer:
\begin{lem} \label{h-optimal}
The restriction $P:D\to Z$ is locally $\alpha$-Hoelder continuous with $\alpha =q(\mu) \cdot \frac 1 {4\pi C}$.
\end{lem}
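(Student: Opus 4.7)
The plan is to derive this Hoelder estimate as an immediate consequence of Lemma \ref{shortcurve}, using the basic chain of inequalities
\[
d_Z(P(z_1),P(z_2)) \;=\; d_u(z_1,z_2) \;\leq\; \ell_X(u\circ\gamma)
\]
which holds for any curve $\gamma\subset\bar D$ from $z_1$ to $z_2$, by definition of $d_u$ and $d_Z$.

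First I would note that $A:=\Area_\mu(u)$ is finite, since $u\in W^{1,2}(D,X)$ yields $\Area_\mu(u)\leq E_+^2(u)<\infty$ (Subsection~\ref{subsec:areasob}). Next, given an arbitrary $z_0\in D$, I would choose some $\delta\in (|z_0|,1)$ and set $r:=\delta-|z_0|>0$, so that $B(z_0,r)\subset B(0,\delta)\subset D$. Applying Lemma~\ref{shortcurve} with these $\delta$ and $A$ produces a constant $L=L(C,l_0,\delta,A)>0$ such that every pair $z_1,z_2\in B(0,\delta)$ can be joined by a piecewise affine curve $\gamma\subset\bar D$ satisfying
\[
\ell_X(u\circ \gamma)\;\leq\;L\cdot|z_1-z_2|^{\alpha},
\]
with the exponent $\alpha=q(\mu)\cdot\frac{1}{4\pi C}$ already matching the target.

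Combining the two displays for $z_1,z_2\in B(z_0,r)$ gives
\[
d_Z(P(z_1),P(z_2))\;\leq\;L\cdot|z_1-z_2|^{\alpha},
\]
so $P$ is $\alpha$-Hoelder on the neighborhood $B(z_0,r)$ of $z_0$. Since $z_0\in D$ was arbitrary, $P:D\to Z$ is locally $\alpha$-Hoelder continuous, as claimed.

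The main point is that there is essentially no obstacle here beyond unpacking the definitions: the real work — Morrey's energy decay argument generalized to the metric setting, together with the improved exponent via Lemma~\ref{lem:Qq} — has already been absorbed into Lemma~\ref{shortcurve}. The only thing to verify is that the curves produced by Lemma~\ref{shortcurve} are admissible in the infimum defining $d_u$, which is automatic since they are curves in $\bar D$.
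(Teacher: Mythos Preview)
Your proposal is correct and is exactly the argument the paper intends: the paper simply writes ``From \lref{shortcurve} we infer'' before stating the lemma, and you have correctly unpacked this one-line reference by combining the length bound $\ell_X(u\circ\gamma)\leq L\cdot|z_1-z_2|^\alpha$ from \lref{shortcurve} with the defining inequality $d_Z(P(z_1),P(z_2))=d_u(z_1,z_2)\leq \ell_X(u\circ\gamma)$.
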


Since   $\ell_X(u\circ \gamma)=\ell_Z(P\circ \gamma )$ for all curves $\gamma$ in $\bar D$    we deduce
from \cref{twomaps}  and the corresponding property of $u$:

\begin{lem} \label{diff-equal}
The map $P:D\to  Z$ is in the Sobolev class $W^{1,2} (D, Z)$ and in the local Sobolev class $W^{1,p} _{loc}$ for some
$p>2$ depending on $C$.
The approximate metric differentials of $P$ and $u$ coincide at almost all points  $z\in D$.
In particular, the restrictions of $u$ and $P$ to any subdomain $O\subset D$ have equal $\mu$-area and equal energy.
\end{lem}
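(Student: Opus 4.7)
The plan is to deduce everything from Corollary~\ref{twomaps} applied with $u^{+}=u$ (valued in $X$) and $u^{-}=P$ (valued in $Z$), together with the length identity $\ell_X(u\circ\gamma)=\ell_Z(P\circ\gamma)$ already established for every curve $\gamma\subset\bar D$ in Proposition~\ref{prop:first+}. Since that identity holds for \emph{all} continuous curves, it certainly holds for $p$-almost every curve for any $p\in(1,\infty)$, and the continuity of $u\circ\gamma$ and $P\circ\gamma$ along such curves is automatic because both $u$ and $P$ are continuous on $\bar D$.

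Next I would verify the $L^p$-integrability hypothesis needed in Corollary~\ref{twomaps}. Because $P:\bar D\to Z$ is continuous and $Z$ is compact by Proposition~\ref{prop:first+}, $P$ is bounded and essentially separably valued, so $P\in L^{\infty}(D,Z)\subset L^p(D,Z)$ for every $p\geq 1$. Knowing already that $u\in W^{1,2}(D,X)$, Corollary~\ref{twomaps} applied with $p=2$ yields $P\in N^{1,2}(D,Z)$ and the almost-everywhere identity $\apmd P_z=\apmd u_z$. Applying the same corollary with the exponent $p>2$ furnished for $u$ by the higher integrability discussed just before \lref{lip-good} (and on each subdomain $O\subset\bar O\subset D$, where the $L^p$-condition on $P$ is again automatic) gives $P\in W^{1,p}_{\mathrm{loc}}(D,Z)$.

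The last sentence is then immediate. For any subdomain $O\subset D$, formula \eqref{eq:area-sob} expresses the $\mu$-area as $\int_O \J^{\mu}(\apmd u_z)\,dz$, respectively $\int_O \J^{\mu}(\apmd P_z)\,dz$; since $\apmd u_z=\apmd P_z$ almost everywhere, these integrals coincide. The Reshetnyak energy admits the analogous pointwise expression in terms of the operator seminorm of the approximate metric differential (it is the minimal integrand among those satisfying \eqref{eq-n1p}, and by Lemma~\ref{almostall} it is determined a.e.\ by $\apmd u_z$), so equality of approximate metric differentials forces equality of energies on $O$ as well.

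The only delicate point is the application of Corollary~\ref{twomaps}: one must make sure the length equality $\ell_X(u\circ\gamma)=\ell_Z(P\circ\gamma)$ is already known for \emph{every} rectifiable curve, not merely for $p$-almost every one. This is exactly the content of the final statement of Proposition~\ref{prop:first+}, so no extra work is required; the main obstacle is thus essentially bookkeeping rather than substance.
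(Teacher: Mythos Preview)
Your proof is correct and follows essentially the same approach as the paper: the paper's one-line argument simply invokes Corollary~\ref{twomaps} together with the length identity $\ell_X(u\circ\gamma)=\ell_Z(P\circ\gamma)$ from Proposition~\ref{prop:first+} and the known regularity of $u$. You have merely made explicit the hypothesis checks (continuity of $P$, boundedness of $Z$, and the localization for the $W^{1,p}_{\mathrm{loc}}$ claim) that the paper leaves implicit.
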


From  \lref{diff-equal} and the infinitesimal properties of $u$ we get:
\begin{cor} \label{P-quasi}
The map $P:D\to Z$ is $\sqrt 2$-quasiconformal. If the space $X$ satisfies  property (ET) then $P:D\to Z$ is conformal.
\end{cor}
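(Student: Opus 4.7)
The plan is to observe that quasiconformality and conformality are purely pointwise properties of the approximate metric differentials, so both statements will fall out of \lref{diff-equal} together with the known infinitesimal behavior of $u$.

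First, I would recall from the setting of Section~\ref{sec:Plateau} that the solution $u$ of the Plateau problem is itself $\sqrt{2}$-quasiconformal, as a consequence of \lref{lem:Qq} applied to a minimal disc; moreover, if $X$ satisfies property (ET), then $u$ is conformal (this is stated right after \lref{lem:Qq}). By the definition in Subsection~\ref{subsec:infinite}, these statements mean that for almost every $z\in D$ the seminorm $\apmd u_z\in \mathfrak S_2$ is $\sqrt{2}$-quasiconformal, respectively comes from a (possibly degenerate) multiple of the Euclidean inner product.

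Next, I would invoke \lref{diff-equal}, which tells us that $\apmd P_z = \apmd u_z$ at almost every $z\in D$. Since the quasiconformality class of a map is an almost-everywhere condition on its field of approximate metric differentials, and since this field is unchanged in passing from $u$ to $P$, the $\sqrt{2}$-quasiconformality of $u$ immediately transfers to $P\in W^{1,2}(D,Z)$. Likewise, under property (ET) the seminorms $\apmd u_z$ are either zero or come from a scalar multiple of the Euclidean inner product, and the same then holds for $\apmd P_z$, giving the conformality of $P$.

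There is essentially no obstacle: the content of the corollary is already concentrated in \lref{diff-equal}, and the rest is bookkeeping about the pointwise definitions. The only minor point worth writing down carefully is that the notion of $Q$-quasiconformality as formulated in Subsection~\ref{subsec:infinite} depends on the map only through its measurable field $z\mapsto \apmd(\cdot)_z$, and not on the particular target space; this makes the transfer from $u$ to $P$ completely formal.
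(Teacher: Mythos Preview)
Your proposal is correct and matches the paper's approach exactly: the paper simply states that the corollary follows ``from \lref{diff-equal} and the infinitesimal properties of $u$,'' which is precisely the argument you have written out in detail.
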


From \pref{lip-good}, \lref{diff-equal} and the last statement in \pref{prop:first+} we directly deduce:
\begin{lem}  \label{isop-adm}
Let $T\subset \bar D$ be a   Jordan curve with Jordan domain $\Omega$.  If
$\ell_Z (P|_T ) \leq l_0$ then
$Area _{\mu} (P|_{\Omega}) \leq C\cdot \ell_Z (P|_T ) ^2.$
\end{lem}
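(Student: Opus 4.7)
The plan is to combine three already-established facts: the length-preserving property of the factorization $u = \bar u \circ P$, the isoperimetric estimate for restrictions of $u$ to Jordan subdomains (\pref{lip-good}), and the equality of $\mu$-areas of $u$ and $P$ on subdomains (\lref{diff-equal}).

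First I would translate the hypothesis from $Z$ back to $X$. By the last assertion of \pref{prop:first+}, applied to (any parametrization of) the Jordan curve $T$, one has
\[
\ell_X(u|_T) \;=\; \ell_Z(P|_T) \;\leq\; l_0.
\]
In particular, $u|_T$ is a rectifiable curve of length at most $l_0$, so the hypothesis of \pref{lip-good} is satisfied for the Jordan curve $T\subset\bar D$ with Jordan domain $\Omega$.

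Next I would apply \pref{lip-good} directly to obtain
\[
\Area_\mu(u|_\Omega) \;\leq\; C\cdot \ell_X(u|_T)^2 \;=\; C\cdot \ell_Z(P|_T)^2.
\]
Finally, invoking \lref{diff-equal}, which guarantees that the approximate metric differentials of $u$ and $P$ agree almost everywhere and hence that $u$ and $P$ have the same $\mu$-area on every subdomain of $D$, yields $\Area_\mu(P|_\Omega) = \Area_\mu(u|_\Omega)$. Combining the two displays gives the claimed inequality.

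There is no real obstacle here: the potentially delicate step, namely that restrictions of the minimizer $u$ to Jordan subdomains with irregular boundary still satisfy the quadratic isoperimetric bound, has already been absorbed into \pref{lip-good}; and the transition between $u$ on $\bar D$ and $P$ on $\bar D$ is handled by \lref{diff-equal}. The only point deserving explicit mention is that the length-preservation in \pref{prop:first+} is stated for curves $\gamma$ defined on an interval, but applies equally well to the Jordan curve $T$ by parametrizing it by a circle, since lengths of continuous curves on $S^1$ are defined by the same supremum formula \eqref{eq:rect} and are invariant under reparametrization.
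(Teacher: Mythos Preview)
Your proof is correct and follows exactly the same route as the paper: the paper's own proof simply cites \pref{lip-good}, \lref{diff-equal}, and the last statement of \pref{prop:first+}, which is precisely what you invoke and in the same order. Your added remark about parametrizing $T$ by a circle is a harmless clarification.
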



Since $P\in W^{1,p} _{loc} (D, Z)$, for some $p>2$,
the map $P$ satsifies Lusin's property (N).  Thus, $P(D)$ is countably $2$-rectifiable and has
finite $\mathcal H^2 $-area,   Subsection \ref{subsec:areasob}.
  Since $Z=P(D)\cup\Gamma' $ and $\mathcal H^2 (\Gamma') =0$ we obtain:

\begin{lem} \label{2-rect}
The space $Z$ is countably $2$-rectifable and $\mathcal H^2 (Z)<\infty $.
\end{lem}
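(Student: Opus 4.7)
My plan is to extract both statements from material already established in the excerpt, essentially by combining the regularity of $P$ proved in Lemma~\ref{diff-equal} with the general structure theory of Sobolev maps recalled in Subsection~\ref{subsec:areasob}.

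First I would dispose of the boundary. By Lemma~\ref{lem:bcurve}, $\Gamma' = P(S^1)$ is a rectifiable Jordan curve, hence $\mathcal H^1(\Gamma') < \infty$ and in particular $\Gamma'$ is countably $1$-rectifiable and $\mathcal H^2(\Gamma') = 0$. Since $Z = P(\bar D) = P(D) \cup \Gamma'$, it suffices to show that $P(D)$ is countably $2$-rectifiable with $\mathcal H^2(P(D)) < \infty$.

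Next I turn to $P(D)$. From Lemma~\ref{diff-equal} we have $P \in W^{1,p}_{loc}(D, Z)$ for some $p > 2$; in particular $P$ is continuous on $D$ and enjoys Lusin's property (N) on $D$, as recorded in Subsection~\ref{subsec:areasob}. Exhausting $D$ by an increasing sequence of compact subsets $K_n$ and invoking the decomposition result of Subsection~\ref{subsec:apmd}, each $K_n$ admits a decomposition $K_n = S_n \cup \bigcup_i K_{n,i}$ with $|S_n| = 0$ and $P|_{K_{n,i}}$ Lipschitz. By Lusin's (N), $\mathcal H^2(P(S_n)) = 0$, so $P(K_n)$ is countably $2$-rectifiable, and therefore so is $P(D) = \bigcup_n P(K_n)$.

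For the finite area bound I would apply Lemma~\ref{area-sob} to the map $P \in W^{1,2}(D,Z)$ with $\mu = \mathcal H^2$. Writing $N(y) = \#\{z \in D : P(z) = y\} \geq 1$ on $Y = P(D)$, it yields
\begin{equation*}
\mathcal H^2(P(D)) \;\leq\; \int_{P(D)} N(y)\, d\mathcal H^2(y) \;=\; \int_D \jac^{\mathcal H^2}(\apmd P_z)\, dz \;=\; \Area_{\mathcal H^2}(P).
\end{equation*}
By Lemma~\ref{diff-equal}, the approximate metric differentials of $P$ and $u$ coincide almost everywhere, so $\Area_{\mathcal H^2}(P) = \Area_{\mathcal H^2}(u)$; since any two definitions of area differ by at most a factor $2$ (Subsection~\ref{sec:def-vol-normed}), this is bounded by $2 \Area_\mu(u) < \infty$. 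Combined with $\mathcal H^2(\Gamma') = 0$, this gives $\mathcal H^2(Z) < \infty$ and completes the argument.

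There is no serious obstacle here: the only point that requires a moment's care is that $P$ is only known to be in $W^{1,p}_{loc}$ rather than $W^{1,p}$ globally, which is why rectifiability is obtained through an exhaustion while the finite area bound is obtained from the global $W^{1,2}$ information combined with Lusin's (N).
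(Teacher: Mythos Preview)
Your proposal is correct and follows essentially the same approach as the paper's own argument, which appears in the paragraph immediately preceding the lemma: use that $P\in W^{1,p}_{loc}(D,Z)$ for some $p>2$ gives Lusin's property (N), deduce countable $2$-rectifiability and finite $\mathcal H^2$-area of $P(D)$ via the material in Subsection~\ref{subsec:areasob}, and handle the boundary by $Z=P(D)\cup\Gamma'$ with $\mathcal H^2(\Gamma')=0$. You have simply spelled out the details (exhaustion by compacts, the area formula with multiplicity $\geq 1$) that the paper leaves implicit in its reference to Subsection~\ref{subsec:areasob}.
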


We consider the  multiplicity  function $N:Z \to [1,\infty]$ defined by $N(z)=\#\{x\in \bar D: P(x) = z\}$ which appears in the area formula. Due to \lref{lem:nonsense-P}, the   fibers of the map $P$ are connected, thus the function   $N$ can only assume  the values $1$ and $\infty$.  From \lref{area-sob} we deduce that
for $\mathcal H^2$-almost all point $z\in Z$ the value  $N(z)$ is  exactly $1$. Another application of the area formula in  \lref{area-sob}
now gives us:
\begin{lem} \label{simplearea}
For any open subset $ V\subset Z$ we have
$$\mu _Z (V)= \Area_{\mu} (P|_{P^{-1} (V) \cap D}).$$
\end{lem}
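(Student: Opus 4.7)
The plan is to obtain the identity as a direct application of the area formula in Lemma~\ref{area-sob} together with the preceding observation that the multiplicity function equals $1$ almost everywhere on $Z$.

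First I would verify that the hypotheses of Lemma~\ref{area-sob} are met for the map $P$. By Lemma~\ref{diff-equal}, $P\in W^{1,p}_{loc}(D,Z)$ for some $p>2$, so $P$ is continuous and enjoys Lusin's property (N). Hence for the measurable set $K:=P^{-1}(V)\cap D\subset D$, Lemma~\ref{area-sob} gives
\begin{equation*}
\int_{Y} N_K(y)\, d\mu_Z(y) \;=\; \int_{K} \J^{\mu}(\apmd P_z)\, dz \;=\; \Area_{\mu}(P|_{K}),
\end{equation*}
where $Y:=P(K)$ and $N_K(y):=\#\{z\in D:P(z)=y\}$ is the multiplicity of $P$ (in the sense of Lemma~\ref{area-sob}, which counts preimages in the whole domain $D$; note that for $y\in Y$ every preimage automatically lies in $K$, since $P^{-1}(y)\subset P^{-1}(V)$).

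Next I would show that in the integral above one may replace $Y$ by $V$ and the integrand by the constant $1$, both at the cost of $\mu_Z$-null sets. Since $P(\bar D)\setminus P(D)\subset \Gamma'$ and $\Gamma'$ is a rectifiable curve, $\mathcal H^2(V\setminus Y)\le\mathcal H^2(\Gamma')=0$, and hence $\mu_Z(V\setminus Y)=0$ because $\mu_Z$ is absolutely continuous with respect to $\mathcal H^2$ (Subsection~\ref{subsec:rect}). For the integrand, the paragraph preceding the statement of the lemma establishes, using Lemma~\ref{lem:nonsense-P} and again Lemma~\ref{area-sob}, that the multiplicity $N$ of $P$ takes only the values $1$ and $\infty$ and that it equals $1$ for $\mathcal H^2$-a.e. $y\in Z$; the same therefore holds $\mu_Z$-a.e., and in particular $N_K(y)=1$ for $\mu_Z$-a.e. $y\in Y$.

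Combining these two observations yields
\begin{equation*}
\Area_{\mu}(P|_{K}) \;=\; \int_{Y} N_K(y)\, d\mu_Z(y) \;=\; \mu_Z(Y) \;=\; \mu_Z(V),
\end{equation*}
which is the desired equality. There is essentially no obstacle here; the only subtlety is the bookkeeping around the boundary curve $\Gamma'$, which is handled by the $\mathcal H^2$-negligibility of $\Gamma'$ combined with $\mu_Z\ll\mathcal H^2$.
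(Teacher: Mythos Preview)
Your proof is correct and follows the same approach as the paper: both apply the area formula of Lemma~\ref{area-sob} together with the observation (established in the paragraph immediately preceding the lemma) that the multiplicity function of $P$ equals $1$ for $\mathcal H^2$-almost every point of $Z$. You have simply spelled out the bookkeeping around the boundary curve $\Gamma'$ and the identity $P(K)=V\cap P(D)$, which the paper leaves implicit.
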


The measure $\mu_Z$ and hence $\mathcal H^2$  has the whole set $Z$ as its support:

\bl\label{lem:Haus-positive}
 For every $z_0\in Z$ and every $r>0$ we have $$\mu_Z(B(z_0, r))>0.$$
\el

\begin{proof}
 Otherwise, we find some $z_0=P(x_0)$ and some $r>0$
 with  $\mu _Z  (B(z_0, r))=0$.  The set   $\Omega:= P^{-1}(B(z_0,r))$
 is open in $\bar D$. It  consists of all points  which can be connected to $x_0$ by some  curve  $\gamma$ with $\ell_X(u\circ \gamma ) <r$. Therefore, $\Omega$ is connected.  Hence $\Omega \cap D=\Omega \setminus \partial D$ is connected as well.
  From \lref{simplearea} we deduce $\Area_{\mu}(P|_{\Omega \cap D})=0$. Since $P$ is quasi-conformal, the restriction of $P$ to $\Omega \cap D$ has vanishing energy.  Since $\Omega \cap D$ is connected and $P$ continuous we infer that $P$ is constant on $\Omega \cap D$. By continuity, $B(z_0,r)=P(\Omega)=\{z_0\}$. Since $Z$ is geodesic this implies that $Z=\{z_0\}$.  This is impossible since $\Gamma'=P(\Gamma )$
  is  a Jordan curve in $Z$.
\end{proof}

\section{Topological preliminaries} \label{sec:topo}
In this section we collect some well-known statements in $2$-dimensional topology and provide minor variants
of these statements. A reader with some experience in  this area  may proceed directly to the next section.

\subsection{Jordan's curve theorem} \label{subsec:jordan}
By the theorem of Jordan, any Jordan curve $\Gamma \subset S^2$
divides $S^2$ into two domains. These domains are (homeomorphic to) open discs with boundary $\Gamma$ and their closures are homeomorphic to $\bar D$.

Due to a theorem of Rado, these domains depend "continuously" on the Jordan curve in the following sense,  \cite{Pommerenke}, Theorem 2.11. Let
a Jordan curve $\Gamma$ be fixed, let $O$ be one of the corresponding domains and
$p\in O$ an arbitrary point.  Then for any $\epsilon >0$ there is some $\delta >0$
with the following property. If $f:\Gamma \to S^2$ is a homeomorphism onto the image  Jordan curve $\Gamma '$ such that $d(f(x),x) <\delta $  for all $x\in \Gamma$ then there exists  a  homeomorphism $F:  \bar O \to \bar O'$  with $d(F(x),x) <\epsilon$ for all
$x\in \bar O$.  Here we denote by $O'$ the Jordan domain of $\Gamma '$ which contains the point $p$.

Given disjoint subsets $A,B,C$ of a topological space $Y$, we say that $A$  \emph{separates} $B$ from $C$ if any connected subset $S$ of $Y$ which contains points of $B$ and $C$ must intersect $A$.
 A \emph{Peano continuum} is a compact,  connected, locally connected metric space. Any Peano continuum is locally path connected.
We will  need the following simple observation.

\begin{lem} \label{separate2sphere}
Let $Y$ be a simply connected Peano continuum.
Let $K$ be a compact subset  of $Y$ which separates two points $x$ and $y$ in $Y$. Then $K$ contains a minimal compact subset $K'$  which still  separates $x$ and $y$. Moreover, $K'$ is connected.
\end{lem}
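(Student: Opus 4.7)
The plan is to prove the two assertions in sequence: first obtain a minimal separator by Zorn's lemma, then show it must be connected by invoking a Janiszewski-type theorem available because $Y$ is simply connected.

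For the existence of $K'$, I would order the family $\mathcal F$ of compact subsets of $K$ that still separate $x$ from $y$ by reverse inclusion, and verify the Zorn hypothesis. Given a chain $\{K_\alpha\}\subset\mathcal F$, set $K_*=\bigcap_\alpha K_\alpha$. Compactness of $K_*$ and $x,y\notin K_*$ are immediate, so the only point to check is that $K_*$ separates $x$ from $y$. Suppose not. Since $Y$ is a Peano continuum, it is locally path connected, so the component $C$ of $Y\setminus K_*$ containing $x$ is path connected and contains $y$; choose a path $\gamma\colon[0,1]\to Y\setminus K_*$ from $x$ to $y$. The compact set $\gamma([0,1])$ is disjoint from $K_*=\bigcap K_\alpha$, so the open cover $\{Y\setminus K_\alpha\}$ of $\gamma([0,1])$ has a finite subcover; since the chain is totally ordered, the complements are nested and one of them alone covers $\gamma([0,1])$. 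Thus some $K_{\alpha^*}\in\mathcal F$ does not meet $\gamma([0,1])$, contradicting that $K_{\alpha^*}$ separates $x$ from $y$. Hence $K_*\in\mathcal F$, and Zorn yields a minimal element $K'\in\mathcal F$.

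For the connectedness of $K'$, I would argue by contradiction. If $K'$ is disconnected, write $K'=A\sqcup B$ with $A,B$ nonempty, compact, and disjoint. By minimality of $K'$, neither $A$ nor $B$ separates $x$ from $y$ in $Y$. Here I would invoke Janiszewski's theorem: in a Peano continuum $Y$ that is unicoherent, if $A,B\subset Y$ are closed with $A\cap B$ connected (in particular empty), and if each of $A,B$ fails to separate two points $x,y$, then $A\cup B$ fails to separate them as well. The hypothesis of unicoherence is supplied by the assumption that $Y$ is simply connected: simple connectivity gives $H_1(Y;\mathbb Z)=0$ by Hurewicz and hence $H^1(Y;\mathbb Z)=0$ by universal coefficients, which for a Peano continuum is equivalent to unicoherence. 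Applying Janiszewski's theorem to our $A,B$ gives that $K'=A\cup B$ does not separate $x$ from $y$, contradicting $K'\in\mathcal F$.

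The principal obstacle is the connectedness step, but it is resolved entirely by citing the classical topology of unicoherent Peano continua (see, e.g., Kuratowski's \textit{Topology} II or Wilder's \textit{Topology of Manifolds}); once Janiszewski's theorem is available the contradiction is immediate. The existence step is routine, the only subtlety being the use of local path connectedness of $Y$ to translate non-separation into the existence of an honest path, which can then be shoved into the complement of a single member of the chain by a compactness argument.
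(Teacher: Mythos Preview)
Your proof is correct and follows essentially the same route as the paper: Zorn's lemma for the existence of a minimal separator, and a contradiction argument for connectedness showing that if $K'=A\sqcup B$ then by minimality neither $A$ nor $B$ separates $x$ from $y$, hence neither does $A\cup B$. The only difference is packaging: where you invoke Janiszewski's theorem for unicoherent Peano continua, the paper carries out the underlying homological step directly, observing that $H_1(Y;\mathbb Z)=0$ makes the Mayer--Vietoris map $H_0(Y\setminus K';\mathbb Z)\to H_0(Y\setminus A;\mathbb Z)\oplus H_0(Y\setminus B;\mathbb Z)$ injective.
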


\begin{proof}
If $K_j$ is a chain of decreasing compact subsets separating $x$ and $y$  then their intersection separates $x$ and $y$ as well.  Indeed, any $K_j$ intersects any curve connecting $x$ and $y$, hence so does $K'=\cap K_j$ by compactness.

 By Zorn's lemma there exists a minimal  compact subset $K'$ of $K$ which separates $x$ and $y$.
If $K'$ is not connected then it can be written as the non-trivial disjoint union of compact subsets $K'=K_1\cup K_2$.  Since $Y$ is simply connected, it has trivial first homology group $H_1 (Y,\mathbb Z)$.
The exactness of the  Mayer-Vietoris  sequence  in homology implies  the injectivity of the canonical map
 $$ H_0 (Y\setminus K' ,\mathbb Z) \to H_0 (Y\setminus K_1 ,\mathbb Z) \oplus  H_0 (Y\setminus K_2 ,\mathbb Z). $$

  Therefore, if points $x$ and $y$ define the same element in $H_0(Y\setminus K_1, \mathbb Z)$ and in
$H_0 (Y\setminus K_2 ,\mathbb Z)$ then they define the same element in $H_0(Y\setminus K' ,\mathbb Z)$, hence
are in the same component of $Y\setminus K'$.
Thus, either $K_1$ or $K_2$ must separate $x$ and $y$, in contradiction  with
the minimality of $K'$.
\end{proof}

We cite the following result from  \cite{Wil49}, Theorem IV.6.7:

\begin{lem} \label{peano-jor}
Let $K$ be a Peano continuum in  $S^2$ which separates two points $x$ and $y$. Then $K$ contains a Jordan curve  which still  separates $x$ and $y$.
\end{lem}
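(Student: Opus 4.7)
The plan is to reduce the problem, via two applications of Torhorst's theorem, to the situation where the separating set is the boundary of a simply connected complementary domain, and then to invoke the Carath\'eodory extension theorem together with Whyburn's cyclic element theory to pick out a Jordan curve.

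First I would let $U$ be the component of $S^2\setminus K$ containing $x$. Since $K$ is connected and the boundary of every other component of $S^2\setminus K$ is contained in $K$, the closed set $S^2\setminus U$ is connected, so $U$ is a simply connected domain. Assuming harmlessly that $x,y\notin K$, the point $y$ lies in a different open component of $S^2\setminus K$ and hence $y\notin\overline U$; any continuous path from $x$ to $y$ in $S^2$ therefore has to cross $\partial U\subseteq K$, and so $\partial U$ already separates $x$ from $y$. By Torhorst's theorem, $\partial U$ is a Peano continuum. I would then iterate with the roles of the two points exchanged: let $V$ be the component of $y$ in $S^2\setminus\partial U$. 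The same argument, now applied to the Peano continuum $\partial U$, shows that $V$ is a simply connected domain whose boundary $\partial V\subseteq\partial U\subseteq K$ is a Peano continuum still separating $x$ from $y$. This reduces the problem to extracting a Jordan curve separating $x,y$ from the locally connected boundary of a simply connected domain.

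For the final step I would apply the Carath\'eodory extension theorem: any Riemann map $\phi\colon D\to V$ extends continuously to $\bar\phi\colon\bar D\to\overline V$ with $\bar\phi(S^1)=\partial V$. If $\bar\phi|_{S^1}$ happens to be injective, then $\partial V$ is already a Jordan curve contained in $K$ that separates $x$ and $y$. Otherwise, the identifications on $S^1$ single out the cut points of $\partial V$, and by Whyburn's cyclic element decomposition $\partial V$ is a countable union of arcs and cyclic elements (maximal cut-point-free subcontinua) glued at cut points. Since no dendrite can separate $S^2$, one of the cyclic elements $B\subseteq\partial V$ must still separate $x$ from $y$; applying the Carath\'eodory description to the two simply connected domains on either side of $B$ then forces the boundary parametrization of $B$ to be injective, so that $B$ is a Jordan curve.

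The hard part will be this last step: rigorously extracting the separating Jordan curve from the cyclic element decomposition of $\partial V$ and controlling the prime-end accessibility data well enough to deduce injectivity of the boundary parametrization on the distinguished cyclic element. This bookkeeping is precisely the piece of classical $2$-dimensional topology that the cited result from Wilder's book packages for us.
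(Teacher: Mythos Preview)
The paper does not prove this lemma at all: it is quoted verbatim as Theorem~IV.6.7 of Wilder's \emph{Topology of Manifolds} and no argument is given. So there is no ``paper's proof'' to compare against; your proposal is an attempt to supply what the paper deliberately outsources.

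Your reduction steps are sound. Passing from $K$ to $\partial U$ and then to $\partial V$ via Torhorst's theorem is a standard and correct way to arrange that the separating Peano continuum is the boundary of a simply connected domain on each side; this is exactly the kind of preprocessing Wilder's argument also performs. One remark on the final step: the bare assertion ``a cyclic element that separates must be a Jordan curve'' is false for general Peano continua in $S^2$ (a theta-curve is cut-point-free, separates, and is not a Jordan curve). What saves you is the extra information, coming from your two-sided reduction, that the continuum in question is simultaneously the boundary of two complementary simply connected Jordan domains; under that hypothesis every nondegenerate cyclic element is indeed a Jordan curve, and the Carath\'eodory boundary map is injective on the corresponding arcs of $S^1$. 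You should make this dependence explicit rather than leaving it implicit in the phrase ``applying the Carath\'eodory description to the two \ldots\ domains''.

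That said, you yourself concede that the honest work --- extracting the separating cyclic element and verifying it is a simple closed curve --- ``is precisely the piece of classical $2$-dimensional topology that the cited result from Wilder's book packages for us''. At that point your argument is not an independent proof but a reduction back to the same reference the paper invokes. This is fine as an explanation of \emph{why} the lemma is true, but as a replacement for the citation it is circular.
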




We will need:
\begin{lem} \label{open-compl}
Let $K\subset S^2$ be closed and connected. Then any component $U$ of the complement $S^2\setminus K$ is homeomorphic to a  disc.
\end{lem}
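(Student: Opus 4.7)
The plan is to show that $S^2\setminus U$ is connected, deduce from this that $U$ is simply connected, and then apply the classification of simply connected $2$-manifolds. (We must assume $K\neq\emptyset$; otherwise $U=S^2$, which is not a disc, so the lemma is only meaningful under this implicit hypothesis.)

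First I would verify that $S^2\setminus U$ is connected. Write the components of the open set $S^2\setminus K$ as $U$ together with a (possibly empty, possibly infinite) family $\{V_\alpha\}_\alpha$. Since $S^2$ is locally connected, each $V_\alpha$ is open in $S^2$; being a proper non-empty open subset of the connected space $S^2$, its topological boundary $\partial V_\alpha$ is non-empty, and since the components of $S^2\setminus K$ are pairwise open and disjoint this boundary lies entirely in $K$. Consequently $V_\alpha\cup K$ is connected for every $\alpha$ (union of two connected sets whose closures meet), and
\[
S^2\setminus U \;=\; K\cup\bigcup_\alpha V_\alpha
\]
is a union of connected sets all containing $K$, hence connected.

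Next I would deduce that $U$ is simply connected. The geometric core is: any Jordan curve $\gamma\subset U$ bounds a Jordan domain inside $U$. Indeed, Jordan's theorem splits $S^2\setminus\gamma$ into two open Jordan domains $D_1,D_2$; the connected set $S^2\setminus U$ is disjoint from $\gamma$, hence lies entirely in one of them, say $D_1$, and then $D_2\subset U$ is bounded by $\gamma$. This already shows every simple closed curve in $U$ is null-homotopic. To upgrade this to simple connectivity for arbitrary loops, I would appeal to Alexander duality, which yields $H_1(U;\Z)\cong \tilde H^0(S^2\setminus U;\Z)=0$, together with the fact that the fundamental group of any connected open subset of $S^2$ is free (since such an open surface is homotopy equivalent to a $1$-complex). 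A free group with trivial abelianization is trivial, so $\pi_1(U)=0$.

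Finally, I would invoke the classical classification of simply connected $2$-manifolds without boundary: any such manifold is homeomorphic either to $S^2$ or to $\R^2$. Since $K\neq\emptyset$ we have $U\neq S^2$, whence $U\cong\R^2$, i.e., $U$ is homeomorphic to an open disc. The step I expect to be most subtle is the passage from simple closed curves bounding discs to full simple connectivity; Alexander duality gives the shortest route but tacitly uses that planar open surfaces have free $\pi_1$. A direct alternative would require approximating arbitrary loops by piecewise-linear loops, expressing them as finite sums of simple closed curves, and applying the Jordan-curve argument to each summand.
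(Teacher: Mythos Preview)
Your argument is correct. The core geometric observation---a Jordan curve $\gamma\subset U$ has one of its two Jordan domains contained in $U$ because the connected complement $S^2\setminus U$ (or just $K$) must lie entirely on one side---is exactly what the paper uses, but the paper runs it as a three-line contrapositive: if $U$ were not a disc, take a non-contractible Jordan curve $T\subset U$; then $K$ would have to meet both Jordan domains of $T$, contradicting connectedness of $K$.

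The difference is in what each version takes for granted. The paper's ``otherwise we find a Jordan curve $T$ in $U$ which is non-contractible in $U$'' silently uses that a connected open planar surface which is not a disc contains a non-contractible \emph{simple} closed curve; this is classical but is precisely the passage from ``all Jordan curves bound discs'' to ``simply connected'' that you flagged as subtle. You make this step explicit via Alexander duality and the freeness of $\pi_1$ for open planar surfaces, which is heavier machinery but leaves nothing implicit. Your preliminary step showing $S^2\setminus U$ (rather than just $K$) is connected is not needed for the paper's version, but is exactly what Alexander duality requires, so it is well-motivated in your framework. Both routes ultimately rest on the classification of simply connected surfaces; yours just documents the path more carefully.
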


\begin{proof}
Otherwise we find a Jordan curve $T$ in $U$ which is non-contractible in $U$. Then
$K$ must contain at least one point in both Jordan domains defined by  $T$ in $S^2$.  Then $K$ cannot be connected.
\end{proof}

For the proof of the following result we refer to
\cite[Corollary 2B]{Fre90}:

\begin{lem} \label{peano}
Let $K$ be a  compact, connected metric space with finite $\mathcal H^1 (K)$.  Then
$K$ is a Peano continuum.
\end{lem}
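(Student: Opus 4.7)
\textbf{Proof plan for Lemma \ref{peano}.} Since $K$ is already compact and connected, to show that it is a Peano continuum it suffices to verify that $K$ is locally connected at every point. The plan is: fix $x_0\in K$ and $\varepsilon>0$, and exhibit a connected neighborhood of $x_0$ in $K$ contained in $B(x_0,\varepsilon)$.

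The central input will be the one-dimensional Eilenberg co-area inequality: for any $1$-Lipschitz function $f\colon K\to\mathbb R$ one has
\begin{equation*}
   \int_{\mathbb R}\#(f^{-1}(t))\,dt\ \leq\ c\cdot \mathcal H^1(K)\ <\ \infty.
\end{equation*}
Applying this with $f(y):=d(x_0,y)$, I conclude that for $\mathcal H^1$-a.e.\ $r>0$ the metric sphere $S_r:=\{y\in K:d(x_0,y)=r\}$ is finite. I pick such an $r\in(0,\varepsilon)$ and write $S_r=\{p_1,\dots,p_n\}$. (If it should happen that $K\subset \bar B(x_0,r)$ for arbitrarily small $r$, then $K=\{x_0\}$ and there is nothing to prove.)

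Next I would analyse the compact set $A_r:=K\cap\bar B(x_0,r)$. Its boundary in $K$, namely the set of points in $A_r$ that are also limits of points of $K\setminus A_r$, is contained in $S_r$ by continuity of $d(x_0,\cdot)$. Consequently any connected component of $A_r$ that is disjoint from $S_r$ is both open and closed in $K$, forcing (by connectedness of $K$) the component to equal $K$, which contradicts $S_r\neq\emptyset$. Hence every component of $A_r$ meets the finite set $S_r$, so $A_r$ has only finitely many components $C_1,\dots,C_k$. Relabel so that $x_0\in C_1$. In a compact Hausdorff space possessing only finitely many components, each component is clopen, so $\delta:=d(C_1,C_2\cup\dots\cup C_k)>0$.

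Finally, for any $y\in B(x_0,\min(\delta/2,r))\cap K$ one has $y\in A_r$, hence $y\in C_j$ for some $j$; if $j\geq 2$ then $d(x_0,C_j)<\delta$, contradicting the definition of $\delta$. Therefore $B(x_0,\min(\delta/2,r))\cap K\subset C_1$, so $C_1$ is a connected neighborhood of $x_0$ contained in $\bar B(x_0,r)\subset B(x_0,\varepsilon)$, proving local connectedness. The main technical point is the Eilenberg inequality producing small finite ``spheres'' $S_r$; once that is available, the separation argument using compactness of components is straightforward. The only subtlety I would be careful about is justifying that $S_r$ really coincides with the topological boundary of $A_r$ in $K$ at our chosen radius, which is clear from continuity of $d(x_0,\cdot)$, and that finitely many components in a compact Hausdorff space are automatically clopen (a standard fact going back to \v{S}ura-Bura).
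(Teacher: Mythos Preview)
The paper does not prove this lemma at all; it simply cites Fremlin~\cite{Fre90}. Your self-contained argument via the Eilenberg co-area inequality (finite $\mathcal H^1$ forces almost every distance sphere $S_r$ to be finite) is therefore a genuinely different and more informative route, and the overall strategy is correct.

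There is, however, one step that is not justified as written. From ``$\partial_K A_r\subset S_r$'' you conclude that any component $C$ of $A_r$ disjoint from $S_r$ is \emph{open} in $K$. This does not follow: knowing $C\subset\operatorname{int}_K(A_r)$ only says $C$ lies inside an open set, not that $C$ itself is open---components of open sets need not be open unless the ambient space is already locally connected, which is exactly what you are trying to prove. The fix uses the very \v{S}ura--Bura fact you invoke later, but it is needed \emph{here}: in the compact Hausdorff space $A_r$ the component $C$ equals its quasi-component, so one can separate $C$ from the finite set $S_r$ by a set $V$ that is clopen in $A_r$ with $C\subset V\subset A_r\setminus S_r$. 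Since $V\subset B(x_0,r)\cap K\subset A_r$ and $V$ is relatively open in $A_r$, one gets that $V$ is open in $K$; being also closed in $K$, connectedness of $K$ forces $V=K$, the desired contradiction. (Your later use of \v{S}ura--Bura for the finitely-many-components step is actually unnecessary: once there are finitely many components, each is the complement of a finite union of closed sets, hence open, by elementary reasoning.) With this adjustment the proof goes through.
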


Taking \lref{peano} and  \lref{peano-jor}  we arrive at:
\begin{cor}  \label{corh1}
Let $Y$ be a compact metric space homeomorphic to $S^2$. Let $K\subset Y$ be a compact subset
which separates  two points $x,y\in Y$. If $l=\mathcal H^1 (K) <\infty$ then $K$ contains   a rectifiable Jordan curve  of length  at most $ l$ which still separates
$x$ and $y$.
\end{cor}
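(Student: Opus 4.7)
The plan is to combine the three preceding lemmas of the section, with a final observation that on a Jordan curve the one-dimensional Hausdorff measure coincides with the length.

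First, I would fix a homeomorphism $\Phi:Y\to S^2$; since $S^2$ is a simply connected Peano continuum, so is $Y$. Applying \lref{separate2sphere} to the compact separating set $K\subset Y$ gives a minimal compact subset $K'\subset K$ which still separates $x$ from $y$, and moreover $K'$ is connected. Since $K'$ is contained in $K$, monotonicity of Hausdorff measure gives $\mathcal H^1(K')\leq \mathcal H^1(K)=l<\infty$. In particular $K'$ is a compact, connected metric space of finite $\mathcal H^1$-measure, so by \lref{peano} it is a Peano continuum.

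Now transport $K'$ to $S^2$ via $\Phi$: the set $\Phi(K')$ is a Peano continuum in $S^2$ separating $\Phi(x)$ from $\Phi(y)$, so by \lref{peano-jor} it contains a Jordan curve $J\subset \Phi(K')$ that still separates $\Phi(x)$ and $\Phi(y)$. Pulling back, $\Gamma:=\Phi^{-1}(J)\subset K'\subset K$ is a Jordan curve in $Y$ which separates $x$ and $y$.

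It remains to bound the length of $\Gamma$. Here I would use the standard fact that for an injective continuous image of $S^1$ in a metric space, the length (as defined by \eqref{eq:rect}) equals $\mathcal H^1$ of the image; this follows from the corresponding statement for simple arcs (applied to two complementary subarcs), where $\mathcal H^1$ of a simple arc between two points is its length, itself a consequence of the fact that the nearest-point projection onto a simple arc is $1$-Lipschitz. Applying this to $\Gamma$ and using $\Gamma\subset K'\subset K$ gives
\[
\length_Y(\Gamma)=\mathcal H^1(\Gamma)\leq \mathcal H^1(K')\leq \mathcal H^1(K)=l,
\]
so $\Gamma$ is rectifiable of length at most $l$ and separates $x$ from $y$, as required.

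The only nonroutine point is the passage from a minimal separating set to an actual Jordan curve; this is precisely where the combination of \lref{separate2sphere} (producing a connected minimal separator), \lref{peano} (turning finite $\mathcal H^1$ plus connectedness into local connectedness), and \lref{peano-jor} (finding a Jordan curve inside a Peano continuum separator in $S^2$) does all the work. The length bound itself is then essentially automatic from the identification of $\mathcal H^1$ with length on Jordan curves.
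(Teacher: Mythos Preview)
Your proof is correct and follows essentially the same route as the paper's, which simply says the corollary is obtained by ``taking \lref{peano} and \lref{peano-jor}''. You are more careful than the paper in that you explicitly invoke \lref{separate2sphere} to pass to a connected separator $K'$ before applying \lref{peano}; this step is genuinely needed (both \lref{peano} and \lref{peano-jor} require connectedness) and is only implicit in the paper's one-line derivation.

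One small correction: your justification for $\length_Y(\Gamma)=\mathcal H^1(\Gamma)$ via ``the nearest-point projection onto a simple arc is $1$-Lipschitz'' is not valid in a general metric space; such a projection need not be $1$-Lipschitz or even single-valued. The standard argument is instead: for a simple arc $A$ with endpoints $p,q$, the distance function to $p$ is $1$-Lipschitz and maps the connected set $A$ onto an interval containing $[0,d(p,q)]$, so $\mathcal H^1(A)\geq d(p,q)$; applying this to all subarcs and summing over a partition gives $\mathcal H^1(A)\geq \length(A)$, while the reverse inequality comes from the arclength parametrization being $1$-Lipschitz. The conclusion you use is correct; only the parenthetical reason needs replacing.
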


  We can  now  deduce a corresponding separating result in discs:
  \begin{cor} \label{corh2}
  Let $Z$ be a metric space homeomorphic to $\bar D$. Let $K$ be a compact subset of  $Z$ with finite   $\mathcal H^1 (K)$.
  If $K$ separates points
$x,y \in Z$ then
$K$ contains a  minimal compact subset $T$ separating $x$ and $y$. Either $T$ is a  Jordan curve which intersects $\partial Z$ in at most one point. Or $T$ is  homeomorphic to a compact interval which intersects $\partial Z$ exactly at its endpoints.
  \end{cor}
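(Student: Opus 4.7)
The plan is as follows. First, extract a minimal compact $T \subset K$ still separating $x$ from $y$, and use minimality to obtain the identity $\partial U = \partial V = T$, where $U, V$ denote the components of $Z \setminus T$ containing $x$ and $y$. Then combine this identity with $\mathcal{H}^1(T) < \infty$ to force the desired topological type of $T$.

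Since $Z \cong \bar D$ is a simply connected Peano continuum, \lref{separate2sphere} yields a minimal compact $T \subset K$ still separating $x, y$, and $T$ is connected. Since $\mathcal{H}^1(T) < \infty$, \lref{peano} makes $T$ a Peano continuum. The topological boundary $\partial U$ is a closed subset of $T$ that still separates $x$ from $y$ (as $y \notin \bar U$), so minimality forces $\partial U = T$; symmetrically $\partial V = T$. Consequently every point of $T$ lies in $\bar U \cap \bar V$, and the planar topology of $Z$ together with $\partial U = \partial V = T$ forces $Z \setminus T = U \sqcup V$: a hypothetical third component $W$ would have $\partial W \subset T = \bar U \cap \bar V$, but the planar face-adjacency structure at any point of $\partial W$ forces incidence with at most two of $U, V, W$, contradicting $\partial U = \partial V = T$.

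Next I analyze the local topology of $T$ at each $p \in T$. Each local arc of $T$ through $p$ must have one side in $U$ and the other in $V$, and since $Z \setminus T$ has only the two components $U, V$, this forces $T \cap B(p, \epsilon)$ to be a single topological arc, passing through $p$ at interior or tangent points and ending at $p$ at leaves on $\partial Z$. Interior endpoints are ruled out since they have a single local face, and interior branch points of degree $\geq 3$ are ruled out because the alternating $U, V$ sectors around such a branch would produce either a third complementary component or a proper compact subset of $T$ still separating $x, y$. Hence $T$ is a compact connected one-dimensional manifold with possible boundary on $\partial Z$, so it is either a Jordan curve or an arc with both endpoints on $\partial Z$.

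Finally, extra intersections of $T$ with $\partial Z$ are ruled out by region-counting. If $T$ is a Jordan curve and $T \cap \partial Z$ contains two distinct points $p_1, p_2$, these split $T$ into two subarcs which are both chords of $Z$; the resulting lens configuration partitions $Z \setminus T$ into three regions, contradicting the two-component conclusion above. The same three-region argument applied to an arc $T$ with any tangent intersection with $\partial Z$ beyond its endpoints yields a contradiction. The principal difficulty is the local-structure step: rigorously verifying that the minimality constraint $\partial U = \partial V = T$ combined with $\mathcal{H}^1(T) < \infty$ forces the clean one-manifold structure at every point and rules out interior branch points, which requires careful continuum-theoretic analysis of Peano continua of finite length.
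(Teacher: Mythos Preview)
Your outline has a genuine gap precisely where you flag it yourself: the local-structure step is not proved. The claim that a Peano continuum $T$ of finite $\mathcal{H}^1$, with $\partial U = \partial V = T$ and $Z\setminus T = U\sqcup V$, must be a $1$-manifold with boundary is essentially a converse Jordan/Schoenflies-type statement, and your ``alternating $U,V$ sectors'' argument is only a heuristic. A priori $T$ need not have any well-defined local degree or arc structure at a given point, so talking about ``each local arc of $T$ through $p$'' already presupposes the conclusion. The preceding step is also shaky: the assertion that a hypothetical third component $W$ is ruled out by ``planar face-adjacency'' is an appeal to intuition rather than a proof (recall that without local connectedness, Lakes-of-Wada continua have arbitrarily many complementary domains with common boundary; you need to genuinely use that $T$ is a Peano continuum, and you have not).

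The paper sidesteps all of this local analysis. It doubles $Z$ along $\partial Z$ to obtain a sphere $Y$, forms $T^+ = T\cup\partial Z$, and invokes \lref{peano-jor} (Wilder's theorem) to extract a Jordan curve $T^-\subset T^+$ separating $x$ and $y$ in $Y$. Minimality then forces $T\setminus\partial Z = T^-\setminus\partial Z$, so $T$ is a connected subset of the Jordan curve $T^-$, hence either all of $T^-$ (touching $\partial Z$ at one point) or a subarc with endpoints on $\partial Z$. The doubling trick and the black-box \lref{peano-jor} replace your entire local analysis, and a separate short argument at the start shows $T\setminus\partial Z$ is connected and dense in $T$, which is what makes the minimality comparison with $T^-$ work. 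If you want to salvage your direct approach, you would need to cite or prove a clean common-boundary theorem for Peano continua in the disc; otherwise the doubling route is both shorter and complete.
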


\begin{proof}
We may assume without loss of generality that $x,y\not\in \partial Z$.
Due to  \lref{separate2sphere} we find a minimal compact   $T\subset K$ separating $x$ and $y$,  and $T$ is connected.
The set $T\setminus \partial Z$ separates $x$ and $y$ in
the open disc $Z\setminus \partial Z$. For any  subset  $T_0$ of $T\setminus \partial Z$ which separates $x$ and $y$ in  $Z\setminus \partial Z$, the closure $\bar T_0$ of $T_0$ in $Z$ separates $x$ and $y$ in $Z$, hence it coincides with
$T$ by minimality. Thus $T\setminus \partial Z$ does not contain any proper closed subset separating $x$ and $y$ in  $Z\setminus \partial Z$.
Using a Mayer-Vietoris sequence as in the proof of \lref{separate2sphere}
we deduce that $T\setminus \partial Z$ is connected.  Summarizing, we see that $T\setminus \partial Z$ is connected and dense in $T$.

Using  \lref{peano} we see that  $T$ is a Peano continuum.
Consider the doubling $Y$ of $Z$ along $\partial Z$ and let $T^+$ be the union of $T$ and $\partial Z$.  The compact set $T^+$ separates $x$ and $y$ in $Y$.
If $T$ does not intersect $\partial Z$ then $T$ and $\partial Z$ are connected components of $T^+$.  Since $\partial Z$ does not separate $x$ and $y$, we deduce that $T$
separates $x$ and $y$ in $Y$.  Due to \cref{corh1} and minimality $T$ is a Jordan curve in this  case.

Assume from now on that $T$ intersects $\partial Z$. Then $T^+$ is a Peano continuum, as a connected union of the  Peano
continua $T$ and $\partial Z$. Due to \lref{peano-jor}, we find
a Jordan curve $T^- \subset T^+$ which still separates $x$ and
$y$ in $Y$.

The set $T^-\setminus \partial Z \subset T$ separates
$x$ and $y$ in the open disc $Z\setminus \partial Z$.  Due to the discussion at the beginning of the proof, $T^{-} \setminus \partial Z =T\setminus \partial Z$.  Thus the connected set $T\setminus \partial Z$ is
an open  subset of the Jordan curve $T^-$. We deduce that $T$ is a connected subset of the Jordan curve $T^-$. Moreover, either $T=T^-$ and $T$ intersects $\partial Z$ in exactly one point, or $T$ is homeomorphic to a compact  interval and $T\cap \partial Z$ consists of the two endpoints of the interval.
\end{proof}

By induction we can derive the following extension of  \cref{corh2} to  sets of finitely many points:
\begin{lem} \label{lem:finiteset}
Let $Z$ be a metric space homeomorphic to $\bar D$. Let $F$ be a finite set $F=\{p_1,....,p_m \} \subset Z\setminus \partial Z$.
Let $K\subset Z \setminus F$ be a compact subset which separates
any pair of points of $F$. Then $K$ contains a minimal compact subset
$K_0$ which separates any pair of points of $F$ in $Z$.
If $\mathcal H^1(K) <\infty$  then
$K_0 \cup \partial Z$ is homeomorphic to a finite graph.  Moreover, $Z\setminus (K_0\cup \partial Z)$ has exactly $m$ connected components.
\end{lem}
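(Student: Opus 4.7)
The plan is to proceed in four steps. In the first step I establish the minimal $K_0$ by Zorn's lemma, essentially verbatim from \lref{separate2sphere}: any decreasing chain of compact subsets of $K$ which all separate every pair in $F$ has intersection that still separates every pair and still avoids $F$, since any curve from $p_i$ to $p_j$ is compact and meets each member of the chain, so by Bolzano--Weierstrass there is a limit of the intersection points on the curve lying in the intersection of the chain.

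Next, to exploit the hypothesis $\mathcal H^1(K)<\infty$, I would reduce to a finite-graph setting. Applying \cref{corh2} to each pair $(p_i,p_j)$ yields a compact $T_{ij}\subset K$ that separates $p_i$ from $p_j$ and is either a Jordan curve meeting $\partial Z$ in at most one point or an arc with both endpoints on $\partial Z$. The union $K':=\bigcup_{i<j}T_{ij}\subset K$ is then compact, still separates every pair, and $G:=K'\cup\partial Z$ is a finite topological graph embedded in $Z$. Running Zorn inside $K'$ produces a minimal compact $K_0\subset K'$, and we may work with this $K_0$.

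The core of the argument, and where I expect the main obstacle to lie, is step three: showing that $K_0\cup\partial Z$ is a subcomplex of $G$, i.e.\ that $K_0\cap e\in\{\emptyset,e\}$ for every open edge $e$ of $G$. The idea is the following. Suppose $K_0\cap e$ is a proper non-empty closed subset of $e$. Fix $q\in e\setminus K_0$ and $r\in K_0\cap e$. Since $K'$ coincides locally with $e$ near $q$ and $q\notin K_0$, a small Euclidean ball around $q$ avoids $K_0$ and connects the two local sides of $e$, and a tubular-neighborhood argument along $e$ then shows that both sides of $e$ lie in a single component $V$ of $Z\setminus(K_0\cup\partial Z)$. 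Choose $\delta>0$ so small that $B(r,\delta)$ is disjoint from $\partial Z$, from every edge of $G$ other than $e$, and from the endpoints of $e$. Then $K_0\cap B(r,\delta)\subset e$, so $K_0\setminus B(r,\delta)$ is compact and strictly smaller than $K_0$. Passing from $K_0$ to $K_0\setminus B(r,\delta)$ adds to $Z\setminus K_0$ only points on $e$, which are adjacent to both sides of $e$ and hence absorbed into $V$; no two distinct components of $Z\setminus(K_0\cup\partial Z)$ are merged, so $K_0\setminus B(r,\delta)$ still separates every pair of $F$, contradicting minimality of $K_0$. Hence $K_0\cup\partial Z$ is a subcomplex of $G$, and in particular a finite graph.

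Finally, I would count components. $Z\setminus(K_0\cup\partial Z)$ is the set of faces of the finite planar graph $K_0\cup\partial Z$ in $Z$, so there are finitely many components $U_1,\ldots,U_n$, with $n\geq m$ since $K_0$ separates all pairs. If $n>m$, some face $U$ contains no $p_i$. Its boundary cannot lie entirely in $\partial Z$, for otherwise $U$ would be the unique component of $Z\setminus\partial Z$ and would contain all of $F$; so $\partial U$ contains an edge $e\subset K_0$ disjoint from $\partial Z$. Repeating the removal argument of step three at an interior point of $e$ merges $U$ with its neighboring face across $e$; since $U$ carried no $p_i$, this merging preserves the separation of every pair in $F$, again contradicting minimality. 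Therefore $n=m$, completing the proof.
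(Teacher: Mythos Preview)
Your overall strategy differs from the paper's, which proceeds by induction on $m$: one picks a single minimal separator $K_{ij}$ (an arc or Jordan curve, by \cref{corh2}), cuts $Z$ along it into two closed discs, and applies the inductive hypothesis to each piece. Your approach of building a finite graph $G$ first and then arguing that $K_0$ is a subcomplex is attractive, but it has a genuine gap at step~2.

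The claim that $G=K'\cup\partial Z$ is a finite topological graph is not justified and is in general false. Each $T_{ij}$ supplied by \cref{corh2} is a rectifiable arc or Jordan curve, but a finite union of such curves need not be a finite graph: two rectifiable arcs in the plane can intersect in an infinite set, even a Cantor set. For instance, take $A=[0,1]\times\{0\}$ and let $B$ be the graph of an absolutely continuous function $f$ with $f^{-1}(0)$ equal to a Cantor set (obtained by placing small bumps on the complementary intervals); both arcs are rectifiable but $A\cap B$ is uncountable. Without the finite-graph structure of $G$, your step~3 loses its meaning: there are no well-defined open edges $e$, and the key locality assertion ``$K_0\cap B(r,\delta)\subset e$'' fails, since near a bad intersection point the set $K'$ is not locally an arc. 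The removal argument therefore cannot be carried out.

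A second, smaller point: even if step~2 were repaired, you would only be establishing the conclusion for a \emph{particular} minimal $K_0$ (one lying inside your auxiliary $K'$), whereas the paper's proof applies to \emph{any} minimal $K_0\subset K$. The lemma, as used later, does tolerate this weakening, but you should be aware that you are proving a slightly weaker statement. To close the main gap you would need either a separate argument that a finite union of rectifiable arcs and Jordan curves in $\bar D$ can be refined to a finite graph containing a pairwise separator, or---more simply---to abandon the global graph $G$ and instead argue inductively along a single $T_{ij}$ as the paper does.
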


\begin{proof}
The existence  of a minimal set $K_0$ follows as in the case of two points in \lref{separate2sphere}.   Thus we may assume that $K=K_0$  has finite $\mathcal H^1$-measure  and need to prove that $K\cup \partial Z$ is a finite graph, whose complement has exactly $m$ components.

We proceed by induction on $m$. If $m=2$, then the claim follows from \cref{corh2}.
Assume that $m> 2$  and the result is true for all $m'<m$.  For any pair of distinct  points
$p_i,p_j \in F$ the set $K$ contains a minimal compact subset $K_{ij}$ separating $p_i$ from $p_j$.
By minimality of $K$, we get $K=\cup _{1\leq i <j \leq m}  K_{ij}$.

Assume that some of the sets $K_{ij}$ intersect $\partial Z$.
Then any  such $K_{ij}$ is a simple arc or a Jordan curve and it  divides
 $Z$ into two closed discs $Y^{\pm}$ with common boundary (as subsets of $Z$) given by $K_{ij}$. Then we can  apply the inductive hypothesis to the intersection of $F$ and $K$ with those discs.
 This implies that $Y^{\pm} \setminus (K\cup \partial Y^{\pm}) $ has as many components as points in $F\cap Y^{\pm}$. Moreover, the union of $\partial Y^{\pm} $ and $Y^{\pm} \cap K$ is a finite graph. It follows that $K$ is a finite graph as well, and that $Z\setminus (K\cup \partial Z)$ has exactly $m$ connected components.

If, on the other hand, none of the sets $K_{ij}$ intersects $\partial Z$, then any $K_{ij}$ is a Jordan curve and $K$ is disjoint from $\partial Z$. We embed $Z$ into its double $Y$ homeomorphic to $S^2$. The sphere  $Y$ is divided by the Jordan curve  $K_{12}$ into two closed discs $Y^{\pm}$
 and we can  apply the inductive hypothesis to $Y^{\pm}$, $F^{\pm} =F\cap Y^{\pm}$ and $K^{\pm} :=K\cap Y^{\pm}$.  As above, we deduce that $K$ is a finite graph and that $Y\setminus K$  has exactly
 $m$ connected components, each of them containing exactly one point of $F$. Since $K$ does not intersect $\partial Z$, the union $K\cup \partial Z$ is again a finite graph.
 Moreover, the complement of $Z$ in $Y$ is an open  disc $O$ contained in one component $U$ of $Y\setminus K$. Then $U$ contains $\partial Z$ and
 $U\setminus O =U\cap Z$ is connected. We deduce that  $Z\setminus (K \cup \partial Z)$ has exactly $m$ components.
\end{proof}

\subsection{Cell-like maps}
The following definitions and statements can be found  in \cite[p.~97]{HNV04}, see also \cite{Edw78}.

\bd
 A compact space is called cell-like if it admits an embedding into the Hilbert cube $Q$ in which it is null-homotopic in every neighborhood of itself. A continuous surjection $f\colon X\to Y$ between metric spaces $X$ and $Y$  is called cell-like if $f^{-1}(q)$ is cell-like for every $q\in Y$.
\ed

Let $X$ and $Y$ be  compact metric spaces of finite topological dimension  and $f\colon X\to Y$ a continuous surjection. If $f\colon X\to Y$ is cell-like and $X$ is an absolute neighborhood retract then so is $Y$. If $X$ and $Y$ are absolute neighborhood retracts then $f$ is cell-like if and only if for every open set $U\subset Y$ the restriction  $f\colon f^{-1} (U)\to U$ is a homotopy equivalence.

Basic examples of cell-like sets are contractible sets. Any cell-like subset of $S^1$ is a closed interval.  In $S^2$ the situation is slightly more complicated but  it is still very
well understood:

\begin{example}\label{ex:subset-S2-cell-like}
 A compact subset $K\subset S^2$ is cell-like if and only if $K$ and  $S^2\setminus K$ are connected.
\end{example}

The most important class  of cell-like maps between absolute neighborhood retracts is given by uniform limits of homeomorphisms.  Sometimes, all cell-like maps are of this type. The next example is a direct consequence of the above characterizations of cell-like subsets of $S^1$:

\begin{example}\label{ex:simple-curve-cell-like}
 Let $Y$ be a compact metric space. A continuous surjection
$f\colon S^1\to Y$ is cell-like if and only if $Y$ is homeomorphic to $S^1$ and $f$ is a weakly monotone parametrization of $Y$.
\end{example}

In the $2$-dimensional case the corresponding result is a milestone in classical geometric topology and goes back  to Moore.

\bt\label{thm:Moore-cell-like}
 Let $X$ be a compact $2$-dimensional manifold without boundary and let $f\colon X\to Y$ be a cell-like map. Then $Y$ is homeomorphic to $X$ and $f$
is a uniform limit of homeomorphisms.
\et

In order to recognize the topology of our minimal disc we will need a  similar result for
manifolds with boundary. Unfortunately, we could not find a reference and, therefore, provide the proof of the following consequence of \tref{thm:Moore-cell-like}:

\bc\label{cor:Moore-disc}
 Let $Z$ be a compact metric space and $\varphi: \bar D\to Z$ a cell-like map.
Then $Z$ is a contractible and locally contractible space which is homeomorphic to the complement of some open topological disc $O$ in $S^2$. As a subset of $S^2$, the  boundary  of $Z$ is exactly $\varphi(S^1)$.

 If the restriction $\varphi|_{S^1}\colon S^1 \to \varphi(S^1)$ is  cell-like then
$\varphi$ is a uniform limit of homeomorphisms $\varphi_i:\bar D\to Z$.  In particular,
 $Z$ is homeomorphic to $\bar D$ in this case.
\ec

\begin{proof}
 Denote by $Y$ the space obtained by attaching a copy $\bar D'$ of $\bar D$ to $Z$ along the map $\varphi|_{S^1}$. Denote by $\iota\colon\bar D'\to Y$ the natural projection. View $S^2$ as the union of $\bar D$ and $\bar D '$, glued along $S^1$. Define $f\colon S^2\to Y$ by $f= \varphi$ on $\bar D$ and $f=\iota$ on $\bar D'$. Then $f$ is well-defined and cell-like. Therefore, by Theorem~\ref{thm:Moore-cell-like}, the space $Y$ is homeomorphic to $S^2$ and $Z$ is homeomorphic to the complement of the image of  $D'$ in the sphere $S^2$. This shows that $Z$ is homeomorphic to the complement of some open topological disc in $Y$. Moreover, as a subset of $Y$, the boundary of $Z$ is $\varphi(S^1)$. Since   $\varphi$ is cell-like, $\bar D$ is a $2$-dimensional absolute retract and $Z$ has dimension at most $2$, it follows that $Z$ is an absolute neighborhood retract and, in particular, locally contractible. Moreover, since $\varphi$ is a homotopy equivalence it follows that $Z$ is contractible. This proves the first statement.

If $\varphi|_{S^1}$ is cell-like as a map to $\varphi(S^1)$ it follows that $\varphi(S^1)$ is a Jordan curve by Example~\ref{ex:simple-curve-cell-like}. Since $Y$ is homeomorphic to $S^2$ the Schoenflies theorem shows that $\varphi(S^1)$ divides $Y$ into two domains $\Omega_1$ and $\Omega _2$ such that  $\overline{\Omega}_1$ and $\overline{\Omega}_2$ are homeomorphic to $\bar D$. Clearly, one of the two domains is exactly $Z$, viewed as a subset of $Y$.  Thus $Z$ is homeomorphic to $\bar D$.

 Due to \tref{thm:Moore-cell-like} the map $f:S^2\to Y$ is a uniform limit of homeomorphisms $f_i:S^2\to Y$. Then $Z_i=f_i (\bar D)$  and $Z=f(\bar D)$ are closed discs in the sphere $Y=S^2$  and $f_i$ converges uniformly to $f$. To obtain homeomorphisms $\varphi _i:\bar D\to Z$ we just need to change $f_i$ by a homeomorphism $\psi_i:Z_i \to Z$ which is close to the identity.   But the existence of such $\psi _i$ follows from the theorem of Rado, mentioned in
Subsection \ref{subsec:jordan}.
 \end{proof}

\subsection{A curve cutting lemma} In order to find a suitable Jordan curve inside some non-injective curve we will use the following
observation (only) in the case of a punctured disc $Y$.

\bl\label{lem:subcurves-injective-biLip}
 Let $Y$ be a locally contractible metric space. Let the curve $\gamma\colon S^1\to Y$ be
 non-contractible in $Y$.
 Then  there exists a weakly monotone parametrization $\eta:S^1\to Y$ of a    Jordan curve $T \subset \gamma (S^1)$ which is  non-contractible in $Y$ and such that
for every continuous map $F:Y\to X$ to another metric space $X$ we have  $\ell_X(F \circ \eta)\leq \ell_X(F\circ \gamma )$.
\el

\begin{proof}
Consider the set $\mathcal G$ of all   curves $\eta :S^1\to Y$ with the following property.
If $\eta (t)\neq \gamma (t)$ for some $t\in S^1$ then  $t$ is an inner point of an interval on which $\eta $ is constant.
For any maximal interval, on which $\eta \in \mathcal G$ is constant, the boundary points of $I$ are mapped by $\gamma $ to the same point in $Y$.
The curve $\gamma$ is contained in $\mathcal G$. The family $\mathcal G$ of curves is equi-continuous: the modulus of continuity of $\gamma$ gives also a modulus of continuity
for any $\eta \in \mathcal G$.

Denote by $\mathcal G^+$ the set of all not-contractible curves $\eta \in \mathcal G$.
For any $\eta \in \mathcal G$, let $O(\eta) \subset S^1$ be the open set of points around which $\eta$ is locally constant.  We claim that there exists some $\eta _0 \in \mathcal G^+$ for which $O(\eta _0)$ is maximal among all $\{ O(\eta )| \eta \in \mathcal G^+ \}.$
Assume that $\eta _i \in \mathcal G^+$ is  a sequence,  such that $O(\eta _i) \subset O(\eta_{i+1})$ for all $i$. The curves $\eta _i$ are equicontinuous curves in the compact set $\gamma (S^1)\subset Y$.
By the theorem of Arzela-Ascoli we find a  subsequence $\eta _j$ converging uniformly to a curve $\eta _0 :S^1\to Y$.
Due to the local contractibility of $Y$, $\eta_j$ is homotopic to $\eta _0$ for $j$ large enough, thus $\eta _0$ is non-contractible. If $\eta _0 (t) \neq \gamma (t)$ then $t\in O(\eta _j)$ for all $j$ large enough. We deduce that $\eta _0 \in \mathcal G$ and that $O(\eta _0)$ contains all subsets $O(\eta _j)$. An application of Zorn's lemma finishes the proof of  the claim.

If $\eta _0$ is not a weakly monotone parametrization of a Jordan curve we find some $t,t' \in S^1$ such that $\eta _0 (t)=\eta _0 (t')$ but $\eta_0$ is not constant on  any of the two intervals
$I^{\pm}$ of $S^1$ bounded by $t,t'$. Let $\eta ^{\pm}$ be the curve that coincides with $\eta_0$ on $I^{\pm}$ and is constant on the complementary interval $I^{\mp}$.  By definition, $\eta^{\pm}$
are contained in $\mathcal G$ and their constancy sets are strictly larger that $O(\eta _0 )$.  By the maximality of $O(\eta _0)$ we deduce that $\eta^{\pm} \notin \mathcal G^+$. Thus
$\eta^{\pm}$ are contractible curves.
But up to a reparametrization, $\eta_0$ is a concatenation of $\eta^+$ and $\eta ^-$. Thus $\eta_0$ is contractible, in contradiction with $\eta _0 \in\mathcal G^+$.  This contradiction shows that $\eta_0$ is a  weakly monotone parametrization of a Jordan curve.

By the definition of length, the inequality   $\ell_X(F \circ \eta)\leq \ell_X(F\circ \gamma )$ holds true for any continuous map $F:Y\to X$ and any $\eta \in \mathcal G$.
 \end{proof}

\section{Topological and isoperimetric properties of $Z$}\label{sec:inner-structure-mindiscs}
We proceed  using the notation  from   Section \ref{sec:metric}.

\subsection{Topology}
With our topological preparations we are in position to describe the topology of $Z$.

\begin{thm} \label{prop:structure-Z}
The space $Z$  is homeomorphic to $\bar D$ and $P:\bar D\to Z$ is a uniform limit of homeomorphisms.
\end{thm}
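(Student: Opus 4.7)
My plan is to deduce the theorem from Corollary \ref{cor:Moore-disc}, which requires verifying two things: that $P\colon\bar D\to Z$ is cell-like, and that its restriction $P|_{S^1}\colon S^1\to P(S^1)$ is cell-like. The second is immediate from Lemma \ref{lem:bcurve}, which identifies $P|_{S^1}$ as a weakly monotone parametrization of a Jordan curve, together with Example \ref{ex:simple-curve-cell-like}. The heart of the argument is therefore the cell-likeness of each fiber $P^{-1}(q)\subset\bar D\subset S^2$. By Example \ref{ex:subset-S2-cell-like}, this reduces to connectedness of both $P^{-1}(q)$ and $S^2\setminus P^{-1}(q)$; the first is Lemma \ref{lem:nonsense-P}, so only the statement that no fiber separates $S^2$ remains.

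The main obstacle is to rule out the existence of a component $U$ of $S^2\setminus P^{-1}(q)$ distinct from the outer one containing $S^2\setminus\bar D$. Such a $U$ must avoid $S^1$, since any point of $S^1$ has neighbors in $S^2\setminus\bar D$, which would force its component to be the outer one. Hence $U$ is an open connected subset of $D$ whose topological boundary in $S^2$ lies entirely in $P^{-1}(q)$. I would derive a contradiction by constructing a competitor $\tilde u$ that agrees with $u$ on $D\setminus U$ and equals the constant $\bar u(q)$ on $U$; the boundary values match formally because $u(\partial U)\subset u(P^{-1}(q))=\{\bar u(q)\}$. The delicate technical point is that $\partial U$ may be wild, so classical trace-gluing does not apply. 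This is precisely the setting addressed by Definition \ref{def:eqtraces} and Lemma \ref{lem:bestgluing}: for every $x\in X$, the function $g(z)=d_X(u(z),x)-d_X(\bar u(q),x)$ lies in $W^{1,2}(U)$, is continuous on $\bar U$, and vanishes on $\partial U$. A truncation $g_\epsilon=(g-\epsilon)_+ + (g+\epsilon)_-$ has compact support in $U$ and converges to $g$ in $W^{1,2}$, placing $g$ in $W^{1,2}_0(U)$. The traces thus agree in the sense of Definition \ref{def:eqtraces}, and Lemma \ref{lem:bestgluing} produces $\tilde u\in W^{1,2}(D,X)$ with the same trace as $u$ on $\partial D$ and $\Area_\mu(\tilde u)=\Area_\mu(u)-\Area_\mu(u|_U)$.

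Minimality of $u$ in $\Lambda(\Gamma,X)$ then forces $\Area_\mu(u|_U)=0$. Since $u$ is $\sqrt{2}$-quasi-conformal, the seminorm $\apmd u_z$ is either a norm or zero, so the vanishing of the $\mu$-area forces $\apmd u_z=0$ almost everywhere on $U$. Hence $u|_U$ has zero energy and is constant on the connected set $U$; by continuity $u\equiv\bar u(q)$ on $\bar U$. Any two points of $U$ are then joined by a path along which $u$ is constant, so their $d_u$-distance vanishes, and $P$ is constant on $U$. Continuity of $P$ at a point of $\partial U\subset P^{-1}(q)$ identifies this constant with $q$, contradicting $U\cap P^{-1}(q)=\emptyset$. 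Cell-likeness of $P$ (and of $P|_{S^1}$) now places us in the hypothesis of Corollary \ref{cor:Moore-disc}, yielding both conclusions at once: $Z$ is homeomorphic to $\bar D$, and $P$ is a uniform limit of homeomorphisms $\bar D\to Z$.
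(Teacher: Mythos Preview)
Your proof is correct and follows the same overall architecture as the paper: reduce to Corollary~\ref{cor:Moore-disc} by checking that $P$ and $P|_{S^1}$ are cell-like, the latter via Lemma~\ref{lem:bcurve} and Example~\ref{ex:simple-curve-cell-like}, and the former via Example~\ref{ex:subset-S2-cell-like} together with Lemma~\ref{lem:nonsense-P}, leaving only the task of ruling out a component $U\subset D$ of $S^2\setminus P^{-1}(q)$.

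The one place you diverge from the paper is in how you force $\Area_\mu(u|_U)=0$. The paper observes (via Lemma~\ref{open-compl}) that $U$ is a topological disc, pulls back by a Riemann map $F\colon D\to U$, and invokes Corollary~\ref{cor:minmin}: the composition $u\circ F$ is then an area minimizer whose trace is the constant $\bar u(q)$, hence has zero area. You instead build a direct competitor by replacing $u|_U$ with the constant $\bar u(q)$ and use Lemma~\ref{lem:bestgluing}, supplying the equal-traces hypothesis of Definition~\ref{def:eqtraces} through the truncation argument $g_\epsilon=(g-\epsilon)_+ +(g+\epsilon)_-$. Your route is slightly more elementary in that it avoids the Riemann mapping theorem and the conformal-change machinery of Lemmas~\ref{lem:l2enough}--\ref{lem:pullback}; the paper's route is shorter because Corollary~\ref{cor:minmin} is already available and absorbs the irregular-boundary issue. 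Both paths then finish identically: zero $\mu$-area plus $\sqrt{2}$-quasiconformality forces $\apmd u_z=0$ a.e.\ on $U$, so $u$ is constant there, whence $P(U)=\{q\}$, contradicting $U\cap P^{-1}(q)=\emptyset$.
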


\begin{proof}
The restriction  $P:\partial D\to \Gamma'\subset Z$ is weakly monotone  by
  \lref{lem:bcurve}, hence cell-like by Example \ref{ex:simple-curve-cell-like}. Due to Corollary~\ref{cor:Moore-disc} it suffices to prove that $P:\bar D\to Z$ is cell-like. Thus we need to prove that for any $q\in Z$ the preimage $K=P^{-1}(q)$
   is a cell-like subset of $\bar D$.

Due to \lref{lem:nonsense-P}, the set $K$ is connected. Consider $\bar D$ as the lower hemisphere of $S^2$.   Due to  Example~\ref{ex:subset-S2-cell-like} it is enough to prove that $S^2\setminus K$ is connected.   Assume otherwise. Then there exists at least one component $O$ of $S^2\setminus K$ which does not intersect the closed upper hemisphere,
hence
$O$ is contained in $D$. Due to \lref{open-compl}, $O$ is homeomorphic to a disc, since $K$ is connected.  By the Riemann mapping theorem we find a conformal diffeomorphism $F:D\to O$.
Due to \cref{cor:minmin} the composition $v=u\circ F$ is contained in $W^{1,2} (D,X)$ and has minimal $\mu$-area among all maps with the same trace as $v$.  We claim that $\trace v$ is a constant curve.
By construction, $u(K) $ is a single point $p=\bar u (q)$.  For any sequence $z_j\in D$ converging to $\partial D$ the points $F(z_j)$ subconverge to some point in $K$. Therefore,  the sequence $v(z_j)=u\circ F(z_j)$ converges to the point $p$.  This proves the claim.

The constant curve $\trace v :S^1\to X$ can be filled by the constant disc. By minimality of $\Area_{\mu} (v)$ we deduce that $v$ has zero area.  But $\Area_{\mu} (v)=\Area _{\mu} (u|_O)$ which is non-zero, since $u$ is quasi-conformal and $u$ is non-constant on $O$. This contradiction finishes the proof.
\end{proof}

\subsection{Isoperimetric inequality}  We can approximate arbitrary Jordan curves in $Z$ by $P$-images of Jordan curves in $\bar D$
and use \pref{lip-good} to
  control
the isoperimetric properties of $Z$:
\bt\label{prop:Z-isop}
 Every Jordan curve $T$ in $Z$ bounds a unique open disc $\Omega\subset Z$. Furthermore,  if $\ell _Z(T) <l_0$ then
  \begin{equation}\label{eq:isop-Z}
   \mu_Z(\Omega) \leq C \length_Z(T)^2.
  \end{equation}
\et

\begin{proof}
Existence and uniqueness of  $\Omega$ is a consequence of  the Jordan curve theorem and
\tref{prop:structure-Z}. Since $P$ is a cell-like map, $P :P^{-1} (O)\to O$ is a homotopy equivalence, for any open subset $O\subset Z$.  In  particular,
 $P^{-1} (\Omega) \subset D$ is contractible, hence an open disc.
In order to estimate the area  of $\Omega$   we fix a small  $\varepsilon>0$ with $\length_Z(T) + \varepsilon<l_0$. We fix some open disc  $U\subset\Omega$, such that $\overline{U}\subset\Omega$ is homeomorphic to $\bar D$ and
 \begin{equation*}
  \mu_Z(U) \geq \mu_Z(\Omega) - \varepsilon.
 \end{equation*}
 Set $V:= P^{-1}(U)$. Then $V$ is contractible, hence homeomorphic to $D$.

Fix a homeomorphism $\gamma\colon S^1\to T$.
  Choose $\delta>0$ so small that the open $\delta$-neighborhood $N(T, \delta)$ of $T$ in $Z$  does not intersect $U$ and such that every ball of radius $\delta$ based at a point of $T$ is contractible in $Z\setminus U$.
  Let $\{t_0, t_1,\dots, t_k, t_{k+1}=t_0\}$ be a partition of $S^1$ such that $$\gamma([t_i,t_{i+1}])\subset B_Z(\gamma(t_i), \delta/2)$$ for every $i$. Choose $x_i\in\bar D$ with $P(x_i) = \gamma(t_i)$.
  By the definition of the metric in $Z$, there exists a  curve $\gamma_i$ in $\bar D$ from $x_i$ to $x_{i+1}$ such that $$\length_Z(P\circ\gamma_i)< \min\left\{\frac{\delta}{2}, d_Z(\gamma(t_i), \gamma(t_{i+1}))+\frac{\varepsilon}{k+1}\right\}.$$
	It follows that $\gamma_i$ does not intersect $V$. Let $\tilde{\gamma}$ be the concatenation of the curves $\gamma_i$ for $i=0, \dots,  k$. Then $\tilde{\gamma}$ is a closed  curve and $$\length_Z(P\circ\tilde{\gamma}) < \length_Z(\gamma) + \varepsilon<l_0.$$
	Moreover, $\tilde{\gamma}$ does not intersect $V$ and $P\circ\tilde{\gamma}$ is homotopic to $\gamma$ in $Z\setminus U$. In particular, $\tilde{\gamma}$ is not null-homotopic in $\bar D\setminus V$. By Lemma~\ref{lem:subcurves-injective-biLip} there exists a non-contractible Jordan curve $\eta$ in   $\bar D\setminus V$ with $\length_Z(P\circ \eta)\leq \length_Z(P\circ\tilde{\gamma})$. It follows that the Jordan domain enclosed by $\eta$ in $\bar D$ contains $V$.
  Hence, Lemma~\ref{simplearea} and  Lemma~\ref{isop-adm}  imply
  $$ \mu_Z(\Omega) -\varepsilon \leq \mu_Z(U) = \Area_{\mu}(P|_V) \leq C\length_Z(P\circ\eta)^2 \leq C(\length_Z(\gamma)+\varepsilon)^2.$$
 Since $\varepsilon>0$ was arbitrary this yields \eqref{eq:isop-Z}.
 \end{proof}

\begin{rmrk} The proof of \tref{prop:Z-isop} shows the following  slightly stronger statement. Let $T$ be any Jordan curve of finite length $l$ in $Z$ with Jordan domain $\Omega \subset Z$.
Then for every $\epsilon >0$ there exists a Jordan curve $\eta :S^1\to \bar D$ such that $\ell _X(u\circ \eta ) < l +\epsilon$ and such that $\Fill _{X,\mu} (u\circ \eta) +\epsilon  \geq \mu _Z(\Omega)$.
Moreover, reparametrizing $\eta$ if needed and using \lref{lem:independent} we may assume that $u\circ \eta$ is Lipschitz continuous.
\end{rmrk}

\begin{rmrk}
All subsequent results of this section are derived only using  \tref{prop:Z-isop} and the co-area  inequality,  \lref{co-area-gen}. If we just assume that a metric space
$Z$ satisfies the conclusions of \tref{thmA} and do not assume that $Z$ is countably $2$-rectifiable, the proofs below remain valid once every
  reference to
 \lref{co-area-gen}  is replaced by a reference to \lref{lem:fed}. We obtain as conclusions all theorems below with $\mu _Z$ replaced by $\mathcal H^2$ and $q(\mu _Z)$  replaced by $q(\mathcal H^2)=\frac \pi 4$.
\end{rmrk}

\subsection{Area growth}
In the sequel we will denote the Jordan curve
$\Gamma'=P(S^1) \subset Z$ by $\partial Z$, since it is the boundary circle of the topological disc $Z$.
 It is well known that isoperimetric inequalities often imply  lower bounds on volume growth:

\bt\label{prop:area-growth-Z}
Let $z_0\in Z$. Then
\begin{equation}\label{eq:growth-hm2-Z}
 \mu_Z(B(z_0, r))\geq \min \left\{ q(\mu) ^2\cdot \frac{1}{4C} \cdot r^2 \; , Cl_0 ^2 \right\}
\end{equation}
for every $0\leq r < d_Z(z_0, \partial Z) .$
\et

\begin{proof}
Fix any $z_0\in Z\setminus \partial Z$.
Assume that  there exists some $r_0>0$, which we fix from now on, such that  \eqref{eq:growth-hm2-Z} does not hold.
We set $b(r) := \mu _Z( B(z_0,r))$ for $r \leq r_0$.
Then the ball $B(z_0,r_0)$ does not intersect $\partial Z$ and $b(r_0) <  Cl_0 ^2$. Therefore,
$b(r) < Cl_0 ^2$  for all $r\leq r_0$.
 Let $h\colon Z \to\R$ be the $1$-Lipschitz distance function defined by  $h(z):= d_Z(z, z_0)$.
	Applying the co-area inequality  (\lref{co-area-gen}) to the restriction $h: B(z_0, r) \to \R$ we deduce:
 \begin{equation}\label{eq:coarea-Z}
 b(r)   \geq q(\mu) \cdot  \int_{0} ^{r}\hm^1 (h^{-1} (s))\,ds
\end{equation}
for all $r\leq r_0$.
 In particular, the compact set $S_r=h^{-1} (r)$
  has finite $\mathcal H^1 (S_r)$ for almost every $0\leq r  \leq r_0$.
	
We denote by $F(r)$ the right hand side of \eqref{eq:coarea-Z}.
 Then the function $F:[0,r_0] \to \R$  is absolutely continuous   and
\begin{equation} \label{fr}
F'(r) = q(\mu) \cdot \mathcal H^1 (S_r)
\end{equation}
   for almost all $0<r \leq r_0$. We claim
\begin{equation}\label{eq:hm-ball-ineq}
b(r) \leq C \cdot \hm^1 (S_r)^2
\end{equation}
for all $r\leq r_0$. Indeed, fix an embedding of $Z$ into $S^2$.  The compact subset $S_r$ separates $z_0$  from every point $z\in Z$ with $d_Z(z,z_0)>r$, hence from any point $q$ on $\partial Z$ which we fix now.  On the other hand, the ball $B(z_0,r)$ is connected, since the metric on $Z$ is intrinsic. If
$\hm^1 (S_r) =\infty$ then \eqref{eq:hm-ball-ineq} is valid. On the other hand, if $\hm ^1 (S_r) $ is finite we can apply \cref{corh1} and find
a Jordan curve $T \subset S_r$  which still separates $z_0$ from $q$.
Then $p$ and therefore the whole ball $B(z_0,r)$ must be contained in the
Jordan domain of $T$. Since $\length  _Z(T) \leq \hm^1 (S_r)$
we deduce \eqref{eq:hm-ball-ineq} from \tref{prop:Z-isop}.

 Taking \eqref{eq:coarea-Z}, \eqref{eq:hm-ball-ineq} and \eqref{fr} we deduce for almost all
 $r\leq r_0$ the inequality
\begin{equation}\label{eq:F-growth}
 F(r) \leq  b(r) \leq  C \cdot  q(\mu) ^{-2} \cdot  [F'(r)]^2.
\end{equation}
  Due to Lemma~\ref{lem:Haus-positive}, we have   $F(r)>0$ for all $r>0$. Thus integrating \eqref{eq:F-growth} yields $b(r_0)\geq F(r_0)\geq q(\mu) ^2\cdot  \frac 1 {4C}  \cdot  r_0^2$,
   in contradiction with  our assumption.  This contradiction finishes the proof.
  \end{proof}

\subsection{Linear local contractibility}
The isoperimetric inequality and the lower bound on the area growth of balls imply uniform linear local contractibility of the space $Z$:
\begin{thm} \label{prop:loccontr}
Let $Z$ be as above. For  any $0<r<\frac {l_0} 2$ and any $z\in Z$,
 the ball $B(z,r)$ in $Z$ is contractible inside  the ball $B(z,(8C+1)\cdot r)$.
\end{thm}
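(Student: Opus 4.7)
The strategy is to enclose $B(z,r)$ in a topological disc $\Omega \subset Z$ which is itself contained in $B(z,(8C+1)r)$. Since any such $\Omega$ is contractible (being a topological disc singled out by \tref{prop:Z-isop}), the inclusion $B(z,r)\hookrightarrow B(z,(8C+1)r)$ factors through $\Omega$ and is null-homotopic. The statement is immediate when $B(z,(8C+1)r)=Z$, using that $Z$ itself is contractible by \tref{prop:structure-Z}, so I assume there exists $q\in Z$ with $d_Z(z,q) > (8C+1)r$.

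I construct $\Omega$ as the Jordan domain of a Jordan curve extracted from a level set of $h(w):=d_Z(z,w)$. First, $\mu_Z(Z)\leq C\ell_Z(\partial Z)^2 \leq C l_0^2$ is finite by \tref{prop:Z-isop} applied to $\partial Z$ (using $\ell_Z(\partial Z)\leq l_0$); the co-area inequality \lref{co-area-gen} then yields $\mathcal H^1(S_t)<\infty$ for almost every $t$, where $S_t:=h^{-1}(t)$. For any such $t\in(r,(8C+1)r)$, $S_t$ separates $z$ from $q$ in $Z$; by \cref{corh2}, $S_t$ contains a minimal compact separating subset, which is either a Jordan curve meeting $\partial Z$ in at most one point, or a simple arc with endpoints in $\partial Z$ (which I close up to a Jordan curve using a short arc of $\partial Z$). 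In either case one obtains a Jordan curve $T$ of length controlled by $\mathcal H^1(S_t)+\ell_Z(\partial Z)$, and the Jordan domain $\Omega$ of $T$ containing $z$ is the disc from \tref{prop:Z-isop}. The containment $B(z,r)\subset B(z,t)\subset \Omega$ is automatic since $B(z,t)$ is path-connected, contains $z$, and is disjoint from $T\subset S_t$.

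To bound $d_Z(z,w)$ for $w\in\Omega$, I combine the isoperimetric estimate $\mu_Z(\Omega)\leq C\ell_Z(T)^2$ (valid once $\ell_Z(T)<l_0$) with the area-growth estimate \tref{prop:area-growth-Z}: the ball $B(w,\rho)$ of radius $\rho=d_Z(w,T)$ lies in $\Omega$, so $\mu_Z(\Omega)\geq q(\mu)^2\rho^2/(4C)$, yielding $\rho\leq 2C\ell_Z(T)/q(\mu)\leq 4C\ell_Z(T)$, where I use $q(\mu)\geq \tfrac12$. Via the triangle inequality through the points of $T$ closest to $z$ and to $w$ (and $\mathrm{diam}_Z(T)\leq\ell_Z(T)/2$), this gives $d_Z(z,w)\leq t+ \ell_Z(T)/2+ 4C\ell_Z(T)$. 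It then suffices to find $T$ at a level $t$ slightly above $r$ with $\ell_Z(T)$ small enough — specifically, so that $t+\ell_Z(T)(\tfrac12+4C)\leq(8C+1)r$ — to conclude $\Omega\subset B(z,(8C+1)r)$.

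The main obstacle is producing such a short Jordan curve $T$ at a level $t$ just above $r$. I carry this out via a further application of the co-area inequality on a thin annulus $\{r<h<r+\varepsilon\}$, combined with a bootstrap that uses the iso ineq to upper-bound $\mu_Z(B(z,t))$ by $C\mathcal H^1(S_t)^2$ (when $\mathcal H^1(S_t)<l_0$) and the area-growth lower bound $\mu_Z(B(z,t))\geq q(\mu)^2t^2/(4C)$ to anchor the analysis. A secondary issue is the boundary case $\Omega\cap\partial Z\neq\emptyset$, which I would handle by doubling $Z$ across $\partial Z$ to reduce the area-growth step to the interior setting on the resulting topological sphere. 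The constant $8C+1$ arises from $2C/q(\mu)\leq 4C$ together with absorbing the $\ell_Z(T)/2$ and $t-r$ terms under $\ell_Z(T)\lesssim r$ and $t\to r^+$.
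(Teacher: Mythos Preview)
Your approach has a genuine gap at precisely the step you flag as ``the main obstacle'': you cannot in general produce a Jordan curve $T$ at level $t\approx r$ with $\ell_Z(T)$ of order $r$. The bootstrap you sketch does not yield this. Combining $\mu_Z(B(z,t))\leq C\,\mathcal H^1(S_t)^2$ with the area-growth bound $\mu_Z(B(z,t))\geq q(\mu)^2 t^2/(4C)$ gives only a \emph{lower} bound $\mathcal H^1(S_t)\gtrsim t$, and the co-area inequality on a thin annulus needs an \emph{upper} bound on $\mu_Z(\{r<h<r+\varepsilon\})$ that you do not have. Example~\ref{disc-collapse} exhibits the obstruction: if $z$ is the thick point then every distance sphere $S_t$ has length at least $2\pi R$ (the circumference of the collapsed ball), independently of how small $t$ is, so no separating curve near $z$ is short. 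Your construction therefore cannot place $\Omega$ inside $B(z,(8C+1)r)$ for small $r$, even though in that example $B(z,r)$ is itself a disc and hence trivially contractible. There are also smaller issues: you assume $\ell_Z(\partial Z)\leq l_0$, which is nowhere hypothesised (only finiteness of the length is); and when \cref{corh2} yields an arc with endpoints on $\partial Z$, the boundary arc you use to close it need not be short.

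The paper's proof avoids all of this by not attempting to enclose $B(z,r)$ in a single disc. It argues by contradiction: a loop $\gamma\subset B(z,r)$ non-contractible in $B(z,(8C+1)r)$ is decomposed, via geodesics from its vertices to the centre $z$, into closed curves each of length $<2r<l_0$; one of these is still non-contractible and can be taken to be a Jordan curve by \lref{lem:subcurves-injective-biLip}. Its Jordan domain $\Omega$ satisfies $\mu_Z(\Omega)<4Cr^2$ by \tref{prop:Z-isop}, yet it must contain a point $y$ with $d_Z(y,\gamma)>8Cr$, so $B(y,8Cr)\subset\Omega$ and the area-growth estimate forces $\mu_Z(\Omega)\geq q(\mu)^2(8Cr)^2/(4C)\geq 4Cr^2$, a contradiction. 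The essential difference is that the short curve is manufactured from geodesics through $z$, whose lengths are controlled by $r$ directly, rather than extracted from distance spheres, whose lengths are not.
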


\begin{proof}
	Fix a  point $z\in Z$ and consider the open connected sets $O=B(z,r)$ and $U=B(z,(8C+1)\cdot r)$, where $r<\frac {l_0} 2$.  	Assume that $O$ is not contractible in $U$.
Note that $O$ has trivial  higher homotopy groups as does any open subset of the disc. Thus  we find a curve $\gamma :S^1 \to O$ which is non-contractible in $U$. Since $O$ is locally contractible, we may replace parts of $\gamma$ of small
	diameter by short geodesic segments.  Thus we may assume that $\gamma$ is a concatenation of short geodesics $\gamma=\gamma _1*\gamma _2 *\dots *\gamma _k$.
	Connect the starting and end point of $\gamma _i$ by a geodesic
$\eta_i ^{\pm}$ with the point $z$. We may assume $\eta _{i+1} ^- =\eta _i ^+$ for all $i$.
 For any $1\leq i\leq k$, these geodesics $\eta _i ^{\pm} $ together with $\gamma _i$ provide a
	closed piecewise geodesic  curve $c_i$ of length smaller than  $2r$. Moreover, $\gamma $ is homotopic to the concatenation $c_1 * c_2 *\dots  *c_k$.
	Thus one of the curves $c_i$ is non-contractible in $U$. Hence we may assume   $\ell _Z (\gamma ) <2r<l_0$.
	Using Lemma \ref{lem:subcurves-injective-biLip} we may further assume that $\gamma $ is a Jordan curve.
	
	Consider the Jordan domain $\Omega $ of $\gamma$ in $Z$ and deduce from
	\tref{prop:Z-isop} that $\mu _Z (\Omega) < 4Cr^2$.   Since $\gamma$ is contractible in $\bar \Omega$, we find a point $y\in \Omega \setminus U$, hence
	$d_Z  (z,y) \geq (8C+1) r$.  Due to the triangle inequality, the distance  $d_Z(y, \gamma )$ from
$y$ to the curve $\gamma $ is larger than  $8Cr$.  The connected open ball $B(y,8Cr)$ does not intersect $\gamma$, hence it does not intersect $\partial Z$ and is completely contained
	in $\Omega$. In particular, its area is bounded from above by the area of $\Omega$, which is at most $4Cr^2 < Cl_0^2$. From \tref{prop:area-growth-Z} and using
 $q(\mu) \geq \frac 1 2 $,
 we deduce $\mu _Z(\Omega) \geq  q(\mu) ^2 \cdot \frac 1 {4C} \cdot (8Cr)^2 \geq 4Cr^2$.
	
This contradiction finishes the proof.
\end{proof}

 Using that $Z$ is a closed disc we directly deduce:
\begin{cor} \label{cor:diambound}
Let $V$ be a subset of $Z$ homeomorphic to a closed disc. If the boundary circle $\partial V$ of $V$ has diameter $r<\frac {l_0} 2$ then $V$ has diameter
at most $(8C+1)\cdot r$.
\end{cor}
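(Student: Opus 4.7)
\medskip

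\noindent\textbf{Proof plan for \cref{cor:diambound}.}

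The plan is to derive the corollary by applying the linear local contractibility of \tref{prop:loccontr} at a point $z\in\partial V$, combined with the topological fact that $\partial V$ bounds the disc $V$ in $Z$. Fix any $z\in\partial V$. Since $\diam(\partial V)=r$, we have $\partial V\subset \bar B(z,r)$, and the target is to show $V\subset \bar B(z,(8C+1)r)$; the diameter bound then follows immediately.

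\medskip

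\noindent\emph{Key steps.}

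\textbf{Step 1 (set up contradiction).} Suppose some $q\in V$ satisfies $d(z,q)>(8C+1)r$. Since $\diam(\partial V)=r$, we have $q\notin\partial V$, so $q$ lies in the topological interior $V\setminus\partial V$ of the abstract disc $V$.

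\textbf{Step 2 (locate $q$ in $Z$).} By invariance of domain, the embedded open disc $V\setminus\partial V$ is open in $Z$ and cannot touch $\partial Z$, so $q$ is an interior point of $Z$. Hence $Z\setminus\{q\}$ is an open annulus (up to homotopy, a circle), and $\pi_1(Z\setminus\{q\})\cong\mathbb Z$.

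\textbf{Step 3 (topological non-triviality of $\partial V$).} The punctured disc $V\setminus\{q\}$ deformation retracts onto $\partial V$, so $\partial V$ generates $\pi_1(V\setminus\{q\})\cong\mathbb Z$. The inclusion $V\setminus\{q\}\hookrightarrow Z\setminus\{q\}$ sends this generator to a generator of $\pi_1(Z\setminus\{q\})$: both are represented by loops that wind exactly once around the interior puncture $q$ (using the identification of $V\setminus\partial V$ with one of the two open discs into which $\partial V$ divides $Z$, which is unique by \tref{prop:Z-isop}). Hence $\partial V$ is not null-homotopic in $Z\setminus\{q\}$.

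\textbf{Step 4 (contradict with local contractibility).} Pick $\varepsilon>0$ so small that $r+\varepsilon<\tfrac{l_0}{2}$ and $(8C+1)(r+\varepsilon)<d(z,q)$. By \tref{prop:loccontr} the ball $B(z,r+\varepsilon)$ is contractible inside $B(z,(8C+1)(r+\varepsilon))$. Since $\partial V\subset \bar B(z,r)\subset B(z,r+\varepsilon)$, this yields a null-homotopy of $\partial V$ inside $B(z,(8C+1)(r+\varepsilon))\subset Z\setminus\{q\}$, contradicting Step~3.

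\textbf{Step 5 (conclusion).} Therefore $V\subset \bar B(z,(8C+1)r)$, and since $z\in V$ the diameter of $V$ is controlled in terms of $(8C+1)r$, giving the desired bound.

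\medskip

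\noindent\emph{Main obstacle.} The substantive part is Step~3, where one must verify that the abstract Jordan curve $\partial V$ survives as a non-trivial class in $\pi_1(Z\setminus\{q\})$. This requires handling the various degenerate possibilities (e.g.\ $V=Z$, or $\partial V$ meeting $\partial Z$) and relies crucially on invariance of domain to place $q$ in the interior of $Z$, together with the uniqueness of the open disc bounded by $\partial V$ provided by \tref{prop:Z-isop}. All other steps are essentially immediate once this topological linking argument is in place.
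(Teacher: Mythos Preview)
Your approach is precisely the one the paper has in mind: the corollary is stated immediately after \tref{prop:loccontr} with the remark ``Using that $Z$ is a closed disc we directly deduce'', and your Steps~1--4 spell out exactly that deduction---contract $\partial V$ inside $B(z,(8C+1)(r+\varepsilon))$ via \tref{prop:loccontr} and use that $\partial V$ is homotopically essential in $Z\setminus\{q\}$ whenever $q$ lies in the open disc $V\setminus\partial V$. Your handling of Step~3 via a small linking circle around $q$ and invariance of domain is correct and is the clean way to make the ``direct'' deduction rigorous.

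There is one imprecision in Step~5. From $V\subset\bar B(z,(8C+1)r)$ for a \emph{single} $z\in\partial V$ you only get $\diam(V)\le 2(8C+1)r$, not $(8C+1)r$; containment in a ball of radius $R$ bounds the diameter by $2R$. Your phrase ``the diameter bound then follows immediately'' glosses over this. To obtain the constant as stated one would need the diameter of $V$ to be realized by a pair one of whose points lies on $\partial V$, and this does not follow from the argument given. The paper itself gives no details here, so either an extra observation is intended or the stated constant should carry a harmless factor of~$2$; in all later uses (\tref{thm:papa}, \cref{cor:loccontr}) only a linear bound in $r$ is needed, so nothing downstream is affected.
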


Finally, note that the assumption $r<\frac {l_0} 2$ in \tref{prop:loccontr} was only needed in one step, namely to assure that the area of some domain in $Z$ does not exceed $Cl_0^2$, a condition which is automatically satisfied if $l=\partial Z<l_0$.  Thus no bound on $r$ is needed in this case. In particular, \cref{cor:diambound} can be applied to the whole disc $V=Z$, using that $\partial Z$ has diameter at most $\frac l 2$ in this case. Thus we have:
\begin{cor} \label{cor:loccontr}
If $\partial Z$ has length $l$ smaller than $l_0$ then $Z$ has diameter at most $(4C+\frac 1 2)\cdot l$. Moreover, no bounds on $r$ are needed in \tref{prop:loccontr} and \cref{cor:diambound}.
\end{cor}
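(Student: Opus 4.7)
The plan is to derive the diameter bound from \cref{cor:diambound} applied to the whole disc $V=Z$, and then to bootstrap off this diameter bound to remove the constraint $r<l_0/2$ from \tref{prop:loccontr} and \cref{cor:diambound}.

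For the diameter bound, I first note that $\partial Z$, which by \lref{lem:bcurve} is a rectifiable Jordan curve of length $l$, has $Z$-diameter at most $l/2$: any two points of $\partial Z$ are joined inside $\partial Z$ by the shorter of the two boundary subarcs, whose length is at most $l/2$, and the $Z$-distance between them is bounded above by that length. Since $l<l_0$ gives $\diam(\partial Z)\leq l/2<l_0/2$, I can apply \cref{cor:diambound} with $V=Z$ and $r=\diam(\partial Z)$ to obtain
\[
 \diam(Z)\leq (8C+1)\cdot \frac{l}{2} = \left(4C+\tfrac{1}{2}\right)l.
\]

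For the second assertion, I will split into cases according to the size of $r$. In \tref{prop:loccontr}, if $r<l/2$ then $r<l_0/2$ automatically, and the original argument applies unchanged. If instead $r\geq l/2$, then $(8C+1)r\geq (4C+\tfrac{1}{2})l\geq \diam(Z)$, so $B(z,(8C+1)r)$ contains all of $Z$; since $Z$ is a topological disc by \tref{prop:structure-Z} and hence contractible, the subset $B(z,r)\subset Z$ is automatically contractible inside $B(z,(8C+1)r)=Z$. For \cref{cor:diambound}, if $r<l_0/2$ the original argument gives the bound, while if $r\geq l_0/2>l/2$ then $\diam(V)\leq\diam(Z)\leq (4C+\tfrac{1}{2})l\leq (8C+1)r$.

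The only computation requiring any care is the elementary diameter estimate $\diam(\partial Z)\leq l/2$; everything else is case splitting. The key conceptual observation is that once the global diameter bound on $Z$ is established, the constraint $r<l_0/2$ becomes vacuous for large $r$, because $B(z,(8C+1)r)$ then swallows the entire disc.
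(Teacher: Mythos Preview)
Your proof is correct, but the logical order differs from the paper's. The paper first removes the restriction $r<l_0/2$ from \tref{prop:loccontr} by inspecting its proof: the only place where that bound was invoked was to guarantee that a certain Jordan domain has $\mu_Z$-area below $Cl_0^2$, and when $l<l_0$ this is automatic since $\mu_Z(Z)\leq Cl^2<Cl_0^2$ by \tref{prop:Z-isop}. With the restriction gone, \cref{cor:diambound} is applied to $V=Z$ to obtain the diameter bound. You instead run the steps in reverse: you first apply the \emph{restricted} \cref{cor:diambound} to $V=Z$ (legitimate because $\diam(\partial Z)\leq l/2<l_0/2$), obtain the diameter bound, and then use that bound in a case split to show the conclusions of \tref{prop:loccontr} and \cref{cor:diambound} hold trivially for large $r$. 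Your route is entirely external---it treats \tref{prop:loccontr} as a black box---whereas the paper's argument identifies the precise step in that proof that becomes superfluous. Both are short and valid; the paper's approach is slightly more informative about why the restriction was there in the first place.
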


\subsection{Equi-compactness}
The area growth of balls  implies that the number of disjoint balls of a given radius in $Z$ can be bounded in terms of the area of $Z$.
More precisely:

\begin{thm} \label{thm:equi}
 Let   $Z$ be as above. Set $A=\mu (Z)$ and $l=\ell _Z (\partial Z)$.
 For any integer $n >\frac{l}{8Cl_0}$ there exists
some $\frac l n$-dense subset $F_n$ in $Z$ with at most $2  n +   64 C \cdot \frac {A}  {l^2}  \cdot n^2$  elements.
\end{thm}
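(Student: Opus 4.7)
The plan is to write $F_n = G_n \cup H_n$, where $G_n \subset \partial Z$ handles a neighborhood of the boundary and $H_n$ is a maximal separated subset in the interior; the area growth from Theorem~\ref{prop:area-growth-Z} then bounds $|H_n|$.

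For the construction, I would take $G_n \subset \partial Z$ to consist of $n$ points equally spaced along $\partial Z$ with respect to arclength. Since $d_Z$ is dominated by the intrinsic arclength on the Jordan curve $\partial Z$, every point of $\partial Z$ lies within $l/(2n)$ of $G_n$ in $d_Z$. Next, I would set $W_n := \{z \in Z : d_Z(z, \partial Z) \geq l/(2n)\}$ and choose a maximal subset $H_n \subset W_n$ whose points are pairwise at $d_Z$-distance at least $l/n$ (existence by Zorn/compactness). Put $F_n := G_n \cup H_n$.

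For density: if $z \in Z$ satisfies $d_Z(z,\partial Z) < l/(2n)$, pick a closest $p \in \partial Z$ together with the closest $g \in G_n$ to $p$; the triangle inequality gives $d_Z(z,g) < l/(2n) + l/(2n) = l/n$. If $z \in W_n$, then maximality of $H_n$ forces $d_Z(z, H_n) < l/n$. Hence $F_n$ is $(l/n)$-dense. For the cardinality, $|G_n| = n$, and for $h \in H_n$ the open balls $B(h, l/(2n))$ are pairwise disjoint (since $H_n$ is $(l/n)$-separated) and satisfy $l/(2n) \leq d_Z(h, \partial Z)$; hence, applying Theorem~\ref{prop:area-growth-Z} with $r \nearrow l/(2n)$,
\[
\mu_Z\bigl(B(h, l/(2n))\bigr) \;\geq\; \min\Bigl\{ \tfrac{q(\mu)^2}{4C}\cdot \tfrac{l^2}{4n^2},\;\; C l_0^2 \Bigr\} \;\geq\; \frac{l^2}{64 C n^2},
\]
where the first entry of the $\min$ is bounded below by $l^2/(64 C n^2)$ using $q(\mu) \geq 1/2$, and the second entry is bounded below by the same quantity because the hypothesis $n > l/(8 C l_0)$ is precisely the inequality $C l_0^2 > l^2/(64 C n^2)$. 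Summing over $h \in H_n$ and using the pairwise disjointness yields $|H_n| \cdot l^2/(64Cn^2) \leq \mu_Z(Z) = A$, hence $|H_n| \leq 64 C A n^2/l^2$, and therefore $|F_n| \leq n + 64 C A n^2/l^2 \leq 2n + 64 C A n^2/l^2$.

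The only delicate point is verifying that the area growth inequality actually supplies a quadratic lower bound here, i.e.\ that neither term of the $\min$ in Theorem~\ref{prop:area-growth-Z} collapses below $l^2/(64Cn^2)$; this is exactly what forces the hypothesis $n > l/(8Cl_0)$ and is the sole place where it enters the argument. Everything else is the standard disjoint-ball volume packing.
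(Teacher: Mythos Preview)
Your proof is correct and follows essentially the same approach as the paper's: split $Z$ into a boundary collar and its complement, handle the collar with points on $\partial Z$, handle the complement by a maximal $\tfrac{l}{n}$-separated set, and bound the latter via the disjoint balls and the area-growth estimate of Theorem~\ref{prop:area-growth-Z}. The only cosmetic difference is that you take $n$ equally spaced boundary points (yielding $|G_n|=n$) whereas the paper uses a maximal $\tfrac{l}{2n}$-separated subset of $\partial Z$ (yielding at most $2n$ points); your care with the limit $r\nearrow l/(2n)$ to apply Theorem~\ref{prop:area-growth-Z} is also appropriate.
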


\begin{proof}
Let $U_n$ denote the $\frac l {2n}$-tubular neighborhood of $\partial Z$ in $Z$ and $V_n=Z\setminus U_n$.  Let $F_n^+$ denote a maximal $\frac l n$-separated subset in $V_n$ and let $F_n^-$ denote a maximal $\frac l {2n}$-separated subset in $\partial Z$. Then $F_n^+$ is $\frac l n$-dense in $V_n$ and  $F_n^-$ is $\frac l n$-dense in $U_n$.  Hence the union $F_n$ of $F_n^+$ and $F_n^-$ is $\frac l n$-dense in $Z$.  Since $\partial Z$ is a $1$-Lipschitz image of the circle of length $l$, the set  $F_n^-$ has at most $2n $ elements.

On the other hand, for any $p\in F_n^+$ the ball $B_p:=B(p ,\frac l {2n})$ does not intersect $\partial Z$. From \tref{prop:area-growth-Z}, the general estimate $q(\mu) \geq \frac 1 2$, and the assumption on $n$ we infer:
\begin{equation} \label{areabt}
\mu  (B_p) \geq  \min \left\{ \frac 1 {16 C} \cdot \frac {l^2} {4n ^2}  \; , \; Cl_0^2 \right\} = \frac{l^2}{64 C\cdot n^2}.
\end{equation}
Moreover, all these balls  $B_p$ are disjoint.   Thus,  the number  of elements in   $F_n^+$  times the right-hand side of \eqref{areabt}  is not larger than $A$.
Hence $F_n^+$ has  at most $64 C\cdot \frac {A } {l^2} \cdot n^2$ elements.
This finishes the proof.
\end{proof}

If $l<l_0$ then $\frac A {l^2} \leq C$ and, moreover, no bound on $n$ is needed
to conclude \eqref{areabt}. Therefore:

 \begin{cor} \label{cor:1000}
 Assume that the length $l$ of $\partial Z$ is smaller than  $l_0$. For any integer $n$ the set $Z$ contains some $\frac l n$-dense  subset $ F_n$ with at most
 $2n + 64 C^2 n^2$ elements.
 \end{cor}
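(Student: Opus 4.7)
The plan is to combine two observations enabled by the hypothesis $l<l_0$ with the construction already carried out in the proof of \tref{thm:equi}. The hypothesis $l<l_0$ does two things simultaneously: it upgrades $A/l^2$ to $C$ via the intrinsic isoperimetric inequality on $Z$, and it makes the $Cl_0^2$ cap in the area-growth estimate of \tref{prop:area-growth-Z} irrelevant, which is precisely why the lower bound $n>l/(8Cl_0)$ required in \tref{thm:equi} disappears.

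First I would apply \tref{prop:Z-isop} to the Jordan curve $\partial Z\subset Z$ itself: its length is $l<l_0$ and its Jordan domain inside $Z$ is $Z\setminus\partial Z$, so the theorem yields $A=\mu_Z(Z)\leq Cl^2$. In particular every ball $B\subset Z$ satisfies $\mu_Z(B)\leq A<Cl_0^2$. I would then revisit the proof of \tref{prop:area-growth-Z} and observe that the $\min$ appearing there arose only because the contradiction argument required $b(r_0)<Cl_0^2$ to run the curve-separation step; in our setting that bound is now automatic, so the same argument delivers the uncapped estimate
\begin{equation*}
\mu_Z(B(z,r)) \geq q(\mu)^2\cdot \frac{1}{4C}\cdot r^2
\end{equation*}
for every $z\in Z$ and every $0\leq r\leq d_Z(z,\partial Z)$, with no further restriction.

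With these two upgrades in hand I would follow the construction in the proof of \tref{thm:equi} verbatim, but for every positive integer $n$. Let $U_n$ be the $\frac{l}{2n}$-tubular neighborhood of $\partial Z$ and $V_n:=Z\setminus U_n$; take $F_n^-$ a maximal $\frac{l}{2n}$-separated subset of $\partial Z$ and $F_n^+$ a maximal $\frac{l}{n}$-separated subset of $V_n$, so that $F_n:=F_n^-\cup F_n^+$ is $\frac{l}{n}$-dense in $Z$. Since $\partial Z$ is a $1$-Lipschitz image of a circle of length $l$ one has $|F_n^-|\leq 2n$. For each $p\in F_n^+$ the ball $B_p:=B(p,\frac{l}{2n})$ avoids $\partial Z$, so by the uncapped growth bound and $q(\mu)\geq \frac{1}{2}$ one obtains $\mu_Z(B_p)\geq \frac{l^2}{64Cn^2}$, valid for every $n$. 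These balls are pairwise disjoint, and summing their areas together with $\mu_Z(Z)\leq Cl^2$ from the first step yields $|F_n^+|\leq 64C^2n^2$. Combined with the bound on $|F_n^-|$ this proves the corollary.

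The only non-routine point is the re-reading of the proof of \tref{prop:area-growth-Z}: one must check that once $b(r)<Cl_0^2$ is automatic throughout, the $Cl_0^2$ term may be dropped from the conclusion rather than merely from the hypothesis, and that the curve-separation via \cref{corh1} used to derive \eqref{eq:hm-ball-ineq} still produces an isoperimetrically eligible Jordan curve at every relevant radius. This is bookkeeping on which hypothesis is used where, and not a genuinely new argument.
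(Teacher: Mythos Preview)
Your proposal is correct and follows essentially the same approach as the paper. The paper's proof is a single sentence observing that $l<l_0$ gives $A/l^2\leq C$ and removes the restriction on $n$ needed for \eqref{areabt}; you have simply spelled out in detail why the $Cl_0^2$ cap in \tref{prop:area-growth-Z} becomes inactive (since $\mu_Z(Z)\leq Cl^2<Cl_0^2$) and then rerun the construction from \tref{thm:equi}, which is exactly the intended argument.
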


\subsection{Decomposition of $Z$ by a graph}
The following result is a  topological version of a similar discrete statement proved in \cite{Papasoglu} for curves in groups with quadratic isoperimetric inequality.

\begin{thm} \label{thm:papa}
Let $Z$ be as above. There exists a constant $M$ depending on $l_0$ and  the upper bounds of $C$, the area  $A$ of $Z$, and length $l$ of $\partial Z$ such that the following holds true.
For any integer $n$ there exists  a finite connected graph $\partial Z\subset G \subset Z$, such that $Z\setminus G$ has at most
$ M\cdot  n^2 $ components, and such that  any of these components is a topological disc of diameter at most
$\frac l n$.
\end{thm}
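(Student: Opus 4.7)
The plan is to build $G$ by iteratively cutting pieces of the current partition, adapting Papasoglu's argument from \cite{Papasoglu} to our setting. The key analytic tools are Lemma~\ref{co-area-gen}, Theorem~\ref{prop:Z-isop}, Theorem~\ref{prop:area-growth-Z}, and Corollary~\ref{cor:diambound}; the topological tools are Corollary~\ref{corh2} and Lemma~\ref{lem:finiteset}. Set $d := l/((8C+1)n)$ and initialize $G_0 := \partial Z$ and $\mathcal P_0 = \{Z\}$.

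At each iteration I would pick a piece $Y\in\mathcal P_k$ with $\ell_Z(\partial Y) > 2d$ and cut it by a short Jordan arc. If $Y$ contains a \emph{deep} point $p$ with $d_Z(p,\partial Y)\geq d$, Lemma~\ref{co-area-gen} applied to $h(z) := d_Z(z,p)$ on $B(p,d)\subset Y$ gives
\begin{equation*}
 \mu_Z(B(p,d)) \,\geq\, q(\mu)\int_0^{d}\mathcal H^1\bigl(h^{-1}(t)\cap Y\bigr)\,dt,
\end{equation*}
so some $t_0\in(d/2,d)$ produces a level set $K := h^{-1}(t_0)\cap \bar Y$ with $\mathcal H^1(K)<\infty$. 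Since $K$ separates $p$ from $\partial Y$ in $\bar Y$, Corollary~\ref{corh2} applied to the closed disc $\bar Y$ yields a minimal separator $\gamma\subset K$ that is either a Jordan arc with endpoints on $\partial Y$ or a Jordan curve meeting $\partial Y$ in at most one point. Adding $\gamma$ (together with one short auxiliary chord to dispose of any annular remnant) cuts $Y$ into two open topological discs; the subpiece containing $p$ lies in $\bar B(p,d)$ and so has diameter $\leq 2d$. By Theorem~\ref{prop:area-growth-Z}, the remaining subpiece loses at least $\mu_Z(B(p,d/2))\geq \frac{q(\mu)^2}{16C}d^2$ of area.

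If on the other hand no deep point exists in $Y$, then every $z\in Y$ is within $d$ of $\partial Y$; in this thin case I would cut $Y$ by a near-geodesic chord of length $O(d)$ joining two boundary points that subdivide $\partial Y$ into arcs of length at most $d$. Combining an area accounting for the deep cuts (at most $O(A/d^2)=O(n^2)$, with constants depending on $C,l_0,A,l$) with a perimeter accounting for the thin cuts (each reducing the perimeter-diameter of a piece by at least $d$ on average, hence $O(l/d)=O(n)$ cuts along each nested sequence of pieces) yields a total of at most $Mn^2$ cuts. At termination, every piece $Y$ satisfies $\ell_Z(\partial Y)\leq 2d$, hence $\diam(\partial Y)\leq d$, and Corollary~\ref{cor:diambound} gives $\diam(Y)\leq (8C+1)d = l/n$.

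The main obstacle is to ensure that $G$ is a genuine \emph{finite graph}: two Jordan arcs added at different iterations could a priori meet in an infinite compact set, or overlap along a curve. I plan to resolve this by choosing each cutting level $t_0$ generically---by Lemma~\ref{co-area-gen} applied with any previously constructed arc in the role of a null set, almost every level of a distance function meets that arc in a one-dimensional null set---so that successive arcs intersect in only finitely many points. Lemma~\ref{lem:finiteset} then confirms the combinatorial finiteness of $G$ and the fact that the complement decomposes into exactly the asserted number of disc components.
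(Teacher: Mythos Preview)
Your iterative cutting strategy is genuinely different from the paper's route, which proceeds in one shot via a Voronoi-type decomposition. The paper picks an $\tfrac{l}{n}$-dense set $F=\{p_1,\dots,p_m\}\subset Z\setminus\partial Z$ with $m=O(n^2)$ (Theorem~\ref{thm:equi}), perturbs the distance functions to $f_i(z)=d(p_i,z)+i\varepsilon$ so that the equidistant sets $S_{ij}=\{f_i=f_j\}$ all have finite $\mathcal H^1$-measure, extracts from $K=\bigcup S_{ij}$ a minimal compact subset $K_0$ separating the $p_i$ pairwise (Lemma~\ref{lem:finiteset}), and takes $G$ to be the connected component of $\partial Z$ in $K_0\cup\partial Z$. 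The number of complementary discs is then at most $m=O(n^2)$ automatically, and the diameter bound follows from the observation that the boundary circle of each such disc lies in the closure of a single Voronoi cell, together with Corollary~\ref{cor:diambound}. No iterative bookkeeping is needed.

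Your proposal, by contrast, has a real gap in the thin case. First, the ``near-geodesic chord'' you describe need not stay in $Y$: the pieces $Y$ are open subdiscs of $Z$, and a $Z$-geodesic between two points of $\partial Y$ can leave $\bar Y$. Second, and more seriously, the accounting does not close. The deep-cut count is fine (each carves off a disjoint region of area $\gtrsim d^2$, so at most $O(A/d^2)=O(n^2)$ of them), but the phrase ``$O(l/d)=O(n)$ cuts along each nested sequence of pieces'' does not bound the \emph{total} number of thin cuts: a binary cutting tree with depth $O(n)$ on every root-to-leaf path can still have exponentially many internal nodes. You would need a global potential that each thin cut decreases by a definite amount, and none is exhibited; total perimeter fails since a chord of length $O(d)$ increases it by $O(d)$. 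Smaller points: the ``short auxiliary chord'' in the deep case requires choosing $p$ with $d(p,\partial Y)$ close to $d$, not merely $\geq d$; and the genericity argument for finite intersections would use one-dimensional coarea on the previously built arcs rather than Lemma~\ref{co-area-gen}, and does not obviously cover the thin-case chords, which are not level sets. None of this is irreparable in principle, but the Voronoi construction bypasses all of it at once.
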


\begin{proof}
It suffices to prove the result for all $n$ satisfying  $n\geq \frac{l}{8Cl_0} $ and $\frac {4l} n < \frac {l_0} 2 $.
	 Due to Theorem \ref{thm:equi}, we find  an $\frac l n$-dense  subset $F=F_n$ in $Z$ with elements $p_1,....,p_m$ for some  $m\leq M_1\cdot    n^2$ for some $M_1$ depending
	on the upper bounds of $C,l,A$.
	Since $Z\setminus \partial Z$ is
	 dense in $Z$, we may assume that $F$ is contained in the open disc $Z\setminus \partial Z$.
	
	The idea is now to consider the \emph{Voronoi domains} defined by the set $F$.  However, we need  a few minor modifications.
For all $1\leq i \leq m$, let $\hat f_i :Z\to \R$ be the distance function to the point $p_i$. Since the functions
$\hat f_i$ are $1$-Lipschitz, we deduce from \lref{co-area-gen} that for almost all
$\epsilon >0$ the fiber $(\hat f_i- \hat f_j)^{-1} (\epsilon)$ has  finite $\mathcal H^1$-measure, for any $1\leq i, j \leq m$.

Thus we find arbitrary small  $\epsilon >0$ such that $(\hat f_j- \hat f_i) ^{-1}  (k\epsilon )$ has finite $\mathcal H^1$-measure for any $1\leq i, j \leq m$
and any $1 \leq k\leq m$.  We fix such $\epsilon$ satisfying
$$4m\cdot \epsilon \leq \inf \{d(p_i,p_j) , j\neq i \} \leq \frac {2l} n.$$  Consider the modified distance functions $f_i:Z\to \R$ given by $f_i(z) :=d(p_i,z) +i\cdot \epsilon$.
By construction, for any $i\neq j$ the set $S_{ij}$ of points $z$ with $f_i(z)=f_j(z)$  has  finite $\mathcal H^1$-measure.

Let $U_i$ be the set of points $z\in Z$ with $f_i(z) <f_j(z)$ for all $j\neq i$.  The sets $U_i$ are open. By assumption on $\epsilon$, we have $p_i \in U_i$.
The functions $f_j$ are $1$-Lipschitz and  decrease with velocity $1$
on any geodesic connecting $z$ with $p_j$.  Therefore,  for any $z\in U_i$ any geodesic from $z$ to $p_i$ is entirely contained in $U_i$. In particular, $U_i$ is connected.
Since  $F$ is $\frac l n$-dense in $Z$, and due to the smallness of $\epsilon$, any point $z\in U_i$ has  distance at most $\frac {2l} n$ to $p_i$. Therefore,
the  diameter of $U_i$ is at most $\frac {4l} n$.

Denote by $K$ the union of all the sets $S_{ij}$. Then $K$ is a compact subset of $Z$, which has finite
$\mathcal H^1$-measure and separates points $p_i$ pairwise. The complement $Z\setminus K$ has exactly $m$ connected components  $U_i$.
Let $K_0$ denote a minimal compact subset of $K$ which still separates the points from $F$ pairwise.
We deduce from \lref{lem:finiteset} that $K_1=K_0 \cup \partial Z$  is a  finite graph, whose  complement $Z\setminus K_1$ has exactly  $m$ connected components $W_1,..., W_m$ containing the corresponding points $p_i$. Since $U_i$ is connected,  we obtain $U_i \subset W_i$.  Since $K_1$ is nowhere dense in
$Z$, the sets $U_i$ are dense in $W_i$ for all $i$. Thus  the diameter of any $W_i$ is also bounded
by $\frac {4l} n$.

 Let now $G$ be the connected component of $\partial Z$ in $K_1$. This is  a finite connected graph, which is open in $K_1$.
 Any component $V$ of $Z\setminus G$ must intersect at least one of the components $W_j$, hence $V$ must contain this component $W_j$ in this case.
We deduce that $Z\setminus G$ has at most $m$ components.

It remains to control the size  of these possibly larger components of  $Z\setminus G$.
 By \lref{open-compl}, any component $V$ of $Z\setminus G$ is homeomorphic to an open disc.
   We claim that the boundary $\partial V$ of this disc  has diameter at most $\frac {4l} n$.
    Indeed, some neighborhood $O$ of $\partial V$ in  $\bar V$ intersects $K_1$ only in $\partial V$.
    Hence, choosing such a connected neighborhood $O$, we deduce that $O\setminus \partial V$ is contained in one of the components $W_j$. Thus its diameter is bounded by $\frac {4l } n$. We deduce the same bound for $\bar O$, hence for $\partial V$. From \cref{cor:diambound} we infer that $V$
		 has diameter at most $(8C+1)\cdot \frac {4l} {n}$.

    We set  $N=\frac {n} {4\cdot (8C+1)} $. Then the diameter of any component of $Z\setminus G$ is at most $\frac l N$. Moreover, $Z\setminus G$ has at most $m \leq M_1 \cdot n^2 = M \cdot N^2$
 components for a constant $M$ depending only on $M_1$ and $C$, hence only on the upper bounds of $l,A$ and $C$.
\end{proof}

Note again that if the length $l$ of $\partial Z$ is smaller than $l_0$ then one does not need any additional assumption on $n$ in the first lines of the above proof. Recall  that
 $C\geq \frac 1 {8\pi}$, \cite{LW}, Corollary 1.6. Thus, we can estimate $4(8C+1)$ by $k\cdot C$ and $2n + 64 C^2\cdot n^2$ by $kC^2\cdot n^2$ for a universal constant $k$. Thus, from \cref{cor:1000} and the last lines of the proof of \tref{thm:papa}  we obtain:
\begin{cor} \label{cor:papa}
If the length $l$ of the boundary curve $\partial Z$ is smaller than $l_0$ then the constant $M$ in \tref{thm:papa} can be chosen to be $\hat M \cdot C^4$ for some universal constant $\hat M$.
\end{cor}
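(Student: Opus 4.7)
The plan is to repeat the proof of \tref{thm:papa} essentially verbatim, but track every constant carefully under the additional assumption $l < l_0$, and then invoke the lower bound $C \geq \frac{1}{8\pi}$ to absorb all $O(1)$ terms into powers of $C$. The point is that the only place in the argument where the upper bounds on $l$ and $A$ entered was through the cardinality of the $\frac{l}{n}$-dense subset $F_n$, and this input is now replaced by \cref{cor:1000}, which controls that cardinality purely in terms of $C$.

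First, I would note that since $l < l_0$, the parenthetical remark before \cref{cor:1000} tells us that no lower bound on $n$ is needed in order to invoke the area growth estimate \eqref{areabt} in the proof of \tref{thm:papa}: the argument goes through for every integer $n \geq 1$. Then I would apply \cref{cor:1000} in place of \tref{thm:equi} to choose $F = F_n$ with $|F| \leq 2n + 64 C^2 n^2$. Using $C \geq \frac{1}{8\pi}$, we have $1 \leq 8\pi C$ and hence $2n \leq 16\pi C \cdot n \leq 128 \pi^2 C^2 n^2$ for $n \geq 1$, so that $|F| \leq M_1 \cdot n^2$ with $M_1 = 128\pi^2 + 64$ times a power of $C$ bounded by $C^2$; more simply, $|F| \leq k_1 C^2 n^2$ for a universal $k_1$.

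Next, I would follow the rest of the proof of \tref{thm:papa} unchanged: the Voronoi-type construction produces a finite graph $G \subset Z$ containing $\partial Z$ such that each component of $Z \setminus G$ is an open topological disc whose boundary has diameter at most $\frac{4l}{n}$, and hence (by \cref{cor:diambound}, which also needs no restriction on $r$ when $l < l_0$ by \cref{cor:loccontr}) has diameter at most $(8C+1) \cdot \frac{4l}{n}$. The number of such components is at most $|F| \leq k_1 C^2 n^2$. Setting $N = \frac{n}{4(8C+1)}$ gives components of diameter $\leq \frac{l}{N}$, and their number is bounded by $k_1 C^2 n^2 = k_1 C^2 \cdot 16(8C+1)^2 \cdot N^2$.

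The final step, which is essentially the only calculation, is to absorb $(8C+1)^2$ into $C^2$: since $C \geq \frac{1}{8\pi}$ gives $1 \leq 8\pi C$, we have $8C + 1 \leq 8(1+\pi) C$, so $(8C+1)^2 \leq 64(1+\pi)^2 C^2$. Combining, the number of components of $Z \setminus G$ is at most
\[
  k_1 C^2 \cdot 16 \cdot 64(1+\pi)^2 C^2 \cdot N^2 = \hat M \cdot C^4 \cdot N^2
\]
for the universal constant $\hat M = 1024(1+\pi)^2 k_1$. Since $n$, and therefore $N$, was arbitrary, this gives the claim. There is no real obstacle here; the only thing to be careful about is that \cref{cor:1000}, \cref{cor:diambound}, and \cref{cor:loccontr} all remove the auxiliary smallness assumptions on $n$ and $r$ used in the general statement of \tref{thm:papa}, so no ``throw-away'' factor depending on $l_0$ appears.
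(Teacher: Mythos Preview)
Your proposal is correct and follows essentially the same approach as the paper: use \cref{cor:1000} in place of \tref{thm:equi}, note that the restrictions on $n$ and $r$ disappear when $l<l_0$ (via \cref{cor:loccontr}), and invoke $C\geq \frac{1}{8\pi}$ to absorb $2n+64C^2n^2$ into $kC^2n^2$ and $4(8C+1)$ into $kC$. The paper's own argument is a terse two-line remark making exactly these points; your version simply carries out the arithmetic explicitly.
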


\section{Collecting the harvest} \label{sec:harvest}
We now provide the proofs of the main theorems stated in the introduction.
We formulate them for the general definition of areas $\mu$ and continue to use the notations of the previous sections.

\subsection{General case}
We begin with the following generalization of \tref{thmA}.
Recall that $\frac \pi 4$ in \tref{thmA} is equal to $q(\mathcal H^2)$.

\begin{thm} \label{thmA+}
The metric space  $Z$ is  homeomorphic to $\bar D$. It is countably $2$-rectifiable with finite $\mathcal H^2(Z)$.
 For any Jordan curve $\eta$  in $Z$ of length $l<l_0$,  the domain $\Omega$ of the  disc $Z$ enclosed by
    $\eta$ satisfies
    \begin{equation} \label{Z-hausdorff-isop+}
    \mu _Z (\Omega) \leq C \cdot l^2.
\end{equation}
\end{thm}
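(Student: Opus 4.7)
The plan is simply to collect the pieces already assembled in Sections \ref{sec:metric}, \ref{sec:topo} and \ref{sec:inner-structure-mindiscs}; at this point no new argument should be required, since Theorem~\ref{thmA+} is essentially a restatement of previously established results for the specific geometric framework. I would divide the proof into three one-line citations, matching the three assertions of the theorem.

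First, for the topological statement $Z\cong \bar D$, I would invoke Theorem~\ref{prop:structure-Z}, which already gives the stronger claim that $P:\bar D \to Z$ is a uniform limit of homeomorphisms. That theorem relies on the cell-like characterization (Corollary~\ref{cor:Moore-disc}) combined with the two key geometric facts that each fiber $P^{-1}(q)$ is connected (Lemma~\ref{lem:nonsense-P}) and that no complementary component of a fiber in $S^2$ can be a genuine open disc in $D$ (via the conformal-reparametrization/area-minimality argument together with $\sqrt{2}$-quasiconformality of $u$).

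Second, the assertion that $Z$ is countably $2$-rectifiable with $\mathcal{H}^2(Z) < \infty$ is exactly Lemma~\ref{2-rect}. Its proof rests on the higher integrability $P \in W^{1,p}_{loc}(D,Z)$ for some $p > 2$ (Lemma~\ref{diff-equal}), which supplies Lusin's property (N), together with the standard decomposition into pieces where $P$ is Lipschitz with prescribed metric differential; the boundary circle $\Gamma'$ contributes nothing to $\mathcal{H}^2$.

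Third, the isoperimetric inequality \eqref{Z-hausdorff-isop+} is precisely the content of Theorem~\ref{prop:Z-isop}, whose proof approximates an arbitrary Jordan curve $\eta \subset Z$ of length $l < l_0$ by $P$-images of Jordan curves in $\bar D$ (using local contractibility of $Z$ plus Lemma~\ref{lem:subcurves-injective-biLip} to produce a nearby injective piecewise-geodesic competitor) and then applies Lemma~\ref{isop-adm}, which in turn rests on Proposition~\ref{lip-good}. There is no obstacle left at this stage; the only mild subtlety is that one must record that the Jordan domain enclosed by $\eta$ in $Z$ is well-defined (invoking $Z\cong\bar D$ from the first step and the Jordan curve theorem) so that $\mu_Z(\Omega)$ is unambiguous before applying the isoperimetric estimate.
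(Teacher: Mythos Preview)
Your proposal is correct and matches the paper's own proof essentially line for line: the paper simply cites Theorem~\ref{prop:structure-Z} for $Z\cong\bar D$, Lemma~\ref{2-rect} for countable $2$-rectifiability with finite $\mathcal H^2(Z)$, and Theorem~\ref{prop:Z-isop} for the isoperimetric inequality. The additional commentary you provide on what underlies each cited result is accurate but not part of the proof proper.
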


\begin{proof}
The space $Z$ is homeomorphic to $\bar D$ by \tref{prop:structure-Z}. It is countably $2$-rectifiable with finite $\mathcal H^2 (Z)$  by \lref{2-rect}. The   isoperimetric property
 \eqref{Z-hausdorff-isop+} is exactly \tref{prop:Z-isop}.
\end{proof}

The results stated in  \cref{cor:neu} have been all proven in the more general form in the previous Section. Namely, under the assumptions of \cref{cor:neu} the  $\mu=\mathcal H^2$-area
of $Z$ is less than the critical value $Cl_0^2$.  The area growth (i) of \cref{cor:neu} is exactly the statement of  \tref{prop:area-growth-Z}. The uniform local contractibility (ii) is contained in
\pref{prop:loccontr} and \cref{cor:loccontr}.
Finally, the decomposition statement (iii) of \cref{cor:neu} is contained in \tref{thm:papa} and \cref{cor:papa}.

The next theorem generalizes \tref{thmB}.

\begin{thm} \label{thmB+}
The canonical projection $P:\bar D\to Z$
is a uniform limit of homeomorphisms $P_i:\bar D\to Z$. Moreover,
\begin{enumerate}
\item  $P\in \Lambda (\partial Z, Z)
 \subset W^{1,2} (D,Z)$.
\item The map $P:D\to Z$ is contained  in $W^{1,p} _{loc} (D,Z)$  for some  some $p>2$
depending on $C$.
\item The map $P:D\to Z$ is locally $\alpha$-Hoelder with $\alpha = q(\mu)  \cdot \frac  1 {4\pi  C}$.
\item  The equality $\mu _Z (P (V))= \Area _{\mu} (P|_{V})=\Area _{\mu} (u|_{V})$ holds true for all
open subsets $V\subset D$.
\item The map $P$ is $\sqrt  2$-quasiconformal. If $X$ has property (ET) then $P$ is conformal.

\end{enumerate}
\end{thm}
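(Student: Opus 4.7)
The plan is that essentially every item in Theorem~\ref{thmB+} has already been established in Section~\ref{sec:metric} and Section~\ref{sec:inner-structure-mindiscs}; the proof amounts to assembling the pieces and handling one routine additional step in (iv).

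First I would dispense with the statement that $P:\bar D\to Z$ is a uniform limit of homeomorphisms by citing Theorem~\ref{prop:structure-Z}. For (i), Lemma~\ref{diff-equal} gives $P\in W^{1,2}(D,Z)$, and Lemma~\ref{lem:bcurve} asserts that $P|_{S^1}$ is a weakly monotone parametrization of the Jordan curve $\partial Z=\Gamma'$; combined, these yield $P\in\Lambda(\partial Z,Z)$. Item (ii) is another conclusion of Lemma~\ref{diff-equal}, and (iii) is exactly Lemma~\ref{h-optimal}. Item (v) is Corollary~\ref{P-quasi}.

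The only remaining point is (iv). The second equality $\Area_\mu(P|_V)=\Area_\mu(u|_V)$ is immediate from Lemma~\ref{diff-equal}, since the approximate metric differentials of $u$ and $P$ coincide almost everywhere on $D$ and $\Area_\mu$ depends only on these. For the first equality $\mu_Z(P(V))=\Area_\mu(P|_V)$, I would apply the area formula for Sobolev maps (Lemma~\ref{area-sob}) to the restriction $P|_V$, using the Lusin (N) property of $P$ that follows from $P\in W^{1,p}_{loc}(D,Z)$ for some $p>2$. This gives
\[
  \int_{P(V)} N_V(y)\,d\mu_Z(y)=\Area_\mu(P|_V),
\]
where $N_V(y)=\#\{z\in V:P(z)=y\}$. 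By Lemma~\ref{lem:nonsense-P} the fibers of $P$ are connected, so the global multiplicity function $N(y)=\#\{z\in D:P(z)=y\}$ takes values in $\{1,\infty\}$, and the argument preceding Lemma~\ref{simplearea} (based on the same area formula applied to all of $D$) shows $N(y)=1$ for $\mathcal H^2$-a.e.\ $y\in Z$. Since $N_V\leq N$, this forces $N_V(y)=1$ for $\mu_Z$-a.e.\ $y\in P(V)$, which yields the desired identity.

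The main obstacle, modest as it is, is the measurability and multiplicity bookkeeping in (iv): one has to know that $P(V)$ is $\mu_Z$-measurable (which follows from $P$ being a uniform limit of homeomorphisms and $V$ being a countable union of sets with compact closure in $D$, combined with Lusin (N)) and that the global multiplicity-one statement transfers to $N_V$. Everything else is a direct citation of earlier results.
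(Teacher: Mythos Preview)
Your proof is correct and follows essentially the same route as the paper: each item is reduced to the same earlier result (Theorem~\ref{prop:structure-Z}, Lemmas~\ref{diff-equal}, \ref{lem:bcurve}, \ref{h-optimal}, Corollary~\ref{P-quasi}), and for (iv) both you and the paper invoke the area formula of Lemma~\ref{area-sob} together with the almost-everywhere multiplicity-one fact established just before Lemma~\ref{simplearea}. Your treatment of (iv) is in fact slightly more explicit than the paper's, spelling out why $N_V=1$ a.e.\ follows from $N=1$ a.e.; the measurability remark is a harmless extra precaution.
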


\begin{proof}
The first statement was proved in \tref{prop:structure-Z}.
In \lref{diff-equal} we showed that $P\in W^{1,2} (D,Z)$  and (ii).
Since $P$ restricts to a weakly monotone parametrization $P:\partial D\to \partial Z$  by \lref{lem:bcurve}, we deduce $P\in \Lambda (\partial Z ,Z)$.
The statement of (iii) is contained in \lref{h-optimal}.
 The second equality of  (iv) is contained in \lref{diff-equal}. The first equality follows from  the definition of the $\mu$-area \eqref{eq:area-sob}and the fact
that the multiplicity function $N$ appearing in the area formula \lref{area-sob} equals $1$ almost everywhere on $Z$, as proven after \lref{2-rect}.
Statement (v) is contained in \cref{P-quasi}.
\end{proof}

Now we turn to the generalization of \tref{thmC}.

\begin{thm}  \label{thmC+}
For every  $\epsilon >0$ there exists a decomposition $Z=S\cup _{1\leq i<\infty} K_i$ with compact $K_i$ and
$\mu _Z(S)=0$ such that the restrictions $\bar u:K_i \to \bar u(K_i)$ of the $1$-Lipschitz map $\bar u$ are  $(1+\epsilon)$-biLipschitz. Moreover,  for any $1\leq i <\infty$ and any $x\in K_i$ we have
\begin{equation} \label{eq:iso}
\lim _{y\to x, y\in K_i} \frac {d_Z(x,y) } {d_X(\bar u (x) ,\bar u (y))} =1.
\end{equation}
  Moreover, $\bar u :\partial Z \to \Gamma$ is an arclength preserving homeomorphism.
\end{thm}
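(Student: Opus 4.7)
The plan is to combine the Kirchheim/Karmanova decomposition theorem recalled in Subsection~\ref{subsec:rect} with the coincidence $\apmd u_z = \apmd P_z$ a.e.\ established in \lref{diff-equal}. Fix $\epsilon>0$ and pick $\epsilon'>0$ with $(1+\epsilon')^2 \leq 1+\epsilon$. Applying the decomposition statement of Subsection~\ref{subsec:rect} simultaneously to the Lipschitz representatives of $u$ and $P$ on the common compact pieces, one obtains a disjoint decomposition $D = S_0 \cup \bigcup_{i\geq 1} L_i$ such that the $L_i$ are compact, $u|_{L_i}$ and $P|_{L_i}$ are both Lipschitz, at every $z \in L_i$ both metric differentials exist and agree with $\apmd u_z = \apmd P_z$, and both restrictions are $(1+\epsilon')$-biLipschitz when $L_i$ is endowed with the norm $s_i = \apmd u_{z_i}$ at some fixed basepoint $z_i \in L_i$. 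The set $S_0$ is the union of an $\mathcal{H}^2$-null set with the set of $z \in D$ where $\apmd u_z$ is not a norm.

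Next I would set $K_i := P(L_i)$ and $S := P(S_0) \cup \partial Z$, so that $Z = S \cup \bigcup K_i$ (recall $Z = P(\bar D) = P(D) \cup \partial Z$). Each $K_i$ is compact by continuity of $P$. To see $\mu_Z(S) = 0$: on $S_0$ the Jacobian $\jac^{\mu}(\apmd P_z) = 0$ vanishes by definition of $\jac^{\mu}$, so \lref{simplearea} gives $\mu_Z(P(S_0 \cap D)) = 0$; and $\mu_Z(\partial Z) = 0$ since $\partial Z$ is a rectifiable curve. For the biLipschitz claim, note that $P|_{L_i}$ is $(1+\epsilon')$-biLipschitz onto $K_i$ (in particular injective), hence $\bar u|_{K_i} = u|_{L_i} \circ (P|_{L_i})^{-1}$ is a composition of a $(1+\epsilon')$-biLipschitz map with a $(1+\epsilon')$-biLipschitz map between pieces isometrically comparable to $(L_i, s_i)$, yielding that $\bar u : K_i \to \bar u(K_i)$ is $(1+\epsilon)$-biLipschitz.

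The infinitesimal isometry \eqref{eq:iso} follows from the equality $\md_x u = \md_x P$ at every $x \in L_i$. Given $z = P(x) \in K_i$ and $w = P(y) \in K_i$ with $y \in L_i$, the biLipschitz equivalence of $P|_{L_i}$ shows $w \to z$ iff $y \to x$. Since $s_i$-norm is a genuine norm on $\R^2$ and both maps have the same metric differential at $x$, we have
\begin{equation*}
d_Z(P(x),P(y)) = \md_x u(y-x) + o(|y-x|), \quad d_X(u(x),u(y)) = \md_x u(y-x) + o(|y-x|),
\end{equation*}
and the lower bound $\md_x u(y-x) \geq c|y-x|$ coming from non-degeneracy of $s_i$ forces the ratio to tend to $1$. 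The final statement that $\bar u$ restricts to an arclength preserving homeomorphism $\partial Z \to \Gamma$ is exactly \lref{lem:bcurve}.

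The only genuinely delicate point is justifying $\mu_Z(P(S_0)) = 0$: one must invoke that $P$ has Lusin's property (N) (which follows from $P \in W^{1,p}_{loc}$ with $p>2$, cf.\ Subsection~\ref{subsec:areasob} and \lref{diff-equal}) together with the Jacobian-vanishing argument via \lref{simplearea} applied to the part of $S_0$ where the differential is degenerate; everything else is a straightforward bookkeeping of the simultaneous decomposition.
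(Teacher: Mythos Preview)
Your proof is correct and follows essentially the same approach as the paper, which carries out the Kirchheim decomposition sequentially (first for $P$, then refining each piece for $u$) rather than simultaneously; the two variants are equivalent once one intersects the two decompositions and discards the null set where $\apmd u_z \neq \apmd P_z$. One minor correction: \lref{simplearea} is stated only for open sets and is not the right citation for $\mu_Z(P(S_0))=0$; as you yourself observe at the end, this follows from Lusin's property~(N) for $P$ on the null part of $S_0$ together with the vanishing of $\jac^{\mu}(\apmd P_z)$ on the degenerate part via the area formula \lref{area-sob}.
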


\begin{proof}
Since the map $P:\bar D\to Z$ satisfies Lusin's property (N), we combine  Subsection \ref{subsec:apmd} and Subsection \ref{subsec:rect} and obtain  a disjoint decomposition  $\bar D=S_0\cup _{1\leq i <\infty}L_i$ with the following properties. The subsets $L_i$ are compact and $\mathcal H^2(P(S_0))=0$.
 The restriction  $P:L_i\to P(L_i)$ is a  biLipschitz map, which has a metric differential at each point.  Moreover,  this metric differential is  a norm at each point $z\in L_i$  and coincides with the approximate metric differential $\apmd P _z$.  Finally, if one considers $L_i$ with the metric determined by any of these norms $\apmd u_z$ then $P:L_i\to P(L_i)$ is $(1+\epsilon)$-biLipschitz.

 Since $u=\bar u \circ P$,  the restriction  $u:L_i\to u(L_i)$ is a Lipschitz map. Due to  \lref{diff-equal} at almost all points of $L_i$ its metric differential coincides with $\apmd u _z=\apmd P_z$.  Decompose every  $L_i$ in a negligible set and countably many  compact sets on which $u$ is $(1+\epsilon)$-biLipschitz with respect to the appropriate norm.  Taking  all these negligible subsets together  into a set $S_1$
with $\mathcal H^2(S_1)=0$,  we obtain a decomposition  $\bar D= S_0 \cup S_1\cup _{1\leq j<\infty} M_j $, with $S_0$ from above, such that the following holds true. The sets $M_j$ are compact, the restrictions of $u$ and $P$ to $M_j$
have equal metric differentials at all points of $M_j$. Moreover, $u:M_j\to u(M_j)$ and $P:M_j\to P(M_j)$ are $(1+\epsilon)$-biLipschitz if $M_j$ is equipped with the norm $\apmd u _z$ for some $z\in M_j$.

Hence, for the compact set $K_j=P(M_j)$, the restriction $\bar u:K_j\to u(M_j)$ is $(1+\epsilon)^2$-biLipschitz. Moreover, since $P$ and $u$ have the same metric differentials at all $z\in M_j$, we get \eqref{eq:iso}.
We note that $\mu _Z (P(S_0))=0$ by construction and
$\mu_Z (P(S_1))=0$, since $P$ has Lusin's property (N).  Thus, with  $S=P(S_0\cup S_1)$ we have written
$Z=S\cup _{1\leq j<\infty}  K_j$, such that $\mu_Z (S)=0$ and the restriction of $\bar u$ to any $K_j$ has all the required properties.
It can happen that this union is not disjoint. Then we make it disjoint by a further subdivision, noting that with $K_j$ any compact subset of $K_j$ has the property required in the statement of the theorem.

The last statement is contained in \lref{lem:bcurve}.
\end{proof}

\subsection{The chord-arc case}
We are generalizing \tref{arc-chord} now.

\begin{thm}  \label{arc-chord+}
Assume in addition that $\Gamma$ is a chord-arc curve. Then $P\in W^{1, p} (D,Z)$
 for some $p >2$  depending on $C$ and the biLipschitz constant $L$ of some parametrization $S^1\to  \Gamma$.
  In particular, $P:\bar D\to Z$ is globally $(1-\frac 2 p)$- Hoelder continuous.

 There exists $\delta >0$, depending only on $C,l_0$ and $L$,  such that   for all
$z_0\in Z$ and all $0\leq r\leq \delta$ we have
\begin{equation} \label{volnoncol}
 \mu_Z(B(z_0, r))\geq \delta \cdot r^2.
\end{equation}
\end{thm}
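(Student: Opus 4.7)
The plan is to treat the two assertions separately, each by reducing to results already established in the excerpt.

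For the global Sobolev integrability and Hölder continuity of $P$: in the chord-arc case, the cited result \cite{LW-harmonic}, Theorem~1.3, gives $u \in W^{1,p}(D, X)$ globally for some $p > 2$ depending only on $C$ and the biLipschitz constant $L$ of a parametrization of $\Gamma$. By \lref{diff-equal}, $\apmd P = \apmd u$ almost everywhere on $D$, so the minimal generalized gradients of $P$ and $u$ coincide and $P \in W^{1,p}(D, Z)$ with the same exponent $p$. The global $(1 - 2/p)$-Hölder continuity of $P$ on $\bar D$ then follows from the Morrey-type Sobolev embedding \cite{LW}, Proposition~3.3.

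For the non-collapsing estimate, I would reduce to the interior area-growth estimate of \tref{prop:area-growth-Z}. If $d_Z(z_0, \partial Z) \geq r/2$, the ball $B(z_0, r/2) \subset B(z_0, r)$ is interior and \tref{prop:area-growth-Z} already yields $\mu_Z(B(z_0, r)) \geq (q(\mu)^2/16C)\, r^2$ for $r$ below a scale controlled by $C$, $l_0$, $q(\mu)$. If $d_Z(z_0, \partial Z) < r/2$, a nearest boundary point $w_0 \in \partial Z$ satisfies $B(w_0, r/2) \subset B(z_0, r)$, reducing matters to the case $z_0 \in \partial Z$ at halved radius.

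For $z_0 \in \partial Z$ and small $r$, one exploits the fact---inherited via the arclength-preserving $1$-Lipschitz identification $\bar u\colon \partial Z \to \Gamma$ of \lref{lem:bcurve}---that $\partial Z$ is $L$-chord-arc in $(Z, d_Z)$. Consequently $\partial Z \cap B(z_0, s)$ is a single subarc of arclength at most $2Ls$ for $s$ below a threshold depending on $\ell(\partial Z)$ and $L$. Following the template of the proof of \tref{prop:area-growth-Z} with $h(z) := d_Z(z, z_0)$, one closes the sphere $S_s := h^{-1}(s)$ by this boundary subarc to obtain a Jordan curve enclosing $B(z_0, s)$ in $Z$ of length at most $\hm^1(S_s) + 2Ls$; combining \tref{prop:Z-isop} with the co-area inequality \lref{co-area-gen} then yields, for $b(s) := \mu_Z(B(z_0, s))$, the differential inequality $b'(s) \geq q(\mu)\bigl(\sqrt{b(s)/C} - 2Ls\bigr)$ for almost every small $s$. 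Combining this with the interior bound applied to an interior sub-ball of definite size produced inside $B(z_0, r)$ by the isoperimetric control gives the uniform lower bound $\mu_Z(B(z_0, r)) \geq \delta r^2$ with $\delta$ depending only on $C$, $l_0$, and $L$.

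The principal obstacle lies in the boundary case: the $-2Ls$ correction in the differential inequality prevents it from closing by itself at arbitrarily small scale, so rigorously finishing requires identifying an interior base point $z_1 \in B(z_0, r/4)$ with $d_Z(z_1, \partial Z) \geq c r$ for some $c = c(L) > 0$. The existence of such $z_1$ rests on the two-dimensional topology of $Z$ as a disc together with the chord-arc geometry of $\partial Z$; granting it, the interior estimate applied to $B(z_1, cr) \subset B(z_0, r)$ closes the argument and fixes the quantitative dependence of $\delta$ on $L$.
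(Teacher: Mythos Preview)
Your treatment of the first assertion is correct and matches the paper exactly: transfer the global $W^{1,p}$ regularity of $u$ to $P$ via the equality of approximate metric differentials (\lref{diff-equal}), then invoke the Sobolev embedding.

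For the non-collapsing estimate, your proposal has a genuine gap, which you yourself identify: the differential inequality $b'(s)\geq q(\mu)\bigl(\sqrt{b(s)/C}-2Ls\bigr)$ does not integrate from zero, and the existence of an interior point $z_1\in B(z_0,r/4)$ with $d_Z(z_1,\partial Z)\geq cr$ is not established. That existence is not obvious; it is essentially another form of the non-collapsing statement you are trying to prove.

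The paper bypasses this difficulty with a single clean observation that you missed. When the minimal separating set $\gamma\subset S_r$ produced by \cref{corh2} is an arc with endpoints $a,b\in\partial Z$, those endpoints satisfy $d_Z(a,b)\leq \ell_Z(\gamma)$. The chord-arc condition on $\Gamma$ (transferred to $\partial Z$ via the arclength-preserving $1$-Lipschitz map $\bar u$) then bounds the shorter boundary arc between $a$ and $b$ by $L\cdot\ell_Z(\gamma)$, \emph{not} by $2Lr$. Hence the closing Jordan curve $\hat\gamma$ has length at most $(1+L)\,\ell_Z(\gamma)\leq(1+L)\,\hm^1(S_r)$, and \tref{prop:Z-isop} gives
\[
b(r)\leq C(1+L)^2\,[\hm^1(S_r)]^2.
\]
This is the \emph{same} multiplicative inequality as in the interior case of \tref{prop:area-growth-Z}, merely with the constant $C$ replaced by $k=C(1+L)^2$. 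Combined with the co-area inequality it yields $F(r)\leq k\,q(\mu)^{-2}[F'(r)]^2$, which integrates directly to $b(r)\geq F(r)\geq q(\mu)^2(4k)^{-1}r^2$ without any need for an interior base point. The whole case split interior/boundary is thus unnecessary: one fixes a far boundary point $q\in\partial Z$ and runs the argument uniformly for all $z_0$.
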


  \begin{proof}
There exists some $p>2$ depending only on $C$ and  $L$ such that $u\in W^{1,p} (D,X)$, by \cite{LW-harmonic}, Theorem 3.1.
 Due to Lemma \ref{diff-equal}, we get
$P\in W^{1,p} (D,Z)$.

It remains to prove \eqref{volnoncol}. We fix  a sufficiently small $\delta$, to be determined later and
proceed in analogy with the proof of \tref{prop:area-growth-Z}. We may assume $ \delta  ^3 <Cl_0^2$.
Consider an arbitrary  $z_0\in Z$ and set $b(r) =\mu_Z(B(z_0, r))$. We consider  the distance function
 $h:Z\to \R$ from the point $z_0$ and the corresponding level sets $S_r$, the distance spheres around $z_0$.  Finally, we fix a point $q\in \partial Z$ with maximal distance on $\partial Z$ from $z_0$ and note that  $d_Z(z_0,q)>\delta$
 if $2\delta <\diam (\partial Z )$.

If for some $r_0<\delta$ we have $b(r_0) \geq Cl_0^2$ then \eqref{volnoncol} holds true for all $r_0\leq r\leq \delta$.
  Arguing as in the proof of \tref{prop:area-growth-Z},
 we only need to find some constant $k>0$, such  that for all $r<\delta$ with $b(r) <Cl_0^2$ the following inequality holds true:
\begin{equation} \label{last}
 k\cdot \mathcal H^1 (S_r) ^2 \geq  b(r).
 \end{equation}

 The inequality is trivially fulfilled if $\mathcal H^1 (S_r)  =\infty$.  For all $0<r\leq \delta$, the set $S_r$ separates $p$ from $q$. For all $r$   with finite $\mathcal H^1(S_r)$, we  apply   \cref{corh2} and  find a subset $\gamma$ of $S_r$ still separating $p$ from $q$ such that one of the following two possibilities holds true.
 Either $\gamma $ is a Jordan curve.  Then  the same argument as in the proof of \tref{prop:area-growth-Z}  gives us
 $C\cdot \mathcal H^1 (S_r) ^2 \geq C\cdot  \mathcal H^1 (\gamma) ^2  \geq b(r)$. Or otherwise, $\gamma$ is a simple curve connecting two points on $\partial Z$, and not intersecting $\partial Z$ in further points.
 The shorter part of $\partial Z$ between these two points has length bounded from above by $L\cdot \length _Z (\gamma )$, due to the chord-arc condition.   Moreover, if $\delta $ has been chosen small enough, this shorter part of $\partial Z$ does not contain the point $q$.
  Therefore, the Jordan curve
 $\hat \gamma $ consisting of $\gamma$ and the piece of $\partial Z$
 we have found, has length bounded from above by $(1+L)\cdot \ell _Z (\gamma)$. Moreover, the closure of  the Jordan domain of $\hat \gamma$ contains $z_0$, hence the whole ball $B(z_0,r)$ by construction.
 Now we apply the isoperimetric inequality  \tref{prop:Z-isop}  to the curve $\hat \gamma$ to deduce that $C\cdot \mathcal H^1(\hat \gamma )^2  \geq b(r)$.  Since $  (1+L)\cdot \mathcal H^1 (S_r)  \geq   (1+L)\cdot \mathcal H^1 (\gamma)  \geq \ell _Z (\hat \gamma ) $  we obtain the desired inequality \eqref{last} with
 $k= C\cdot (1+L)^2$.
\end{proof}

\subsection{Different choices of the family of curves}\label{subsec:changes-different-curve-families}

As already mentioned in Section~\ref{subsec:subtle}, all of our results concerning the space $Z$ remain valid if in the definition of the pseudo-metric $d_u$ one uses rectifiable, piecewise biLipschitz, or piecewise smooth curves
instead of continuous curves. Taking into account the following observations, the proofs remain literally the same as the ones given above. Firstly, Corollary~\ref{twomaps} remains valid if  \eqref{eq:twomaps} is only assumed for  $p$-almost all piecewise smooth  curves.  Secondly, the curves constructed  in Lemma~\ref{mainboundary}  and Corollary~\ref{shortconnection} are piecewise smooth. Thirdly, in Lemma~\ref{prop:first+}, the last equation remains valid   for all curves $\gamma$ in the chosen family.

\section{The absolute minimal filling} \label{sec:absolute}
 \subsection{The proof of  \tref{fillingareathm}}
Let us fix a quasi-convex definition of area $\mu$ and a biLipschitz circle $\Gamma$.
We consider an isometric embedding of $\Gamma$ into its  injective  hull $i:\Gamma\to Y$.  Concerning
the definition and properties of injective metric spaces and injective hulls we refer e.g. to \cite{Lan13},  \cite{Isb64}.
 Recall that $Y$  satisfies the  $(C,\infty)$-isoperimetric inequality for any  $\mu$ and $C=\frac 1 {2\pi}$,  \cite{LW}, Lemma 10.3.

As  proved in \cite{LW}, Corollary 10.4, the Sobolev filling area
$$m_{\mu,Sob} (\Gamma):=\inf \{Area _{\mu} (u) \; : \; G \; \text{complete}, \; \Gamma \subset G, \;
  u\in \Lambda (\Gamma,G)\}$$
  is realized by   a solution $u$ of the Plateau problem for $(\Gamma,Y)$ with respect to
$\mu$.  Denoting by $\gamma_0:S^1\to \Gamma$ a parametrization proportional to arclength, we deduce from \lref{lem:independent}, cf. also \cite{LW-harmonic}, Corollary 3.3:
\begin{equation} \label{eq:liptr}
m_{\mu,Sob} (\Gamma)=\inf \{Area _{\mu} (u) \; : \; u\in \Lambda (\Gamma, Y) \; , \; \trace (u) =\gamma _0\}.
\end{equation}

In order to compare the  Sobolev filling area $m_{\mu, Sob} (\Gamma )$  with Gromov's restricted filling area $m_{\mu} (\Gamma )$, we recall the following result of S. Ivanov proven in   \cite{Iva08}. The restricted filling area $m_{\mu} (\Gamma )$ is the infimum over all $\mu$-areas of Lipschitz maps $v:\bar D\to G$ into some metric space $G$ containing $\Gamma$, such
that the restriction of $v$ to $S^1$ is  a  bi\-Lipschitz parametrization of $\Gamma$. Since any such map $v$ is a Sobolev map in $\Lambda (\Gamma, G)$, we get:
$$m_{\mu,Sob} (\Gamma) \leq m_{\mu} (\Gamma).$$
The reverse inequality is  a  direct consequence of \eqref{eq:liptr} and the following lemma,
 whose proof is essentially contained in  \cite{HKST15}, Theorem 8.2.1.

\begin{lem} \label{lem:injective}
Let $Y$ be an injective metric space  and  let $u\in W^{1,2} (D,Y)$ be  such that  trace $tr(u):S^1\to Y$ is  Lipschitz continuous.  Then for every $\epsilon >0$ there exists a  Lipschitz map $v:\bar D\to V$ with  $tr(v)= tr(u)$
and $Area_{\mu} (v) <  Area_{\mu} (u) +\epsilon$.
\end{lem}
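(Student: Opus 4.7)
The plan is to approximate $u$ by Lipschitz maps using a Lusin-type theorem for metric-valued Sobolev maps, and to exploit the Lipschitz extension property of the injective space $Y$ in order to prescribe the correct boundary trace.

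First, I would invoke the standard Lusin--Lipschitz approximation (in the spirit of \cite{HKST15}, Chapter~8): for each $\lambda>0$ there exists a closed set $F_\lambda\subset\bar D$ such that $u|_{F_\lambda}$ is $\lambda$-Lipschitz (after modification on a set of measure zero), $\mathcal L^2(\bar D\setminus F_\lambda)\to 0$, and in fact $\lambda^2\cdot\mathcal L^2(\bar D\setminus F_\lambda)\to 0$ as $\lambda\to\infty$ along a suitable sequence; this sharper estimate follows from the weak $L^2$-bound for the Hardy--Littlewood maximal function applied to the upper gradient $\rho_u$. Moreover, the metric differential of $u|_{F_\lambda}$ coincides almost everywhere on $F_\lambda$ with $\apmd u_z$.

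Second, using that $tr(u)$ is Lipschitz and that the trace is the approximate radial limit of $u$ at $\mathcal H^1$-almost every boundary point, I would shrink $F_\lambda$ slightly so that the combined map $\tilde u_\lambda$, defined to agree with $u$ on $F_\lambda\cap D$ and with $tr(u)$ on $S^1$, is Lipschitz on $F_\lambda\cup S^1$ with Lipschitz constant $O(\lambda)$. Since $Y$ is injective, it has the Lipschitz extension property (the defining property of injective metric spaces), and so $\tilde u_\lambda$ extends to a Lipschitz map $v_\lambda:\bar D\to Y$ of the same Lipschitz constant. By construction $v_\lambda|_{S^1}=tr(u)$, hence $\trace(v_\lambda)=tr(u)$ since $v_\lambda$ is continuous up to the boundary.

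Third, to bound the area I would argue as follows. On $F_\lambda\cap D$ the maps $v_\lambda$ and $u$ coincide, hence $\apmd (v_\lambda)_z=\apmd u_z$ almost everywhere there and the area contribution from $F_\lambda$ is at most $\Area_\mu(u)$. On $D\setminus F_\lambda$, the map $v_\lambda$ is Lipschitz with constant $O(\lambda)$, so $\jac^\mu(\apmd (v_\lambda)_z)\le K\lambda^2$ for some universal $K$ (any two definitions of area differ by at most a factor $2$), giving a contribution of at most $K\lambda^2\mathcal L^2(\bar D\setminus F_\lambda)\to 0$. Therefore $\Area_\mu(v_\lambda)\le\Area_\mu(u)+o(1)$, and choosing $\lambda$ large enough yields $\Area_\mu(v_\lambda)<\Area_\mu(u)+\varepsilon$.

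The main obstacle is the second step: arranging in a controlled way that the map which equals $u$ on $F_\lambda$ and $tr(u)$ on $S^1$ is Lipschitz on $F_\lambda\cup S^1$ with Lipschitz constant comparable to $\lambda$. This requires showing that for $z\in F_\lambda$ sufficiently close to $\theta\in S^1$, one has $d_Y(u(z),tr(u)(\theta))=O(\lambda|z-\theta|)$. This can be achieved by combining the Lipschitz control of $u$ along rectifiable curves in $F_\lambda$ with the fact that, for $\mathcal H^1$-almost every $\theta$, the value $tr(u)(\theta)$ is the limit of $u$ along almost every radial segment ending at $\theta$; removing a set of small measure from $F_\lambda$ near $\partial D$ makes this estimate uniform. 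Once this Lipschitz bound is secured, the injectivity of $Y$ handles the extension cleanly.
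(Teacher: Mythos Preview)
Your overall strategy --- Lusin--Lipschitz truncation of $u$ followed by Lipschitz extension using injectivity of $Y$ --- is exactly the one the paper uses, and your area estimate in the third step is correct. The difference lies entirely in how the boundary trace is handled, and this is precisely the point you flag as the main obstacle.

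The paper avoids your second step completely by a simple device: before applying the Lusin approximation, it extends $u$ to a map $\hat u\in W^{1,2}(B(0,2),Y)$ by setting $\hat u(rz)=tr(u)(z)$ for $r\in(1,2)$ and $z\in S^1$. Since $tr(u)$ is Lipschitz, $\hat u$ is Lipschitz on the annulus $B(0,2)\setminus D$ with a fixed constant. Now one runs the Lusin--Lipschitz theorem on the \emph{extended} map. For $t$ large enough the bad set $E_t$ (where $\hat u$ fails to be $t$-Lipschitz) is automatically contained in $B(0,3/2)$, because $\hat u$ is already Lipschitz on the outer annulus. Hence the $t$-Lipschitz extension $v$ agrees with $\hat u$ on the whole annulus $B(0,2)\setminus B(0,3/2)$, and in particular $v|_{S^1}=tr(u)$ with no further work. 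After rescaling $B(0,2)$ back to $D$ one is done.

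Your proposed direct argument for step~2 --- shrinking $F_\lambda$ near $\partial D$ and using radial approximate limits to match $u|_{F_\lambda}$ with $tr(u)$ --- is plausible but genuinely incomplete as written. You would need to show that after removing a set of measure $o(\lambda^{-2})$ from $F_\lambda$, every remaining point $z$ near $\partial D$ can be joined to $S^1$ by a path along which $u$ is controlled at scale $\lambda$; nothing in the Lusin theorem guarantees such connectivity of $F_\lambda$ near the boundary, and the ``radial limit'' argument only gives this for almost every angle, not uniformly. The paper's radial extension trick buys you exactly this boundary compatibility for free, and is the missing idea in your sketch.
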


 \begin{proof}
We denote by $B$ the ball $B(0,2)\subset \R^2$. We extend $u$ to a
map $\hat u \in W^{1,2} ( B,Y)$ by $\hat u(rz)=u(z)$ for $r>1, z\in S^1$.
By assumption, the map $\hat u$ is Lipschitz continuous on $B\setminus D$ and $\Area_\mu(\hat{u}) = \Area_\mu(u)$.

Fix $\epsilon >0$.
As in   the proof of \cite{HKST15}, Theorem 8.2.1, there exist a sufficiently large $t>0$ and   a set $E_t \subset B$  such that $\hat u:B\setminus E_t\to Y$ is $t$-Lipschitz continuous.
Moreover, the Lebesgue measure of $E_t$ is at  most $\frac \epsilon {t^2}$. Finally, by the construction in [HKST15], Theorem 8.2.1, for
sufficiently large $t>0$ the set $E_t$ is contained in the ball
$B(0,\frac{3}{2})$.  Since $Y$ is injective,  we find
some  $t$-Lipschitz extension $v:B\to Y$
 of $\hat u|_{B\setminus E_t}$.

Since $v$ and $\hat{u}$ coincide on $B\setminus E_t$ and since
$\Area_\mu(v|_{E_t})\leq t^2\cdot \frac{\epsilon}{t^2}= \epsilon$ it
follows that $\Area_\mu(v) - \Area_\mu(u)\leq \epsilon$. Moreover, after
rescaling the ball $B$ so that $v$ is defined on $D$ we clearly have
$\trace v = \trace u$.
\end{proof}

\begin{rmrk}
 The statement of \lref{lem:injective} (and its proof, up to minor modifications)  remains valid if $Y$ is  $1$-Lipschitz connected up to some scale, see \cite{LW15-asymptotic}, Proposition 3.1.
\end{rmrk}

 The remainder of \tref{fillingareathm} is a consequence of the previous results.  Indeed, consider our $\mu$-minimal map  $u\in \Lambda (\Gamma, Y)$
 with $m_{\mu,Sob} (\Gamma ) =\Area_{\mu} (u)$.  Consider its unique continuous extension $u:\bar D\to Y$ and the intrinsic metric space  $Z$ defined via the pseudo-distance $d_u$ on  $\bar D$.  Consider the corresponding projection $P:\bar D\to Z$.
The space $Z$ is compact, geodesic  and homeomorphic to $\bar D$ by \lref{lem:nonsense-P}. The remaining statements  of  \tref{fillingareathm} are direct consequences of
\tref{thmA+}, \tref{thmB+}, \tref{thmC+} and \tref{arc-chord+}.

\subsection{Absolute minimizers}
Following \cite{Iva08}, \cite{minfill} we say that a geodesic metric space $M$ biLipschitz homeomorphic to $\bar D$ is an \emph{absolute minimal filling (of its boundary with respect to the definition of area $\mu$)}
if $\mu (M) = m_{\mu} (\partial M)$. Due to \tref{fillingareathm}, this implies  $m_{\mu,Sob} (\partial M )=
\mu (M)$.
The following  classes of spaces are important examples of absolute minimal fillings.
\begin{example}
Let $V$ be a two-dimensional normed space  and let  $M$ be any closed subset of $V$ biLipschitz homeomorphic to $\bar D$. Then $M$ with its induced intrinsic metric is an absolute minimal filling with respect to any quasi-convex $\mu$. Indeed,  consider the injective hull $W$ of $V$. Then $W$ is a
Banach space which contains $V$ as a linear subspace, \cite{Isb64}. By the definition of quasi-convexity, $\mu (M)$ equals the infimum of $\mu$-areas of Lipschitz discs in $W$ which have $\partial M$ as their boundaries. Due to the injectivity
of $W$ this implies  $\mu (M)=m_{\mu} (\partial M)$.
\end{example}

\begin{example}
Let $M$ be a two-dimensional smooth Finsler manifold homeomorphic to $\bar D$.  If all local geodesics in $M$ are globally minimizing then $M$ is an absolute minimal filling with respect to
the Holmes-Thompson definition of area $\mu ^{ht}$, see \cite{minfill}.
\end{example}

For any biLipschitz circle $\Gamma$,
\tref{fillingareathm} provides a generalized minimal filling of $\Gamma$, which may be slightly less regular than a biLipschitz disc.
We hope to investigate further properties of such generalized  minimal fillings in a continuation of this paper, cf. Question \ref{lastquest} below.

\section{Examples and Questions} \label{sec:exquest}
The first example is well known, see \cite{MR02}.
\begin{example} \label{alpha}
Let $X$ be the Euclidean cone over a circle $\Gamma$ of length $2\pi \alpha $ with $0<\alpha \leq 1$.
The space has property (ET) and admits a $(C,l_0)$-isoperimetric  inequality
for any $l_0$ and $C=\frac 1 {4\pi \alpha}$,
for any definition of area $\mu$. The unique solution of the Plateau problem for the curve $\Gamma$ is
$\alpha$-H\"older continuous, but  not $\beta$-H\"older continuous for any $\beta >\alpha$.
The arising space $Z$ coincides with $X$ and $P$ coincides with $u$. The balls of radius $r<1$ around the origin have area $\alpha \cdot \pi r^2 =\frac 1 {4 C} r^2$.
\end{example}

In the last example, the map $P$ is  not Lipschitz continuous for $\alpha <1$, but $Z$ is still  biLipschitz equivalent to $\bar D$.  Moreover, for $\alpha \to 1$ the isoperimetric constant tends to the critical value $\frac 1 {4\pi} $.
Nevertheless,
the solution of the Plateau problem need not  be  Lipschitz continuous.
Therefore, the answer to the next question  must involve  very fine invariants of spaces.

\begin{quest}
What conditions, apart from upper curvature bounds,  imply that solutions of the Plateau problems
are Lipschitz? Under which conditions is  $Z$ biLipschitz homemomorphic to a  disc?
\end{quest}

Natural examples to
study in connection with the last question might be Finsler manifolds and Riemannian manifolds with non-smooth metrics.
The next example mentioned in the introduction is a typical counterexample for many results  valid in the smooth case.

\begin{example} \label{disc-collapse}
Choose a compact metric ball  $T\subset D$  and consider the quotient metric space $X=\bar D/T$ with the quotient metric,  see e.g.~\cite[Definition 3.1.12]{BBI01}. Then $X$ is a geodesic space, homeomorphic to a disc, and $X$ is flat outside a single thick point, the image of $T$.
It follows from the Euclidean isoperimetric inequality that $X$ admits a $(C,\infty)$-isoperimetric inequality with optimal constant $C=\frac 1 {2\pi}$.
The "worst" curves, enclosing the maximal area, are projections  of tiny  circles which meet the boundary of $T$ orthogonally.

The canonical projection $u\colon\bar D\to X$ is a conformal solution of the  Plateau problem.
The metric space $Z$ coincides with $X$  and $u$ coincides with $P$.    The minimal disc  $u$ has the set $T$   as the set of "branch points".  Moreover, small balls around non-thick points in $Z=X$  have quadratic area growth  and small balls around the thick point have  a linear area  growth.  Thus, the Hausdorff area is not a doubling measure on $Z$. In particular, $Z$ is not biLipschitz  homeomorphic to a  disc.
\end{example}

In view of the nature of this example
it seems possible  that the
isoperimetric constant $\frac 1 {2 \pi}$
 is the critical value for  the constructions of this type. Note that this value
$C = \frac 1
{  2  \pi} $  is also very interesting in view of the absolute Plateau problem.

\begin{quest} Can the set of branch points of a solution of the Plateau problem be large
if the isoperimetric constant $C$ is smaller than $\frac 1 {2\pi}$?
 Can  the map $P$ be  non-injective  in this case?
\end{quest}

The fibers of $P$, which are a priori allowed by the
statement  that $P$  is a uniform limit of homeomorphisms, can be arbitrary cell-like sets, for instance any continuous simple arc.
We do not know if such general fibers can indeed
occur.

\begin{quest}
Can fibers of the canonical map $P$ be non-contractible? Are such fibers always Lipschitz retracts?
\end{quest}

 The following question is closely related to the previous one. Due to \tref{prop:first}
the question has an affirmative answer if we find controlled approximations of any curve in $Z$ by $P$-images of curves in $\bar D$.
\begin{quest}
Does the  map $\bar u :Z\to X$ preserve the lengths of all curves in $Z$?
\end{quest}

  Finally, we do not know to what extent the conclusions about absolute minimizers
are optimal.

\begin{quest} \label{lastquest}
Are solutions of the absolute Plateau problem Lip\-schitz continuous?   What can be said about their geometry?
\end{quest}

\def\cprime{$'$} \def\cprime{$'$}

\end{document}